\documentclass{article}

\usepackage{blindtext}
\usepackage[a4paper, total={6in, 8in}]{geometry}
\usepackage{biblatex}
\addbibresource{Thesis.bib}
\usepackage[utf8]{inputenc}

\usepackage{csquotes}
\usepackage{authblk}
\usepackage[english]{babel}
\usepackage{amsthm}
\usepackage{amssymb}
\usepackage{amsmath}
\usepackage{amscd}
\usepackage{hyperref}
\usepackage{MnSymbol}
\usepackage{appendix}

\hypersetup{
    colorlinks=true,
    linkcolor=red,
    filecolor=magenta,      
    urlcolor=cyan,
    pdftitle={Ultralimit and Poisson boundary},
    pdfpagemode=FullScreen,
    }

\urlstyle{same}

\newtheorem{theorem}{Theorem}[subsection]
\newtheorem{corollary}[theorem]{Corollary}
\newtheorem{lemma}[theorem]{Lemma}
\newtheorem{definition}[theorem]{Definition}
\newtheorem{example}[theorem]{Example}
\newtheorem{prop}[theorem]{Proposition}
\newtheorem{remark}[theorem]{Remark}
\newtheorem{problem}{Problem}
\newtheorem{notation}[theorem]{Notation}
\newtheorem{claim}[theorem]{Claim}
\newtheorem{thmx}{Theorem}

\newcommand{\bigslant}[2]{{\raisebox{.2em}{$#1$}\left/\raisebox{-.2em}{$#2$}\right.}}

\title{Ultralimits, Amenable actions and Entropy\footnote{The author acknowledge the ISF support made through grant 1483/16.}}
\author{Elad Sayag\footnote{elad.sayag2003@gmail.com}}
\date{}
\affil[ ]{School of Mathematical Sciences, Tel-Aviv University, Israel}

\begin{document}

\maketitle
\begin{abstract}
In this paper we show that the minimal value of Furstenberg entropy (along all measures, not restricting to stationary ones) for any amenable action is the same as for the action of the group on itself. Using the boundary amenability result of \cite{ADAMS1994765}, this allows us to compute the minimal value of the entropy over all the measure classes in the boundary of the free group. Similar results are proved for the action of a hyperbolic group on its Gromov boundary.\\
Our main tool is an ultralimit realization of the Poisson boundary of a time dependent matrix-valued random walk on the group. \\
This extends and refines the results and tools of \cite{sayag2022entropy}.
\end{abstract}

\tableofcontents

\section{Introduction}

In this paper we continue the study initiated in \cite{sayag2022entropy} on the {problem of entropy minimizing}. We will relate this problem in the case of amenable actions (see Theorem \ref{Theorem entropy for amenable intro}) and boundary actions (see Theorems \ref{Theorem Entropy for hyperbolic groups intro}, \ref{Theorem entropy for lattices intro}). In particular, we calculate the minimal entropy number for the action of the free group on its boundary (see Theorem \ref{Theorem entropy for free groups intro}). We will give a new perspective on amenable actions by providing an ultralimit construction for them, starting with the action of the group on itself (see Theorem \ref{Theorem ultralimit ameanble action intro}). 
A secondary goal of this paper is to develop further the tool of ultralimits from a measure theoretic point of view. Building on \cite{conley_kechris_tucker-drob_2013}, \cite{MR390154}, \cite{MR2964622} and dealing with unbounded functions. We will also relate it to the construction of ultralimits in \cite{sayag2022entropy}.

\subsection{The problem of minimizing entropy}

Let \(G\) be a discrete countable group. Throughout this introduction \(\lambda\) will be a finitely supported, generating probability measure on \(G\) and \(f\) will be a convex function on \((0,\infty)\) with \(f(1)=0\).\\
In \cite{sayag2022entropy}, we introduced the notion of Furstenberg \((\lambda,f)\)-entropy. Namely, for a \(G\)-space \(X\) and a quasi-invariant probability measure \(\nu\) on \(X\), we define
\[h_{\lambda,f}(X,\nu):=\sum_{g\in G}\lambda(g)D_{f}(g\nu||\nu)=\sum_{g\in G}\lambda(g)\intop_{X} f(\frac{dg\nu}{d\nu})\:d\nu\]
Our main object of interest is the \textbf{minimal entropy number} defined for a \(G\)-space \(X\) as follows:
\[I_{\lambda,f}(X)=\inf_{\nu\in M(X)} h_{\lambda,f}(X,\nu)\]
Here, by a \(G\)-space we mean that \(X\) is a Borel space equipped with a fixed measure class which is \(G\)-invariant, \(M(X)\) denotes the collection of probability measures on \(X\) of this measure class. \\
One has the following topological variant - let \(X\) be a topological \(G\)-space and let \(M_{top}(X)\) be the collection of all Borel measures. Define 
\[I_{\lambda,f}^{top}(X)=\inf_{\nu\in M_{top}(X)} h_{\lambda,f}(X,\nu)\]
We note that although \(\nu\) is not assumed to be quasi-invariant one can easily adapt the definition for this case (see subsection \ref{subsection Recollection of Entropy}).\\
In \cite{sayag2022entropy}, we proved that when \(G\) is amenable one has \(I_{\lambda,f}(G)=0\). In the present paper we extend this result to amenable actions in the sense of Zimmer (e.g. \cite{zimmer1978amenable}):
\begin{thmx}[See Theorem \ref{Theorem entropy and amenability}]\label{Theorem entropy for amenable intro}
Let \(G\) be a discrete countable group and let \(S\) be an amenable \(G\)-space. Then for any \(\lambda,f\) as above we have:  
\[I_{\lambda,f}(S)=I_{\lambda,f}(G)\]
\end{thmx}
Our proof of Theorem \ref{Theorem entropy for amenable intro} is based on the ideas of \cite{sayag2022entropy}, an ultralimit realization of Poisson boundaries of time-dependent matrix-valued random walks, as well as Elliott-Giordano realization of amenable actions as such Poisson boundaries (e.g. \cite{Amenable}).\\
Theorem \ref{Theorem entropy for amenable intro} combined with the work of Adams on boundary amenability of the Gromov boundary of hyperbolic groups (e.g. \cite{ADAMS1994765}) implies the following:
\begin{thmx}[See Theorem \ref{Theorem Entropy for hyperbolic groups}]\label{Theorem Entropy for hyperbolic groups intro}
Let \(G\) be a hyperbolic group and \(\partial G\) be its Gromov boundary. Then for any \(\lambda,f\) as above we have: 
\[I^{top}_{\lambda,f}(\partial G)=I_{\lambda,f}(G)\]
\end{thmx}
Similarly we will get:
\begin{thmx}[See Theorem \ref{Theorem Entropy for lattices}]\label{Theorem entropy for lattices intro}
Let \(G\) be a discrete subgroup in a semi-simple Lie group \(\mathbf{G}\). Let \(\mathbf{B}\leq \mathbf{G}\) be a Borel subgroup and consider the flag space \(X = \mathbf{G}/\mathbf{B}\), then for any \(\lambda, f\) as above we have:
\[I^{top}_{\lambda,f}(X)=I_{\lambda,f}(G)\]
\end{thmx}

\(\)\\
We now focus on the free group \(F_{d}\) on \(d\) generators \(\{a_1,\dots,a_{d}\}\).\\
Denote by \(\Delta_d\) the set of generating symmetric probability measures supported on \(\{a_1^{\pm1},\dots,a_{d}^{\pm1}\}\). Given a \(\mu\in\Delta_{d}\) we denote by \(\nu_{\mu}\) the \(\mu\)-harmonic measure on the boundary \(\partial F_{d}\).\\
Suppose additionally that \(f\) is strictly convex and smooth. In \cite[Definition 7.4]{sayag2022entropy} we introduced a bijection \(T:\Delta_d\to\Delta_d\).\\
Let \(\lambda\in\Delta_{d}\) and \(\mu=T^{-1}(\lambda)\),
in \cite{sayag2022entropy} we proved \(I_{\lambda,f}(F_{d})=h_{\lambda,f}(\partial F_{d},\nu_{\mu})\) and we showed that the measure \(\nu_{\mu}\) on the boundary \(\partial F_{d}\) minimizes \((\lambda,f)\)-entropy \underline{in its measure class}.\\
A natural question left open in \cite{sayag2022entropy} is whether \(\nu_{\mu}\) minimizes the \((\lambda,f)\)-entropy among \underline{all measures} on the boundary, that is, whether \(I_{\lambda,f}^{top}(\partial F_{d})=h_{\lambda,f}(\nu_{\mu})\). One of the main goals of this paper was to prove this equality:
\begin{thmx}[See Theorem \ref{Theorem entropy for boundary of free groups}]\label{Theorem entropy for free groups intro}
In the notations above, for any \(\lambda\in\Delta_d\) we have:
\[I_{\lambda,f}^{top}(\partial F_{d})=h_{\lambda,f}(\partial F_d,\nu_{\mu})\]
where \(\mu=T^{-1}(\lambda)\).
\end{thmx}

\subsection{Ultralimits}

\subsubsection{An ultralimit construction for amenable actions}

Amenability of \(G\)-spaces was first studied by Zimmer, see \cite{zimmer1978amenable},\cite{Zimmer1984ErgodicTA}. Since then this property was generalized to various settings (see \cite{anantharaman2000amenable}) and studied extensively.
In order to prove Theorem \ref{Theorem entropy for amenable intro}, we provide a new perspective on this notion. For this we use the construction of ultralimit of uniformly bounded quasi-invariant (BQI) \(G\)-spaces from \cite[Definition 4.17]{sayag2022entropy}. Based on this, our main tool is an ultralimit construction for (ergodic) amenable actions:
\begin{thmx}[See Theorem \ref{Theorem Amenable actions and ultralimit}]\label{Theorem ultralimit ameanble action intro}
Let \(G\) be a discrete countable group, and \(S\) is an ergodic amenable \(G\)-space. Then there is a BQI measure \(\nu\) on \(S\) (in the measure class) and a sequence of uniformly BQI measures \((\lambda_n)_{n\geq 0}\) on \(G\times\mathbb{N}\) such that for any non-principle ultrafilter \(\mathcal{U}\) on the natural numbers we have an isomorphism between the Radon-Nikodym factors:
\[(S,\nu)_{RN}\cong \mathcal{U}\lim\:_{RN}\: (G\times\mathbb{N},\lambda_n)\]
\end{thmx}

\begin{remark}
In \cite{weakcont} it was proved that unitary twisted \(L^{2}\)-representations induced from amenable actions are weakly contained in the regular representation. This was our motivation to speculate Theorem \ref{Theorem ultralimit ameanble action intro}.
\end{remark}
To prove Theorem \ref{Theorem ultralimit ameanble action intro}, we first use the result of \cite{Amenable} that represents such ergodic amenable action as generalized Poisson boundary. That is, Poisson boundary of a matrix-valued time dependent random walk on \(G\). Such a Poisson boundary is attached to a sequence \(\boldsymbol{\sigma}=(\sigma^{(n)})\) consisting of stochastic matrices of measures on \(G\). For the classical Furstenberg-Poisson boundary \(\mathcal{B}(G,\mu)\) we introduced an ultralimit construction \cite[Theorem E]{sayag2022entropy}. In Theorem \ref{Theorem ultralimit construction to Poisson boundary of a time dependent matrix-valued random walk} we generalize this construction to the general case.\\
In \cite{sayag2022entropy}, the verification that the construction coincides with the Furstenberg-Poisson boundary was based on a classical maximality property of the Furstenberg-Poisson boundary among \(\mu\)-stationary systems. We provide an analogue of the classical theory (of Furstenberg-Glasner \cite{FurstenbergGlasner}) to this general case in Appendix \ref{appendix Poisson boundary of a time dependent matrix-valued random walk}.

\subsubsection{Ultralimits in measure theoretic setting}
The tool of ultralimit in the setting of BQI \(G\)-spaces was introduced in \cite{sayag2022entropy} using the tool of ultralimit of \(C^{*}\)-algebras. We remark that one could also have used Von-Neumann algebras (equipped with a state) as in \cite{ando2014ultraproducts}.\\ 
Those approach has the disadvantage that they deal mostly with bounded functions. However, considering actions \(G\curvearrowright X\) it is very common to consider unbounded yet integrable functions or even quasi-invariant measure which are not bounded quasi-invariant.\\
This motivates us to develop the construction of ultralimit from a measure theoretic point of view. The construction of the ultralimit measure space is due to Loeb (see \cite{MR390154}) and we recall it briefly.\\ 
We will prove a basic theorem about interchanging the order of integral with ultralimits. This is an analogue of Lebesgue-Vitali's Theorem about uniform integrability.\\
To state and prove this result, we introduce a filtration on \(L^{1}\) of any probability space by Banach spaces of 'uniformly-integrable-functions'. The basic properties of those spaces will be established in Appendix \ref{appendix uniform integrability}, and we will use this formalism for our Vitali's type theorem as well as showing that this filtration behaves well under ultralimits (see Proposition \ref{proposition compartion in C rho of ultralimit}). 
\(\)\\
This is used in the deduction of Theorem \ref{Theorem entropy for amenable intro} from Theorem \ref{Theorem ultralimit ameanble action intro} -- we need to be able to deal with all the measures in the measure class of \(S\), that may not be in the same bounded measure class as \(\nu\).

\section{The method of ultralimits}\label{section ultralimit of probability spaces}
In this section we will study the  ultralimit of probability spaces and quasi-invariant \(G\)-spaces.\\
We begin by reviewing the ultralimit construction for probability spaces in subsection \ref{subsection Preliminaries- the construction of ultralimits of probability spaces}.\\
Next, in subsection \ref{subsection Ultralimit and integration for uniformly integrable functions} we prove a  Lebesgue-Vitali type theorem about interchanging of integration and ultralimits (see Theorem \ref{Theorem ultralimit and integrals}). For this we appeal to Appendix \ref{appendix uniform integrability} where we introduced a filtration of \(L^{1}\) by spaces uniform integrability.\\
In subsection \ref{subsection Comparison with ultralimit of C*-algebras} we compare the ultralimit of probability spaces and of Banach spaces attached to these probability spaces.\\
In subsection \ref{section Equivariant ultralimit}
we consider the equivariant version for this construction.

\subsection{The construction of ultralimit of probability spaces}\label{subsection Preliminaries- the construction of ultralimits of probability spaces}
In this section we construct the ultralimit of probability spaces.\\
We follow \cite{conley_kechris_tucker-drob_2013} which is based on \cite{MR2964622} and the work of Loeb \cite{MR390154}.\\
Let \(\mathcal{I}\) be a set, let \(\mathcal{U}\) be an ultrafilter on \(\mathcal{I}\). We let \(\mathfrak{X_i}=(X_i,\Sigma_i,\nu_i)\) be a collection of probability spaces parameterized by \(i\in\mathcal{I}\).\\
In this section we will construct a probability space \(\mathcal{U}\lim \mathfrak{X}_i=(X,\Sigma,\nu)\), the \emph{ultralimit} of \(\mathfrak{X_i}\).\\
\(\)\\
\underline{The underlying set \(X\)}: define \(X=\bigslant{\prod_{i\in\mathcal{I}} X_i}{\mathcal{U}}\), that is, the product \(\prod_{i\in\mathcal{I}} X_i\) divided by the equivalence relation given by: 
\[(x_i)\sim_{\mathcal{U}} (y_i) \iff \big\{i\in\mathcal{I} \big| x_i=y_i\big\}\in\mathcal{U}\]
We denote by \([x_i]\) the equivalence class of \((x_i)\). \\
\underline{The \(\sigma\)-algebra \(\Sigma\)}: for \((A_i)_{i\in\mathcal{I}}\) we define
\[\mathcal{U}\lim A_i=\big\{[x_i] \big| x_i\in A_i \quad \mathcal{U}\text{-a.e.} \big\}=\big\{[x_i] \big| \{i \big| x_i\in A_i\}\in\mathcal{U} \big\}  \]
An equivalent description is 
\[\boldsymbol{1}_{\mathcal{U}\lim A_i}([x_i])=\mathcal{U}\lim \boldsymbol{1}_{A_i}(x_i)\]
Here, in the right hand side we have used \(\mathcal{U}\lim\) for sequences of numbers (e.g. \cite[Definition 4.1]{sayag2022entropy}) .\\
The collection \(\mathcal{A}:=\big\{\mathcal{U}\lim A_i \big| A_i \in \Sigma_i\big\}\) forms an algebra on \(X\). We define \(\Sigma:=\sigma(\mathcal{A})\) to be the \(\sigma\)-algebra generated by \(\mathcal{A}\).\\
\underline{The measure \(\nu\)}: our next goal is to define the ultralimit measure.\\
Define the function \(\nu_*:\mathcal{A}\to [0,1]\) by 
\[\nu_*(\mathcal{U}\lim A_i)=\mathcal{U}\lim \nu_i(A_i)\]
Here also, in the right hand side we have used \(\mathcal{U}\lim\) for sequences of numbers.
As \(\:\:\mathcal{U}\lim A_i=\mathcal{U}\lim B_i \iff A_i=B_i\:\:\: \mathcal{U}\textrm{-a.e.}\:\:\) we conclude that \(\nu_{*}\) is well defined. It is easy to see that \(\nu_{*}\) gives rise to a finitely-additive measure of total mass 1 on \(\mathcal{A}\).\\
By Dynkin's Lemma (see \cite[Lemma 1.6]{williams}) an extension of \(\nu_*\) to a probability measure \(\nu:\Sigma\to[0,1]\) is unique if it exists. To show existence, we proceed as follows:\\
We will say that \(N\subset X\) is a \emph{null set} if for any \(\epsilon>0\) we can find \(A\in\mathcal{A}\) with \(N\subset A,\:\nu_{*}(A)<\epsilon\). Define \(\mathcal{N}\) to be the collection of null sets. Note that \(\mathcal{N}\) is closed under finite unions.
\begin{lemma}\label{Lemma defining ultralimit measure}
Let \(A^{(r)}\in\mathcal{A}\) and consider \(v=\lim_{m\to\infty}\nu_*(\bigcup_{r=1}^m A^{(r)})\). Then there is \(A\in\mathcal{A}\) with \(\nu_*(A)=v\) and \(A^{(r)}\subset A\) for all \(r\).
\end{lemma}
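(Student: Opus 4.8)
The plan is to construct the set $A$ by a diagonal argument over the index set $\mathcal{I}$, which compensates for the fact that a non-principal $\mathcal{U}$ is not closed under countable intersections. First I would write each $A^{(r)}=\mathcal{U}\lim A^{(r)}_i$ with $A^{(r)}_i\in\Sigma_i$ and pass to partial unions: setting $B^{(m)}_i:=\bigcup_{r=1}^m A^{(r)}_i\in\Sigma_i$, the identity $\boldsymbol 1_{\mathcal{U}\lim A_i}=\mathcal{U}\lim\boldsymbol 1_{A_i}$ together with the commutation of finite maxima and numerical ultralimits gives $\bigcup_{r=1}^m A^{(r)}=\mathcal{U}\lim B^{(m)}_i\in\mathcal{A}$. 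Hence $v_m:=\nu_*\big(\bigcup_{r=1}^m A^{(r)}\big)=\mathcal{U}\lim_i\nu_i(B^{(m)}_i)$, and $(v_m)$ increases to $v$.

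Next I would fix, for each $m$, a set $U_m\in\mathcal{U}$ on which $\nu_i(B^{(m)}_i)$ approximates $v_m$: since $\mathcal{U}\lim_i\nu_i(B^{(m)}_i)=v_m$, the set $\{i:|\nu_i(B^{(m)}_i)-v_m|<1/m\}$ lies in $\mathcal{U}$, and after intersecting the first $m$ of these I may assume $U_1\supseteq U_2\supseteq\cdots$. The heart of the construction is the diagonal definition $A_i:=B^{(\phi(i))}_i$, where $\phi(i):=\sup\{m:i\in U_m\}$, with the conventions $B^{(0)}_i=\emptyset$, $B^{(\infty)}_i=\bigcup_m B^{(m)}_i$ and $v_\infty:=v$. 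Each $A_i$ is a countable union of $\Sigma_i$-sets, so $A_i\in\Sigma_i$ and $A:=\mathcal{U}\lim A_i\in\mathcal{A}$.

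It then remains to verify the two required properties. For containment: if $i\in U_r$ then $\phi(i)\geq r$, hence $A^{(r)}_i\subseteq B^{(\phi(i))}_i=A_i$; since $U_r\in\mathcal{U}$ and a $\mathcal{U}$-a.e. inclusion of the fibres forces inclusion of the ultralimits, we obtain $A^{(r)}\subseteq A$. For the measure: by monotonicity of $B^{(m)}_i$ in $m$, for $i\in U_M$ we have $\phi(i)\geq M$, so combining $\nu_i(B^{(M)}_i)>v_M-1/M$ with $\nu_i(B^{(\phi(i))}_i)<v_{\phi(i)}+1/M\leq v+1/M$ yields $v_M-1/M<\nu_i(A_i)<v+1/M$ for every $i\in U_M$. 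Since $v_M\nearrow v$ and $1/M\to 0$, this window shrinks to $\{v\}$ while each $U_M$ remains in $\mathcal{U}$, whence $\nu_*(A)=\mathcal{U}\lim_i\nu_i(A_i)=v$.

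The step I expect to be the main obstacle is precisely this diagonalization: because $\mathcal{U}$ is not countably complete, neither does $\bigcup_r A^{(r)}$ belong to $\mathcal{A}$ nor $\bigcap_m U_m$ to $\mathcal{U}$, so one cannot simply take the countable union on each fibre and read off the measure. The function $\phi$ is what allows a single $U_M\in\mathcal{U}$ to control the approximation at all finer levels simultaneously, and the delicate point is checking that the two-sided estimate on $\nu_i(A_i)$ holds \emph{uniformly} on $U_M$ rather than only $\mathcal{U}$-eventually for each separate $m$.
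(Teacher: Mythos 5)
Your proof is correct and takes essentially the same approach as the paper's: your sets $U_m$ and diagonal function $\phi(i)$ are exactly the paper's $\bigcap_{r=1}^m T_r$ and $m(i)$, with the same construction $A_i=B_i^{(\phi(i))}$ and the same verification over the sets $U_M\in\mathcal{U}$. The only immaterial difference is that the paper obtains the lower bound $\nu_*(A)\geq v$ from the containments $A^{(M)}\subset A$ together with monotonicity of $\nu_*$, whereas you read both bounds off the fibrewise two-sided estimate on $\nu_i(A_i)$.
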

\begin{proof}
Write \(A^{(m)}=\mathcal{U}\lim A_i^{(m)}\) and let \(B_i^{(m)}:=\bigcup_{r=1}^{m} A_i^{(r)}\) so that \(B^{(m)}:=\mathcal{U}\lim B_i^{(m)}=\bigcup_{r=1}^{m} A^{(r)}\) and denote \(v_m:=\nu_*(B^{(m)})\) so that \(v_m\uparrow v\). Define 
\[T_m=\big\{i\in\mathcal{I} \big| \: |\nu_i(B_i^{(m)})-v_m|<\frac{1}{m} \big\} \]
by the definition of \(\nu_*\) we have that \(T_m\in \mathcal{U}\).
For any \(i\in \mathcal{I}\) define:
\[m(i)=\sup \big\{m \big| i\in \bigcap_{r=1}^m T_r \big\}\in \mathbb{N}\cup\{+\infty\} \quad,\quad A_i:=B_i^{(m(i))}=\bigcup_{m=1}^{m(i)} A_i^{(m)}\]
We will show that \(A=\mathcal{U}\lim A_i\in\mathcal{A}\) satisfies the desired properties.\\
For any positive integer \(M\) we have
\[ \big\{i\in\mathcal{I} \big|\: m(i)\geq M \big\} = \bigcap_{m=1}^{M} T_m \in\mathcal{U}  \]
Hence we get \(A_i^{(M)}\subset A_i\) for \(\mathcal{U}\)-a.e. \(i\). This implies \(A^{(M)}\subset A\) for any positive integer \(M\). We conclude from this that \(v\leq \nu_{*}(A)\).\\
Moreover, for each fixed \(M\) we have \(\nu(A_i)\leq v_{m(i)}+\frac{1}{m(i)}\leq v+\frac{1}{M}\) for \(\mathcal{U}\)-a.e. \(i\). Thus \(\nu_*(A)\leq v+\frac{1}{M}\) for any positive integer \(M\). We conclude \(\nu_*(A)=v\) and hence the proof of the lemma.
\end{proof}

\begin{corollary}\label{Corollary defining ultralimit measure}
\(\)
\begin{enumerate}
    \item
    \(\mathcal{N}\) is closed under countable union.
    \item 
    \(\overline{\Sigma}:=\big\{B\subset X \big| \exists A\in\mathcal{A}:\: A\triangle B\in\mathcal{N} \big\} \) is a  \(\sigma\)-algebra. 
    \item
    The mapping  \(\nu:\overline{\Sigma}\to[0,1]\) given by \(\nu(B)=\nu_*(A)\), where  \(A\in\mathcal{A}\) is such that \(A\triangle B\in\mathcal{N}\), is well defined. Furthermore, \(\nu\) defines a probability measure extending  \(\nu_*\).

\end{enumerate}
\end{corollary}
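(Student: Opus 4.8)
The plan is to treat this as a completion-type construction in which Lemma~\ref{Lemma defining ultralimit measure} is the engine. The recurring difficulty is that \(\mathcal{A}\) is only an algebra, so a countable union \(\bigcup_k A_k\) of sets in \(\mathcal{A}\) need not lie in \(\mathcal{A}\); the Lemma is designed precisely to remedy this, producing a single \(A\in\mathcal{A}\) that contains all the \(A_k\) and whose measure equals the supremum \(v=\lim_m\nu_*(\bigcup_{r=1}^m A_r)\) of the measures of the finite unions. Two preliminary facts, used throughout, are that \(\nu_*\) is monotone and finitely subadditive on \(\mathcal{A}\) (both follow from finite additivity) and that every \(N\in\mathcal{A}\cap\mathcal{N}\) has \(\nu_*(N)=0\) (cover \(N\) by \(\mathcal{A}\)-sets of arbitrarily small measure and apply monotonicity).

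For item (1), given null sets \((N_k)\) and \(\epsilon>0\), I would choose \(A^{(k)}\in\mathcal{A}\) with \(N_k\subset A^{(k)}\) and \(\nu_*(A^{(k)})<\epsilon 2^{-k}\). Finite subadditivity gives \(\nu_*(\bigcup_{r=1}^m A^{(r)})<\epsilon\) for all \(m\), hence \(v\le\epsilon\), and the Lemma yields \(A\in\mathcal{A}\) with \(\bigcup_k N_k\subset A\) and \(\nu_*(A)=v\le\epsilon\); as \(\epsilon\) was arbitrary, \(\bigcup_k N_k\in\mathcal{N}\). For item (2), closure under complements is immediate since \(\mathcal{A}\) is an algebra and \(A^c\triangle B^c=A\triangle B\), so the substance is closure under countable unions. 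Given \(B_k\in\overline{\Sigma}\) with witnesses \(A_k\in\mathcal{A}\), set \(U=\bigcup_k A_k\) and let \(A\in\mathcal{A}\) be the set the Lemma attaches to \((A_k)\). The crux is to show \(A\triangle U\in\mathcal{N}\): since \(U\subset A\) this reduces to \(A\setminus U\) being null, and indeed \(A\setminus U\subset A\setminus\bigcup_{r=1}^m A_r\in\mathcal{A}\) with \(\nu_*(A\setminus\bigcup_{r=1}^m A_r)=v-v_m\to0\). Combining this with \(U\triangle\bigcup_k B_k\subset\bigcup_k(A_k\triangle B_k)\in\mathcal{N}\) (from item (1)) and the symmetric-difference inclusion \(A\triangle\bigcup_k B_k\subset(A\triangle U)\cup(U\triangle\bigcup_k B_k)\) exhibits \(\bigcup_k B_k\in\overline{\Sigma}\) with witness \(A\).

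For item (3), well-definedness follows because two witnesses \(A,A'\) for a given \(B\) satisfy \(A\triangle A'\in\mathcal{N}\), forcing \(\nu_*(A)=\nu_*(A')\) by the vanishing of \(\nu_*\) on \(\mathcal{A}\cap\mathcal{N}\); and \(\nu\) extends \(\nu_*\) since each \(A\in\mathcal{A}\) is its own witness. For countable additivity, take disjoint \((B_k)\) with witnesses \((A_k)\); the \(A_k\) are only almost disjoint, as \(A_k\cap A_j\) differs from \(B_k\cap B_j=\emptyset\) by a null set and hence has \(\nu_*(A_k\cap A_j)=0\). An inclusion-exclusion modulo null sets then gives \(v_m=\nu_*(\bigcup_{r=1}^m A_r)=\sum_{r=1}^m\nu_*(A_r)=\sum_{r=1}^m\nu(B_r)\), and letting \(m\to\infty\), together with \(\nu(\bigcup_k B_k)=\nu_*(A)=v\) from item (2), yields \(\nu(\bigcup_k B_k)=\sum_k\nu(B_k)\).

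I expect the main obstacle to be the ``swallowing'' estimate \(A\setminus U\in\mathcal{N}\) in item (2): it is here that the precise equality \(\nu_*(A)=v=\lim_m v_m\) supplied by the Lemma, rather than a mere inequality, is indispensable, and this single estimate is what simultaneously promotes \(\overline{\Sigma}\) to a \(\sigma\)-algebra and underlies countable additivity.
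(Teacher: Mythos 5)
Your proof is correct and follows essentially the same route as the paper's: item (1) by covering with $\mathcal{A}$-sets of summable measure and invoking Lemma \ref{Lemma defining ultralimit measure}, item (2) by reducing to countable unions of $\mathcal{A}$-sets and using the precise equality $\nu_*(A)=v$ to show $A\setminus\bigcup_k A_k\in\mathcal{N}$, and item (3) via the vanishing of $\nu_*$ on $\mathcal{A}\cap\mathcal{N}$ together with almost-disjointness of the witnesses. The only difference is expository: you spell out the reduction steps (the symmetric-difference inclusions and the inclusion-exclusion modulo null sets) that the paper leaves implicit.
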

\begin{proof}
\begin{enumerate}
    \item
    Let  \(N^{(m)}\in\mathcal{N}\) and consider \(N:=\bigcup_m N_m\). We show \(N\in\mathcal{N}\). Let  \(\epsilon>0\), by assumption we have  \(A^{(m)}\in\mathcal{A}\) such that \(N^{(m)}\subset A^{(m)},\:\nu_*(A^{(m)})\leq \frac{\epsilon}{2^m}\). Applying Lemma \ref{Lemma defining ultralimit measure} yields \(A\in\mathcal{A}\) such that \(A^{(m)}\subset A\) and \(\nu_*(A)=\lim_m \nu_*(\bigcup_{r=1}^{m} A^{(r)})\leq\epsilon\). Thus  \(N\subset A\in\mathcal{A},\:\nu_*(A)\leq\epsilon\).
    \item
     It is obvious that \(\overline{\Sigma}\) is an algebra. In view of item 1, it is enough to show that for  \(A^{(n)}\in\mathcal{A}\) we have \(\bigcup A^{(n)}\in\overline{\Sigma}\).\\
     Using Lemma \ref{Lemma defining ultralimit measure}, we can find \(A\in\mathcal{A}\) such that: \(A\triangle(\bigcup A^{(n)})=A\setminus(\bigcup A^{(n)})\in\mathcal{N}\). Indeed, \(A\setminus(\bigcup A^{(n)})\subset A\setminus(\bigcup_{r=1}^m A^{(r)}))\) and \(\nu_*(A\setminus(\bigcup_{r=1}^m A^{(r)}))=\nu_{*}(A)-\nu_{*}(\bigcup_{r=1}^m A^{(r)}))\stackrel{m\to \infty}{\longrightarrow} 0\).
    \item
    Since \(\mathcal{A}\cap\mathcal{N}=\big\{A\in\mathcal{A} \big| \nu_*(A)=0 \big\}\) we conclude that  \(\nu\) is well defined. To show that it is  \(\sigma\)-additive, using item 1, and the proof of item 2 it is enough to show that if  \((A^{(r)})\) are  \(\nu_*\)-almost disjoint and \(A\) satisfies Lemma \ref{Lemma defining ultralimit measure} then \(\nu_*(A)=\sum_r \nu_*(A^{(r)})\). However this is obvious since \(\nu_*(A)=\lim_{m\to\infty} \nu_*(\bigcup_{r\leq m} A^{(r)})=\lim_{m\to\infty} \sum_{r\leq m} \nu_*(A^{(r)})= \sum_r \nu_*(A^{(r)})\).
\end{enumerate}
\end{proof}

\begin{definition}
Defining  \(\nu=\nu|_{\Sigma}\) and  \(\mathfrak{X}=(X,\Sigma,\nu)\) we have defined the  \(\mathcal{U}\)-\emph{\textbf{ultralimit of the probability spaces}}  \((\mathfrak{X_i})_{i\in\mathcal{I}}\).
\end{definition}

\begin{remark}
\begin{itemize}
    \item 
    The measure space completion of  \(\mathfrak{X}\) is  \((X,\overline{\Sigma},\nu)\)
    \item
    The collection of negligible sets (that is, sets contained in a measurable set of measure \(0\)) in \(\mathfrak{X}\) is the \(\sigma\)-ideal \(\mathcal{N}\).
    \item
    To show uniqueness of an extension for \(\nu_{*}\), one can avoid Dynkin's uniqueness theorem. Clearly item 2 of Corollary \ref{Corollary defining ultralimit measure} implies this uniqueness.
\end{itemize}
\end{remark}

\begin{definition}
Let \(f_i:X_i\to [-\infty,\infty]\) be 
functions on \(\mathfrak{X}_i\). Define
\[(\mathcal{U}\lim_i f_i)([x_i]):=\mathcal{U}\lim_i f_i(x_i)\]
\end{definition}
\begin{claim}
\(\)
\begin{itemize}
    \item 
    If \(f_{i}\) are Borel measurable functions on \(\mathfrak{X}_i\), then \(\mathcal{U}\lim f_i\) is Borel measurable function on \(\mathfrak{X}\).
    \item
    If for \(\mathcal{U}\)-a.e. \(i\) we have \(f_i=g_i\), then \(\mathcal{U}\lim f_i=\mathcal{U}\lim g_i\).
    \item
    If we  have \(f_i=g_i\) \(\nu_i\)-a.e., then \(\mathcal{U}\lim f_i=\mathcal{U}\lim g_i\) \(\nu\)-a.e.
\end{itemize}
\end{claim}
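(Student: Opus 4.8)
The plan is to handle the three bullets separately, since they are of quite different character. The second and third are soft consequences of the single basic fact that the numerical ultralimit $\mathcal{U}\lim c_i$ of a sequence in $[-\infty,\infty]$ depends only on the $\mathcal{U}$-almost-everywhere values of $(c_i)$ (in particular, this is also what makes the defining formula $(\mathcal{U}\lim_i f_i)([x_i]):=\mathcal{U}\lim_i f_i(x_i)$ well defined on equivalence classes). The first bullet, measurability, is the only part requiring genuine work, and the crux there is to convert a strict inequality for a numerical ultralimit into a membership statement for $\mathcal{U}$.

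For the first bullet I would write $h:=\mathcal{U}\lim f_i$ and reduce measurability to checking that $h^{-1}((a,\infty])\in\Sigma$ for every $a\in\mathbb{R}$, as such sets generate the Borel $\sigma$-algebra of $[-\infty,\infty]$. The key lemma about sequences is: for $(c_i)$ in $[-\infty,\infty]$ one has $\mathcal{U}\lim c_i>a$ if and only if there is a rational $b>a$ with $\{i:c_i>b\}\in\mathcal{U}$. The forward direction picks a rational $b$ strictly between $a$ and $\mathcal{U}\lim c_i$, using that $(b,\infty]$ is then a neighbourhood of the ultralimit; the converse uses that $\{c_i>b\}$ and $\{c_i<b\}$ are disjoint and so cannot both lie in $\mathcal{U}$. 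Applying this pointwise gives the identity
\[ h^{-1}((a,\infty])=\bigcup_{b\in\mathbb{Q},\, b>a}\mathcal{U}\lim f_i^{-1}((b,\infty]). \]
Since $f_i$ is Borel measurable, each $f_i^{-1}((b,\infty])\in\Sigma_i$, hence each $\mathcal{U}\lim f_i^{-1}((b,\infty])\in\mathcal{A}\subseteq\Sigma$, and a countable union of such sets stays in $\Sigma$. This yields measurability of $h$.

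For the second bullet I would fix an arbitrary $[x_i]\in X$ and observe that the hypothesis $\{i:f_i=g_i\}\in\mathcal{U}$ forces $\{i:f_i(x_i)=g_i(x_i)\}\in\mathcal{U}$; by the insensitivity of numerical ultralimits to $\mathcal{U}$-null modifications this gives $\mathcal{U}\lim f_i(x_i)=\mathcal{U}\lim g_i(x_i)$, and since $[x_i]$ was arbitrary the two functions coincide everywhere on $X$.

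For the third bullet I would choose, for each $i$, a null set $M_i\in\Sigma_i$ with $\{x_i:f_i(x_i)\neq g_i(x_i)\}\subseteq M_i$, and set $N:=\mathcal{U}\lim M_i\in\mathcal{A}$. Then $\nu_*(N)=\mathcal{U}\lim\nu_i(M_i)=0$, so $N$ is a null set of $\mathfrak{X}$. If $[x_i]\notin N$ then $\{i:x_i\notin M_i\}\in\mathcal{U}$, so $f_i(x_i)=g_i(x_i)$ for $\mathcal{U}$-almost every $i$, and the argument of the second bullet gives $(\mathcal{U}\lim f_i)([x_i])=(\mathcal{U}\lim g_i)([x_i])$. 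Hence the two functions agree off $N$, that is $\nu$-almost everywhere. Throughout, the only step demanding care is the equivalence between the strict inequality $\mathcal{U}\lim c_i>a$ and ultrafilter membership used in the first bullet; once that is in hand, the remaining verifications are routine bookkeeping with $\mathcal{U}\lim$ on sequences.
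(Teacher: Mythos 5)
Your proposal is correct and follows essentially the same route as the paper: the paper proves the first bullet by writing $\{(\mathcal{U}\lim f_i)>a\}$ as the countable union $\bigcup_m \mathcal{U}\lim\{x_i \mid f_i(x_i)>a+\frac{1}{m}\}$ of sets in the algebra $\mathcal{A}$, which is the same decomposition as your union over rationals $b>a$, and it dismisses the second and third bullets as trivial. Your write-up merely makes explicit the ultrafilter dichotomy behind the key equivalence and the null-set bookkeeping $N=\mathcal{U}\lim M_i$ for the third bullet, which the paper leaves to the reader.
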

\begin{proof}
The first item follows from:
\begin{multline*}
 \big\{[x_i] \: \big| (\mathcal{U}\lim f_i)([x_i])>a \big\}=
\big\{[x_i]\: \big| \mathcal{U}\lim f_i(x_i) > a \big\}=\\
\bigcup_m \big\{[x_i]\: \big| \{i|f_i(x_i)>a+\frac{1}{m}\}\in\mathcal{U} \big\} =
\bigcup_m\:\mathcal{U}\lim \big\{x_i\: \big| f_i(x_i)>a+\frac{1}{m} \big\}    
\end{multline*}
The second and third items are trivial.
\end{proof}

Let us consider the functoriality of the construction:
\begin{definition}
A \emph{factor map} between probability spaces \(T:(X,\nu)\to(Y,m)\) is a measurable map with \(T_*\nu=m\). We will say that \(Y\) is a factor of \(X\) and that \(X\) is an extension of \(Y\).
\end{definition}

\begin{lemma}\label{Lemma functoriality ultralimit measure spaces}
Let \(\mathfrak{X}_i,\:\mathcal{Y}_i\) be probability spaces and let \(T_i:\mathfrak{X}_i\to\mathcal{Y}_i\) be factors. Then \(T:=\mathcal{U}\lim T_i\) defined by \(T([x_i])=[T_i(x_i)]\) is a factor \(T:\mathcal{U}\lim \mathfrak{X}_i\to\mathcal{U}\lim\mathcal{Y}_i\).
\end{lemma}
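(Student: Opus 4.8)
The plan is to verify the three defining properties of a factor map in turn: that $T$ is a well-defined map on equivalence classes, that it is measurable for the ultralimit $\sigma$-algebras, and that it pushes $\nu$ forward to $m$. Write $\mathcal{Y}_i=(Y_i,\Lambda_i,m_i)$, denote the two ultralimit spaces by $(X,\Sigma,\nu)$ and $(Y,\Lambda,m)$, and let $\mathcal{B}=\big\{\mathcal{U}\lim B_i \,\big|\, B_i\in\Lambda_i\big\}$ be the generating algebra of $\Lambda$. Well-definedness is immediate: if $[x_i]=[y_i]$ then $\{i \mid x_i=y_i\}\in\mathcal{U}$, and on this set $T_i(x_i)=T_i(y_i)$; since $\mathcal{U}$ is a filter, the larger set $\{i \mid T_i(x_i)=T_i(y_i)\}$ also lies in $\mathcal{U}$, so $[T_i(x_i)]=[T_i(y_i)]$.

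The computational heart of the argument is the identity
\[T^{-1}(\mathcal{U}\lim B_i)=\mathcal{U}\lim T_i^{-1}(B_i),\]
valid for any $B_i\in\Lambda_i$. I would prove it by unwinding the definitions: $[x_i]\in T^{-1}(\mathcal{U}\lim B_i)$ means $[T_i(x_i)]\in\mathcal{U}\lim B_i$, i.e. $\{i \mid T_i(x_i)\in B_i\}\in\mathcal{U}$, which is exactly $\{i \mid x_i\in T_i^{-1}(B_i)\}\in\mathcal{U}$, i.e. $[x_i]\in\mathcal{U}\lim T_i^{-1}(B_i)$. Since each $T_i$ is measurable, $T_i^{-1}(B_i)\in\Sigma_i$, so the right-hand side lies in $\mathcal{A}\subseteq\Sigma$. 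Thus $T^{-1}$ carries the generating algebra $\mathcal{B}$ into $\Sigma$; because $\big\{B\subseteq Y \,\big|\, T^{-1}(B)\in\Sigma\big\}$ is a $\sigma$-algebra containing $\mathcal{B}$, it contains $\Lambda=\sigma(\mathcal{B})$, and $T$ is measurable.

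For the pushforward it suffices, by the uniqueness of the extension established in Corollary \ref{Corollary defining ultralimit measure} (equivalently, Dynkin's lemma applied to the generating algebra $\mathcal{B}$), to check that $T_*\nu$ and $m$ agree on $\mathcal{B}$. Using the preimage identity above together with the definition of $\nu_*$ and the factor property $(T_i)_*\nu_i=m_i$, I compute, for $B=\mathcal{U}\lim B_i$,
\[\nu\big(T^{-1}(B)\big)=\nu_*\big(\mathcal{U}\lim T_i^{-1}(B_i)\big)=\mathcal{U}\lim \nu_i\big(T_i^{-1}(B_i)\big)=\mathcal{U}\lim m_i(B_i)=m(B).\]
Hence $T_*\nu=m$ on the generating algebra, and the two probability measures coincide on all of $\Lambda$.

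There is no serious obstacle here; the only point requiring care is the passage from the generating algebra to the full $\sigma$-algebra in both the measurability and the pushforward steps, and each is handled by the standard good-sets principle together with the uniqueness of the ultralimit measure already proved above. Once the preimage identity $T^{-1}(\mathcal{U}\lim B_i)=\mathcal{U}\lim T_i^{-1}(B_i)$ is in hand, everything else is formal.
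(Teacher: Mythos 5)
Your proof is correct and follows essentially the same route as the paper's: both rest on the preimage identity \(T^{-1}(\mathcal{U}\lim B_i)=\mathcal{U}\lim T_i^{-1}(B_i)\), obtained by unwinding definitions, and then verify \(T_*\nu=m\) on the generating algebra via the factor property \((T_i)_*\nu_i=m_i\). The only difference is that you spell out the well-definedness of \(T\) on equivalence classes and the good-sets/Dynkin step explicitly, which the paper leaves implicit.
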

\begin{proof}
Note that 
\begin{align*}
    T^{-1}(\mathcal{U}\lim B_i)=\big\{[x_i]\big|T([x_i])\in\mathcal{U}\lim B_i \big\}=
    \big\{[x_i] \big| \{i:T_i(x_i)\in B_i \}\in\mathcal{U} \big\}= \\ 
    =\big\{[x_i]\big|\: \{i:x_i\in T_i^{-1}(B_i)\}\in\mathcal{U}\big\}=\mathcal{U}\lim T_i^{-1}(B_i)
\end{align*}
Thus \(T\) is measurable. To show \(T_*\nu=m\) it is enough to check it on a generating subalgebra:
\begin{align*}
    (T_*\nu)(\mathcal{U}\lim B_i)=\nu(T^{-1}(\mathcal{U}\lim B_i))=\nu(\mathcal{U}\lim T_i^{-1}(B_i))=\mathcal{U}\lim \nu_i(T_i^{-1}(B_i))= \\=\mathcal{U}\lim ((T_i)_*\nu_i)(B_i)=\mathcal{U}\lim m_i(B_i)=m(\mathcal{U}\lim B_i)
\end{align*}
\end{proof}

The last thing we will do in this subsection is to discuss the general relation between ultralimit and integration. We begin with an analog of Fatou's lemma:
\begin{lemma}\label{lemma ultra Fatou}
Suppose \(f_i\) are measurable non-negative functions on \(\mathfrak{X}_i\). Then we have the following inequality (of numbers in \([0,\infty]\)):
\[\intop_{X} \mathcal{U}\lim f_i\:\: d\nu \leq \: \mathcal{U}\lim \intop_{X_i} f_i\: d\nu_i\]
\end{lemma}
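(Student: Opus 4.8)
The plan is to compute both sides through the Cavalieri (``layer cake'') formula $\int_X g\,d\nu = \int_0^\infty \nu(\{g>t\})\,dt$, valid for any non-negative measurable $g$ on a probability space, and likewise $\int_{X_i} f_i\,d\nu_i = \int_0^\infty \nu_i(\{f_i>t\})\,dt$. Writing $g := \mathcal{U}\lim_i f_i$ and $h_i(t) := \nu_i(\{f_i > t\})$, the inequality will follow once I establish (i) a pointwise-in-$t$ comparison of distribution functions, $\nu(\{g>t\}) \le \mathcal{U}\lim_i h_i(t)$, and (ii) a numerical Fatou-type bound $\int_0^\infty \mathcal{U}\lim_i h_i(t)\,dt \le \mathcal{U}\lim_i \int_0^\infty h_i(t)\,dt$. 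The point of this route is that everything reduces to the single measure identity $\nu(\mathcal{U}\lim_i A_i) = \mathcal{U}\lim_i \nu_i(A_i)$ (the fact that $\nu$ extends $\nu_*$) together with the level-set description furnished by the Claim preceding the lemma.

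For (i), the Claim gives $\{g > t\} = \bigcup_m \mathcal{U}\lim_i\{f_i > t + 1/m\}$, and this union is increasing in $m$ because $\{f_i > t+1/m\} \subseteq \{f_i > t + 1/(m+1)\}$ and $\mathcal{U}\lim$ preserves inclusions. Each set $\mathcal{U}\lim_i\{f_i>t+1/m\}$ lies in the algebra $\mathcal{A}$, so continuity from below of $\nu$ together with $\nu|_{\mathcal{A}}=\nu_*$ yields $\nu(\{g>t\}) = \lim_m \mathcal{U}\lim_i \nu_i(\{f_i>t+1/m\})$. Since $\nu_i(\{f_i>t+1/m\}) \le h_i(t)$ for every $m$ and $i$, each term of this limit is bounded by $\mathcal{U}\lim_i h_i(t)$, which gives (i). Integrating (i) in $t$ already shows $\int_X g\,d\nu \le \int_0^\infty \mathcal{U}\lim_i h_i(t)\,dt$; note that $F(t):=\mathcal{U}\lim_i h_i(t)$ is decreasing in $t$, hence Borel, so the right-hand integral makes sense.

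The remaining step (ii) is where the real care is needed, since a numerical ultralimit commutes with finite sums but never with infinite sums or integrals; thus I cannot simply pull $\mathcal{U}\lim_i$ out of $\int_0^\infty$. Here I would exploit that each $h_i$ is non-negative and decreasing. Fix $M>0$ and $\delta>0$ with $N:=M/\delta \in \mathbb{N}$. Because $h_i$ is decreasing, the lower Riemann sum satisfies $\delta\sum_{k=1}^{N} h_i(k\delta) \le \int_0^M h_i(t)\,dt \le \int_{X_i} f_i\,d\nu_i$, and the left side is a finite sum, so applying $\mathcal{U}\lim_i$ gives $\delta\sum_{k=1}^{N} F(k\delta) = \mathcal{U}\lim_i \delta\sum_{k=1}^N h_i(k\delta) \le \mathcal{U}\lim_i \int_{X_i} f_i\,d\nu_i$. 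Letting $\delta \to 0$ (the lower sums of the decreasing function $F$ increase to $\int_0^M F$) and then $M \to \infty$ yields $\int_0^\infty F\,dt \le \mathcal{U}\lim_i \int_{X_i} f_i\,d\nu_i$, which combined with (i) closes the argument. The main obstacle is precisely this interchange of $\mathcal{U}\lim$ with the $t$-integration; monotonicity of the distribution functions is what tames it, reducing the passage to finite Riemann sums where finite additivity of the numerical ultralimit applies. A secondary point to watch is the strict-inequality ($1/m$) shift in the level sets, which is handled cleanly by the monotone-union description from the Claim.
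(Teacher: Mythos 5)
Your proof is correct, and it takes a genuinely different route from the paper's. The paper follows the classical Fatou template: it fixes a simple function \(0\le\varphi\le f=\mathcal{U}\lim f_i\), uses Corollary \ref{Corollary defining ultralimit measure}(2) to replace the sets of \(\varphi\) by algebra sets, producing \(\psi=\mathcal{U}\lim\psi_i\) with \(\psi=\varphi\) a.e.\ and \(\psi_i\) uniformly bounded simple functions on \(X_i\); the a.e.\ inequality \(\psi\le f\) is then transferred to the statement that \(\nu_i(\{f_i-\psi_i<-c\})<\delta\) for \(\mathcal{U}\)-a.e.\ \(i\), and an estimate on \(\intop(f_i-\psi_i)_-\,d\nu_i\) (exploiting \(\|\psi_i\|_\infty\le K\)) closes the argument. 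You instead push everything through the layer-cake formula: the level-set description from the Claim, combined with continuity from below of \(\nu\) and the defining identity \(\nu_*(\mathcal{U}\lim A_i)=\mathcal{U}\lim\nu_i(A_i)\), gives the pointwise bound \(\nu(\{f>t\})\le\mathcal{U}\lim_i\nu_i(\{f_i>t\})\) on distribution functions, and the delicate interchange of \(\mathcal{U}\lim\) with \(\intop_0^\infty dt\) is tamed by monotonicity of the distribution functions via finite lower Riemann sums, where the ultralimit commutes with finite sums. Your route buys a cleaner argument: the level sets of the \(f_i\) themselves serve as the needed algebra sets, so you never have to approximate arbitrary measurable sets by algebra sets nor run the negative-part \(\epsilon\)-\(\delta\) estimate (note that your staircase sums \(\delta\sum_k \boldsymbol{1}_{\{f_i>k\delta\}}\) are, implicitly, simple functions compatible across \(i\) by construction). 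What the paper's route buys is reusable machinery: the simple-function-plus-algebra-set approximation scheme is exactly what is deployed again, in strengthened form, in the proof of Theorem \ref{Theorem ultralimit and integrals}, whereas your argument is self-contained but specific to this one-sided inequality.
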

\begin{proof}
We may suppose that \(\mathcal{U}\lim \intop_{X_i} f_i \:d\nu_i<\infty\). Denote \(f=\mathcal{U}\lim f_i\).\\
It is enough to show that for any \(\mathfrak{X}\)-simple function \(0\leq\varphi\leq f\) we have \(\intop_X \varphi\:d\nu\leq \mathcal{U}\lim \intop_{X_i} f_i \:d\nu_i\). \\
Write \(\varphi=\sum_{m=1}^N c_m\cdot \boldsymbol{1}_{B^{(m)}}\) where  \(c_m\geq 0 \:,\: B^{(m)}\in \Sigma\) and let \(K:=\sum_{m=1}^N c_m\).\\
By Corollary \ref{Corollary defining ultralimit measure}(2), there are \(A^{(m)}=\mathcal{U}\lim A_i^{(m)}\) with  \(A_i^{(m)}\in \Sigma_i\) so that \(A^{(m)}\triangle B^{(m)}\) are null sets.\\
Define:
\[\psi:=\sum_{m=1}^N c_m\cdot \boldsymbol{1}_A^{(m)}=\mathcal{U}\lim \psi_i \:,\quad \psi_i:=\sum_{m=1}^N c_m\cdot\boldsymbol{1}_{A_i^{(m)}}\]
Note that \(\psi=\varphi\) \(\nu\)-a.e., thus we have \(\psi\leq f\) \(\nu\)-a.e. Hence for any \(m\geq 1\)
\[0=\nu(\big\{\mathcal{U}\lim f_i<\mathcal{U}\lim\psi_i \big\})\geq\nu\big(\mathcal{U}\lim \big\{f_i-\psi_i<-\frac{1}{m} \big\} \big)=\mathcal{U}\lim\nu_i(\{f_i-\psi_i<-\frac{1}{m}\})\]
This means that for all \(\delta,c>0\) we have for \(\mathcal{U}\)-a.e. \(i\): 
\(\nu_i\bigg(\big\{f_i-\psi_i<-c \big\}\bigg)<\delta\). \\
Let \(\epsilon>0\) and take \(\delta=\frac{\epsilon}{2K}, c=\frac{\epsilon}{2}\), then for \(\mathcal{U}\)-a.e. \(i\):
\begin{align*}
    \intop_{X_i} (f_i-\psi_i)_{-} \:d\nu_i\leq\intop_{\{f_i-\psi_i<-\frac{\epsilon}{2}\}} |\psi_i|\:d\nu_i+ \frac{\epsilon}{2} \leq
    +\nu_i\big(\{f_i-\psi_i<-\frac{\epsilon}{2}\}\big)||\psi_i||_\infty+\frac{\epsilon}{2}
    \leq \delta\cdot K +\frac{\epsilon}{2}=\epsilon
\end{align*}
Thus we conclude that for every \(\epsilon>0\) we have for \(\mathcal{U}\)-a.e. \(i\) that \[\intop_{X_i} f_i-\psi_i\: d\nu_i\geq -\epsilon\]
This yields that for any \(\epsilon>0\)
\[ \mathcal{U}\lim_{i} \intop_{X_i} f_i\:d\nu_i\:-\:\sum_{m=1}^N c_m \nu(B^{(m)})=\mathcal{U}\lim_{i} \Bigg(\intop_{X_i}  f_i\:d\nu_i-\sum_{m=1}^N c_m \cdot\mathcal{U}\lim_i\nu_i(A_i^{(m)})\Bigg)\geq -\epsilon \]
As this is true for any \(\epsilon>0\), we conclude
\[\mathcal{U}\lim \intop_{X_i} f_i\:d\nu_i\geq\intop_X \varphi\: d\nu\]
Which is what we wanted to show.
\end{proof}

As in classical analysis, ultralimits and integration do not necessarily commute.
That is, the equality \(\intop_X \mathcal{U}\lim f_i \:\: d\nu=\mathcal{U}\lim \intop_{X_i} f_i\:d\nu_i\) does \textbf{not} always holds. 
Another reflection of this complication is that there can be measures \(\eta_i,\nu_i\) on \(X_i\) such that \(\eta_i\ll\nu_i\) but \(\eta=\mathcal{U}\lim \eta_i\) is not absolutely continuous with respect to \(\nu=\mathcal{U}\lim \nu_i\). 
The next example illustrates this phenomena:
\begin{example}
\begin{enumerate}
    \item
    Take \(\mathcal{I}=\mathbb{N}\) and \(\mathcal{U}\) any non-principle ultra-filter, \(X_i=[0,1]\) with the Lebesgue measure. Let \(f_n=n\cdot\boldsymbol{1}_{[0,\frac{1}{n}]}\) then \(\intop_{X_n} f_n d\nu_n=1\) but \(\:\mathcal{U}\lim f_n=0\) \(\nu\)-a.e.
    \item
    In the setup of the previous example consider the measures \(\eta_n=\frac{1+f_n}{2}\cdot\nu_n\). Then \(\eta=\mathcal{U}\lim\eta_n\) is not absolutely continuous with respect to \(\nu=\mathcal{U}\lim \nu_n\). Indeed the set \(E=\mathcal{U}\lim [0,\frac{1}{n}]\) has measure \(\frac{1}{2}\) for \(\eta\) and measure \(0\) for \(\nu\).
\end{enumerate}
\end{example}
As suggested from ordinary real-analysis, the extra condition required to get equality results is uniform integrability. We study this notion in the next subsection.

\subsection{Ultralimit and integration for uniformly integrable functions}\label{subsection Ultralimit and integration for uniformly integrable functions}

We begin with defining the spaces of uniform integrability. This is a filtration of \(L^{1}\) by Banach spaces for any probability space. For their basic properties see Appendix \ref{appendix uniform integrability}. 

\begin{definition}[see Definition \ref{definition space of majorants}]\label{definition space of majorants main text}
The space of majorants \(\mathbf{M}\) is the set of functions \(\rho:[0,1]\rightarrow[0,1]\) such that:
\begin{enumerate}
    \item \(\rho(0^+)=\rho(0)=0,\: \rho(1)=1\).
    \item
    \(\rho\) is concave: for any \(x,y,t\in[0,1]\) we have \(\rho((1-t)x+ty)\geq (1-t)\rho(x)+t\rho(y)\).
\end{enumerate}
\end{definition}
Any \(\rho\in\mathbf{M}\) is continuous, non-decreasing, sub-additive and satisfies \(\rho(t)\geq t\). The space \(\mathbf{M}\) is closed under composition, \(\max\) and is convex (see Lemma \ref{lemma properties of majorants}).
\begin{definition}\label{definition C rho spaces main text}
Let \(\mathfrak{X}=(X,\Sigma,\nu)\) be a probability space and let \(\rho\in\mathbf{M}\).
\begin{enumerate}
    \item
We define the normed space \(\mathcal{C}_\rho (\mathfrak{X})\) to be the space of functions (mod a.e. 0 functions) in \(L^1(\mathfrak{X})\) such that
\[||f||_\rho=||f||_{\mathcal{C}_\rho (\mathfrak{X})}:=\sup_{A\in\Sigma:\ \nu(A)\neq0}\frac{1}{\rho(\nu(A))}\intop_{A}|f|d\nu<\infty\]
\item
Given a probability measures \(m\) on \((X,\Sigma)\),  we say that \(m\) is \emph{\(\rho\)-absolutely continuous with respect to \(\nu\)} if \(m\) is absolutely continuous with respect to \(\nu\) and \(||\frac{dm}{d\nu}||_\rho\leq1\). We denote this relation by \(m \stackrel{\rho}{\ll}\nu\). 
\end{enumerate}
\end{definition}
\(\mathcal{C}_\rho (\mathfrak{X})\) are Banach spaces that form an exhaustive filtration of \(L^{1}(\mathfrak{X})\) -- see Lemma \ref{lemma properties of C rho spaces}.\\
\(\)\\
We return to the setup of ultralimit of probability spaces, keeping the notations of subsection \ref{subsection Preliminaries- the construction of ultralimits of probability spaces}. We are now ready to formulate and prove our version of Lebesgue-Vitali theorem:
\begin{theorem} \label{Theorem ultralimit and integrals}
Let \(f_i\) are measurable functions on \(\mathfrak{X}_i\). Assume that there is \(\rho\in\mathbf{M}\) with \(f_i\in \mathcal{C}_\rho(\mathfrak{X}_i)\) and \(\sup_i ||f_i||_{\mathcal{C}_\rho(\mathfrak{X}_i)}<\infty\). Then \(\mathcal{U}\lim f_i\in L^1(\mathfrak{X})\) and 
\[\intop_X \mathcal{U}\lim f_i \:\: d\nu=\mathcal{U}\lim \intop_{X_i} f_i\:d\nu_i\]

\end{theorem}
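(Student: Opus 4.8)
The plan is to establish the two inequalities between \(\int_X \mathcal{U}\lim f_i\, d\nu\) and \(\mathcal{U}\lim \int_{X_i} f_i\, d\nu_i\) separately, one of which is essentially free. Splitting \(f_i = f_i^{+} - f_i^{-}\) and noting that \(\|f_i^{\pm}\|_{\mathcal{C}_\rho(\mathfrak{X}_i)} \le \|f_i\|_{\mathcal{C}_\rho(\mathfrak{X}_i)}\) (since \(\int_A |f_i^{\pm}| \le \int_A |f_i|\)), I reduce the whole statement to the case \(f_i \ge 0\). Writing \(M = \sup_i \|f_i\|_\rho < \infty\) and testing the \(\mathcal{C}_\rho\)-norm on \(A = X_i\) gives \(\int_{X_i} f_i\, d\nu_i \le M\,\rho(1) = M\), so the right-hand side is bounded. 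Lemma \ref{lemma ultra Fatou} then immediately yields both \(\mathcal{U}\lim f_i \in L^1(\mathfrak{X})\) and the easy inequality \(\int_X \mathcal{U}\lim f_i\, d\nu \le \mathcal{U}\lim \int_{X_i} f_i\, d\nu_i\).

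The reverse inequality is the substance of the theorem. I would first dispatch the case of uniformly bounded functions \(0 \le g_i \le C\) by a two-sided application of Fatou: applying Lemma \ref{lemma ultra Fatou} to the non-negative functions \(C - g_i\), and using that \(\mathcal{U}\lim(C - g_i) = C - \mathcal{U}\lim g_i\) for the bounded sequences involved, one gets \(C - \int_X \mathcal{U}\lim g_i\, d\nu \le C - \mathcal{U}\lim \int_{X_i} g_i\, d\nu_i\). This is exactly the reverse inequality for the \(g_i\), so combined with the easy direction it gives the equality \(\int_X \mathcal{U}\lim g_i\, d\nu = \mathcal{U}\lim \int_{X_i} g_i\, d\nu_i\) for bounded functions.

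To pass from bounded to \(\mathcal{C}_\rho\)-bounded functions, I would truncate. Fix \(\epsilon > 0\), set \(g_i = \min(f_i, C)\) and \(h_i = (f_i - C)_{+}\), so that \(f_i = g_i + h_i\). Here the uniform integrability hypothesis is used: by Markov's inequality \(\nu_i(\{f_i > C\}) \le M/C\), and since \(\|f_i\|_\rho \le M\) the tail satisfies \(\int_{X_i} h_i\, d\nu_i \le \int_{\{f_i > C\}} f_i\, d\nu_i \le M\,\rho\big(\nu_i(\{f_i > C\})\big) \le M\,\rho(M/C)\). Because \(\rho(0^{+}) = 0\) and \(\rho\) is non-decreasing, a single \(C\) large enough makes this \(< \epsilon\) simultaneously for all \(i\). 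The bounded case now gives \(\int_X \mathcal{U}\lim g_i\, d\nu = \mathcal{U}\lim \int_{X_i} g_i\, d\nu_i \ge \mathcal{U}\lim \int_{X_i} f_i\, d\nu_i - \epsilon\), while the pointwise identity \(\mathcal{U}\lim f_i = \mathcal{U}\lim g_i + \mathcal{U}\lim h_i \ge \mathcal{U}\lim g_i\) yields \(\int_X \mathcal{U}\lim f_i\, d\nu \ge \int_X \mathcal{U}\lim g_i\, d\nu\). Chaining these two estimates and letting \(\epsilon \to 0\) (i.e. \(C \to \infty\)) gives the reverse inequality, and the proof is complete.

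The step I expect to be the main obstacle is precisely the uniform tail estimate above: it is the only place where the hypothesis \(\sup_i \|f_i\|_\rho < \infty\) is indispensable. It is what excludes the escape-of-mass phenomenon of the example \(f_n = n\,\boldsymbol{1}_{[0,1/n]}\) — for which no single \(\rho \in \mathbf{M}\) can bound all the norms — and it is the one point at which the concavity of \(\rho\) and the condition \(\rho(0^{+}) = 0\) are genuinely exploited to force the truncation error to be small uniformly in \(i\). The bounded case, by contrast, is handled slickly by the symmetric use of Fatou and needs no further input.
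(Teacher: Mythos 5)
Your proof is correct, and it takes a genuinely different route from the paper's. The paper proves the hard inequality by contradiction: it approximates the limit function $f=\mathcal{U}\lim f_i$ from below by an $\mathfrak{X}$-simple function $\psi$, uses Corollary \ref{Corollary defining ultralimit measure}(2) to realize $\psi$ as $\mathcal{U}\lim\psi_i$ with $\psi_i$ simple on $X_i$, transfers the $L^1$-closeness of $f$ and $\psi$ back to the $X_i$ via Markov's inequality, and then contradicts the assumed gap using the bound $\intop_{\{f_i-\psi_i>\epsilon\}} f_i\,d\nu_i\leq \|f_i\|_\rho\,\rho(2\delta)$. You instead follow the classical Vitali scheme: truncate at height $C$, prove the exact equality for uniformly bounded functions by the symmetric two-sided application of Lemma \ref{lemma ultra Fatou} (applied to $C-g_i$, which is a clean and standard trick), and control the tails $\intop_{X_i}(f_i-C)_+\,d\nu_i\leq M\rho(M/C)$ uniformly in $i$ via Markov plus the $\mathcal{C}_\rho$-bound. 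Both arguments rest on the same two pillars --- the ultra-Fatou lemma and the fact that a uniform $\mathcal{C}_\rho$-bound together with $\rho(0^+)=0$ kills integrals over uniformly small sets --- but your decomposition is different and buys some modularity: the bounded case is isolated as a reusable statement, the argument is direct rather than by contradiction, and you avoid the second invocation of the simple-function/ultralimit-set approximation machinery of the limit space (it enters for you only through Fatou itself). The paper's version, on the other hand, stays entirely inside the ultralimit formalism and makes explicit how functions on $\mathfrak{X}$ are modelled by sequences on the $\mathfrak{X}_i$, which is the perspective reused later (e.g. in Proposition \ref{proposition compartion in C rho of ultralimit}). One cosmetic point in your write-up: when you write $\rho(M/C)$ you should note $C\geq M$ so that the argument lies in $[0,1]$ where $\rho$ is defined; this is harmless since $C\to\infty$ anyway.
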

\begin{proof}
We may assume by linearity that \(f_i\geq 0\). Denote \(f=\mathcal{U}\lim f_i\) and \(M=\sup_i ||f_i||_\rho\). As \(\intop_{X_i} f_i\:d\nu_i\leq M\) we get \(\mathcal{U}\lim \intop_{X_i} f_i \:d\nu_i\leq M\) and in particular it is finite. Lemma \ref{lemma ultra Fatou} (Fatou's lemma) implies that  \(f \in L^{1}(\mathfrak{X})\) and  
\[\intop_{X} f\:\:d\nu \leq\: \mathcal{U}\lim \intop_{X_i} f_i \: d\nu_i\]
It remains to prove the inequality in the second direction. Suppose to the contrary that there is \(\epsilon>0\) with:
\[\mathcal{U}\lim\intop_{X_i} f_i\:d\nu_i>3\epsilon+\intop_X f \:d\nu\]
Then there is \(I_0\in\mathcal U\) such that for all \(i\in I_0\) we have:
\[\intop_{X_i} f_i\:d\nu_i>3\epsilon+\intop_X f \:d\nu \]
Let \(\delta>0\) be such that \(\rho(2\delta)<\frac{\epsilon}{2M}\). Let \(\psi\) be an \(\mathfrak{X}\)-simple function with:
\begin{enumerate}
    \item
    \(\psi=\sum_{m=1}^N c_m\boldsymbol{1}_{A^{(m)}}\) where \(A^{(m)}=\mathcal{U}\lim A_i^{(m)},\:c_m\geq0,\: A_i^{(m)}\in \Sigma_i\).
    \item
    \(\nu\)-a.e. \(\psi\leq f\).
    \item
    \(\intop_X (f-\psi)\: d\nu\leq\frac{\epsilon\delta}{2}\).
\end{enumerate}
Define \(\psi_i=\sum_{m=1}^N c_m\boldsymbol{1}_{A^{(m)}_{i}}\) so that \(\psi=\mathcal{U}\lim \psi_{i}\).\\
By items 1,2 
\[\intop_{X} f\: d\nu \geq \intop_{X}\psi \:d\nu=\mathcal{U}\lim_{j} \intop_{X_{j}}\psi_{j} d\nu_{j}\]
Thus there is \(I_{1}\in \mathcal{U}\) so that for any \(j\in I_{1}\):
\[\intop_{X} f\:d\nu \geq \intop_{X_j} \psi_{j}\:d\nu_{j} - \epsilon\]
Hence, for \(i\in I_{0}\cap I_1\) we have:
\[\intop_{X_i} f_i\:d\nu_i>3\epsilon+\intop_{X}f\:d\nu \geq  2\epsilon+ \intop_{X_i}\psi_{i}\:d\nu_{i}\quad\quad(0)\]
On the other hand, by items 2, 3 and Markov's inequality:
\[\mathcal{U}\lim_{j}\: \nu_{j}(\{f_j-\psi_j>\epsilon\})=\nu\big(\mathcal{U}\lim \{f_j-\psi_j>\epsilon\}\big)\leq \nu\Big(\big\{\mathcal{U}\lim_j \big(f_j-\psi_{j}\big)>\frac{\epsilon}{2}\big\} \Big)=\nu\big(\{f-\psi>\frac{\epsilon}{2}\}\big)\leq\delta\]
This gives \(I_2\in\mathcal{U}\) such that for all \(j\in I_2\):
\[\nu_j\bigg(\Big\{ f_j-\psi_{j}>\epsilon \Big\}\bigg)\leq 2\delta \quad\quad (1)\]
But since \(||f_i||_\rho\leq M\) we conclude that for \(i\in I_0\cap I_1\cap I_2\neq\emptyset\):

\begin{align*}
2\epsilon\stackrel{(0)}{<}\intop_{X_i} f_i \: d\nu_i \: - \intop_{X_i}\psi_{i}\:d\nu_i\leq \epsilon+\intop_{\Big\{f_i-\psi_i>\epsilon \Big\}} f_i\: d\nu_i 
\stackrel{(1)}{\leq} \epsilon+||f_i||_\rho \cdot \rho(2\delta)<\epsilon+M\cdot\frac{\epsilon}{2M}=\frac{3\epsilon}{2}
\end{align*}
Which is a contradiction, proving \(\intop_X \mathcal{U}\lim f_i \:\: d\nu=\mathcal{U}\lim \intop_{X_i} f_i\:d\nu_i\).
\end{proof}

\begin{corollary}\label{Corollary 1 from Vitali theorem}
Let \(\rho\in\mathbf{M},\:f_i\in\mathcal{C}_\rho(\mathfrak{X}_i)\) and \(\sup_i ||f_i||_\rho<\infty\).\\
Then  \(\:\mathcal{U}\lim f_i\in\mathcal{C}_\rho(\mathfrak{X})\) and \(||\mathcal{U}\lim f_i||_\rho\leq\mathcal{U}\lim ||f_i||_\rho\).
\end{corollary}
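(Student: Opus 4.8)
The plan is to verify the defining inequality of the \(\rho\)-norm for \(f=\mathcal{U}\lim f_i\) directly, by testing against measurable sets \(A\) and bounding \(\intop_A|f|\,d\nu\) by \(M\cdot\rho(\nu(A))\), where \(M:=\mathcal{U}\lim||f_i||_\rho\) (this ultralimit exists and satisfies \(M\leq\sup_i||f_i||_\rho<\infty\), since it is the ultralimit of a bounded sequence of reals). Because \(|\mathcal{U}\lim f_i|=\mathcal{U}\lim|f_i|\) and the \(\rho\)-norm depends only on the absolute value of a function, I may assume throughout that \(f_i\geq0\). Applying Theorem \ref{Theorem ultralimit and integrals} with the test set \(X\) already gives \(f\in L^1(\mathfrak{X})\), so the only remaining task is to control the integrals \(\intop_A f\,d\nu\).

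First I would treat sets of the special form \(A=\mathcal{U}\lim A_i\in\mathcal{A}\). The natural move is to apply the Lebesgue--Vitali theorem not to \(f_i\) but to the truncated functions \(f_i\boldsymbol{1}_{A_i}\). These lie in \(\mathcal{C}_\rho(\mathfrak{X}_i)\) with \(||f_i\boldsymbol{1}_{A_i}||_\rho\leq||f_i||_\rho\): indeed, for any measurable \(B\) one has \(\intop_B f_i\boldsymbol{1}_{A_i}\,d\nu_i=\intop_{B\cap A_i}f_i\,d\nu_i\leq||f_i||_\rho\,\rho(\nu_i(B\cap A_i))\leq||f_i||_\rho\,\rho(\nu_i(B))\), using that \(\rho\) is non-decreasing (Lemma \ref{lemma properties of majorants}(1)). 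Hence \(\sup_i||f_i\boldsymbol{1}_{A_i}||_\rho<\infty\), and since \(\mathcal{U}\lim(f_i\boldsymbol{1}_{A_i})=f\boldsymbol{1}_A\), Theorem \ref{Theorem ultralimit and integrals} gives \(\intop_A f\,d\nu=\mathcal{U}\lim\intop_{A_i}f_i\,d\nu_i\). Bounding each term by \(||f_i||_\rho\,\rho(\nu_i(A_i))\), taking ultralimits, and using both the multiplicativity of ultralimits of bounded sequences and the continuity of \(\rho\) (to pass \(\mathcal{U}\lim\rho(\nu_i(A_i))=\rho(\mathcal{U}\lim\nu_i(A_i))=\rho(\nu(A))\), via Lemma \ref{lemma properties of majorants}(1)), I obtain \(\intop_A f\,d\nu\leq M\,\rho(\nu(A))\) for every \(A\in\mathcal{A}\).

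The remaining step, which I expect to be the only genuinely delicate point, is passing from the algebra \(\mathcal{A}\) to an arbitrary measurable \(B\in\Sigma\): the supremum defining \(||f||_\rho\) ranges over all of \(\Sigma\), whereas the ultralimit machinery directly handles only sets of the form \(\mathcal{U}\lim A_i\). For this I would invoke the structure of the ultralimit measure from Corollary \ref{Corollary defining ultralimit measure}(2)--(3): every \(B\in\Sigma\subseteq\overline{\Sigma}\) differs from some \(A\in\mathcal{A}\) by a null set, with \(\nu(B)=\nu_*(A)=\nu(A)\). Since \(f\in L^1(\mathfrak{X})\), altering the domain of integration by a null set leaves \(\intop f\) unchanged, so \(\intop_B f\,d\nu=\intop_A f\,d\nu\leq M\,\rho(\nu(A))=M\,\rho(\nu(B))\). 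Taking the supremum over \(B\) yields \(||f||_\rho\leq M<\infty\), which simultaneously shows \(f\in\mathcal{C}_\rho(\mathfrak{X})\) and establishes the claimed inequality \(||\mathcal{U}\lim f_i||_\rho\leq\mathcal{U}\lim||f_i||_\rho\).
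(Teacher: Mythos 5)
Your proposal is correct and follows essentially the same route as the paper: both reduce to Theorem \ref{Theorem ultralimit and integrals} applied to the truncations \(|f_i|\boldsymbol{1}_{A_i}\), identify an arbitrary measurable set with some \(\mathcal{U}\lim A_i\) modulo a null set via Corollary \ref{Corollary defining ultralimit measure}(2), and conclude using monotonicity and continuity of \(\rho\). The only difference is organizational (you treat sets in \(\mathcal{A}\) first and then pass to general \(B\in\Sigma\), while the paper does both in one step), and your explicit verification that \(\|f_i\boldsymbol{1}_{A_i}\|_\rho\leq\|f_i\|_\rho\) is a detail the paper leaves implicit.
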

\begin{proof}
By Theorem \ref{Theorem ultralimit and integrals}, \(f=\mathcal{U}\lim f_i\) lies in \(L^1(\mathfrak{X})\). Let \(A\subset X\) be a measurable set and take \(A_i\subset X_i\) measurable so that \(A\triangle \mathcal{U}\lim A_i\) is a null set (this is possible by Corollary \ref{Corollary defining ultralimit measure}(2)). Then \(\nu\)-a.e. \(|f|\cdot\boldsymbol{1}_A=\mathcal{U}\lim|f_i|\cdot\boldsymbol{1}_{A_i}\). Using Theorem \ref{Theorem ultralimit and integrals}:
\[\intop_A |f|\:d\nu=\mathcal{U}\lim\intop_{A_i} |f_i|d\nu_i\leq \mathcal{U}\lim ||f_i||_\rho\cdot\rho(\nu(A_i))=(\mathcal{U}\lim||f_i||_\rho)\cdot\rho(\nu(A))\]
\end{proof}

\begin{corollary}\label{Corollary 2 from Vitali theorem}
Let \(\rho\in\mathbf{M}\) and let \(\eta_i,\nu_i\) be probability measures on \((X_i,\Sigma_i)\). Consider \((X,\Sigma)\) with the ultralimit measures \(\eta,\nu\). If \(\eta_i\stackrel{\rho}{\ll}\nu_i\) then \(\eta\stackrel{\rho}{\ll}\nu\) and \(\frac{d\eta}{d\nu}=\mathcal{U}\lim\frac{d\eta_i}{d\nu_i}\).
\end{corollary}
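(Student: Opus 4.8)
The plan is to produce the candidate Radon--Nikodym derivative as an ultralimit of the individual derivatives and then to verify that it is correct by testing it against the generating algebra. Write \(g_i=\frac{d\eta_i}{d\nu_i}\); the hypothesis \(\eta_i\stackrel{\rho}{\ll}\nu_i\) means precisely that \(g_i\in\mathcal{C}_\rho(\mathfrak{X}_i)\) with \(\|g_i\|_\rho\leq 1\), where \(\mathfrak{X}_i=(X_i,\Sigma_i,\nu_i)\). Since \(\sup_i\|g_i\|_\rho\leq 1<\infty\), Corollary \ref{Corollary 1 from Vitali theorem} applies on the ultralimit space \(\mathfrak{X}=(X,\Sigma,\nu)\) and yields \(g:=\mathcal{U}\lim g_i\in\mathcal{C}_\rho(\mathfrak{X})\) with \(\|g\|_\rho\leq\mathcal{U}\lim\|g_i\|_\rho\leq 1\); in particular \(g\in L^1(\nu)\). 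It then remains to show that \(\eta\) coincides with the measure \(A\mapsto\int_A g\,d\nu\), for then \(\frac{d\eta}{d\nu}=g=\mathcal{U}\lim g_i\) and \(\eta\stackrel{\rho}{\ll}\nu\) follow at once from \(\|g\|_\rho\leq 1\).

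Both \(\eta\) and \(g\,d\nu\) are probability measures on \((X,\Sigma)\), and \(\Sigma=\sigma(\mathcal{A})\) where \(\mathcal{A}=\{\mathcal{U}\lim A_i\mid A_i\in\Sigma_i\}\) is an algebra, hence a \(\pi\)-system containing \(X\). So by the uniqueness part of Dynkin's lemma it suffices to check \(\eta(A)=\int_A g\,d\nu\) for \(A=\mathcal{U}\lim A_i\in\mathcal{A}\). On the one hand, by the definition of the ultralimit measure, \(\eta(A)=\mathcal{U}\lim\eta_i(A_i)=\mathcal{U}\lim\int_{X_i}g_i\boldsymbol{1}_{A_i}\,d\nu_i\). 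On the other hand, since restricting to a subset does not increase the \(\mathcal{C}_\rho\)-norm (as \(\rho\) is non-decreasing, \(\int_{A'\cap A_i}|g_i|\leq\|g_i\|_\rho\,\rho(\nu_i(A'\cap A_i))\leq\|g_i\|_\rho\,\rho(\nu_i(A'))\), giving \(\|g_i\boldsymbol{1}_{A_i}\|_\rho\leq\|g_i\|_\rho\leq 1\)), the family \(g_i\boldsymbol{1}_{A_i}\) satisfies the hypotheses of Theorem \ref{Theorem ultralimit and integrals}, whence \(\mathcal{U}\lim\int_{X_i}g_i\boldsymbol{1}_{A_i}\,d\nu_i=\int_X\mathcal{U}\lim(g_i\boldsymbol{1}_{A_i})\,d\nu\).

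The one point demanding care is to identify \(\mathcal{U}\lim(g_i\boldsymbol{1}_{A_i})\) with \(g\cdot\boldsymbol{1}_A\) \(\nu\)-almost everywhere. Pointwise one has \(\mathcal{U}\lim(g_i\boldsymbol{1}_{A_i})([x_i])=\mathcal{U}\lim\big(g_i(x_i)\boldsymbol{1}_{A_i}(x_i)\big)\), and since \(\boldsymbol{1}_{A_i}(x_i)\in\{0,1\}\) the multiplicativity \(\mathcal{U}\lim(a_ib_i)=(\mathcal{U}\lim a_i)(\mathcal{U}\lim b_i)\) for sequences of numbers is valid provided \(\mathcal{U}\lim g_i(x_i)=g([x_i])\) is finite, the only obstruction being an indeterminate \(\infty\cdot 0\). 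But the set where \(g=+\infty\) is \(\nu\)-null because \(g\in L^1(\nu)\), so off a null set \(\mathcal{U}\lim(g_i\boldsymbol{1}_{A_i})=g\cdot\mathcal{U}\lim\boldsymbol{1}_{A_i}=g\cdot\boldsymbol{1}_A\). Combining the three computations gives \(\int_A g\,d\nu=\eta(A)\) on \(\mathcal{A}\), and Dynkin's lemma extends this to all of \(\Sigma\), completing the proof. The essential work is thus carried entirely by Theorem \ref{Theorem ultralimit and integrals}: the uniform \(\mathcal{C}_\rho\)-bound is exactly the uniform-integrability hypothesis licensing the exchange of integral and ultralimit, and without it (as the example preceding subsection \ref{subsection spaces of uniform integrability} shows) \(\eta\) need not even be absolutely continuous with respect to \(\nu\).
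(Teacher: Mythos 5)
Your proof is correct and takes essentially the same route as the paper's: both define the candidate derivative as \(\mathcal{U}\lim \frac{d\eta_i}{d\nu_i}\), bound it using Corollary \ref{Corollary 1 from Vitali theorem}, and then verify that \(\eta\) agrees with \(\big(\mathcal{U}\lim \frac{d\eta_i}{d\nu_i}\big)\cdot\nu\) on the generating algebra \(\mathcal{A}\) by applying Theorem \ref{Theorem ultralimit and integrals} to the functions \(\frac{d\eta_i}{d\nu_i}\cdot\boldsymbol{1}_{A_i}\). The only difference is that you make explicit the details the paper leaves implicit (the uniform \(\mathcal{C}_\rho\)-bound after restriction, the \(\infty\cdot 0\) issue in identifying \(\mathcal{U}\lim(g_i\boldsymbol{1}_{A_i})\) with \(g\cdot\boldsymbol{1}_A\), and the invocation of Dynkin's lemma), which is sound.
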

\begin{proof}
Let \(f_i=\frac{d\eta_i}{d\nu_i},f=\mathcal{U}\lim f\). Then by Corollary \ref{Corollary 1 from Vitali theorem} \(f\in\mathcal{C}_\rho(X,\nu)\) with \(||f||_\rho\leq1\) and moreover by Theorem \ref{Theorem ultralimit and integrals} we have \(\intop_X fd\nu=1\). Define \(\eta_0=f\cdot\nu\) then \(\eta_0\) is a probability measure on \((X,\Sigma)\) with \(\eta_0\stackrel{\rho}{\ll}\nu,\:\frac{d\eta_0}{d\nu}=f\).\\
We will show that \(\eta=\eta_0\) and conclude the corollary. To show this we may show it only on the generating sub-algebra \(\mathcal{A}\). Let \(A\) be a set of the form \(\mathcal{U}\lim A_i\) then by Theorem \ref{Theorem ultralimit and integrals}:
\[\eta_0(A)=\intop_X f\cdot\boldsymbol{1}_A\:d\nu=\intop_X \mathcal{U}\lim f_i\cdot \boldsymbol{1}_{A_i}\:d\nu=\mathcal{U}\lim \intop_{X_i} f_i\cdot\boldsymbol{1}_{A_i}\:d\nu_i=\mathcal{U}\lim \eta_i(A_i)=\eta(A)\]
\end{proof}

\subsection{Comparison with ultralimit of C*-algebras}\label{subsection Comparison with ultralimit of C*-algebras}
We now give a comparison between this notion of ultralimit and the notion for \(C^*\)-probability spaces as in \cite{sayag2022entropy}. For the convenience of the reader, we remind the necessary notation:\\
Given a collection of Banach spaces \((B_{i})_{i\in\mathcal{I}}\), we define \(\ell^{\infty}(\mathcal{I},\{B_i\})\) to be the Banach space of bounded sequences in the product of \(B_i\). It has a closed subspace \(\mathcal{N}_{\mathcal{U}}\) consisting of sequences \((v_i)\) with \(\mathcal{U}\lim ||v_i||_{B_i}=0\) (\cite[Notation 4.4]{sayag2022entropy}). The quotient \(\mathcal{U}\lim B_i:=\ell^{\infty}(\mathcal{I},\{B_i\})/\mathcal{N}_{\mathcal{U}}\) is defined to be the ultralimit of those Banach spaces (\cite[Definition 4.6]{sayag2022entropy}).\\
A \(C^{*}\) probability space is a pair \((A,\nu)\) of a unital commutative \(C^{*}\)-algebra with a faithful state \(\nu\) (\cite[Definition 2.11]{sayag2022entropy}).\\
Given a collection of \(C^{*}\)-probability spaces \((A_i,\nu_i)_{i\in\mathcal{I}}\), on \(\mathcal{U}\lim A_i\) there is a canonical state \(\nu=\mathcal{U}\lim \nu_i\). Taking a quotient by the ideal of elements \(a\) with \(\nu(a^{*}a)=0\) we get a \(C^{*}\)-probability space denoted by \(\mathcal{U}\lim (A_i,\nu_i)\) (\cite[Definition 4.11]{sayag2022entropy}).

\begin{prop}\label{proposition compartion in L infinity of ultralimit}
Let \(\mathfrak{X}_i\) be probability spaces, then we have a canonical isomorphism of \(C^{*}\)-probability spaces:
\[\mathcal{U}\lim (L^\infty(\mathfrak{X}_i),\nu_i)\cong (L^\infty(\mathcal{U}\lim\mathfrak{X}_i),\nu)\]
\end{prop}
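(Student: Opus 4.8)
The plan is to build the isomorphism from the natural ultralimit-of-functions map and then verify that it descends to a state-preserving $*$-isomorphism. First I would define, on the Banach-space level, the map
$\Phi : \ell^{\infty}(\mathcal{I},\{L^\infty(\mathfrak{X}_i)\}) \to L^\infty(\mathcal{U}\lim\mathfrak{X}_i)$ by $\Phi((f_i)) = \mathcal{U}\lim_i f_i$. The Claim of subsection \ref{subsection Preliminaries- the construction of ultralimits of probability spaces} guarantees that $\mathcal{U}\lim f_i$ is measurable, and since for $\nu$-a.e.\ point one has $|\mathcal{U}\lim f_i| = \mathcal{U}\lim|f_i| \leq \mathcal{U}\lim\|f_i\|_\infty \leq \sup_i\|f_i\|_\infty$, the image lies in $L^\infty(\mathcal{U}\lim\mathfrak{X}_i)$ and $\Phi$ is contractive. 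Because ultralimits of bounded scalar sequences are linear, multiplicative, and conjugation-commuting pointwise, $\Phi$ is a $*$-homomorphism of commutative $C^{*}$-algebras.

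Next I would identify the state-kernel. The composite $C^{*}$-probability space $\mathcal{U}\lim(L^\infty(\mathfrak{X}_i),\nu_i)$ is $\ell^\infty/K$, where $K = \{(f_i) : \mathcal{U}\lim_i \nu_i(|f_i|^2)=0\}$ and its state sends $[(f_i)]$ to $\mathcal{U}\lim_i \int_{X_i} f_i\,d\nu_i$; note $\mathcal{N}_{\mathcal{U}}\subseteq K$, so the two successive quotients defining $\mathcal{U}\lim(A_i,\nu_i)$ collapse to this single quotient. Uniformly bounded functions lie in $\mathcal{C}_\rho$ for $\rho(t)=t$ with uniformly bounded norm (see Example \ref{example of C rho}), so Theorem \ref{Theorem ultralimit and integrals} gives both
$\int_X \mathcal{U}\lim f_i\,d\nu = \mathcal{U}\lim_i \int_{X_i} f_i\,d\nu_i$
and, applied to $|f_i|^2$,
$\int_X|\mathcal{U}\lim f_i|^2\,d\nu = \mathcal{U}\lim_i \nu_i(|f_i|^2)$.
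The first identity shows $\Phi$ intertwines the two states; the second shows that $\Phi((f_i))=0$ in $L^\infty(\mathcal{U}\lim\mathfrak{X}_i)$ precisely when $(f_i)\in K$. Hence $\Phi$ descends to an injective, state-preserving $*$-homomorphism $\overline{\Phi}:\mathcal{U}\lim(L^\infty(\mathfrak{X}_i),\nu_i)\to L^\infty(\mathcal{U}\lim\mathfrak{X}_i)$.

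Finally I would prove surjectivity, which I expect to be the main point. Since an injective $*$-homomorphism of $C^{*}$-algebras is automatically isometric, the image of $\overline{\Phi}$ is closed in the essential-sup norm; it therefore suffices to hit a $\|\cdot\|_\infty$-dense family, and simple functions will do. For a generator $\mathcal{U}\lim A_i\in\mathcal{A}$ we have $\boldsymbol{1}_{\mathcal{U}\lim A_i} = \Phi((\boldsymbol{1}_{A_i}))$ with $(\boldsymbol{1}_{A_i})$ uniformly bounded, so its class lies in the image; and by Corollary \ref{Corollary defining ultralimit measure}(2) every measurable $B\subseteq X$ differs from such a generator by a null set, whence $\boldsymbol{1}_B$ lies in the image as well. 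Taking finite linear combinations, all simple functions are in the (closed) image, so $\overline{\Phi}$ is onto and is the desired isomorphism of $C^{*}$-probability spaces. The only genuine subtlety is the passage from the generating algebra $\mathcal{A}$ to all of $\Sigma$ via the null-set approximation of Corollary \ref{Corollary defining ultralimit measure}(2), combined with the observation that closedness of the image lets density of simple functions in $L^\infty$ conclude the argument.
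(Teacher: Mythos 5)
Your proposal is correct and follows essentially the same route as the paper: the same map $\Phi((f_i))=\mathcal{U}\lim f_i$ defined on $\ell^\infty(\mathcal{I},\{L^\infty(\mathfrak{X}_i)\})$, descent to the $C^{*}$-probability quotient via Theorem \ref{Theorem ultralimit and integrals} (the paper phrases the kernel identification through faithfulness of $\nu$ rather than your $|f_i|^2$ computation, but these are the same argument), and surjectivity from closedness of the image together with simple functions obtained from Corollary \ref{Corollary defining ultralimit measure}(2). Your extra details (collapsing the two quotients into one, and noting that an injective $*$-homomorphism is isometric) are just explicit versions of steps the paper leaves implicit.
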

\begin{proof}
First define 
\(\Phi:\ell^\infty\big(\mathcal{I},\big\{L^\infty(\mathfrak{X}_i)\big\}\big)\to L^\infty\big(\mathcal{U}\lim\mathfrak{X}_i\big)\) by \(\Phi\big((f_i)\big)=\mathcal{U}\lim f_i\). This is a \(C^*\) homomorphism. It is clear that if \((f_i)\) are such that \(\mathcal{U}\lim ||f_i||=0\) then \(\mathcal{U}\lim f_i\) is a.e. \(0\). Hence \(\Phi\) factors to 
\(\mathcal{U}\lim L^\infty(\mathfrak{X}_i)\to L^\infty(\mathcal{U}\lim\mathfrak{X}_i)\). \\ 
By Theorem \ref{Theorem ultralimit and integrals} we conclude that \(\Phi^*(\nu)=\mathcal{U}\lim \nu_i\), since \(\nu\) is faithful on \(L^\infty(\mathcal{U}\lim\mathfrak{X}_i)\) we conclude that \(\Phi\) factors to an injection \(\mathcal{U}\lim (L^\infty(\mathfrak{X}_i),\nu_i)\to (L^\infty(\mathcal{U}\lim\mathfrak{X}_i),\nu)\).\\
\(\Phi\) is surjective since its image is closed and contains all simple functions of \(\mathcal{U}\lim\mathfrak{X}_i\) by Corollary \ref{Corollary defining ultralimit measure}(2). Thus \(\Phi\) is an isomorphism. 
\end{proof}

The next result shows a compatability of the Banach spaces \(\mathcal{C}_\rho\) and the operation of taking ultralimit. This is an important tool in the deduction of Theorem \ref{Theorem entropy for amenable intro} from Theorem \ref{Theorem ultralimit ameanble action intro} (e.g. see corollary \ref{Corollary lower semi cont of I in ultralimit}). 
The proof uses results in Appendix \ref{appendix uniform integrability}, in particular Proposition \ref{prop finding a rho integrable part} about extracting the \(\rho\)-integrable part out of any integrable function.

\begin{prop}\label{proposition compartion in C rho of ultralimit}
Let \(\mathfrak{X}_i\) be probability spaces and let \(\mathfrak{X}=\mathcal{U}\lim \mathfrak{X}_i\). Then for any \(\rho\in\mathbf{M}\), the mapping \((f_i)\mapsto\mathcal{U}\lim f_i\) induces a continuous linear operator 
\[\Phi_\rho:\mathcal{U}\lim \mathcal{C}_\rho(\mathfrak{X}_i)\to \mathcal{C}_\rho(\mathfrak{X})\]
that satisfies \(||\Phi_\rho||\leq 1\) and commutes with integration, e.g. \(\Phi_\rho^*(\nu)=\mathcal{U}\lim \nu_i\).\\
Moreover,
\[\ker\Phi_\rho=\big\{(f_i)+\mathcal{N}_\mathcal{U}\big|\:\mathcal{U}\lim\intop_{X_i} |f_i|d\nu_i=0\big\}\] 
and 
\[\overline{\Phi_\rho}:\frac{\mathcal{U}\lim \mathcal{C}_\rho(\mathfrak{X}_i)}{\ker\Phi_{\rho}}\stackrel{\cong}{\longrightarrow} \mathcal{C}_\rho(\mathfrak{X})\]
is an isometric isomorphism.
\end{prop}
\begin{proof}
By Corollary \ref{Corollary 1 from Vitali theorem}, \(\Phi_\rho\) is well defined and \(||\Phi_\rho||\leq 1\). By Theorem \ref{Theorem ultralimit and integrals}, \(\Phi_\rho\) commutes with integration. Note that:
\[\Phi_{\rho}(f_i+\mathcal{N}_\mathcal{U})=0\iff \: \intop_X |\mathcal{U}\lim f_i|d\nu=0 \iff \: \mathcal{U}\lim\intop_{X_i} |f_i|d\nu_i=0\]
which shows the description of the kernel.\\
Let us show that \(\overline{\Phi}_{\rho}\) is an isomorphism.
First, we show the following claim: 
\begin{itemize}
    \item Suppose \(\lambda\in\mathbf{M}\) and \(f\in \mathcal{C}_\rho(\mathfrak{X})\cap Im(\Phi_\lambda)\) then for any \(\delta>0\) there are \(f_i\in \mathcal{C}_\rho(\mathfrak{X}_i)\) with \(||f_i||_\rho\leq(1+\delta)||f||_\rho\) and \(f=\mathcal{U}\lim f_i\).
\end{itemize}
Indeed, by assumption we may write \(f=\mathcal{U}\lim g_i\) where \(g_i\in \mathcal{C}_{\lambda}(\mathfrak{X})\). Let \(M=||f||_\rho\), using Proposition \ref{prop finding a rho integrable part}, for any \(i\) there is \(B_i\in\Sigma_i\) with
\[\intop_{B_i} |g_i|d\nu_i\geq(1+\delta)M\rho\big(\nu_i(B_i)\big) \]
and such that \(f_i:=g_i\cdot \boldsymbol{1}_{B^c_i}\) satisfies that \(f_i\in \mathcal{C}_\rho(\mathfrak{X}_i)\) and \(||f_i||_\rho\leq (1+\delta)||f||_\rho\). To prove \(\mathcal{U}\lim f_i=f\) we will show \(\nu(\mathcal{U}\lim B_i)=0\). Indeed (by Theorem \ref{Theorem ultralimit and integrals} applied for \(\lambda\)):
\[M\rho\big(\nu(\mathcal{U}\lim B_i)\big)\geq\intop_{\mathcal{U}\lim B_i} |f|d\nu=\mathcal{U}\lim \intop_{B_i} |g_i|d\nu_i\geq\mathcal{U}\lim \: (1+\delta)M\rho\big(\nu_i(B_i)\big)=(1+\delta)M\rho\big(\nu(\mathcal{U}\lim B_i) \big)\]
Thus \(\nu(\mathcal{U}\lim B_i)=0\) as required.\\
\(\)\\
We return to the proof of the Proposition. The claim applied to the case \(\lambda=\rho\) yields that \(\overline{\Phi_\rho}\) is an isometric embedding. Thus we only need to show \(\overline{\Phi_\rho}\) is surjective. \\
Take \(\lambda=\rho^{\frac{1}{2}}\) then as we explained \(Im(\Phi_\lambda)\subset \mathcal{C}_\lambda(\mathfrak{X})\) is closed (as the image of an isometric embedding) and contains all simple functions. By Lemma \ref{lemma properties of C rho spaces}(4) we conclude it contains \(\mathcal{C}_\rho(\mathfrak{X})\). Thus by the claim we conclude \(\overline{\Phi}_\rho\) is surjective, finishing the proof.
\end{proof}

\begin{corollary}\label{Corollary of compartion of ultralimits}
Let \(\mathfrak{X}_i\) be probability spaces and let \(\mathfrak{X}=\mathcal{U}\lim\mathfrak{X}_i\).
\begin{enumerate}
    \item
    For any \(f\in L^1(\mathfrak{X})\) there is \(\rho\in\mathbf{M}\) such that for all \(\delta>0\) we can find \(f_i\in \mathcal{C}_\rho(\mathfrak{X}_i)\) with \(\sup||f_i||_\rho\leq(1+\delta)||f||_\rho\) and \(f=\mathcal{U}\lim f_i\).
    \item
    If \(\eta\) is a probability measure on \((X,\Sigma)\) absolutely continuous with respect to \(\nu\) then there is \(\rho\in\mathbf{M}\) and probability measures \(\eta_i\) on \((X_i,\Sigma_i)\) with \(\eta_i\stackrel{\rho}{\ll}\nu_i\) and \(\eta=\mathcal{U}\lim\eta_i\) 
\end{enumerate}
\end{corollary}
\begin{proof}
Item 1 is immediate from Proposition \ref{proposition compartion in C rho of ultralimit}. For item 2, consider \(f=\frac{d\eta}{d\nu}\), by item 1 we can find \(\rho_0\in\mathbf{M}\) and \(f_i\in \mathcal{C}_{\rho_0}(\mathfrak{X})\) with \(\sup ||f_i||_{\rho_0}\leq 2||f||_{\rho_0}=:M\). Define \(\rho(t)=\min(1,M\cdot\rho_0(t))\) then \(\rho\in\mathbf{M}\) and \(||f_i||_{\rho}\leq 1\). Thus \(\eta_i:=f_i\cdot \nu_i\) satisfies \(\eta=\mathcal{U}\lim \eta_i\:,\: \eta_i\stackrel{\rho}{\ll}\nu_i\).
\end{proof}

\subsection{Equivariant ultralimit}\label{section Equivariant ultralimit}
Let \(G\) be a discrete countable group.\\
In this section we define ultralimit of \(G\)-spaces. In the case where the spaces are quasi-invariant (QI) we provide a criterion (so called "uniform QI") for the ultralimit to be QI as well.\\
This is a generalization of the construction given in \cite[section 4.4]{sayag2022entropy} (see Proposition \ref{proposition comparing ultralimit BQI}).

\subsubsection{Quasi invariant G-spaces}
Let us remind the basic notations from \cite[Subsection 2.1]{sayag2022entropy}.
\begin{definition}
\(\)
\begin{itemize}
    \item
    A Borel \(G\)-space is a Borel space \(\mathfrak{X}=(X,\Sigma)\) together with a measurable action of \(G\) on \((X,\Sigma)\).\\
    We denote by \(M(X,\Sigma)\) the collection of probability measures on \(X\).
    \item 
    A \emph{quasi invariant \(G\)-space} (QI \(G\)-space) is a probability measure space \(\mathfrak{X}=(X,\Sigma,\nu)\) together with a measurable action of \(G\) on \(X\) such that for all \(g\in G\) the measures \(g\nu,\nu\) are in the same measure class (that is, has the same null sets).
    \item
    The Radon-Nikodym cocyle is defined by: \(R_\nu(g;x):=\frac{dg\nu}{d\nu}(x)\).
\end{itemize}
\end{definition}
    It is easy to see that \(R_{\nu}\) is indeed a cocyle:
\[R(gh;x)=R(h;g^{-1}x)\cdot R(g;x)\]
Recall that \(\mathbf{M}\) stands for the space of majorants (see Definition \ref{definition space of majorants main text}).
\begin{definition}
A \emph{majorant} for a QI \(G\)-space \(\mathfrak{X}\) is a function \(B:G\to\mathbf{M}\) such that for all \(g\in G\) we have \(g\cdot\nu\stackrel{B(g)}{\ll}\nu\) (see Definition \ref{definition C rho spaces main text}(2)).
\end{definition}
We remind the notion of BQI \(G\)-spaces from \cite[Definition 2.1]{sayag2022entropy}.
\begin{definition}
We say that \(\mathfrak{X}\) is \emph{bounded quasi invariant \(G\)-space} (BQI \(G\)-space) if it has a majorant \(B\) of the form \(B(g)(t)=\min(1,M(g)\cdot t)\) for a function \(M:G\to[1,\infty)\).
\end{definition}

\begin{lemma}\label{lemma majorant on convolution}
Let \(\mu\) be a probability measure on \(G\) and let \(B\) be a majorant for the QI \(G\)-space \(\mathfrak{X}=(X,\nu)\), then \(B(\mu):=\sum_{g\in G}\mu(g)B(g)\in\mathbf{M}\) and \(\mu\ast\nu\stackrel{B(\mu)}{\ll}\nu\).
\end{lemma}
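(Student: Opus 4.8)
The plan is to prove the statement by directly verifying the two assertions: first that $B(\mu) = \sum_{g} \mu(g) B(g)$ lies in $\mathbf{M}$, and then that $\mu * \nu \stackrel{B(\mu)}{\ll} \nu$. The first assertion is essentially immediate: each $B(g) \in \mathbf{M}$ by the definition of a majorant, and $\mu$ is a probability measure on the countable group $G$, so $\sum_{g} \mu(g) B(g)$ is precisely a convex combination of majorants with weights forming a probability measure. Hence by the strong convexity of $\mathbf{M}$ established in Lemma \ref{lemma properties of majorants}(2), we conclude $B(\mu) \in \mathbf{M}$.

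For the second assertion, I would recall the characterization noted in the remark following the definition of $\mathcal{C}_\rho$: the relation $m \stackrel{\rho}{\ll} \nu$ holds if and only if $m(A) \leq \rho\big(\nu(A)\big)$ for every measurable set $A$. So it suffices to show that $(\mu * \nu)(A) \leq B(\mu)\big(\nu(A)\big)$ for all $A \in \Sigma$. The key step is to unpack the definition of the convolution: $\mu * \nu = \sum_{g \in G} \mu(g)\, g\nu$, so that $(\mu * \nu)(A) = \sum_{g} \mu(g)\, (g\nu)(A)$. For each fixed $g$, the hypothesis that $B$ is a majorant gives $g\nu \stackrel{B(g)}{\ll} \nu$, hence $(g\nu)(A) \leq B(g)\big(\nu(A)\big)$ by the same characterization.

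Putting these together, I would estimate
\[
(\mu * \nu)(A) = \sum_{g \in G} \mu(g)\, (g\nu)(A) \leq \sum_{g \in G} \mu(g)\, B(g)\big(\nu(A)\big) = B(\mu)\big(\nu(A)\big),
\]
where the last equality is just the definition of $B(\mu)$ evaluated at the point $\nu(A)$. This is exactly the inequality characterizing $\mu * \nu \stackrel{B(\mu)}{\ll} \nu$, completing the proof. Along the way one should also note that $\mu * \nu \ll \nu$, which follows because each $g\nu$ is in the measure class of $\nu$ (as $\mathfrak{X}$ is a QI $G$-space) and a countable convex combination of measures absolutely continuous with respect to $\nu$ is again absolutely continuous with respect to $\nu$.

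I do not anticipate a serious obstacle here; the statement is a formal consequence of the definitions combined with the convexity of $\mathbf{M}$. The only point requiring a little care is the interchange of summation and the application of the pointwise bound, but since all terms are nonnegative this is justified by monotone convergence (or simply by the fact that $G$ is countable and everything is bounded). If anything, the subtlest verification is the absolute continuity $\mu * \nu \ll \nu$, which must be checked separately from the quantitative bound, though it too is routine given that $\mathfrak{X}$ is assumed to be a QI $G$-space.
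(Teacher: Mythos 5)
Your proposal is correct and follows essentially the same route as the paper: membership of $B(\mu)$ in $\mathbf{M}$ via Lemma \ref{lemma properties of majorants}(2), followed by the pointwise estimate $(\mu\ast\nu)(A)=\sum_{g}\mu(g)(g\nu)(A)\leq\sum_{g}\mu(g)B(g)\big(\nu(A)\big)=B(\mu)\big(\nu(A)\big)$. Your separate check of $\mu\ast\nu\ll\nu$ is harmless but redundant, since the set-inequality characterization already forces $(\mu\ast\nu)(A)=0$ whenever $\nu(A)=0$ (because $B(\mu)(0)=0$).
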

\begin{proof}
\(B(\mu)\in\mathbf{M}\) by Lemma \ref{lemma properties of majorants}(2), and:
\[(\mu\ast\nu)(A)=\sum_{g}\mu(g)(g\nu)(A)\leq\sum_{g} \mu(g)B(g)\big(\nu(A)\big)=B(\mu)(\nu(A))\]
\end{proof}

\begin{definition}
\(\)
\begin{enumerate}
    \item
    A \emph{factor} \(p:\mathfrak{X}\to\mathcal{Y}\) between two QI \(G\)-spaces is a factor of probability spaces which is \(G\)-equivariant. We say \(\mathcal{Y}\) is a factor of \(\mathfrak{X}\) and \(\mathfrak{X}\) is an extension of \(\mathcal{Y}\).
    \item
    A factor \(p:\mathfrak{X}\to\mathcal{Y}\) is said to be \emph{measure preserving} if \(\frac{dg\nu_X}{d\nu_X}= \frac{dg\nu_Y}{d\nu_Y}\circ p\).
    \item
    The \emph{Radon Nikodym factor} of a QI \(G\)-space \(\mathfrak{X}\) is the final object in the category of measure preserving factors of \(\mathfrak{X}\). We denote it by \(\mathfrak{X}_{RN}\). More concretely, \(\mathfrak{X}_{RN}\) is equipped with a measure preserving factor \(\pi:\mathfrak{X}\to \mathfrak{X}_{RN}\), and has the property that for any other measure preserving factor \(p:\mathfrak{X}\to\mathcal{Y}\) there is a unique factor \(q:\mathcal{Y}\to \mathfrak{X}_{RN}\) so that \(\pi=q\circ p\).
\end{enumerate}
\end{definition}
\begin{remark}
The Radon-Nikodym factor of \(\mathfrak{X}\) indeed exists, it is the underling QI \(G\)-space for a topological model of \((X,\Sigma_{RN},\nu)\) where \(\Sigma_{RN}=\sigma(\frac{dg\nu}{d\nu}\big| g\in G)\) the \(\sigma\)-algebra generated by the Radon-Nikodym cocyle. The universal property guarantees that the pair \((\mathfrak{X}_{RN},\pi)\) is unique up to a unique isomorphism.
\end{remark}
The following lemma is useful for constructing majorants to actions:
\begin{lemma}\label{lemma majorant on convolution on G}
Suppose that \(\omega\) is a QI measure on \(G\) with majorant \(B:G\to \mathbf{M}\). Suppose that \((X,\Sigma,\nu)\) is measurable \(G\)-space such that there is a probability measure \(\nu_{0}\) on \((X,\Sigma)\) for which \(\nu=\omega\ast\nu_{0}\). Then \(\nu\) is QI with \(B\) as a majorant.
\end{lemma}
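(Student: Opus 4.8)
The plan is to verify directly the majorant inequality $(h\nu)(A)\le B(h)\big(\nu(A)\big)$ for every $h\in G$ and every $A\in\Sigma$; by the remark following the definition of $\stackrel{\rho}{\ll}$ this is exactly $h\nu\stackrel{B(h)}{\ll}\nu$, and since $\rho(0)=0$ for every $\rho\in\mathbf{M}$ it forces $h\nu\ll\nu$. Applying the same bound to $h^{-1}$ gives $h^{-1}\nu\ll\nu$, which is equivalent to $\nu\ll h\nu$; hence $\nu$ is quasi-invariant and $B$ is a majorant for it. So the whole lemma reduces to the single inequality above.

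First I would record the equivariance of convolution: for any $h\in G$,
\[
h_*(\omega\ast\nu_0)=(h\omega)\ast\nu_0,
\]
where $h\omega$ denotes the left translate of $\omega$. This is a change of variables $g\mapsto hg$ in the sum defining the convolution. In particular $h\nu=(h\omega)\ast\nu_0$.

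The key step is to transport the question from $X$ to the probability space $(G,\omega)$. For a fixed $A\in\Sigma$ define $\varphi_A\colon G\to[0,1]$ by $\varphi_A(g)=(g\nu_0)(A)=\nu_0(g^{-1}A)$; this is a $[0,1]$-valued (hence bounded) measurable function on the discrete space $G$ precisely because $\nu_0$ is a probability measure. Unwinding the definition of convolution gives
\[
\nu(A)=\int_G \varphi_A\,d\omega,\qquad (h\nu)(A)=\int_G \varphi_A\,d(h\omega).
\]
Since $\omega$ is QI with majorant $B$, we have $h\omega\stackrel{B(h)}{\ll}\omega$, so $f:=\frac{d(h\omega)}{d\omega}$ exists and satisfies $\|f\|_{B(h)}\le 1$ in $\mathcal{C}_{B(h)}(G,\omega)$. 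I would then apply Lemma \ref{lemma concave of rho effect on rho norm} on the probability space $(G,\omega)$, with $\rho=B(h)$, the density $f$, and the cutoff $\varphi_A$:
\[
(h\nu)(A)=\int_G f\cdot\varphi_A\,d\omega\le \|f\|_{B(h)}\cdot B(h)\Big(\int_G \varphi_A\,d\omega\Big)\le B(h)\big(\nu(A)\big),
\]
which is exactly the desired inequality.

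The only real content here is the reduction, not an estimate: once one observes that the relevant test function $\varphi_A$ is automatically valued in $[0,1]$ (again because $\nu_0$ is a probability measure), the bound is handed to us by Lemma \ref{lemma concave of rho effect on rho norm}, which is precisely designed to control $\int f\varphi$ by $\rho$ applied to the mass of $\varphi$. I expect the main care to go into the bookkeeping of the identity $h\nu=(h\omega)\ast\nu_0$ and the two integral representations of $\nu(A)$ and $(h\nu)(A)$; everything beyond that is a one-line invocation of the earlier lemma.
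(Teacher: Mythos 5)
Your proof is correct, and the lemma is indeed reduced correctly to the single inequality $(h\nu)(A)\le B(h)\big(\nu(A)\big)$, with the quasi-invariance itself recovered by applying the bound to $h^{-1}$ as well. The route differs from the paper's in packaging rather than in substance. The paper works on the product space $G\times X$: it notes that the action map $a:G\times X\to X$ satisfies $a_*(\omega\times\nu_0)=\nu$ and $a_*\big((g\omega)\times\nu_0\big)=g\nu$, that $\frac{d\,\big((g\omega)\times\nu_0\big)}{d\,\big(\omega\times\nu_0\big)}=\pi^*\big(\frac{dg\omega}{d\omega}\big)$ for the projection $\pi:G\times X\to G$, and then invokes Lemma \ref{lemma functoriality of C rho spaces}: item (3) (pullback along a factor is an isometric embedding of $\mathcal{C}_\rho$-spaces) gives $(g\omega)\times\nu_0\stackrel{B(g)}{\ll}\omega\times\nu_0$, and item (2) (pushforward along a factor preserves $\stackrel{\rho}{\ll}$) transports this along $a$ to yield $g\nu\stackrel{B(g)}{\ll}\nu$. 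You instead test directly on a set $A\in\Sigma$ with the function $\varphi_A(g)=\nu_0(g^{-1}A)$ and apply Lemma \ref{lemma concave of rho effect on rho norm} on the probability space $(G,\omega)$. These two arguments have the same mathematical core: Lemma \ref{lemma functoriality of C rho spaces}(3) is itself proved by applying Lemma \ref{lemma concave of rho effect on rho norm} to a conditional expectation, and your $\varphi_A$ is precisely $\mathbb{E}_{\omega\times\nu_0,\pi}\big(\boldsymbol{1}_{a^{-1}(A)}\big)$, so your computation inlines the paper's two functoriality steps. What your version buys is self-containedness — only one prior lemma is needed, and the passage from the majorant inequality to genuine quasi-invariance (mutual absolute continuity via $h$ and $h^{-1}$) is made explicit where the paper leaves it implicit. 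What the paper's version buys is modularity: the functoriality lemma it routes through is stated once and reused, so the present proof shrinks to three lines of bookkeeping.
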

\begin{proof}
Consider the action map \(a: G\times X\to X\). Note that \(a_{*}(\omega\times\nu_{0})=\nu\:,\: a_{*}\big((g\omega)\times\nu_{0}\big)=g\nu\). By Lemma \ref{lemma functoriality of C rho spaces}(2) it is enough to show \((g\omega)\times \nu_{0}\stackrel{B(g)}{\ll}\omega\times\nu_{0}\). However, considering the projection \(\pi: G\times X\to G\) we have \(\frac{d\: \big((g\omega)\times \nu_{0}\big)}{d\:\big(\omega\times \nu_{0}\big)}=\pi^{*}(\frac{dg\omega}{d\omega})\). Thus we conclude the lemma by Lemma \ref{lemma functoriality of C rho spaces}(3).
\end{proof}

\subsubsection{Definition of ultralimit of G-spaces}
In this subsection, \(\mathcal{I}\) is a set and \(\mathcal{U}\) is an ultrafilter on \(\mathcal{I}\).\\
Lemma \ref{Lemma functoriality ultralimit measure spaces} yields immediately the following:
\begin{lemma}
Let \((X_i,\Sigma_{i})_{i\in\mathcal{I}}\) be a collection of Borel \(G\)-spaces and let \(\nu_i\in M(X_i,\Sigma_i)\). Consider the ultralimit \(\mathcal{U}\lim (X_i,\Sigma_i)=(X,\Sigma)\) with the ultralimit measure \(\nu=\mathcal{U}\lim \nu_{i}\). Then \((X,\Sigma)\) posses a natural structure of a Borel \(G\)-space. Moreover, for any \(g\in G\) we have that \(\mathcal{U}\lim g\nu_i=g\nu\).
\end{lemma}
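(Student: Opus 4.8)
The plan is to define the $G$-action on the ultralimit one group element at a time, packaging the functoriality already established in Lemma~\ref{Lemma functoriality ultralimit measure spaces}. For a fixed $g\in G$ the action of $g$ on each $X_i$ is a measurable bijection $\alpha_i^g:(X_i,\Sigma_i)\to(X_i,\Sigma_i)$ with measurable inverse $\alpha_i^{g^{-1}}$. First I would define $\alpha^g:=\mathcal{U}\lim\alpha_i^g$ on the ultralimit Borel space by $\alpha^g([x_i])=[g x_i]$. This is well defined on $\mathcal{U}$-equivalence classes exactly as in Lemma~\ref{Lemma functoriality ultralimit measure spaces}: if $x_i=y_i$ for $\mathcal{U}$-a.e.\ $i$ then $g x_i=g y_i$ for $\mathcal{U}$-a.e.\ $i$. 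Measurability of $\alpha^g$ on $(X,\Sigma)$ follows from the preimage identity $(\alpha^g)^{-1}(\mathcal{U}\lim B_i)=\mathcal{U}\lim(\alpha_i^g)^{-1}(B_i)$, which is the Borel-level content of the proof of Lemma~\ref{Lemma functoriality ultralimit measure spaces} and is independent of any measure.

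Next I would check the group-action axioms directly on representatives. Since $e\cdot x_i=x_i$ we get $\alpha^e=\mathrm{id}_X$, and since $(gh)x_i=g(h x_i)$ we get $\alpha^{gh}([x_i])=[g(h x_i)]=\alpha^g(\alpha^h([x_i]))$, so $\alpha^{gh}=\alpha^g\circ\alpha^h$. In particular each $\alpha^g$ is a Borel automorphism with inverse $\alpha^{g^{-1}}$, and because $G$ is countable and discrete these maps assemble into a measurable action of $G$ on $(X,\Sigma)$, giving the asserted Borel $G$-space structure.

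For the final assertion $\mathcal{U}\lim g\nu_i=g\nu$, I would apply Lemma~\ref{Lemma functoriality ultralimit measure spaces} to the factor maps $\alpha_i^g:(X_i,\Sigma_i,\nu_i)\to(X_i,\Sigma_i,g\nu_i)$, which are factors precisely because pushforward gives $(\alpha_i^g)_*\nu_i=g\nu_i$. The lemma then says that $\alpha^g=\mathcal{U}\lim\alpha_i^g$ is a factor from $\mathcal{U}\lim(X_i,\Sigma_i,\nu_i)$ to $\mathcal{U}\lim(X_i,\Sigma_i,g\nu_i)$. As these two ultralimits share the underlying Borel space $(X,\Sigma)$ and differ only in their measures $\nu$ and $\mathcal{U}\lim g\nu_i$, being a factor means $(\alpha^g)_*\nu=\mathcal{U}\lim g\nu_i$. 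Since $(\alpha^g)_*\nu$ is by definition $g\nu$, this yields $g\nu=\mathcal{U}\lim g\nu_i$.

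There is no serious obstacle here; the statement is essentially a repackaging of Lemma~\ref{Lemma functoriality ultralimit measure spaces}. The only point requiring a moment's care is keeping the bookkeeping straight between the Borel level, where one defines the action and checks measurability and the group axioms, and the measured level, where one invokes functoriality with the twisted target measure $g\nu_i$ to extract the pushforward identity. Once one observes that the two ultralimits appearing in the last step have the same underlying Borel space, everything follows at once.
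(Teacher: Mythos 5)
Your proposal is correct and takes essentially the same route as the paper: the paper offers no separate argument, stating the lemma as following immediately from Lemma~\ref{Lemma functoriality ultralimit measure spaces}, and your write-up simply makes that deduction explicit (coordinatewise definition of the action, measurability via the preimage identity, group axioms on representatives, and functoriality applied to $\alpha_i^g:(X_i,\Sigma_i,\nu_i)\to(X_i,\Sigma_i,g\nu_i)$ for the identity $\mathcal{U}\lim g\nu_i=g\nu$). Your observation that both ultralimits share the same underlying Borel space is exactly the point recorded in the paper's remark that $(X,\Sigma)$ depends only on the Borel spaces $(X_i,\Sigma_i)$ and not on the measures.
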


\begin{lemma}\label{lemma convolution and ultralimit}
Suppose \((X_i,\Sigma_i)\) are Borel \(G\)-spaces, let \(\nu_i\in M(X_i,\Sigma_i)\) and consider \(\nu=\mathcal{U}\lim \nu_{i}\) the ultralimit measure on \((X,\Sigma)\). Then for any probability measure \(\mu\) on \(G\) we have:
\[\mathcal{U}\lim \mu\ast\nu_i=\mu\ast\nu\]
\end{lemma}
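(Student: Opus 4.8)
The plan is to show that both \(\mathcal{U}\lim(\mu \ast \nu_i)\) and \(\mu \ast \nu\) are probability measures on the ultralimit Borel space \((X,\Sigma)\) that agree on the generating algebra \(\mathcal{A} = \{\mathcal{U}\lim A_i \mid A_i \in \Sigma_i\}\), and then to conclude by the uniqueness of such an extension. First I would note that each \(\mu \ast \nu_i = \sum_{g} \mu(g)\, g\nu_i\) is itself a genuine probability measure on \((X_i,\Sigma_i)\) (a \(\mu\)-average of probability measures), so its ultralimit \(\mathcal{U}\lim(\mu \ast \nu_i)\) is defined on \((X,\Sigma)\) by the construction of subsection \ref{subsection Preliminaries- the construction of ultralimits of probability spaces}; likewise \(\mu \ast \nu = \sum_g \mu(g)\, g\nu\) is a probability measure there. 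Since \(\mathcal{A}\) is an algebra generating \(\Sigma\), the uniqueness part of the construction (Dynkin's lemma) reduces the claim to checking that the two measures coincide on every \(A = \mathcal{U}\lim A_i \in \mathcal{A}\).

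Evaluating on such an \(A\), the left-hand side unwinds through the definition of \(\nu_\ast\) to
\[(\mathcal{U}\lim(\mu \ast \nu_i))(A) = \mathcal{U}\lim\, (\mu \ast \nu_i)(A_i) = \mathcal{U}\lim \sum_{g} \mu(g)\,(g\nu_i)(A_i),\]
while the right-hand side, using the preceding lemma (that \(g\nu = \mathcal{U}\lim g\nu_i\), hence \((g\nu)(A) = \mathcal{U}\lim (g\nu_i)(A_i)\)), equals
\[(\mu \ast \nu)(A) = \sum_{g} \mu(g)\,(g\nu)(A) = \sum_{g} \mu(g)\,\mathcal{U}\lim (g\nu_i)(A_i).\]
Writing \(a_i^{(g)} := (g\nu_i)(A_i) \in [0,1]\), the whole statement then reduces to the interchange of the numerical ultralimit with the countable \(\mu\)-weighted sum,
\[\mathcal{U}\lim \sum_{g} \mu(g)\,a_i^{(g)} = \sum_{g} \mu(g)\,\mathcal{U}\lim a_i^{(g)}.\]

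The main obstacle will be precisely this interchange, since \(\mathcal{U}\lim\) on sequences of numbers is only finitely linear and is not automatically compatible with infinite sums. The point that saves it is the dominated control supplied by \(\mu\): given \(\epsilon > 0\), I would pick a finite set \(F \subset G\) with \(\sum_{g \notin F}\mu(g) < \epsilon\). Because each \(a_i^{(g)} \le 1\), the tail \(\sum_{g \notin F}\mu(g)\,a_i^{(g)}\) is bounded by \(\epsilon\) uniformly in \(i\), and the same bound holds for \(\sum_{g\notin F}\mu(g)\,\mathcal{U}\lim a_i^{(g)}\). Applying finite linearity of \(\mathcal{U}\lim\) to the partial sum over \(F\), together with the fact that \(\mathcal{U}\lim\) preserves the inequalities \(\le \epsilon\), yields
\[\Bigl|\,\mathcal{U}\lim \sum_{g} \mu(g)\,a_i^{(g)} \;-\; \sum_{g} \mu(g)\,\mathcal{U}\lim a_i^{(g)}\,\Bigr| \le 2\epsilon.\]
Since \(\epsilon > 0\) is arbitrary the two quantities coincide, which establishes agreement on \(\mathcal{A}\) and hence completes the proof.
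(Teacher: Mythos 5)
Your proposal is correct and rests on essentially the same idea as the paper's proof: truncate \(\mu\) to a finite set carrying all but \(\epsilon\) of its mass, use finite linearity of \(\mathcal{U}\lim\), and control the tail uniformly in \(i\). The only difference is packaging --- the paper bounds the total-variation distance \(\|(\mu\ast\nu)-\mathcal{U}\lim(\mu\ast\nu_i)\|\) directly at the level of measures, while you verify equality set-by-set on the generating algebra \(\mathcal{A}\) and then invoke uniqueness of the extension; these are interchangeable bookkeeping choices around the same estimate.
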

\begin{proof}
Let \(\epsilon>0\), we have a finite set \(S\subset G\) with \(\mu(G\setminus S)<\frac{\epsilon}{2}\). By linearity of \(\mathcal{U}\lim\) we conclude \(\sum_{g\in S} \mu(g)g\nu=\mathcal{U}\lim \sum_{g\in S}\mu(g)g\nu_i\). Thus (\(||\cdot||\) denotes the total variation of a measure):
\begin{multline*}
    ||(\mu\ast\nu)-\mathcal{U}\lim(\mu\ast\nu_i)||= ||(\mu\ast\nu)-\sum_{g\in S} \mu(g)\cdot(g\nu)+\mathcal{U}\lim\sum_{g\in S} \mu(g)(g\nu_i)-\mathcal{U}\lim (\mu\ast\nu_i)||\\
    \leq ||\sum_{g\in G\setminus S} \mu(g) (g\nu)||+||\mathcal{U}\lim \sum_{g\in G\setminus S} \mu(g)(g\nu_i)||\leq2\mu(G\setminus S)<\epsilon
\end{multline*}
Taking \(\epsilon\to0\) we conclude \(\mathcal{U}\lim (\mu\ast\nu_i)=\mu\ast\nu\).
\end{proof}
Next, we would like to know when the ultralimit of QI \(G\)-spaces is QI. For this we have the following definition and proposition:
\begin{definition}
We say that a collection of QI \(G\)-spaces is \emph{uniformly QI} if there is a function \(B:G\to \mathbf{M}\) that is a common majorant for all of them.
\end{definition}

\begin{prop}\label{proposition convolution and ultralimit}
Let \((\mathfrak{X}_i)_{i\in\mathcal{I}}\) be uniformly QI \(G\)-spaces,
then \(\mathcal{U}\lim\mathfrak{X}_i\) is a QI \(G\)-space.\\
Moreover, for any probability measure \(\mu\) on \(G\) we have
\[\mathcal{U}\lim \frac{d\mu\ast\nu_i}{d\nu_i}=\frac{d\mu\ast\nu}{d\nu}\]
\end{prop}
\begin{proof}
Let \(B\) be a uniform majorant for \((\mathfrak{X}_i)_i\). Denote \(\mathfrak{X}=\mathcal{U}\lim \mathfrak{X}_i\).\\
For any \(g\in G\) we have a measurable map \(g:X_i\to X_i\), and by functionality (Lemma \ref{Lemma functoriality ultralimit measure spaces}) the induced mapping \(g:\mathcal{U}\lim X_i\to\mathcal{U}\lim X_i\) given by \(g\cdot[x_i]=[g\cdot x_i]\) is measurable. Thus we get a measurable action of \(G\) on \(\mathcal{U}\lim X_i\).\\
We have by functionality \(g\nu=\mathcal{U}\lim g\nu_i\), which is, by Corollary \ref{Corollary 2 from Vitali theorem}, \(B(g)\)-absolutely continuous with respect to \(\nu\).\\
In conclusion, \(\mathfrak{X}\) is a QI-\(G\)-space with \(B\) as a majorant. \\
For the moreover part, by Lemma \ref{lemma majorant on convolution} we have \(\mu\ast\nu_i\stackrel{B(\mu)}{\ll}\nu_i\) and thus we conclude from Corollary \ref{Corollary 2 from Vitali theorem} and Lemma \ref{lemma convolution and ultralimit} that \(\mu\ast\nu=\mathcal{U}\lim (\mu\ast\nu_i)\ll\nu\) and 
\[\mathcal{U}\lim \frac{d\:\mu\ast\nu_i}{d\nu_i}=\frac{d\:\mathcal{U}\lim (\mu\ast\nu_i)}{d\nu}=\frac{d\:\mu\ast\nu}{d\nu}\]
as required.
\end{proof}

\begin{definition}
With the notations above, we will call \(\mathcal{U}\lim\mathfrak{X}_i\) with the \(G\)-action (which is structure a QI-\(G\)-space) the ultralimit of the uniformly QI-\(G\)-spaces \(\mathfrak{X}_i\).
\end{definition}



Recall that in \cite[Definition 4.17]{sayag2022entropy} we defined an ultralimit for a collection of uniformly BQI \(G\) spaces, which we denoted by \(\mathcal{U}\lim_{big} X_i\) and its Radon-Nikodym factor by \(\mathcal{U}\lim_{RN} X_i\). 
\begin{prop}\label{proposition comparing ultralimit BQI}
Suppose \(\mathfrak{X}_i\) is a collection of uniformly BQI \(G\)-spaces. Then:
\begin{itemize}
    \item \(\mathcal{U}\lim_{big} X_i\) is a compact Hausdorff space with an isomorphism of BQI-\(G\)-\(C^*\)-spaces:
    \[C(\mathcal{U}{\lim}_{big} \mathfrak{X}_i)\cong L^{\infty}(\mathcal{U}\lim \mathfrak{X}_i)\]
    \item
    \(\mathcal{U}\lim_{RN} \mathfrak{X}_i\) is the Radon-Nikodym factor of \(\mathcal{U}{\lim} \mathfrak{X}_i\).
\end{itemize}
\end{prop}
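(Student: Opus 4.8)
My plan is to play the two ultralimit constructions off against each other: the $C^{*}$-algebraic one $\mathcal{U}\lim_{big}$ from \cite[Definition 4.17]{sayag2022entropy} and the measure-theoretic one $\mathcal{U}\lim$ of the present section, using Proposition \ref{proposition compartion in L infinity of ultralimit} as the bridge between them.

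For the first bullet I would start from the fact that, by \cite[Definition 4.17]{sayag2022entropy}, the space $\mathcal{U}\lim_{big}\mathfrak{X}_i$ is the Gelfand spectrum of the commutative unital $C^{*}$-probability space $\mathcal{U}\lim(L^{\infty}(\mathfrak{X}_i),\nu_i)$; as the spectrum of a commutative unital $C^{*}$-algebra it is automatically compact Hausdorff, and Gelfand duality gives $C(\mathcal{U}\lim_{big}\mathfrak{X}_i)\cong\mathcal{U}\lim(L^{\infty}(\mathfrak{X}_i),\nu_i)$ as $C^{*}$-probability spaces. Proposition \ref{proposition compartion in L infinity of ultralimit} then identifies the right-hand side with $(L^{\infty}(\mathcal{U}\lim\mathfrak{X}_i),\nu)$, which is the stated isomorphism at the level of $C^{*}$-probability spaces. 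It remains to check that it is $G$-equivariant and respects the BQI structure; both are routine. The isomorphism is implemented by $\Phi((f_i))=\mathcal{U}\lim f_i$, and since the action on the ultralimit $G$-space satisfies $g\cdot\mathcal{U}\lim f_i=\mathcal{U}\lim(g\cdot f_i)$ the map $\Phi$ intertwines the two pullback $G$-actions; the common majorant $B(g)(t)=\min(1,M(g)t)$ witnessing uniform BQI descends to $\mathcal{U}\lim\mathfrak{X}_i$ by Proposition \ref{proposition convolution and ultralimit}, so the BQI data is preserved.

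For the second bullet I would use that, by definition, $\mathcal{U}\lim_{RN}\mathfrak{X}_i$ is the Radon-Nikodym factor of $\mathcal{U}\lim_{big}\mathfrak{X}_i$, i.e.\ it corresponds to the sub-$C^{*}$-algebra of $C(\mathcal{U}\lim_{big}\mathfrak{X}_i)$ generated by the Radon-Nikodym cocycles $g\mapsto R(g)$. Because the actions are uniformly BQI, each derivative $\frac{dg\nu_i}{d\nu_i}$ is bounded by $M(g)$ independently of $i$, so the sequence $(\frac{dg\nu_i}{d\nu_i})_i$ is a legitimate element of $\mathcal{U}\lim(L^{\infty}(\mathfrak{X}_i),\nu_i)$ and one checks directly that it is the cocycle $R(g)$ of this $C^{*}$-probability space. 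Under the isomorphism of the first bullet, $\Phi$ carries this element to $\mathcal{U}\lim\frac{dg\nu_i}{d\nu_i}$, which by the moreover part of Proposition \ref{proposition convolution and ultralimit} (applied to $\mu=\delta_g$) equals $\frac{dg\nu}{d\nu}$. Hence the sub-$C^{*}$-algebra generated by the $R(g)$ is carried onto $L^{\infty}(\Sigma_{RN})$, where $\Sigma_{RN}=\sigma\big(\frac{dg\nu}{d\nu}\mid g\in G\big)$, which is by definition the Radon-Nikodym factor of $\mathcal{U}\lim\mathfrak{X}_i$; this gives the second bullet.

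The step I expect to be the main obstacle is exactly this last matching of cocycles. The Radon-Nikodym factor a priori admits two descriptions — one inherited from the $C^{*}$-algebraic construction of $\mathcal{U}\lim_{big}$ (the sub-algebra generated by the classes of the sequences $(\frac{dg\nu_i}{d\nu_i})_i$) and one intrinsic to the measure-theoretic ultralimit (the $\sigma$-algebra $\Sigma_{RN}$ generated by the honest derivatives $\frac{dg\nu}{d\nu}$) — and the whole point is that the identity $\mathcal{U}\lim\frac{dg\nu_i}{d\nu_i}=\frac{dg\nu}{d\nu}$ from Proposition \ref{proposition convolution and ultralimit} forces these two sub-objects to coincide under $\Phi$ rather than merely to be abstractly isomorphic. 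The uniform boundedness coming from the BQI hypothesis is what makes the sequences $(\frac{dg\nu_i}{d\nu_i})_i$ legitimate elements of the $C^{*}$-ultralimit in the first place, and hence is essential even for the two descriptions to be comparable.
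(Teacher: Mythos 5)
Your proposal is correct and takes essentially the same route as the paper: the paper's entire proof is the single line that the statement ``follows immediately from Proposition \ref{proposition compartion in L infinity of ultralimit}'', which is precisely the bridge you build on. Your write-up simply makes explicit the details the paper treats as immediate — Gelfand duality for $\mathcal{U}\lim_{big}$, the $G$-equivariance of the map $(f_i)\mapsto\mathcal{U}\lim f_i$, and the cocycle identification $\mathcal{U}\lim\frac{dg\nu_i}{d\nu_i}=\frac{dg\nu}{d\nu}$ via Proposition \ref{proposition convolution and ultralimit}, which pins down the Radon--Nikodym factor.
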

\begin{proof}
Follows immediately from \ref{proposition compartion in L infinity of ultralimit}.
\end{proof}

\section{Realizations via ultralimit}\label{section Realizations via ultralimit}
In this section we provide an ultralimit constructions for the Poisson boundary of a time dependent matrix-valued random walk on a discrete countable group \(G\). This is similar to \cite[Theorem E]{sayag2022entropy}. Then we will review the notion of amenable actions and prove Theorem \ref{Theorem ultralimit ameanble action intro}.

\subsection{Realization of the Poisson boundary of a time dependent matrix-valued random walk as an ultralimit}
Let us recall the notations of \cite{connes1989hyperfinite}, \cite{Amenable} about the Poisson boundary of a time dependent matrix-valued random walk -- the Poisson boundary of matrix-valued random walk on a group. For more details see Appendix \ref{appendix Poisson boundary of a time dependent matrix-valued random walk}. \\
Let \(\boldsymbol{\ell}=(\ell_n)_{n\geq0}\) be a sequence of positive integers, denote \(\ell_{-1}=1\). 
Let \(\boldsymbol{\sigma}=\big(\sigma^{(n)}\big)_{n\geq0}\) be a \emph{\(\boldsymbol{\ell}\)-stochastic sequence}, that is a sequence where \(\sigma^{(n)}=\big(\sigma_{i,j}^{(n)}\big)_{i\in[\ell_{n-1}],j\in[\ell_n]}\) is an \([\ell_{n-1}]\times [\ell_n]\) stochastic matrix of measures on \(G\) (see Definition \ref{definition stochastic sequence}). Here, \([\ell] = \{0,\dots,\ell - 1\}\).\\
We assume (as in \cite{Amenable}),
that for any \(j\in[\ell_{0}]\), the measure \(\sigma^{(0)}_{0,j}\) is supported on all of \(G\). We also suppose (without loss of generality, see Remark \ref{remark no zero columns on sigma}) \(\sigma^{(n)}\) has no zero columns.\\
We denote \(V_n=[\ell_n]\times G\) for \(n\geq0\). This is a \(G\)-space with trivial action on the first coordinate.
We consider the random walk \((X_n)_{n\geq0 }\) (where \(X_n\) takes values in \(V_n\)) with transition probabilities:
\[\mathbb{P}_{V_{n-1}\mapsto V_n}\bigg((i,g)\to(j,h)\bigg)=\sigma_{i,j}^{(n)}(g^{-1} h)\] 
We get a probability space \((\Omega,\mathcal{F},\mathbb{P})\) where \(\Omega=\prod_{n\geq0} V_n\) is the space of paths of the radon walk and the probability measure \(\mathbb{P}\) on \(\Omega\) is given by the Markov measure with transition probabilities as above and initial distribution \(\sigma^{(0)}\).
The \(G\)-action on \(V_n\) gives rise to a \(G\)-action on \(\Omega\) with. Let \(\mathcal{F}_m=\sigma(X_k \big| k\geq m)\) and define the asymptotic algebra of the random walk \(\mathcal{A}_\Omega=\bigcap_m \mathcal{F}_m\). \\
For each \(m\) we define 
\(\mathbb{P}^{(m)}\) to be the column vector of probability measures \((\mathbb{P}^{(m)}_{j,e})_{j\in [\ell_{m}]}\). Here, \(\mathbb{P}^{(m)}_{j,g}\) is the Markov measure of the random walk starting at \((j,g)\in V_m\).\\
The following definition is the basic generalization of the classical theory:
\begin{definition}
    A \(\boldsymbol{\sigma}\)-stationary space (or \(\boldsymbol{\sigma}\)-system), \(\mathcal{X}=(X,\boldsymbol{\nu})\) is a measure space \(X\) together with a measurable \(G\)-action, and a sequence \(\boldsymbol{\nu}=(\nu^{(n)})_{n=-1}^{\infty}\) of \(\ell_n\)-column vectors \(\nu^{(n)}=(\nu^{(n)}_j)_{j\in [\ell_n]}\) such that any \(\nu^{(n)}_j\) is a probability measure on \(X\), and such that \(\sigma^{(n)}\ast \nu^{(n)} = \nu^{(n-1)}\) for any \(n\geq 0\). That is, for any \(i\in[\ell_{n-1}]\) one has \(\sum_{j\in [\ell_n]} \sigma^{(n)}_{i,j} \ast \nu^{(n)}_{j} = \nu^{(n-1)}_{i}\).
\end{definition}
One can define factors and measure-preserving factors and the Radon-Nikodym factor for such systems (see Definitions \ref{definition stationary spaces}, \ref{definition measure preserving factors sigma-systems} and Lemma \ref{lemma RN factor of sigma-system}).\\
We say that a \(\boldsymbol{\sigma}\)-system \(\mathcal{X}=(X,\Sigma,\boldsymbol{\nu})\) is \emph{regular} if the underlying Borel space \((X,\Sigma)\) is a standard Borel space (see Definition \ref{definition regular sigma-system}).\\
One can see that \(\big(\Omega, \mathcal{A}_\Omega, (\mathbb{P}^{(m)})_m)\big)\) is a \(\boldsymbol{\sigma}\)-system. A regular \(\boldsymbol{\sigma}\)-system equivalent to it is called the \textbf{Poisson boundary of \(\boldsymbol{\sigma}\)} and denoted by \(\mathcal{B}(G,\boldsymbol{\sigma})\) (see Definition \ref{Definition Poisson boundary of sigma} and Lemma \ref{prop existence and uniqueness poisson boundary}). The Poisson boundary equals its own Radon-Nikodym factor (see Lemma \ref{prop RN factor of Poisson boundary is itself}).\\
Another basic property is that an extension of the Poisson boundary is necessarily measure preserving (see Proposition \ref{proposition maximality of Poisson boundary}).
\(\)\\
We now give our ultralimit construction for this Poisson boundary of a time dependent matrix-valued random walk. Fix an ultrafilter \(\mathcal{U}\) on \((\frac{1}{2},1)\) such that \(\mathcal{U}\lim_{a} a=1\), that is,  \(\forall \epsilon>0 :\: (1-\epsilon,1)\in\mathcal{U}\).
\begin{definition}
Consider the space \(E=\coprod_{n\geq0} \{n\}\times V_n=\coprod_{n,\:j\in[\ell_{n}]} \{n\}\times \{j\}\times G\) which is a \(G\)-space by the left multiplication on each copy of \(G\). \\
Let \(K\) be a non-negative integer. For each \(0<a<1\), \(t\geq -1\) and \(r\in [\ell_t]\), define a measure \(\nu_{r,a;K}^{(t)}\) on \(E\):\\
\[\nu_{r,a;K}^{(t)}(n,j,g)=\frac{1-a}{a^{t+1+K}}\cdot \boldsymbol{1}_{n\geq t+1+K}\cdot  a^{n}\cdot (\sigma^{(t+1)}\ast\dots\ast \sigma^{(n)})_{r,j}(g)\]


\end{definition}
Since \(\sigma^{(m)}\) are stochastic matrices of measures we get 
that \(\nu_{r,a;K}^{(t)}\) are probability measures on \(E\).\\
We have the following basic formula: for any \(t\geq0,s\in[\ell_{t-1}],K\geq0\) we have 
\[\sum_{r\in[\ell_{t}]}\sigma^{(t)}_{s,r}\ast\nu_{r,a;K}^{(t)}= \nu_{s,a;K+1}^{(t-1)}\quad\quad\quad (!)\]
Also note that
\[\nu_{r,a;K+1}^{(t)}\leq \frac{1}{a}\nu_{r,a;K}^{(t)}\]

\begin{lemma}\label{lemma measures on E independet of choices}
On \(\mathcal{U}\lim_{a} E\) the measure \(\mathcal{U}\lim \nu_{r,a;K}^{(t)}\) is independent of \(K\).
\end{lemma}
\begin{proof}
From, \(\nu_{r,a;K+1}^{(t)}\leq \frac{1}{a}\nu_{r,a;K}^{(t)}\) we conclude that \(\mathcal{U}\lim_{a}\nu_{r,a;K+1}^{(t)}\leq \mathcal{U}\lim_{a}\nu_{r,a;K}^{(t)}\). As both sides are probability measures, we conclude the equality.
\end{proof}

\begin{lemma}\label{lemma uniform integrability on E}
For any \(K\geq0\) the collection of measures \(\nu_{0,a;K}^{(-1)}\) for \(0<a<1\) is uniformly QI.\\
If \(\sigma^{(0)}_{i}\: (i\in [\ell_{0}])\) are BQI then \(\nu_{0,a;K}^{(-1)}\) for \(0<a<1\) is uniformly BQI.\\
Moreover, for any 
\(t\geq0,\:r\in[\ell_{t}]\), there is \(\rho\in\mathbf{M}\) so that for all \(\frac{1}{2}<a<1\) we have: \(\nu_{r,a;K}^{(t)}\stackrel{\rho}{\ll} \nu_{0,a;K}^{(-1)}\).
\end{lemma}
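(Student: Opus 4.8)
The plan is to derive all three assertions from two convolution identities for the family $\nu^{(t)}_{r,a;K}$, together with the functoriality of the spaces $\mathcal{C}_\rho$ (Lemma \ref{lemma functoriality of C rho spaces}) and Corollary \ref{Corollary exsitence of majorant for measures}. For the uniform QI assertion I would first record the factorization
\[\nu_{0,a;K}^{(-1)}=\sum_{r\in[\ell_0]}\sigma^{(0)}_{0,r}\ast\mu^{(a)}_r,\qquad \mu^{(a)}_r(n,j,g)=\tfrac{1-a}{a^{K}}\,\boldsymbol{1}_{n\ge K}\,a^{n}\,(\sigma^{(1)}\ast\cdots\ast\sigma^{(n)})_{r,j}(g),\]
where each $\mu^{(a)}_r$ is a probability measure on $E$ (this is identity $(!)$ when $K\ge 1$, and a one-line check of total mass when $K=0$). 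Set $\omega=\sum_{r}\delta_r\times\sigma^{(0)}_{0,r}$, a probability measure on $V_0=[\ell_0]\times G$; by the standing assumption each $\sigma^{(0)}_{0,r}$ has full support, so $\omega$ has full support, is QI, and by Corollary \ref{Corollary exsitence of majorant for measures} admits a majorant $B\colon G\to\mathbf{M}$ that does not depend on $a$ or $K$. I would then realise $\nu_{0,a;K}^{(-1)}$ as the pushforward of the fibered measure $\tilde\Theta_a=\sum_r \delta_r\times\sigma^{(0)}_{0,r}\times\mu^{(a)}_r$ on $V_0\times E$ under the $G$-equivariant map $\tilde a\colon(r,h,e)\mapsto h\cdot e$. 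Since $\tilde\Theta_a$ projects to $\omega$ under $p\colon V_0\times E\to V_0$ and $\frac{dg\tilde\Theta_a}{d\tilde\Theta_a}=p^{*}\big(\frac{dg\omega}{d\omega}\big)$, the computation in the proof of Lemma \ref{lemma majorant on convolution on G} applies verbatim with $V_0$ in place of $G$: Lemma \ref{lemma functoriality of C rho spaces}(3) gives $g\tilde\Theta_a\stackrel{B(g)}{\ll}\tilde\Theta_a$, and then Lemma \ref{lemma functoriality of C rho spaces}(2) applied to $\tilde a$ yields $g\nu^{(-1)}_{0,a;K}\stackrel{B(g)}{\ll}\nu^{(-1)}_{0,a;K}$ for every $a$. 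Thus $B$ is a common majorant and the family is uniformly QI.

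The BQI case is the pointwise shadow of this argument. If each $\sigma^{(0)}_{i}$ is BQI, say $g\sigma^{(0)}_{0,r}\le M_r(g)\,\sigma^{(0)}_{0,r}$, then convolving on the left and summing over $r$ in the factorization above gives $g\nu^{(-1)}_{0,a;K}\le M(g)\,\nu^{(-1)}_{0,a;K}$ pointwise, with $M(g)=\max_{r\in[\ell_0]}M_r(g)<\infty$ independent of $a$; hence the family is uniformly BQI.

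For the moreover part I would iterate $(!)$ from level $t$ down to level $-1$ to obtain $\nu^{(-1)}_{0,a;K+t+1}=\sum_{r'\in[\ell_t]}\alpha_{r'}\ast\nu^{(t)}_{r',a;K}$, where $\alpha_{r'}=(\sigma^{(0)}\ast\cdots\ast\sigma^{(t)})_{0,r'}$. Dropping all but the $r'=r$ term and invoking $\nu^{(-1)}_{0,a;K+t+1}\le a^{-(t+1)}\nu^{(-1)}_{0,a;K}<2^{t+1}\nu^{(-1)}_{0,a;K}$ (from $\nu^{(t)}_{\cdot,a;K+1}\le\tfrac1a\nu^{(t)}_{\cdot,a;K}$ and $a>\tfrac12$) gives $\alpha_r\ast\nu^{(t)}_{r,a;K}<2^{t+1}\nu^{(-1)}_{0,a;K}$. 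The no-zero-column hypothesis together with the full support of the $\sigma^{(0)}_{0,\cdot}$ forces $\alpha_r$ to be nonzero of full support, so $\alpha_r\ge\alpha_r(e)\,\delta_e$ with $\alpha_r(e)>0$; convolving this inequality with $\nu^{(t)}_{r,a;K}$ ``un-smears'' the estimate, since $\nu^{(t)}_{r,a;K}=\delta_e\ast\nu^{(t)}_{r,a;K}\le\alpha_r(e)^{-1}\,\alpha_r\ast\nu^{(t)}_{r,a;K}$. Chaining the two bounds yields $\nu^{(t)}_{r,a;K}\le M\,\nu^{(-1)}_{0,a;K}$ with $M=2^{t+1}/\alpha_r(e)$ independent of $a$, so $\nu^{(t)}_{r,a;K}\stackrel{\rho}{\ll}\nu^{(-1)}_{0,a;K}$ with $\rho(s)=\min(1,Ms)\in\mathbf{M}$.

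I expect the first assertion to be the crux. In the general QI case we only control each $g\sigma^{(0)}_{0,r}$ up to a majorant rather than by a pointwise multiple, so $g\nu^{(-1)}_{0,a;K}$ cannot be dominated pointwise by $\nu^{(-1)}_{0,a;K}$, and the majorant must instead be transported through the fibered realisation using the functoriality of $\mathcal{C}_\rho$ under factors and projections. The BQI case and the moreover part, by contrast, reduce to elementary pointwise dominations once the correct convolution identities are in place.
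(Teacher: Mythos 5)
Your proof is correct, and at the level of strategy it is the same as the paper's: the same factorization of \(\nu^{(-1)}_{0,a;K}\) as \(\sigma^{(0)}\ast\mu_{a;K}\) for a column vector of probability measures on \(E\), the same iteration of the identity \((!)\) for the moreover part, and the same use of positivity of \(\alpha_r=(\sigma^{(0)}\ast\cdots\ast\sigma^{(t)})_{0,r}\). Two places where you execute differently, both to your credit. First, for uniform quasi-invariance the paper simply cites Lemma \ref{lemma majorant on convolution on G}, but that lemma as stated treats a single convolution \(\omega\ast\nu_0\) with \(\omega\) a measure on \(G\), whereas here one has a matrix convolution \(\sum_r\sigma^{(0)}_{0,r}\ast\mu_r\); your fibered measure \(\tilde\Theta_a\) on \(V_0\times E\), whose Radon--Nikodym cocycle is pulled back from \((V_0,\omega)\) and then pushed through the \(G\)-equivariant map \(\tilde a\) via Lemma \ref{lemma functoriality of C rho spaces}(2)--(3), is exactly the argument needed to make that citation honest in the matrix-valued setting. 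Second, in the moreover part the paper takes an arbitrary \(g\) with \(\alpha_r(g)>0\), deduces \(\nu^{(t)}_{r,a;K}\le \frac{2^{t+1}}{\alpha_r(g)}\,g^{-1}\nu^{(-1)}_{0,a;K}\), and then must invoke the uniform QI majorant \(B(g^{-1})\) from the first part to pass from the translate back to \(\nu^{(-1)}_{0,a;K}\); you instead observe that \(\alpha_r\) inherits full support from \(\sigma^{(0)}_{0,\cdot}\) (a convolution of a fully supported measure with nonzero measures, the latter existing by the no-zero-column assumption), take \(g=e\), and get the pointwise domination \(\nu^{(t)}_{r,a;K}\le M\,\nu^{(-1)}_{0,a;K}\) with \(M=2^{t+1}/\alpha_r(e)\). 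This yields the BQI-type majorant \(\rho(s)=\min(1,Ms)\) without any appeal to the first assertion -- a marginally stronger conclusion obtained by a more elementary route.
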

\begin{proof}
By construction there are column vectors of probability measures of length \(\ell_{0}\) on \(E\) denoted by \(\mu_{a;K}\) such that:
\[\nu_{0,a;K}^{(-1)}=\sigma^{(0)}\ast \mu_{a;K}\] 
Since \(\sigma^{(0)}\) is QI we conclude (Lemma \ref{lemma majorant on convolution on G}) that the collection \(\nu_{0,a;K}^{(-1)}\) is uniformly QI. If \(\sigma^{(0)}\) is BQI we conclude that the collection \(\nu_{0,a;K}^{(-1)}\) is uniformly BQI.\\
For the moreover part, we use \((!)\) to conclude that for any \(t\geq 0,r\in[\ell_t]\) and \(a\geq\frac{1}{2}\)
\[(\sigma^{(0)}\ast\dots\ast\sigma^{(t)})_{0,r}\ast\nu_{r,a;K}^{(t)} \leq \nu_{r, a; K+t+1}^{(-1)} \leq 2^{t+1} \nu_{0,a;K}^{(-1)}\]
Since there are no-zero columns on \(\boldsymbol{\sigma}\), there is \(g\in G\) such that \((\sigma^{(0)}\ast\dots\ast\sigma^{(t)})_{0,r}(g)>0\). Thus:
\[\nu_{r,a;K}^{(t)}\leq 2^{t+1}\cdot  g^{-1}\nu_{0,a;K}^{(-1)}\]
Using the uniform QI of \(\nu_{0,a;K}^{(-1)}\) we conclude the result.
\end{proof}

\begin{definition}
\(\mathfrak{B}_{\mathcal{U}}(\boldsymbol{\sigma})\) is the space \(\mathcal{U}\lim_{a} E\) equipped with the measures \(\nu^{(t)}_i:=\mathcal{U}\lim _a\nu^{(t)}_{i,a;0}\) and \(G\)-action.
\end{definition}

\begin{prop}\label{proposition the Abel construction is a sigma system}
\(\mathfrak{B}_{\mathcal{U}}(\boldsymbol{\sigma})\) is a \(\boldsymbol{\sigma}\)-system.
\end{prop}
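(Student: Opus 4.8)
The plan is to check the two defining conditions of a $\boldsymbol{\sigma}$-system directly from the construction, the substantive one being stationarity.

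First I would record the easy points. The ultralimit Borel space $\mathcal{U}\lim E$ carries a natural measurable $G$-action: applying the functoriality Lemma \ref{Lemma functoriality ultralimit measure spaces} to the map $g\colon E\to E$ for each $g\in G$ produces a measurable action $g\cdot[x_a]=[g\cdot x_a]$. Moreover, each $\nu_{i,a;0}^{(t)}$ is a probability measure on $E$, so its ultralimit $\nu_i^{(t)}=\mathcal{U}\lim_a\nu_{i,a;0}^{(t)}$ is a probability measure on $\mathcal{U}\lim E$ by the construction of the ultralimit measure in Section \ref{section ultralimit of probability spaces}. (The quasi-invariance of the underlying space is then automatic from the $n=0$ stationarity relation and the positivity of $\sigma^{(0)}$, exactly as remarked after the definition of a $\boldsymbol{\sigma}$-system.) Thus the only thing left is the stationarity relation $\sigma^{(n)}\ast\nu^{(n)}=\nu^{(n-1)}$ for each $n\geq 0$.

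To prove stationarity, fix $n\geq0$ and $s\in[\ell_{n-1}]$. The basic formula $(!)$ with $t=n$ and $K=0$ gives, for every $0<a<1$,
\[\sum_{r\in[\ell_n]}\sigma^{(n)}_{s,r}\ast\nu_{r,a;0}^{(n)}=\nu_{s,a;1}^{(n-1)}.\]
I would then apply $\mathcal{U}\lim_a$ to both sides and identify the two limits. For the left-hand side, the ultralimit commutes with each convolution: for an index $r$ with $c_r:=\sigma^{(n)}_{s,r}(G)>0$ write $\sigma^{(n)}_{s,r}=c_r\,\bar\sigma_r$ with $\bar\sigma_r$ a probability measure on $G$, so that Lemma \ref{lemma convolution and ultralimit} yields $\mathcal{U}\lim_a(\bar\sigma_r\ast\nu_{r,a;0}^{(n)})=\bar\sigma_r\ast\nu_r^{(n)}$, while the terms with $c_r=0$ vanish. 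Summing over the finitely many $r$ and using linearity of $\mathcal{U}\lim$ gives
\[\mathcal{U}\lim_a\sum_{r\in[\ell_n]}\sigma^{(n)}_{s,r}\ast\nu_{r,a;0}^{(n)}=\sum_{r\in[\ell_n]}\sigma^{(n)}_{s,r}\ast\nu_r^{(n)}=\big(\sigma^{(n)}\ast\nu^{(n)}\big)_s.\]
For the right-hand side, Lemma \ref{lemma measures on E independet of choices} shows that $\mathcal{U}\lim_a\nu_{s,a;1}^{(n-1)}=\mathcal{U}\lim_a\nu_{s,a;0}^{(n-1)}=\nu_s^{(n-1)}$, since the ultralimit measure does not depend on the shift $K$. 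Equating both sides for all $s\in[\ell_{n-1}]$ gives $\sigma^{(n)}\ast\nu^{(n)}=\nu^{(n-1)}$, as required.

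The one step that is not purely formal is the commutation of the ultralimit with convolution: Lemma \ref{lemma convolution and ultralimit} is phrased for probability measures on $G$, whereas the entries $\sigma^{(n)}_{s,r}$ are only sub-probability measures, so one must pass through the normalization above (equivalently, note that the total-variation estimate in the proof of that lemma applies verbatim to finite measures). I expect no further difficulty, since we work throughout with genuine probability measures and never need to control unbounded Radon--Nikodym derivatives; the $K$-independence supplied by Lemma \ref{lemma measures on E independet of choices} is precisely what converts the shift appearing on the right of $(!)$ back into the measures defining $\mathfrak{B}_{\mathcal{U}}(\boldsymbol{\sigma})$.
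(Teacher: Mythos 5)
Your proposal is correct and follows exactly the route of the paper: the paper's proof is a one-line citation of formula $(!)$, Lemma \ref{lemma measures on E independet of choices}, and Lemma \ref{lemma convolution and ultralimit}, which are precisely the three ingredients you combine (taking the ultralimit of $(!)$ at $K=0$, commuting it with the convolution, and using $K$-independence to identify the right-hand side). Your extra care in normalizing the sub-probability entries $\sigma^{(n)}_{s,r}$ before invoking Lemma \ref{lemma convolution and ultralimit} is a detail the paper leaves implicit, and it is handled correctly.
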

\begin{proof}
Follows immediately from formula (!), Lemma \ref{lemma measures on E independet of choices} and Lemma \ref{lemma convolution and ultralimit}.
\end{proof}

\begin{definition}
We denote by \(\mathfrak{B}_{\mathcal{U}}(\boldsymbol{\sigma})_{RN}\) the Radon Nikodym factor of \(\mathfrak{B}_{\mathcal{U}}(\boldsymbol{\sigma})\) as a \(\boldsymbol{\sigma}\)-system (see Lemma \ref{lemma RN factor of sigma-system} for the Radon-Nikodym factor of a \(\boldsymbol{\sigma}\)-system).
\end{definition}

Our next goal is to mimic the proof of \cite[Theorem 5.9]{sayag2022entropy} for the Poisson boundary of a time dependent matrix-valued random walk. 

\begin{prop}\label{proposition maximality of the Abel construction}
Let \(\mathcal{X}\) be a regular \(\boldsymbol{\sigma}\)-system. Then we have a \(\boldsymbol{\sigma}\)-system \(\mathcal{Y}\) and a diagram:
\[\begin{CD}
\mathcal{Y} @>\mu>>  \mathcal{X} \\
@VVpV \\
\mathfrak{B}_{\mathcal{U}}(\boldsymbol{\sigma})_{RN}
\end{CD}
\]
Where \(\mu\) is a factor and \(p\) is measure preserving extensions.
\end{prop}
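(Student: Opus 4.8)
The plan is to realize $\mathcal{Y}$ as an ultralimit of explicit \emph{joinings} of the spaces $(E,\nu_{a;K})$ with $\mathcal{X}$. Write $\mathcal{X}=(X,\boldsymbol{\nu}^{X})$ with $X$ standard Borel and measure vectors $\nu^{X,(n)}_{j}$, and define the $G$-equivariant measurable map $\Psi:E\to M(X)$ by $\Psi(n,j,g)=g\nu^{X,(n)}_{j}$. For a measure $\lambda$ on $E$ let $J(\lambda):=\intop_{E}\big(\delta_e\times\Psi(e)\big)\,d\lambda(e)$ be the associated measure on $E\times X$; this operator is linear, monotone, and $G$-equivariant in the sense $gJ(\lambda)=J(g\lambda)$. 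Setting $\tilde\nu^{(t)}_{r,a;K}:=J(\nu^{(t)}_{r,a;K})$, a short computation using the defining formula for $\nu^{(t)}_{r,a;K}$, the identity $(\sigma^{(t+1)}\ast\dots\ast\sigma^{(n)})\ast\nu^{X,(n)}=\nu^{X,(t)}$ coming from stationarity of $\mathcal{X}$, and $\sum_{n\geq t+1+K}a^{n}=\frac{a^{t+1+K}}{1-a}$ yields the key barycenter identity: the $X$-marginal of $\tilde\nu^{(t)}_{r,a;K}$ equals $\nu^{X,(t)}_{r}$, while its $E$-marginal is $\nu^{(t)}_{r,a;K}$.

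First I would check that $\mathcal{Y}:=\mathcal{U}\lim_{a}(E\times X,\tilde\nu_{a;0})$ is a well-defined $\boldsymbol{\sigma}$-system. Applying $J$ to formula $(!)$ and using its $G$-equivariance and linearity gives $\sum_{r}\sigma^{(t)}_{s,r}\ast\tilde\nu^{(t)}_{r,a;K}=\tilde\nu^{(t-1)}_{s,a;K+1}$, and applying $J$ to $\nu^{(t)}_{r,a;K+1}\leq\frac1a\nu^{(t)}_{r,a;K}$ gives the analogous domination; exactly as in Lemma \ref{lemma measures on E independet of choices} the ultralimit $\tilde\nu^{(t)}_{r}:=\mathcal{U}\lim_{a}\tilde\nu^{(t)}_{r,a;0}$ is then independent of $K$, and stationarity follows as in Proposition \ref{proposition the Abel construction is a sigma system} via Lemma \ref{lemma convolution and ultralimit}. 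For the uniform quasi-invariance needed to form the ultralimit, note that $\pi_{E}:(E\times X,\tilde\nu^{(-1)}_{0,a;K})\to(E,\nu^{(-1)}_{0,a;K})$ is measure preserving and $J(\phi\cdot\lambda)=(\phi\circ\pi_{E})\cdot J(\lambda)$ for any density $\phi$ on $E$, so $\frac{dg\tilde\nu^{(-1)}_{0,a;K}}{d\tilde\nu^{(-1)}_{0,a;K}}=\pi_{E}^{*}\big(\frac{dg\nu^{(-1)}_{0,a;K}}{d\nu^{(-1)}_{0,a;K}}\big)$; by Lemma \ref{lemma functoriality of C rho spaces} the majorant transfers, so the family is uniformly QI with the same majorant as in Lemma \ref{lemma uniform integrability on E}, and applying $J$ to $\nu^{(t)}_{r,a;K}\leq 2^{t+1}g_{0}^{-1}\nu^{(-1)}_{0,a;K}$ gives the uniform $\rho$-absolute continuity of the higher vectors.

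Next I would produce the two maps. The projection $\pi_{X}:E\times X\to X$ satisfies $(\pi_{X})_{*}\tilde\nu^{(t)}_{r,a;K}=\nu^{X,(t)}_{r}$ by the barycenter identity, so by functoriality (Lemma \ref{Lemma functoriality ultralimit measure spaces}) it induces a factor $\mathcal{Y}\to\mathcal{U}\lim_{a}(X,\boldsymbol{\nu}^{X})$; composing with the canonical factor $\mathcal{U}\lim_{a}(X,\boldsymbol{\nu}^{X})\to\mathcal{X}$ coming from constant sequences yields the desired factor $\mu:\mathcal{Y}\to\mathcal{X}$. Dually, $\pi_{E}$ induces a factor $q:\mathcal{Y}\to\mathfrak{B}_{\mathcal{U}}(\boldsymbol{\sigma})$; the density formula above, together with $\mathcal{U}\lim_{a}\frac{dg\nu^{(n)}_{i,a;0}}{d\nu^{(-1)}_{0,a;0}}=\frac{dg\nu^{(n)}_{i}}{d\nu^{(-1)}}$ (Proposition \ref{proposition convolution and ultralimit}) and the commutation of $\mathcal{U}\lim$ with composition, shows $\frac{dg\tilde\nu^{(n)}_{i}}{d\tilde\nu^{(-1)}}=\big(\frac{dg\nu^{(n)}_{i}}{d\nu^{(-1)}}\big)\circ q$, i.e. $q$ is a measure preserving extension. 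Finally I set $p$ to be the composite of $q$ with the Radon--Nikodym projection $\mathfrak{B}_{\mathcal{U}}(\boldsymbol{\sigma})\to\mathfrak{B}_{\mathcal{U}}(\boldsymbol{\sigma})_{RN}$; since a composite of measure preserving extensions is measure preserving, $p$ is the required measure preserving extension and the diagram commutes.

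The main obstacle is the bookkeeping around the ultralimit rather than any single hard estimate: one must ensure the joining measures are \emph{uniformly} QI (so that the measure-theoretic ultralimit of Section \ref{section ultralimit of probability spaces} applies and the Radon--Nikodym cocycles converge), and one must verify that the finite-level density identity $\frac{dg\tilde\nu}{d\tilde\nu}=\pi_{E}^{*}\frac{dg\nu}{d\nu}$ survives the passage to the ultralimit. Both reduce to the functoriality of the $\mathcal{C}_{\rho}$-spaces (Lemma \ref{lemma functoriality of C rho spaces}) and the commutation of $\mathcal{U}\lim$ with convolution and composition (Proposition \ref{proposition convolution and ultralimit}), so no genuinely new analytic input beyond Section \ref{section ultralimit of probability spaces} should be needed.
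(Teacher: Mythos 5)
Your proposal is correct and takes essentially the same route as the paper: the paper likewise builds measures on \(E\times X\) whose \(E\)-marginals are the \(\nu^{(t)}_{r,a;K}\) and whose Radon--Nikodym cocycle is pulled back from \(E\), passes to the ultralimit via Lemma \ref{lemma uniform integrability on E} and Corollary \ref{Corollary 2 from Vitali theorem}, maps to \(\mathcal{X}\) through \(\mathcal{U}\lim_a X\) using standardness of \(X\), and finishes by composing the projection to \(E\) with the Radon--Nikodym projection. The only difference is a change of coordinates: the paper lets \(G\) act on the first factor only and places the untranslated fiber measures \(m^{(n)}_{p}\) over \((n,p,g)\), using the action map \(\mu\big((n,p,g),x\big)=g\cdot x\) where you use the projection \(\pi_{X}\) with diagonal action and fibers \(g\nu^{X,(n)}_{p}\); the fiberwise twist \((n,p,g,x)\mapsto(n,p,g,g\cdot x)\) identifies the two constructions.
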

\begin{proof}
Let \(\mathcal{X}=(X,\boldsymbol{m})\). Consider \(E\times X\) where the \(G\)-action is only on the first coordinate. We have the following diagram:
\[\begin{CD}
E\times X @>\mu>>  X  \\
@VVpV \\
E
\end{CD}
\]
where \(p\) is the projection, and \(\mu((n,i,g),x)=g\cdot x\). Both \(\mu,p\) are \(G\)-equivariant. \\
For \(0<a<1\:,\:t\geq-1,r\in[\ell_t]\) we define the following probability measure \(m_{r,a}^{(t)}\) on \(E\times X\): \\
On \(\{n\}\times \{p\}\times G\times X\subset E\times X\) we put \(\frac{1-a}{a^{t+1}}\cdot \boldsymbol{1}_{n\geq t+1}\cdot a^{n}\cdot  \:(\sigma^{(t+1)}\ast\dots\ast\sigma^{(n)})_{r,p} \times m^{(n)}_{p}\).\\
Then we have:
\begin{multline*}
\mu_{*}(m_{r,a}^{(t)})=\sum_{n\geq 0;p\in[\ell_{n}]}\frac{1-a}{a^{t+1}}\cdot \boldsymbol{1}_{n\geq t+1}\cdot a^{n}\cdot \mu_{*}( \:(\sigma^{(t+1)}\ast\dots\ast\sigma^{(n)})_{r,p} \times m^{(n)}_{p})=\\
\frac{1-a}{a^{t+1}} \sum_{n=t+1}^{\infty} a^{n}\sum_{p\in [\ell_{n}]}(\sigma^{(t+1)}\ast\dots\ast\sigma^{(n)})_{r,p} \ast m^{(n)}_{p})=\frac{1-a}{a^{t+1}} \sum_{n=t+1}^{\infty} a^{n} m_{r}^{(t)}=m_{r}^{(t)}
\end{multline*}
On the other hand, we have that \(p_{*}(m_{r,a}^{(t)})=\nu_{r,a;0}^{(t)}\) and that for any \(g\in G\)
\[\frac{d\:gm_{r,a}^{(t)}}{d\:m_{0,a}^{(-1)}}=\frac{d\:g\nu_{r,a;0}^{(t)}}{d\:\nu_{0,a;0}^{(-1)}}\circ p\]
Consider the ultralimits diagram (Lemma \ref{Lemma functoriality ultralimit measure spaces}) where we get:
\[\begin{CD}
(\mathcal{U}\lim_{a} E\times X,\mathcal{U}\lim_{a} m_{r,a}^{(t)}) @>\mu>>(\mathcal{U}\lim_{a} X, \mathcal{U}\lim_{a} m_{r}^{(t)}) \\
@VVpV \\
(\mathcal{U}\lim_{a} E,\mathcal{U}\lim_{a} \nu_{r,a; 0}^{(t)})
\end{CD}
\]
Using  Lemma \ref{lemma uniform integrability on E} and Corollary \ref{Corollary 2 from Vitali theorem} we conclude
\[\frac{d\:g\mathcal{U}\lim m_{r,a}^{(t)}}{d\:\mathcal{U}\lim m_{0,a}^{(-1)}}=\frac{d\:g \nu_{r}^{(t)}}{d\:\nu_{0}^{(-1)}}\circ p\]
In particular we conclude that \(\mathcal{Y}=(\mathcal{U}\lim E\times X, \big((\mathcal{U}\lim m_{r,a}^{(t)})_{r=1,\dots,\ell_n}\big)_{t\geq0}\) give rise to a \(\boldsymbol{\sigma}\)-system and that \(p\) is a measure preserving extension. Thus we got:
\[\begin{CD}
\mathcal{Y} @>\mu>>(\mathcal{U}\lim X, \mathcal{U}\lim \boldsymbol{m}) @>\Delta>> (X,\boldsymbol{m}) \\
@VVpV \\
\mathfrak{B}_{\mathcal{U}}(\boldsymbol{\sigma})\\
@VVqV\\
\mathfrak{B}_{\mathcal{U}}(\boldsymbol{\sigma})_{RN}
\end{CD}
\]
where \(\Delta\) is defined using the fact that \(X\) is a standard probability space and we have \(\Delta: \: L^\infty(X)\to L^\infty(\mathcal{U}\lim X)\) which sends each \(\mathcal{U}_{a}\lim m_{r}^{(t)}\) to \(m_{r}^{(t)}\) [and applying \cite[Theorem 2.1]{RAMSAY1971253} ].\\
Note that \(p,q\) (and \(\Delta\)) are measure preserving extensions, proving the result.
\end{proof}

Finally we obtain the key tool of this paper - the construction of the Poisson boundary of a time dependent matrix-valued random walk in terms of ultralimits:

\begin{theorem}\label{Theorem ultralimit construction to Poisson boundary of a time dependent matrix-valued random walk}
The Poisson boundary \(\mathcal{B}(G,\boldsymbol{\sigma})\) of \(\boldsymbol{\sigma}\) is isomorphic to \(\mathfrak{B}_{\mathcal{U}}(\boldsymbol{\sigma})_{RN}\).
\end{theorem}
\begin{proof}
Take \(\mathcal{X}=\mathcal{B}(G,\boldsymbol{\sigma})\) to be the Poisson boundary and apply Proposition \ref{proposition maximality of the Abel construction}. By Proposition \ref{proposition maximality of Poisson boundary} the factor \(\mathcal{Y}\to\mathcal{B}(G,\boldsymbol{\sigma})\) is measure preserving. Since the Poisson boundary is its own Radon Nikodym factor (Lemma \ref{prop RN factor of Poisson boundary is itself}) and \(\mathcal{Y}\to \mathfrak{B}_{\mathcal{U}}(\boldsymbol{\sigma})_{RN}\) is measure preserving we conclude:
\[\mathcal{B}(G,\boldsymbol{\sigma})\cong\mathcal{B}(G,\boldsymbol{\sigma})_{RN}\cong \mathcal{Y}_{RN}\cong \mathfrak{B}_{\mathcal{U}}(\boldsymbol{\sigma})_{RN}\]
\end{proof}

\begin{remark}
We now sketch another proof of Theorem \cite[Theorem A]{sayag2022entropy} without appealing to Furstenberg's conjecture (\cite[Theorem 4.3]{BoundaryEntropy})). \\
Suppose \(G\) is amenable, let \(\ell_n=1\). Consider \(\sigma^{(0)}\) which is BQI. 
We define inductively an exhausting F{\o}lner sequence \(F_n\). Indeed, take \(F_{n}\) to be a F{\o}lner set for \((\epsilon,S)=(\frac{1}{2^n},F_{1}\cdots F_{n-1})\) that contains \(e\) (here, \(F_1\cdots F_{n-1}\) is the set of products).\\
Let \(\sigma^{(n)}=\frac{1}{|F_n|}\delta_{F_n}\) for \(n\geq0\).\\
Consider the Poisson boundary \(\mathcal{B}(G,\boldsymbol{\sigma})=(B,(m^{(n)}))\). Note that \(\sigma^{(n+1)}\ast m^{(n+1)}=m^{(n)}\) which implies that for \(g\in F_{1}\cdots F_{n}\) one has \(||g\cdot m^{(n)}-m^{(n)}||\leq ||g\sigma^{(n+1)}-\sigma^{(n+1)}||\leq\frac{1}{2^{n+1}}\). Thus \(||m^{(0)}-m^{(n)}||=||\sigma^{(1)}\ast\dots \ast \sigma^{(n)}\ast m^{(n)}-m^{(n)}||\leq \frac{1}{2^{n+1}}\). Thus for any \(g\in G\), there is \(n_0\) so that for \(n\geq n_0\) one has: \(||m^{(0)}-gm^{(0)}||<\frac{3}{2^{n+1}}\). In particular \(m^{(0)}\) is \(G\)-invariant and thus \(m^{(-1)}=\sigma^{(0)}\ast m^{(0)}\) is \(G\)-invariant.\\
In particular considering \(\pi:G\times \mathbb{N}\to G\) and \(\lambda_{k}=\pi_{*}\nu_{0,1-\frac{1}{k};0}^{(-1)}\) we conclude that \(\lambda_{k}\) are uniformly BQI measures on \(G\) such that for any non-principle ultrafilter \(\mathcal{U}\) we have \(\mathcal{U}\lim (G,\lambda_{k})\) is \(G\)-invariant.\\
In particular, the measures \(\lambda_{k}\) are KL-almost invariant (they are also \(D_{f}\)-almost invariant for any \(f\)-divergence).
\end{remark}

\subsection{Amenable actions and ultralimits}
We begin by giving the definition for amenability, we use \cite[Section 3]{MR2013348} as our main resource. We treat only the case of discrete groups, which simplifies the presentation. However we do not assume that our probability spaces are standard.
\begin{definition}\label{Definition amenable pair}
Given a factor of QI \(G\)-spaces \(\pi:Y\to X\), a \(G\)-equivariant conditional expectation is a linear mapping \(E:L^{\infty}(Y)\to L^{\infty}(X)\) such that:
\begin{enumerate}
    \item
    \(E\) is \(G\)-equivariant: for every \(f\in L^{\infty}(Y),g\in G\) we have \(E(f\circ g)=E(f)\circ g\).
    \item 
    \(E\) is non-negative: if \(L^{\infty}(Y)\ni f\geq 0\) then \(E(f)\geq 0\).
    \item
    \(E\) is normalized: \(E(1)=1\).
    \item
    \(E\) is \(L^{\infty}(X)\)-linear: for \(f\in L^{\infty}(Y),h\in L^{\infty}(X)\) we have \(E(\pi^{*}(h)\cdot f)=h\cdot E(f)\).
\end{enumerate} 
If such a \(G\)-equivariant expectation exists, we say that the pair \((Y,X)\) is amenable.\\
We say that a QI \(G\)-space \(S\) is amenable (in Zimmer's sense) if the pair \((G\times S, S)\) is amenable. Here, \(G\times S\) has the product measure class and diagonal action, and the factor is the projection.
\end{definition}
\begin{remark}
Here, the actual measure on our QI \(G\)-spaces is immaterial, only its measure class will play a role.
\end{remark}
\begin{example}\label{example conditional expectation of mpe is G equivariant}
A factor of QI \(G\)-spaces \(\pi:(Y,m)\to (X,\nu)\) is a measure preserving extension iff the conditional expectation \(\mathbb{E}_{m,\pi}: L^{\infty}(Y)\to L^{\infty}(X)\) is \(G\)-equivariant.\\
In particular, suppose \(S\) is a QI \(G\)-space, then for any measure \(\nu\) in the measure class, the pair \((S,T)\) is amenable, where \(T=(S,\nu)_{RN}\) is the RN factor of \((S,\nu)\).
\end{example}
In the case where the \(G\)-spaces are standard, this notion of amenable action is the one from \cite{Amenable},\cite{AmenableGeneral}. 
This concept is equivalent to Zimmer's: he introduced his notion of amenability in the papers \cite{zimmer1978amenable},\cite{zimmer1977neumann}, \cite{MR470692} based on the idea of a fixed point property, and proved the equivalence to the definition above.\\
We remind that we consider non-standard probability spaces, so we need to check that some basic properties still hold in this setting.
\begin{lemma}\label{lemma amenable pair and amenable actions}
Let \(\pi:S\to T\) be a factor of QI \(G\)-spaces.
\begin{enumerate}
    \item If \((S,T)\) is an amenable pair and \(S\) is an amenable \(G\)-space, then \(T\) is an amenable \(G\)-space.
    \item
    If \((S,T)\) is an amenable pair and suppose \(X\) is a QI \(G\)-space such that there are factors \(S\stackrel{\tau}{\to} X\stackrel{p}{\to} T\) with \(\pi=p\circ\tau\). Then \((X,T)\) is an amenable pair.
\end{enumerate}
\end{lemma}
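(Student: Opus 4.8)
The plan is to build the required $G$-equivariant conditional expectations explicitly, by composing the expectations supplied by the hypotheses with pullback maps along factors, and then to verify the four defining axioms by a diagram chase. I first record the only feature of pullbacks I will use: for a factor of probability $G$-spaces $\phi\colon Y\to X$ the pullback $\phi^{*}\colon L^{\infty}(X)\to L^{\infty}(Y)$, $\phi^{*}h=h\circ\phi$, is well defined (since $\phi_{*}\nu_{Y}=\nu_{X}$, so null sets pull back to null sets) and is a unital, positive, multiplicative, $G$-equivariant operator. This is exactly what makes the module computations go through.

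For item 1, write $p_{S}\colon G\times S\to S$ and $p_{T}\colon G\times T\to T$ for the projections, let $E_{S}\colon L^{\infty}(G\times S)\to L^{\infty}(S)$ witness amenability of $S$, and let $E_{\pi}\colon L^{\infty}(S)\to L^{\infty}(T)$ witness amenability of the pair $(S,T)$. Put $\Pi=\mathrm{id}_{G}\times\pi\colon G\times S\to G\times T$, which is a factor of $G$-spaces, and define
\[
E_{T}:=E_{\pi}\circ E_{S}\circ\Pi^{*}\colon L^{\infty}(G\times T)\longrightarrow L^{\infty}(T).
\]
Equivariance, positivity and normalization of $E_{T}$ are immediate, since each of the three composed maps has these properties. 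The content is the $L^{\infty}(T)$-linearity of $E_{T}$ over $p_{T}$, and here I would use the commuting square $\pi\circ p_{S}=p_{T}\circ\Pi$: for $h\in L^{\infty}(T)$ and $F\in L^{\infty}(G\times T)$, multiplicativity of $\Pi^{*}$ together with $\Pi^{*}p_{T}^{*}=p_{S}^{*}\pi^{*}$ gives $\Pi^{*}(p_{T}^{*}h\cdot F)=p_{S}^{*}(\pi^{*}h)\cdot\Pi^{*}F$; applying the $L^{\infty}(S)$-linearity of $E_{S}$ over $p_{S}$ pulls out $\pi^{*}h$, and then the $L^{\infty}(T)$-linearity of $E_{\pi}$ over $\pi$ pulls out $h$, yielding $E_{T}(p_{T}^{*}h\cdot F)=h\cdot E_{T}(F)$.

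For item 2, let $E_{\pi}\colon L^{\infty}(S)\to L^{\infty}(T)$ witness amenability of $(S,T)$ and set $E:=E_{\pi}\circ\tau^{*}\colon L^{\infty}(X)\to L^{\infty}(T)$. Again equivariance, positivity and normalization are inherited from $\tau^{*}$ and $E_{\pi}$. For $L^{\infty}(T)$-linearity over $p$, the relation $p\circ\tau=\pi$ gives $\tau^{*}(p^{*}h\cdot F)=\pi^{*}h\cdot\tau^{*}F$ for $h\in L^{\infty}(T)$ and $F\in L^{\infty}(X)$, and the $L^{\infty}(T)$-linearity of $E_{\pi}$ over $\pi$ then yields $E(p^{*}h\cdot F)=h\cdot E(F)$. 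Hence $E$ witnesses amenability of the pair $(X,T)$.

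I do not expect a serious obstacle; the work is careful bookkeeping of which factor each expectation and each module identity is attached to. The one point that genuinely requires care in the present non-standard setting is that the whole argument must stay at the level of operators on $L^{\infty}$: since the spaces need not be standard I cannot appeal to pointwise disintegration, so I rely only on well-definedness of the pullbacks (absolute continuity of the pushed-forward measure class under a factor) and on the abstract axioms. Verifying that $\Pi^{*}$ and $\tau^{*}$ are indeed well defined on the relevant $L^{\infty}$ spaces is the only measure-theoretic check, and it follows directly from the definition of a factor.
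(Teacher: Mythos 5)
Your proposal is correct and follows essentially the same route as the paper: item 1 uses the identical composition \(E_{T}=E\circ E_{S}\circ(\mathrm{id}_{G}\times\pi)^{*}\) and item 2 the identical composition \(E\circ\tau^{*}\). The only difference is that you spell out the verification of the \(L^{\infty}(T)\)-module linearity via the commuting squares, which the paper leaves as "easy to see."
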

\begin{proof}
\begin{enumerate}
    \item 
    Let \(E:L^{\infty}(S)\to L^{\infty}(T)\) be a \(G\)-equivariant conditional expectation. As \(S\) is amenable, we have a \(G\)-equivariant conditional expectation \(E_{S}:L^{\infty}(G\times S)\to L^{\infty}(S)\). Consider
    \[E_{T}:=E\circ E_{S}\circ(id_{G}\times\pi)^{*}: L^{\infty}(G\times T)\to L^{\infty}(G\times S)\to L^{\infty}(S)\to L^{\infty}(T)\]
    it is easy to see that \(E_{T}\) is a \(G\)-equivariant conditional expectation \(L^{\infty}(G\times T)\to L^{\infty}(T)\) and thus \(T\) is amenable.
    \item
    Let \(E:L^{\infty}(S)\to L^{\infty}(T)\) be a \(G\)-equivariant conditional expectation, then:
    \[E\circ \tau^{*}:L^{\infty}(X)\to L^{\infty}(S)\to L^{\infty}(T)\]
    is a \(G\)-equivariant conditional expectation.
\end{enumerate}
\end{proof}


The next lemma allows us to reduce to the case of standard probability spaces in problems of amenability.
\begin{lemma}\label{lemma action ameanble iff rn amenable}
Let \((S,\nu)\) be a QI \(G\)-space and consider the Radon-Nikodym factor \(T=(S,\nu)_{RN}\). Then \(S\) is amenable iff \(T\) is amenable.
\end{lemma}
\begin{proof}
If \(S\) is amenable, by Lemma \ref{lemma amenable pair and amenable actions} and Example \ref{example conditional expectation of mpe is G equivariant} we conclude that \(T\) is amenable.\\
For the other direction, we need to define \(\Phi: L^{\infty}(G\times S)\to L^{\infty}(S)\). Let \((S_{\alpha},\pi_{\alpha})_{\alpha\in I}\) be the set of measure preserving factors of \((S,\nu)\) that are standard probability spaces (up to isomorphism). \(I\) has a partial order \(\leq\) defined by \(\alpha\leq \beta \iff \pi_{\alpha}^{*}L^{\infty}(S_{\alpha})\subset \pi_{\beta}^{*} L^{\infty}(S_{\beta})\). It is clear that \((I,\leq)\) is directed.\\
Applying \cite[Corollary C]{AmenableGeneral} for \(S_{\alpha}\to T=(S,\nu)_{RN}\) and the amenability of \(T\) we conclude that \(S_{\alpha}\) is amenable for every \(\alpha\in I\).
Thus, there is a \(G\)-equivariant conditional expectation \(\Phi_{\alpha}: L^{\infty}(G\times S_{\alpha})\to L^{\infty}(S_\alpha)\). Using \(\pi_{\alpha}\) we consider \(L^{\infty}(S_\alpha),L^{\infty}(G\times S_{\alpha})\) naturally as sub-algebras \(L^{\infty}(S),L^{\infty}(G\times S)\) respectively. Extending
\(\Phi_{\alpha}\) by zero we obtain a function \(\Phi_{\alpha}: L^{\infty}(G\times S)\to L^{\infty}(S)\). Note that \(\Phi_{\alpha}\) defines an element of
\[E:=\prod_{f\in L^{\infty}(G\times S)} \overline{B}_{L^{\infty}(S)}(||f||) \]
We give \(E\) the product topology of the \(w^{*}\)-topology. By Tychonoff's theorem we conclude that \(E\) is compact. So that the net \((\Phi_{\alpha})_{\alpha\in I}\) has a sub-net converging to \(\Phi\in E\).\\ 
Note that for any \(f\in L^{\infty}(G\times S)\) there is \(\alpha_{0}\in I\) so that for every \(\alpha\geq \alpha_0\) we have \(f\in L^{\infty}(G\times S_{\alpha})\). Indeed, consider the \(\sigma\)-algebra generated by the Radon-Nikodym cocyle of \(\nu\) (that is the functions \(\frac{dg\nu}{d\nu}(s)\)) and \(f(g,h\cdot s)\) for \(g,h\in G\). This \(\sigma\)-algebra is separable and \(G\)-invariant thus corresponds to \(S_{\alpha_0}\) for some \(\alpha_{0}\in I\), which satisfies the required property. \\
This easily implies that \(\Phi: L^{\infty}(G\times S)\to L^{\infty}(S)\) is a \(G\)-equivariant conditional expectation.
\end{proof}

Let us prove Theorem \ref{Theorem ultralimit ameanble action intro} (we are using Proposition \ref{proposition comparing ultralimit BQI} to translate it to the measurable ultralimit construction):
\begin{theorem}\label{Theorem Amenable actions and ultralimit}
Let \(S\) be an ergodic amenable \(G\)-space. Then there is a BQI measure \(\nu\) on \(S\) (in the measure class) and a sequence of uniformly BQI measures \(\lambda_n\) on \(G\times\mathbb{N}\) such that for any non-principle ultrafilter \(\mathcal{U}\) on \(\mathbb{N}\) we have that there is an isomorphism 
\[(S,\nu)_{RN}\cong (\mathcal{U}\lim G\times\mathbb{N},\mathcal{U}\lim\lambda_n)_{RN} \]
\end{theorem}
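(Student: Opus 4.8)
The plan is to combine the representation of ergodic amenable actions as generalised Poisson boundaries (the main result of \cite{Amenable}) with the ultralimit realisation established in Theorem \ref{Theorem ultralimit construction to general poisson boundary}. First I would reduce to the Radon--Nikodym factor: fixing any representative of the measure class, Lemma \ref{lemma action ameanble iff rn amenable} shows that the standard QI $G$-space $(S,\nu)_{RN}$ is again ergodic and amenable, so that by \cite{Amenable} it is isomorphic, as a QI $G$-space, to a general Poisson boundary $\mathcal{B}(G,\boldsymbol{\sigma})$ for some $\boldsymbol{\ell}$-stochastic sequence $\boldsymbol{\sigma}$. Here one arranges $\boldsymbol{\sigma}$ to have no zero columns and the initial matrix $\sigma^{(0)}$ to be BQI (taking the initial distribution to be a fully supported BQI measure on $G$); this makes the harmonic measure $\nu^{(-1)}$ a BQI measure, which after a measure-preserving lift furnishes the required BQI representative $\nu$ on $S$ with $(S,\nu)_{RN}\cong\mathcal{B}(G,\boldsymbol{\sigma})$.

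Next I would package the ultralimit side. The $G$-space $E=\coprod_{n,\,j\in[\ell_n]}\{n\}\times\{j\}\times G$ is a countable disjoint union of copies of $G$ with $G$ acting by left translation, hence is $G$-equivariantly isomorphic to $G\times\mathbb{N}$. Fixing an increasing sequence $a_n\to 1$ inside $(\tfrac12,1)$ (say $a_n=1-\tfrac1n$) I would set $\lambda_n:=\nu_{0,a_n;0}^{(-1)}$, viewed as a probability measure on $G\times\mathbb{N}$; by Lemma \ref{lemma uniform integrability on E} (using $\sigma^{(0)}$ BQI) these are uniformly BQI, and $\mathcal{U}\lim_n\lambda_n=\nu^{(-1)}$. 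The key observation is that the construction of $\mathfrak{B}_{\mathcal{U}}(\boldsymbol{\sigma})$ and the proofs of Lemma \ref{lemma measures on E independet of choices}, Proposition \ref{proposition maximality of the Abel construction} and Theorem \ref{Theorem ultralimit construction to general poisson boundary} use only formula $(!)$, the domination $\nu_{r,a;K+1}^{(t)}\le\tfrac1a\nu_{r,a;K}^{(t)}$, and the relation $\mathcal{U}\lim a=1$; all of these persist verbatim when the interval $(\tfrac12,1)$ is replaced by the sequence $\{a_n\}$ and $\mathcal{U}$ by any non-principal ultrafilter on $\mathbb{N}$ (since $a_n\to1$ forces $\mathcal{U}\lim_n a_n=1$). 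Thus Theorem \ref{Theorem ultralimit construction to general poisson boundary} applies along the sequence and gives that the $\boldsymbol{\sigma}$-system Radon--Nikodym factor of $\mathfrak{B}:=\mathcal{U}\lim_n(G\times\mathbb{N},\lambda_n)$ is $\mathcal{B}(G,\boldsymbol{\sigma})$.

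Finally I would reconcile the two notions of Radon--Nikodym factor and chain the isomorphisms. The base measure class of $\mathfrak{B}$ is $\nu^{(-1)}$, so the $\sigma$-algebra $\sigma\big(\tfrac{dg\nu^{(-1)}}{d\nu^{(-1)}}\big)$ defining the QI $G$-space factor $\mathcal{U}\lim_{RN}(G\times\mathbb{N},\lambda_n)$ is contained in the larger $\sigma$-algebra generated by all the cocycles $\tfrac{dg\nu_i^{(n)}}{d\nu^{(-1)}}$ defining the $\boldsymbol{\sigma}$-system factor $\mathfrak{B}_{\mathcal{U}}(\boldsymbol{\sigma})_{RN}\cong\mathcal{B}(G,\boldsymbol{\sigma})$. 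Hence $\mathcal{B}(G,\boldsymbol{\sigma})$ maps onto $\mathcal{U}\lim_{RN}(G\times\mathbb{N},\lambda_n)$ by a measure-preserving factor; but $\mathcal{B}(G,\boldsymbol{\sigma})\cong (S,\nu)_{RN}$ already coincides with its own QI Radon--Nikodym factor, so every measure-preserving factor out of it is an isomorphism, whence this map is an isomorphism. Translating through Proposition \ref{proposition comparing ultralimit BQI} from the measurable ultralimit to the $C^*$-ultralimit then yields $(S,\nu)_{RN}\cong\mathcal{U}\lim_{RN}(G\times\mathbb{N},\lambda_n)$, as required.

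I expect the main obstacle to be precisely this last reconciliation together with the BQI normalisation in the first step: one must ensure that the representation supplied by \cite{Amenable} can be taken with $\sigma^{(0)}$ BQI and as an isomorphism of QI $G$-spaces, so that $\mathcal{B}(G,\boldsymbol{\sigma})$ is genuinely its own QI Radon--Nikodym factor, since Theorem \ref{Theorem ultralimit construction to general poisson boundary} only produces the coarser $\boldsymbol{\sigma}$-system factor directly. The passage from the interval-indexed ultralimit to the sequence-indexed one, and the identification $E\cong G\times\mathbb{N}$, are by contrast routine.
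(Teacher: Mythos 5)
Your proposal follows the paper's overall route (reduction via \cite{AmenableGeneral}, BQI normalisation of \(\sigma^{(0)}\), the sequence \(a_n=1-\tfrac1n\), Theorem \ref{Theorem ultralimit construction to general poisson boundary}, then reconciling the two RN notions), but the step you yourself flag as the ``last reconciliation'' contains a genuine gap, not a routine verification. You assert that \(\mathcal{B}(G,\boldsymbol{\sigma})\), equipped with its harmonic measure \(m^{(-1)}\), is its own QI \(G\)-space Radon--Nikodym factor, equivalently your step-one claim \((S,\nu)_{RN}\cong\mathcal{B}(G,\boldsymbol{\sigma})\) with no RN factor on the right; you then use this to conclude that the measure-preserving factor from \(\mathcal{B}(G,\boldsymbol{\sigma})\) onto \((\mathcal{U}\lim G\times\mathbb{N},\mathcal{U}\lim\lambda_n)_{RN}\) is an isomorphism. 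But what your measure-preserving lift actually yields is only \((S,\nu)_{RN}\cong(B,m^{(-1)})_{RN}\); deleting the outer RN is precisely the unproved assertion that the cocycles \(\big\{\frac{d\,g m^{(-1)}}{d m^{(-1)}}\big\}_{g\in G}\) alone generate the \(\sigma\)-algebra of \(B\). Corollary \ref{Corollary RN of Poisson boundary} gives generation only by the derivatives of \emph{all} the measures \(g\mathbb{P}^{(n)}_{i}\) (i.e.\ the Poisson boundary is its own \(\boldsymbol{\sigma}\)-system RN factor), which is strictly weaker; and ``being one's own RN factor'' is a property of the chosen measure, not of the measure class, since replacing \(\nu\) by \(f\nu\) multiplies the cocycle by the coboundary \((f\circ g^{-1})/f\) and changes the generated \(\sigma\)-algebra. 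Hence knowing that \(T=(S,m)_{RN}\) is its own RN factor with respect to the pushforward of the \emph{original} measure says nothing once you pass to the transported harmonic measure: your justification is circular. The claim can genuinely fail: take \(\ell_n=2\) and \(\sigma^{(n)}_{i,j}=\delta_{i,j}\mu\) for \(n\geq1\), with \(\mu\) the lazy simple random walk measure on \(\mathbb{Z}\); then \(B\) is the two-point space \(\sigma(I_0)\) with trivial \(G\)-action and invariant \(m^{(-1)}\), so its QI RN factor is a point although \(B\) is not, while \(B\) is still its own \(\boldsymbol{\sigma}\)-system RN factor. Ergodicity of \(S\) does not rescue this: \cite{AmenableGeneral} provides only a measure-class isomorphism, nontrivial ergodic p.m.p.\ actions of amenable groups are also general Poisson boundaries, and nothing prevents the harmonic measure handed to you from being the invariant one, in which case \((B,m^{(-1)})_{RN}\) is again a point.

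The repair is exactly the paper's proof, and it is shorter than what you attempted: never drop the outer QI RN factor, and use only that a measure-preserving factor of QI \(G\)-spaces induces an isomorphism of their QI RN factors, together with the idempotency \(\big((\cdot)_{RN}\big)_{RN}=(\cdot)_{RN}\). From the measure-preserving maps you already have --- the isomorphism \((S,\nu)\cong(B,m^{(-1)})\), and the \(\boldsymbol{\sigma}\)-system RN map \(\mathfrak{B}_{\mathcal{U}}(\boldsymbol{\sigma})\to\mathfrak{B}_{\mathcal{U}}(\boldsymbol{\sigma})_{RN}\cong\mathcal{B}(G,\boldsymbol{\sigma})\), which is measure preserving by Lemma \ref{lemma RN factor of sigma-system} and Theorem \ref{Theorem ultralimit construction to general poisson boundary} --- one obtains
\[(S,\nu)_{RN}\cong\big(B,m^{(-1)}\big)_{RN}\cong\big(\mathfrak{B}_{\mathcal{U}}(\boldsymbol{\sigma})_{RN},\nu_{0}^{(-1)}\big)_{RN}\cong\big(\mathfrak{B}_{\mathcal{U}}(\boldsymbol{\sigma}),\nu_{0}^{(-1)}\big)_{RN}\cong\big(\mathcal{U}\lim G\times\mathbb{N},\mathcal{U}\lim\lambda_n\big)_{RN},\]
which is the statement of the theorem. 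In other words, replace ``the measure-preserving factor out of \(\mathcal{B}(G,\boldsymbol{\sigma})\) is an isomorphism'' by ``apply \((\cdot)_{RN}\) to both sides of that factor''. The rest of your argument --- the identification \(E\cong G\times\mathbb{N}\), the passage from the interval-indexed to the sequence-indexed ultralimit, Lemma \ref{lemma uniform integrability on E}, and the translation via Proposition \ref{proposition comparing ultralimit BQI} --- is correct and agrees with the paper.
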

\begin{proof}
We first reduce to the case where \(S\) is standard. Let \(T=(S,m)_{RN}\) for some measure \(m\) on \(S\) (in the measure class), then \(T\) is standard. By Lemma \ref{lemma action ameanble iff rn amenable}, \(T\) is amenable and since it is a factor of an ergodic action it is also ergodic. Moreover, the result for \(T\) implies it for \(S\). Indeed, for any BQI probability measure \(\nu_0\) on \(T\) there is a BQI probability measure \(\nu\) on \(S\) for which \((S,\nu)\to (T,\nu_0)\) is measure preserving (e.g. \(\nu=(\frac{d\nu_{0}}{d\pi_{*}m}\circ \pi)\cdot m\) where \(\pi: S\to T\) is the factor map).\\
Thus we may assume that \(S\) is a standard probability space.\\ 
By \cite[Theorem A]{AmenableGeneral}, there is a probability measure \(\nu\) on \(S\), a sequence \(\boldsymbol{\ell}\) and an \(\boldsymbol{\ell}\)-stochastic system of measures \(\boldsymbol{\sigma}\) on \(G\), so that \((S,\nu)\) is isomorphic to the underlying QI \(G\)-space of the Poisson boundary of \(\boldsymbol{\sigma}\) -- that is, to \((B,m^{(-1)})\) where we denote \(\mathcal{B}(G,\boldsymbol{\sigma})=(B,\boldsymbol{m})\).\\
By replacing \(\boldsymbol{\sigma}\) we may assume that there are no zero columns (see Remark \ref{remark no zero columns on sigma}). By taking any BQI measure \(\omega\) on \(G\) and replacing \(\sigma^{(0)}\) with \(\omega\ast \sigma^{(0)}\) we may assume that \(\sigma^{(0)}\) is BQI. This changes the Poisson boundary only by changing \(m^{(-1)}\) to \(\omega\ast m^{(-1)}\), so by changing \(\nu\) to \(\omega\ast\nu\) we keep the isomorphism between \((S,\nu)\) and the underlying QI \(G\)-space of the Poisson boundary of \(\boldsymbol{\sigma}\).\\
Take \(a_n=1-\frac{1}{n}\) a sequence that goes to 1, an ultrafilter on \(\mathbb{N}\) can be identified with an ultrafilter on \((\frac{1}{2},1)\), and being non-principle means that \(\mathcal{U}\lim_n a_n=1\). Note that \(\lambda_n=\nu^{(-1)}_{0,a_n;0}\) are uniformly BQI (Lemma \ref{lemma uniform integrability on E}).
By Theorem \ref{Theorem ultralimit construction to Poisson boundary of a time dependent matrix-valued random walk} we conclude:
\[(S,\nu)_{RN}\cong  (B,m^{(-1)})_{RN}\cong (\mathfrak{B}_{\mathcal{U}}(\boldsymbol{\sigma})_{RN},\nu_{0}^{(-1)})_{RN}\cong (\mathfrak{B}_{\mathcal{U}}(\boldsymbol{\sigma}),\nu_{0}^{(-1)})_{RN}\cong (\mathcal{U}\lim_{n} \:G\times \mathbb{N}, \mathcal{U}\lim \lambda_n)_{RN}\]
\end{proof}
\begin{remark}
In the displayed sequence of isomorphisms in the end of the proof above, the subscript RN has two meanings. One when applied to a QI \(G\)-space, and the other when applied to a \(\boldsymbol{\sigma}\)-system.\\
The latter was used only once during the proof in the notation \(\mathfrak{B}_{\mathcal{U}}(\boldsymbol{\sigma})_{RN}\).\\
The mapping \((\mathfrak{B}_{\mathcal{U}}(\boldsymbol{\sigma}),\nu_{0}^{(-1)}) \to (\mathfrak{B}_{\mathcal{U}}(\boldsymbol{\sigma})_{RN},\nu_{0}^{(-1)})\) is measure preserving (for QI \(G\)-spaces), and thus induces an isomorphism for RN factors.
\end{remark}

\begin{remark}
We actually used \cite[Theorem A]{AmenableGeneral} only for discrete groups, which is the main result of the earlier paper \cite{Amenable}.
\end{remark}

\begin{remark}
For the applications of the next section, we could use the following weaker version of the above results:\\
For any amenable ergodic and standard \(S\), there is a BQI measure \(\nu\) on \(S\), a BQI \(G\)-space \((X,m)\) and a collection of uniformly BQI measures \(\lambda_{n}\) on \(G\)
such that:
\begin{enumerate}
    \item
    \((X,m)\) is a measure preserving extension of \((S,\nu)\).
    \item
    \((X,m)\) is an extension of \((\mathcal{U}\lim G,\lambda_{n})_{RN}\).
\end{enumerate}
To prove this weaker version, one need not use Theorem \ref{Theorem ultralimit construction to Poisson boundary of a time dependent matrix-valued random walk}, rather it is enough to use
Proposition \ref{proposition the Abel construction is a sigma system} combined with a structure theorem for stationary actions. Indeed, one first
uses \cite[Theorem A]{AmenableGeneral} to exhibit \(S\) as a Poisson boundary, then apply Proposition \ref{proposition the Abel construction is a sigma system} and Theorem \ref{Theorem Furstenberg Glasner} (Taking \(\Lambda=\mathcal{B}(G,\boldsymbol{\sigma})\)) to construct \(X\).
\end{remark}

\begin{remark}
\cite[Lemma 5.6]{sayag2022entropy} implies that for any \(G\)-space \(Y\), any any measure \(\lambda\) on \(G\), there is a measure \(\nu\) on \(Y\) so that \((Y,\nu)\) is a factor of a space which is measure preserving extension of \((G,\lambda)\) (we can take \(\nu=\lambda\ast m\) for any \(m\)). In this sense, actions of \(G\) on \((G,\lambda)\) are the "biggest".\\
In this approach, we see that amenable action as actions are "big", in the sense that they can be obtained as limits of free actions.\\
This perspective is motivated by Theorem \ref{Theorem entropy for amenable intro}.
\end{remark}

\section{Applications to Entropy}
In this section we prove the main theorems of the paper. In subsection \ref{subsection Entropy in amenable actions} we prove Theorem 
\ref{Theorem entropy for amenable intro}, in sub-subsection \ref{subsection Entropy for hyperbolic groups} we prove Theorems \ref{Theorem Entropy for hyperbolic groups intro},\ref{Theorem entropy for lattices intro} and in sub-subsection \ref{subsection Entropy for free groups} we prove Theorem \ref{Theorem entropy for free groups intro}.

\subsection{Recollection of entropy}\label{subsection Recollection of Entropy}
We recall the basic notions regarding entropy from \cite[Section 3]{sayag2022entropy}.\\
Let us recall the definition of \(f\)-divergence (see \cite[Chapter 6]{polyanskiy2014lecture}):
\begin{definition}
Let \(f\) be a convex function on \((0,\infty)\) with \(f(1)=0\).\\
We denote \(f^{\prime}(\infty)=\lim_{t\to\infty}\frac{f(t)}{t}\) and \(f(0)=f(0^{+})\) (these numbers exist in \(\mathbb{R}\cup\{+\infty\}\)).\\
Given a Borel space \((X,\Sigma)\) we denote by \(M(X,\Sigma)\) the collection of probability measures on \((X,\Sigma)\). The \(f\)-divergence between two measures \(P,Q\in M(X,\Sigma)\) is defined in the following way:\\
Take a measure \(R\) on \(X\) with \(P,Q\ll R\), write \(\frac{dP}{dR}=p, \frac{dQ}{dR}=q\) and define:
\[D_{f}(P||Q)=\intop_{\{q>0\}} f\Big(\frac{p}{q}\Big)q\: dR + P(\{q=0\})\cdot f^{\prime}(\infty)\]
\end{definition}
This is independent of the choice of \(R\) (we have the agreement here that \(0\cdot\infty=0\)).\\
Note that one can change \(f(t)\) by \(a \cdot (t-1)\) for any \(a\in\mathbb{R}\) and it does not change \(D_{f}\). Thus we may assume \(f\geq0\) and in particular the integral above gives rise to a well defined value in \([0,\infty]\).\\
We summarize the basic properties of \(f\)-divergence in the following:
\begin{lemma}\label{lemma properties of f-divergence}
Let \(f\) be a convex function on \((0,\infty)\) with \(f(1)=0\).
\begin{enumerate}
    \item
    For any Borel space \((X,\Sigma)\), the function \(D_{f}:M(X,\Sigma)\times M(X,\Sigma)\to [0,\infty]\) is convex.
    \item
    If \(P,Q\in M(X,\Sigma)\) are of the same measure class then:
    \[D_{f}(P||Q)=\intop_{X} f\Big(\frac{dP}{dQ}\Big)dQ\]
    \item
    For a measurable mapping between Borel spaces \(\pi: (X,\Sigma_{X})\to (Y,\Sigma_{Y})\) for any \(P,Q\in M(X,\Sigma_{X})\) we have:
    \[D_{f}(\pi_{*}P||\pi_{*}Q)\leq D_{f}(P||Q)\]
    \item
    Suppose \(P,Q\in M(X,\Sigma_{X})\) and \(\nu\in M(Y,\Sigma_{Y})\) and consider the measures \(P\times \nu, Q\times \nu\) on the product \((X\times Y, \Sigma_{X}\otimes \Sigma_{Y})\). Then we have:
    \[D_{f}(P\times \nu||Q\times \nu)=D_{f}(P||Q)\]
\end{enumerate}
\end{lemma}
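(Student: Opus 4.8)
The plan is to reduce all four statements to a single convex-analytic fact: the joint convexity of the \emph{perspective function} of $f$. Define $\widetilde{f}\colon[0,\infty)\times[0,\infty)\to[0,\infty]$ by $\widetilde{f}(p,q)=q\,f(p/q)$ when $q>0$ and $\widetilde{f}(p,0)=p\,f'(\infty)$. Since $f$ is convex with $f(1)=0$, it lies above a supporting line $f(t)\ge a(t-1)$ at $t=1$; subtracting $a(t-1)$ leaves $D_f$ unchanged (as noted after the definition), so I may assume $f\ge 0$ and hence $\widetilde f\ge 0$, making every integral below a well-defined element of $[0,\infty]$. With $p=dP/dR$, $q=dQ/dR$ for a common reference $R$, and using $\int_{\{q=0\}}p\,dR=P(\{q=0\})$, the defining formula collapses to the single integral
\[
D_f(P\|Q)=\int_X \widetilde f(p,q)\,dR .
\]
Everything then follows from joint convexity of $\widetilde f$ and (conditional) Jensen.

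Items 2 and 4 are direct specializations of this representation. For item 2, the common measure class lets me take $R=Q$, whence $q\equiv 1$ and $P(\{q=0\})=0$, giving $D_f(P\|Q)=\int_X f(dP/dQ)\,dQ$. For item 4, I use the reference measure $R\times\nu$ on $(X\times Y,\Sigma_X\otimes\Sigma_Y)$: the densities of $P\times\nu$ and $Q\times\nu$ are $p(x)$ and $q(x)$, independent of the $Y$-coordinate, so Fubini together with $\nu(Y)=1$ yields $\int_{X\times Y}\widetilde f(p,q)\,d(R\times\nu)=\int_X\widetilde f(p,q)\,dR=D_f(P\|Q)$.

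For item 1 I first establish joint convexity of $\widetilde f$. On the open half-plane $\{q>0\}$, given $(p_0,q_0),(p_1,q_1)$ and $t\in[0,1]$, write $q_t=(1-t)q_0+tq_1$, $p_t=(1-t)p_0+tp_1$ and set $\lambda=(1-t)q_0/q_t\in[0,1]$, so that $p_t/q_t=\lambda(p_0/q_0)+(1-\lambda)(p_1/q_1)$; convexity of $f$ and multiplication by $q_t$ then give $\widetilde f(p_t,q_t)\le(1-t)\widetilde f(p_0,q_0)+t\widetilde f(p_1,q_1)$. The boundary values at $q=0$ are handled by lower semicontinuity, using $\lim_{q\to 0^+}q f(p/q)=p\,f'(\infty)$ (substitute $s=p/q\to\infty$), which extends the inequality to all of $[0,\infty)^2$. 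Granting this, for two pairs $(P_i,Q_i)$ I pick a reference $R$ dominating all four measures (e.g.\ their normalized sum), apply the pointwise inequality to the densities, and integrate against $R$ to obtain $D_f\big((1-t)P_0+tP_1\,\|\,(1-t)Q_0+tQ_1\big)\le (1-t)D_f(P_0\|Q_0)+tD_f(P_1\|Q_1)$.

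Finally, item 3 (the data-processing inequality) follows from joint convexity via conditional Jensen. Let $R$ dominate $P,Q$ on $X$ and put $\mathcal G=\pi^{-1}\Sigma_Y$. Then $\pi_*R$ dominates $\pi_*P,\pi_*Q$, and the crucial identity is that the pulled-back pushforward densities are conditional expectations: $\big(\tfrac{d\pi_*P}{d\pi_*R}\big)\circ\pi=\mathbb E_R[p\mid\mathcal G]$ and likewise for $q$. Applying the conditional Jensen inequality to the jointly convex $\widetilde f$ gives $\widetilde f(\mathbb E_R[p\mid\mathcal G],\mathbb E_R[q\mid\mathcal G])\le\mathbb E_R[\widetilde f(p,q)\mid\mathcal G]$ pointwise; integrating over $X$ against $R$ and using $\int_Y h\,d\pi_*R=\int_X h\circ\pi\,dR$ collapses the left side to $D_f(\pi_*P\|\pi_*Q)$ and the right side to $D_f(P\|Q)$. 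I expect the main obstacle to be the two places where the degenerate $f'(\infty)$ term enters: establishing joint convexity of $\widetilde f$ across the boundary $q=0$, and verifying the conditional-expectation description of the pushforward densities in the possibly non-separable measure spaces this paper insists on working with, where the usual disintegration shortcuts are unavailable.
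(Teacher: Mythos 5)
The paper itself gives no proof of this lemma: it is stated as a summary of standard facts about $f$-divergences, with the definition (and implicitly the properties) cited to Chapter 6 of the Polyanskiy--Wu lecture notes. So there is nothing internal to compare against; your proposal in effect supplies the proof the paper delegates to the literature, and it is correct. Your route --- writing $D_f(P\|Q)=\int_X \widetilde f(p,q)\,dR$ for the perspective function $\widetilde f(p,q)=qf(p/q)$, $\widetilde f(p,0)=pf'(\infty)$, then deducing items 1 and 3 from joint convexity plus (conditional) Jensen, and items 2 and 4 by choosing $R=Q$ and $R\times\nu$ respectively --- is exactly the standard argument in those references. The two obstacles you flag are both benign. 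For joint convexity across $\{q=0\}$: with $q_0>0$, $q_1=0$ the inequality reduces to $f(a+h)-f(a)\le h\,f'(\infty)$, which holds because difference quotients of a convex function are nondecreasing and bounded above by $f'(\infty)=\lim_{s\to\infty}f(s)/s$; the case $q_0=q_1=0$ is an identity. For the identity $\bigl(\tfrac{d\pi_*P}{d\pi_*R}\bigr)\circ\pi=\mathbb{E}_R[p\mid\pi^{-1}\Sigma_Y]$: no standardness or disintegration is needed at all --- the left side is $\pi^{-1}\Sigma_Y$-measurable and satisfies the defining integral identity on sets $\pi^{-1}(B)$ by the change-of-variables formula, and conditional expectation exists on arbitrary probability spaces via Radon--Nikodym, so the paper's insistence on possibly non-separable spaces costs nothing here. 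The one step worth writing out if you formalize this is conditional Jensen for the \emph{jointly} convex, $[0,\infty]$-valued $\widetilde f$; the clean way is to note that $\widetilde f$, extended by $+\infty$ off $[0,\infty)^2$, is lower semicontinuous and convex, hence a countable supremum of affine functions $a_np+b_nq+c_n$, and apply linearity and monotonicity of conditional expectation to each affine minorant.
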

\begin{remark}
If \(f^{\prime}(\infty)=+\infty\) and \(D_{f}(P||Q)<\infty\) then \(P\ll Q\).
\end{remark}
Recall the definition of entropy \cite[Definition 3.4]{sayag2022entropy}:
\begin{definition}
Let \(f\) be a convex function on \((0,\infty)\) with \(f(1)=0\) and let \(\lambda\) be a probability measure on \(G\).
Given a probability measure \(\nu\) on a Borel \(G\)-space \((X,\Sigma)\), its \emph{Furstenberg's \((\lambda,f)\)-entropy} is defined by:
\[h_{\lambda,f}(X,\nu)=\sum_{g\in G}\lambda(g) D_{f}(g\nu||\nu)\]
\end{definition}
By Lemma \ref{lemma properties of f-divergence} we conclude that the entropy \(h_{\lambda,f}\) is a convex function that decreases along factors:
\begin{prop}\label{Proposition properties of entropy lambda f}
Let \(f\) be a convex function on \((0,\infty)\) with \(f(1)=0\) and let \(\lambda\) be a probability measure on \(G\).
\begin{enumerate}
    \item
    For any Borel \(G\)-space \((X,\Sigma)\) the function \(h_{\lambda,f}:M(X,\Sigma)\to [0,\infty]\) is convex.
    \item
    Given a measurable \(G\)-equivariant map \(\pi:(X,\Sigma_{X})\to (Y,\Sigma_{Y})\) between Borel \(G\)-spaces, for any \(\nu\in M(X,\Sigma_{X})\) we have:
    \[h_{\lambda,f}(Y,\pi_{*}\nu)\leq h_{\lambda,f}(X,\nu)\]
\end{enumerate}
\end{prop}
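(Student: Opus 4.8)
The plan is to deduce both statements directly from the properties of $f$-divergence recorded in Lemma \ref{lemma properties of f-divergence}, since the entropy $h_{\lambda,f}(X,\nu)=\sum_{g\in G}\lambda(g)D_f(g\nu||\nu)$ is just a $\lambda$-weighted average of divergences $D_f(g\nu||\nu)$. The only two facts I will need are the joint convexity of $D_f$ (item 1) and the data-processing inequality (item 3), together with the elementary observation that a nonnegative combination of convex functions valued in $[0,\infty]$ is again convex.

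For part 1, I would fix measures $\nu_0,\nu_1\in M(X,\Sigma)$, a parameter $t\in[0,1]$, and set $\nu_t=(1-t)\nu_0+t\nu_1$. The key point is that for each fixed $g$ the assignment $\nu\mapsto(g\nu,\nu)$ is affine in $\nu$, so that the pair $(g\nu_t,\nu_t)$ equals the convex combination $(1-t)(g\nu_0,\nu_0)+t(g\nu_1,\nu_1)$ in $M(X,\Sigma)\times M(X,\Sigma)$. Applying the joint convexity of $D_f$ from Lemma \ref{lemma properties of f-divergence}(1) gives
\[
D_f(g\nu_t||\nu_t)\leq (1-t)D_f(g\nu_0||\nu_0)+t\,D_f(g\nu_1||\nu_1).
\]
Multiplying by $\lambda(g)\geq0$ and summing over $g\in G$ (all terms lie in $[0,\infty]$, so the sum is unambiguous) yields $h_{\lambda,f}(X,\nu_t)\leq(1-t)h_{\lambda,f}(X,\nu_0)+t\,h_{\lambda,f}(X,\nu_1)$, which is the desired convexity.

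For part 2, the essential input is that $G$-equivariance of $\pi$ forces $\pi_*(g\nu)=g\,\pi_*\nu$ for every $g\in G$: indeed, pushing forward and acting by $g$ are both implemented by composition with the corresponding measurable maps, which commute because $\pi$ intertwines the actions. Granting this, the data-processing inequality of Lemma \ref{lemma properties of f-divergence}(3) applied to $\pi$ gives, for each $g$,
\[
D_f(g\,\pi_*\nu||\pi_*\nu)=D_f(\pi_*(g\nu)||\pi_*\nu)\leq D_f(g\nu||\nu).
\]
Again weighting by $\lambda(g)$ and summing over $G$ produces $h_{\lambda,f}(Y,\pi_*\nu)\leq h_{\lambda,f}(X,\nu)$, completing the proof.

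I do not expect a genuine obstacle here, as everything reduces to the already-established properties of $D_f$; the only point requiring a line of care is verifying the equivariance identity $\pi_*(g\nu)=g\,\pi_*\nu$, which is where the $G$-equivariance hypothesis on $\pi$ is actually used. One should also keep in mind the convention (noted after the definition of $D_f$) that one may assume $f\geq0$, so that each $D_f$ term is a well-defined element of $[0,\infty]$ and the countable sums over $G$ make sense without convergence issues.
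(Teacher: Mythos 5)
Your proof is correct and follows essentially the same route as the paper, which deduces both items directly from Lemma \ref{lemma properties of f-divergence}: joint convexity of \(D_{f}\) for item 1 and the data-processing inequality for item 2, summed against the weights \(\lambda(g)\). The only detail the paper leaves implicit, and which you rightly spell out, is the equivariance identity \(\pi_{*}(g\nu)=g\,\pi_{*}\nu\) needed before applying \(D_{f}(\pi_{*}P||\pi_{*}Q)\leq D_{f}(P||Q)\).
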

\begin{remark}
In \cite{sayag2022entropy} we only considered the case of \(\nu\) being quasi-invariant, however, using the definition of \(f\)-divergence for every pair of measures, there is no reason to do so.\\
However, when \(f^{\prime}(\infty)=+\infty\) and \(Supp(\lambda)\) is generating as a semi-group then \(h_{\lambda,f}(\nu)<\infty\) implies that \(\nu\) is quasi-invariant.
\end{remark}
We come to defining the main object studied in this section -- various versions of the minimal entropy number.
\begin{definition}
Let \(f\) be a convex function on \((0,\infty)\) with \(f(1)=0\) and let \(\lambda\) be a probability measure on \(G\).
\begin{enumerate}
    \item 
    Given a QI \(G\)-space \(X\), we denote by \(M(X)\) the probability measures on \(X\) in the measure class, and define the minimal entropy number of \(X\) to be:
    \[I_{\lambda,f}(X)=\inf_{\nu\in M(X)} h_{\lambda,f}(X,\nu)\]
    \item
    Given a measurable action \(G\curvearrowright (X,\Sigma)\) on a Borel space, we define the minimal entropy number of \((X,\Sigma)\) to be:
    \[I_{\lambda,f}^{Borel}(X,\Sigma)=\inf_{\nu\in M(X,\Sigma)} h_{\lambda,f}(X,\nu)\]
    We recall that \(M(X,\Sigma)\) denotes the collection of all probability measures on \((X,\Sigma)\).
    \item
    Given a continuous action \(G\curvearrowright X\) where \(X\) is a compact metrizable space, we define the topological minimal entropy number of \(X\) to be: \[I_{\lambda,f}^{top}(X):=I_{\lambda,f}^{Borel}(X,\mathcal{B})\]
    Here, \(\mathcal{B}\) is the Borel \(\sigma\)-algebra on \(X\).
\end{enumerate}
\end{definition}
\begin{remark}
The concepts \(I_{\lambda,f}(X), I_{\lambda,f}^{top}(X) \) were introduced in \cite{sayag2022entropy}. 
\end{remark}

\begin{lemma}\label{lemma properties of I lambda f}
\(\)
\begin{enumerate}
\item
If \(\pi:S\to T\) is a factor map between QI \(G\)-spaces then \(I_{\lambda,f}(S)\geq I_{\lambda,f}(T)\).\\
Moreover, if there is a measure \(m\in M(S)\) so that \(\pi:(S,m)\to (T,\pi_*m)\) is measure preserving extension then \(I_{\lambda,f}(S)= I_{\lambda,f}(T)\).
\item
For any Borel \(G\)-space \((X,\Sigma)\) and \(\omega\in M(G),\nu\in M(X,\Sigma)\) we have \(h_{\lambda,f}(\omega\ast\nu)\leq h_{\lambda,f}(\omega)\).
\item
For any QI \(G\)-space \(S\) we have \(I_{\lambda,f}(S)\leq I_{\lambda,f}(G)\).
\item
Suppose \(\lambda\) is a finitely supported probability measure on \(G\). Let \((S,\nu)\) be a BQI \(G\)-space. Then: 
\[I_{\lambda,f}(S)=\inf\{h_{\lambda,f}(S,m)\big|\: m=\omega\ast m_0 \:\text{ where }\: m_0\in M(S)\: \text{ satisfies }\: \frac{d\nu}{dm_0}\in L^{\infty}(S)\:\: \text{ and }\: \omega\in M(G)\: \text{ is BQI}\:\}\]
\end{enumerate}
\end{lemma}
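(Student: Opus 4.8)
The plan is to prove the two inequalities separately; write $J$ for the infimum on the right-hand side. The inequality $J\ge I_{\lambda,f}(S)$ is essentially definitional once one checks that every admissible $m=\omega\ast m_0$ lies in $M(S)$, i.e.\ in the measure class of $\nu$. Indeed $m_0$ is in the class of $\nu$, hence is itself QI, so $gm_0\sim m_0$ for every $g$; thus $m=\sum_g\omega(g)\,gm_0\ll m_0$, while $m\ge\omega(g_0)\,g_0m_0\sim m_0$ for any $g_0$ in the (nonempty) support of $\omega$, giving $m\sim m_0\sim\nu$. Since the right-hand infimum runs over a subset of $M(S)$, it dominates $I_{\lambda,f}(S)$.

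The substance is the reverse inequality $J\le I_{\lambda,f}(S)$; I may assume $I_{\lambda,f}(S)<\infty$. I would first fix a fully supported BQI probability measure $\theta$ on $G$ (for instance a weighted average $\theta=\sum_{n\ge1}2^{-n}\lambda^{\ast n}$ of convolution powers, which is BQI because $\lambda^{\ast(n+1)}\ge\lambda(g)\,g\lambda^{\ast n}$, and fully supported when $\lambda$ is symmetric and generating). Two finiteness facts should be recorded, both from the BQI hypothesis together with $\lambda$ being finitely supported: since the Radon--Nikodym derivatives $\tfrac{dg\nu}{d\nu}$ and $\tfrac{dg\theta}{d\theta}$ are bounded above and below for each fixed $g$ and $f$ is continuous on $(0,\infty)$, each divergence $D_f(g\nu\|\nu)$ and $D_f(g\theta\|\theta)$ is finite, so that $h_{\lambda,f}(S,\nu)<\infty$ and $h_{\lambda,f}(G,\theta)<\infty$.

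Now I would approximate an arbitrary near-optimal measure by two successive mollifications. Given $\epsilon>0$, pick $\nu'\in M(S)$ with $h_{\lambda,f}(S,\nu')<I_{\lambda,f}(S)+\epsilon$. For $t\in(0,1]$ set $m_0:=(1-t)\nu'+t\nu$; this lies in $M(S)$, satisfies $\tfrac{d\nu}{dm_0}\le\tfrac1t$ so that $\tfrac{d\nu}{dm_0}\in L^\infty(S)$, and by convexity of $h_{\lambda,f}$ (Proposition \ref{Proposition properties of entropy lambda f}(1)) obeys $h_{\lambda,f}(S,m_0)\le(1-t)h_{\lambda,f}(S,\nu')+t\,h_{\lambda,f}(S,\nu)$. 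For $s\in(0,1]$ set $\omega_s:=(1-s)\delta_e+s\theta$; full support of $\theta$ makes $\omega_s$ QI, and a direct computation of the density $\tfrac{dg\omega_s}{d\omega_s}$ shows it is BQI. The candidate is $m:=\omega_s\ast m_0$, which has exactly the required form. Writing $\omega_s\ast m_0=(1-s)m_0+s(\theta\ast m_0)$, using convexity once more together with part (2) of the present lemma (which gives $h_{\lambda,f}(\theta\ast m_0)\le h_{\lambda,f}(G,\theta)$), I obtain
\[
h_{\lambda,f}(S,m)\le(1-s)\,h_{\lambda,f}(S,m_0)+s\,h_{\lambda,f}(G,\theta).
\]
Combining the two estimates and choosing $s$ then $t$ small (both $h_{\lambda,f}(S,\nu)$ and $h_{\lambda,f}(G,\theta)$ being finite) makes the right-hand side as close as desired to $h_{\lambda,f}(S,\nu')<I_{\lambda,f}(S)+\epsilon$; hence $J\le I_{\lambda,f}(S)+\epsilon$ for every $\epsilon>0$, yielding the equality.

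The main obstacle I anticipate is the convolution step: one cannot take $\omega=\delta_e$, which is not QI, so the argument must exhibit a genuinely BQI measure whose convolution perturbs the entropy by an arbitrarily small amount. This is precisely what the mixing $\omega_s=(1-s)\delta_e+s\theta$ accomplishes, but it depends on the existence of a fully supported BQI $\theta$ and on the finiteness of $h_{\lambda,f}(G,\theta)$ and $h_{\lambda,f}(S,\nu)$ to bound the error terms --- all of which rest on the BQI and finite-support hypotheses. The remaining verifications, that $\omega_s$ is BQI and that $\omega_s\ast m_0\in M(S)$, are routine but essential.
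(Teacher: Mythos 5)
Your proposal treats only item 4 (items 1--3, which in the paper are short reductions to the earlier paper's Lemma 3.6/3.8 together with the action-map trick, are left aside), but your proof of item 4 is correct, and it is structurally the same as the paper's: both directions rest on (i) mixing with \(\nu\) to force \(\frac{d\nu}{dm_0}\in L^{\infty}(S)\), (ii) convolving with a BQI measure close to \(\delta_e\), (iii) convexity of \(h_{\lambda,f}\) plus item 2 --- which you may legitimately quote, since its proof is independent of item 4 --- and (iv) finiteness of \(h_{\lambda,f}(S,\nu)\) and of the entropy of a BQI measure on \(G\), coming from BQI plus finite support of \(\lambda\). The one genuine difference is the choice of mollifier. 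The paper takes the resolvent \(\omega_\epsilon=(1-\epsilon)\sum_{n\geq0}\epsilon^{n}\omega^{\ast n}\) and uses the self-referential identity \(\omega_\epsilon=(1-\epsilon)\delta_e+\epsilon\,\omega\ast\omega_\epsilon\), so that \(m=\omega_\epsilon\ast m_0=(1-\epsilon)m_0+\epsilon\,\omega\ast m\) and item 2 is applied to \(\omega\ast m\); you take the two-point mixture \(\omega_s=(1-s)\delta_e+s\theta\) and decompose \(m=(1-s)m_0+s\,\theta\ast m_0\) directly, applying item 2 to \(\theta\ast m_0\). Your route is slightly more economical (no geometric series, no self-reference), at the price of verifying that \((1-s)\delta_e+s\theta\) is BQI; that check is indeed routine because every BQI measure on \(G\) is automatically fully supported, giving \(\frac{dg\omega_s}{d\omega_s}\leq\max\bigl(M_\theta(g),\,\frac{(1-s)+s\,\theta(e)}{s\,\theta(g)}\bigr)\), and the paper's \(\omega_\epsilon\) needs an analogous routine check, so neither choice is privileged. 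Two minor remarks: your explicit candidate \(\theta=\sum_{n\geq1}2^{-n}\lambda^{\ast n}\) requires \(\mathrm{supp}(\lambda)\) to generate \(G\) as a semigroup, which item 4 does not assume, but since all you need is existence of \emph{some} BQI measure on \(G\) --- a fact the paper also invokes without proof (``choose a BQI measure \(\omega\)'') --- this is harmless; and your verification that admissible measures \(\omega\ast m_0\) lie in \(M(S)\) (needed for \(J\geq I_{\lambda,f}(S)\)) is a point the paper leaves implicit, so spelling it out is a net gain.
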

\begin{proof}
\begin{enumerate}
\item
The inequality \(I_{\lambda,f}(S)\geq I_{\lambda,f}(T)\) follows from \cite[Lemma 3.6]{sayag2022entropy}. If \(\pi: (S,m)\to(T,\pi_*m)\) is measure preserving extension then for any \(\nu\in M(T)\) we can take \(\kappa=(\frac{d\nu}{d\pi_*m}\circ \pi)\cdot \nu\in M(S)\) and then \(\pi:(S,\kappa)\to (T,\nu)\) is measure preserving. By \cite[Lemma 3.6]{sayag2022entropy} we conclude \(I_{\lambda,f}(S)\leq h_{\lambda,f}(\kappa)=h_{\lambda,f}(\nu)\), which shows the other direction.
\item
The same proof as \cite[Lemma 3.8]{sayag2022entropy}: Consider the Borel \(G\)-space \(G\times X\) (with the product \(\sigma\)-algebra) where the \(G\)-action is only on the first coordinate. The action map \(a:G\times X\to X\) is \(G\)-equivariant and maps \(\omega\times \nu\) to \(\omega\ast\nu\) we conclude by Lemma \ref{lemma properties of f-divergence}(4) and \ref{Proposition properties of entropy lambda f}(2) :
\[h_{\lambda,f}(G,\omega)=h_{\lambda,f}(G\times X,\omega\times \nu)\geq h_{\lambda,f}(X,\omega\ast\nu)\]
\item
Follows from the previous item (this is also \cite[Corollary 3.9]{sayag2022entropy}).
\item 
Denote by \(I^\prime\) the right hand side. Let \(\kappa\in M(S)\), we need to show \(h_{\lambda,f}(\kappa)\geq I^{\prime}\).\\
Choose a BQI measure \(\omega\) on \(G\) and let \(\epsilon>0\).\\
Define \(m_0=(1-\epsilon)\kappa+\epsilon \nu\) and \(\omega_\epsilon=(1-\epsilon)\sum_{n\geq0}\epsilon^n\omega^{n}\) and \(m:=\omega_\epsilon\ast m_0\). Note that \(\omega_{\epsilon}\) is a BQI measure and since \(\frac{d\nu}{dm_0}\leq\epsilon^{-1}\) we conclude that \(h_{\lambda,f}(m)\geq I^{\prime}\). \\
Note that \(\omega_\epsilon=(1-\epsilon)\delta_e+\epsilon \omega\ast\omega_\epsilon\), using convexity of \(h_{\lambda,f}\) and item 2 we get:
\begin{multline*}
    I^\prime\leq h_{\lambda,f}(m)= h_{\lambda,f}\big((1-\epsilon)m_0+ \epsilon \omega\ast m \big)\leq (1-\epsilon)h_{\lambda,f}(m_0)+ \epsilon h_{\lambda,f} (\omega\ast m)\leq\\
    (1-\epsilon)^2 h_{\lambda,f}(\kappa)+ (1-\epsilon)\epsilon h_{\lambda,f}(\nu) + \epsilon h_{\lambda,f}(\omega)
\end{multline*}
As \(\omega\) and \(\nu\) are BQI we conclude \(h_{\lambda,f}(\omega),h_{\lambda,f}(\nu)<\infty\) (indeed \(\lambda\) is finitely supported).\\
Taking \(\epsilon\to0\) we conclude that \(h_{\lambda,f}(\kappa)\geq I^\prime\).
\end{enumerate}
\end{proof}

The next lemma shows that our concept of entropy is consistent with ergodic decomposition.\\
We recall, that given a QI \(G\)-space \(X\) which is a standard probability space, one considers \(L^{\infty}(X)^{G}\subset L^{\infty}(X)\) which is \(w^{*}\)-closed and thus of the form \(L^{\infty}(E)\) for a standard probability space \(E\). The inclusion above yields a measurable mapping \(\pi:X\to E\) which is \(G\)-invariant (there is no \(G\)-action on \(E\)). Given a measure \(\nu\) on \(X\) we denote \(P=\pi_{*}\nu\) and \(\{\nu_{e}\}_{e\in E}\) be the disintegration, \(\nu=\intop_{E}\nu_{e}dP\).\\
For almost every \(e\in E\) one has that \((X,\nu_{e})\) is QI and ergodic.\\
We call this the ergodic decomposition of \((X,\nu)\).
\begin{lemma}\label{lemma ergodic decomposition and entropy}
Suppose \((X,\nu)\) is a QI \(G\)-space which is a standard probability space. Let \(\pi: (X,\nu)\to (E,P)\) and let \(\nu=\intop_{E} \nu_{e} 
dP(e)\) be the ergodic decomposition. Then:
\begin{enumerate}
    \item
    For almost every \(e\in E\) we have  \(\frac{dg\nu}{d\nu}=\frac{dg\nu_{e}}{d\nu_{e}}\) [\(\nu_{e}\)-a.e.].
    \item
    For any probability measure \(\lambda\) on \(G\) and any convex function \(f\) with \(f(1)=0\) we have: 
    \[h_{\lambda,f}(\nu)=\intop_{E} h_{\lambda,f}(\nu_{e})dP(e)\]
\end{enumerate}
\end{lemma}
\begin{proof}
\begin{enumerate}
    \item 
    Since there is a countable algebra  of measurable sets of \(X\) that generates its \(\sigma\)-algebra, we need only to show that for each \(A\subset X\) we have for a.e. \(e\in E\) that  \(\intop_{A} \frac{dg\nu}{d\nu} d\nu_{e}=g\nu_{e}(A)\). This is equivalent to \(\varphi(e):=\intop_{X} (\frac{dg\nu}{d\nu}\cdot 1_{A}- 1_{g^{-1}A})\:d\nu_{e}=0\) for a.e. \(e\in E\). To show this we will show that for any \(T\subset E\) we have \(\intop_{T}\varphi \:dP=0\). Indeed, 
    \[\intop_{T}\Big(\intop_{X} (\frac{dg\nu}{d\nu}\cdot 1_{A}- 1_{g^{-1}A})\:d\nu_{e}\Big) dP=\intop_{X} \big(\frac{dg\nu}{d\nu}\cdot 1_{A}- 1_{g^{-1}A}\big)1_{\pi^{-1}T}\:d\nu=g\nu(A\cap \pi^{-1}(T))-\nu(\pi^{-1}(T)\cap g^{-1}A)=0\]
    where the last equality follows from \(\pi^{-1}(T)=g^{-1}\pi^{-1}(T)\). Thus we conclude the item.
    \item
    Assume without loss of generality that \(f\geq 0\).
    Using the previous item, for a.e. \(e\in E\) we have:
    \(D_{f}(g\nu_{e}||\nu_{e})=\intop_{X} f(\frac{dg\nu}{d\nu}) d\nu_{e}\). Integrating over \(E\) and applying Fubini-Tonelli we get: \[D_{f}(g\nu||\nu)=\intop_{E} D_{f}(g\nu_{e}||\nu_{e}) dP(e)\]
    applying the monotone convergence theorem we get the result.
\end{enumerate}
\end{proof}


\begin{lemma}\label{lemma only QI measures on topological actions}
Let \(f\) be a convex function with \(f(1)=0\) and let \(\lambda\) be a finitely supported measure on \(G\). Then for any \(G\curvearrowright (X,\Sigma)\) measurable action we have:
\[I_{\lambda,f}^{Borel}(X,\Sigma)=\inf \big\{h_{\lambda,f}(X,\nu)\::\:\:\nu\in M(X,\Sigma)\:\: \text{quasi-invariant}\big\}\]
\end{lemma}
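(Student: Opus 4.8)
The inequality $I_{\lambda,f}^{Borel}(X,\Sigma)\leq \inf\{h_{\lambda,f}(X,\nu):\nu\text{ quasi-invariant}\}$ is immediate, as the right-hand side is an infimum over a subset of $M(X,\Sigma)$. The plan is to establish the reverse inequality by showing that for \emph{every} $\nu\in M(X,\Sigma)$ there are quasi-invariant measures whose entropy is bounded above by $h_{\lambda,f}(\nu)$ in the limit. This mirrors the proof of Lemma \ref{lemma properties of I lambda f}(4); the only genuinely new point is that one must regularize an \emph{arbitrary} Borel measure into a quasi-invariant one.

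Fix a BQI measure $\omega$ on $G$ and, for $\epsilon\in(0,1)$, set $\omega_\epsilon=(1-\epsilon)\sum_{n\geq0}\epsilon^n\omega^n$, which is again a BQI measure on $G$ and satisfies the identity $\omega_\epsilon=(1-\epsilon)\delta_e+\epsilon\,\omega\ast\omega_\epsilon$. Given $\nu\in M(X,\Sigma)$, define $m:=\omega_\epsilon\ast\nu$. The key step is that $m$ is quasi-invariant: since $\omega_\epsilon$ is a QI measure on $G$ with a majorant and $m=\omega_\epsilon\ast\nu$, Lemma \ref{lemma majorant on convolution on G} (applied with $\nu_0=\nu$) shows that $m$ is QI with the majorant of $\omega_\epsilon$. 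This is exactly the place where convolving with a full-support measure on $G$ regularizes $\nu$, regardless of whether $\nu$ was quasi-invariant to begin with.

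From the identity for $\omega_\epsilon$ one gets the convex decomposition $m=(1-\epsilon)\nu+\epsilon\,\omega\ast m$ inside $M(X,\Sigma)$. Combining the convexity of $h_{\lambda,f}$ (Proposition \ref{Proposition properties of entropy lambda f}(1)) with the bound $h_{\lambda,f}(\omega\ast m)\leq h_{\lambda,f}(\omega)$ from Lemma \ref{lemma properties of I lambda f}(2) gives
\[
h_{\lambda,f}(m)\leq (1-\epsilon)\,h_{\lambda,f}(\nu)+\epsilon\,h_{\lambda,f}(\omega).
\]
Since $\omega$ is BQI and $\lambda$ is finitely supported, $h_{\lambda,f}(\omega)<\infty$. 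As $m$ is quasi-invariant, the right-hand infimum in the lemma is at most $h_{\lambda,f}(m)$; letting $\epsilon\to0$ shows it is $\leq h_{\lambda,f}(\nu)$, an inequality that also holds trivially when $h_{\lambda,f}(\nu)=\infty$. Taking the infimum over all $\nu\in M(X,\Sigma)$ yields the reverse inequality, and hence equality.

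I anticipate no serious difficulty, since all the analytic content is already packaged in the cited results. The one step deserving care is the verification that $\omega_\epsilon\ast\nu$ is quasi-invariant for arbitrary $\nu$, which Lemma \ref{lemma majorant on convolution on G} handles cleanly; the remainder is convexity bookkeeping identical in spirit to Lemma \ref{lemma properties of I lambda f}(4).
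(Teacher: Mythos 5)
Your proof is correct and follows essentially the same route as the paper: the paper also regularizes an arbitrary $\nu\in M(X,\Sigma)$ by convolving with $\omega_{\epsilon}=(1-\epsilon)\sum_{n}\epsilon^{n}\omega^{*n}$, applies convexity of $h_{\lambda,f}$ together with Lemma \ref{lemma properties of I lambda f}(2) to the decomposition $\omega_{\epsilon}\ast\nu=(1-\epsilon)\nu+\epsilon\,\omega\ast\omega_{\epsilon}\ast\nu$, and lets $\epsilon\to0$. The only difference is that you explicitly justify the quasi-invariance of $\omega_{\epsilon}\ast\nu$ via Lemma \ref{lemma majorant on convolution on G}, a step the paper uses implicitly; this is a welcome clarification but not a different argument.
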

\begin{proof}
Denote by \(I^{\prime}\) the right hand side. We need to show that for any \(\nu\in M(X,\Sigma)\) we have \(h_{\lambda,f}(\nu)\geq I^{\prime}\). 
Take a BQI measure \(\omega\) on \(G\), then \(h_{\lambda,f}(\omega)<\infty\). 
Define \(\omega_{\epsilon}=(1-\epsilon)\sum \epsilon^{n}\omega^{*n}\), then \(\omega_{\epsilon}=\epsilon\cdot \omega\ast\omega_{\epsilon}+(1-\epsilon)\delta_{e}\). By Lemma \ref{lemma properties of I lambda f}(2) and Proposition \ref{Proposition properties of entropy lambda f}(1), for any \(\nu\in M(X,\Sigma)\) we have:
\[I^{
\prime}\leq h_{\lambda,f}(\omega_{\epsilon}\ast\nu) \leq \epsilon h_{\lambda,f}(\omega\ast\omega_{\epsilon}\ast \nu)+(1-\epsilon)h_{\lambda,f}(\nu)\leq \epsilon h_{\lambda,f}(\omega)+(1-\epsilon)h_{\lambda,f}(\nu)\]
Since \(h_{\lambda,f}(\omega)<\infty\) taking \(\epsilon\to0\) we see \(h_{\lambda,f}(\nu)\geq I^{\prime}\).
\end{proof}

\subsubsection{Entropy minimal number and ultralimit}
The following follows either from Theorem \ref{Theorem ultralimit and integrals} (in the uniformly bounded case) or from \cite[Corollary 4.19]{sayag2022entropy} (combined with Proposition \ref{proposition comparing ultralimit BQI})
\begin{prop}\label{Proposition entropy and ultralimit BQI case} 
Suppose \((X_i,\nu_i)_{i\in\mathcal{I}}\) are uniformly BQI \(G\)-spaces. Let \(\mathcal{U}\) be an ultrafilter on \(\mathcal{I}\) and let \((X,\nu)=\mathcal{U}\lim (X,\nu_i)\).
Then for any convex function \(f\) with \(f(1)=0\) and finitely supported probability measure \(\lambda\) on \(G\) we have:
\[h_{\lambda,f}(X,\nu)=\mathcal{U}\lim h_{\lambda,f}(X_i,\nu_i)\]
\end{prop}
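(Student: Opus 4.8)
The plan is to reduce the statement to the Lebesgue--Vitali theorem (Theorem \ref{Theorem ultralimit and integrals}) by showing that the uniformly BQI hypothesis forces the relevant Radon--Nikodym cocycles to be uniformly bounded \emph{and} uniformly bounded away from zero, so that everything in sight lives in $L^\infty$. First I would record that, since $\lambda$ is finitely supported and $g\nu,\nu$ (respectively $g\nu_i,\nu_i$) are in the same measure class, Lemma \ref{lemma properties of f-divergence}(2) lets me write
\[h_{\lambda,f}(X,\nu)=\sum_{g\in\mathrm{Supp}(\lambda)}\lambda(g)\intop_X f\big(R_\nu(g;\cdot)\big)\,d\nu,\]
and likewise for each $(X_i,\nu_i)$, where $R_\nu(g;x)=\tfrac{dg\nu}{d\nu}(x)$ is the cocycle. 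As the sum is finite and $\mathcal U\lim$ is linear on finitely many sequences, it suffices to prove, for each fixed $g\in\mathrm{Supp}(\lambda)$, the single identity
\[\intop_X f\big(R_\nu(g;\cdot)\big)\,d\nu=\mathcal U\lim_i \intop_{X_i} f\big(R_{\nu_i}(g;\cdot)\big)\,d\nu_i.\]

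The key observation is a two-sided bound on the cocycle. Let $B(g)(t)=\min(1,M(g)t)$ be the common BQI majorant. From $g\nu_i\stackrel{B(g)}{\ll}\nu_i$ I get $\intop_A R_{\nu_i}(g;\cdot)\,d\nu_i=g\nu_i(A)\le M(g)\nu_i(A)$ for every measurable $A$, which (as in the reverse inequality of Example \ref{example of C rho}) forces $R_{\nu_i}(g;\cdot)\le M(g)$ a.e. Applying the same bound to $g^{-1}$ gives $R_{\nu_i}(g^{-1};\cdot)\le M(g^{-1})$, and the cocycle relation $R(g^{-1};g^{-1}x)\cdot R(g;x)=R(e;x)=1$ then yields $R_{\nu_i}(g;\cdot)\ge \tfrac{1}{M(g^{-1})}$ a.e. Hence, uniformly in $i$,
\[\tfrac{1}{M(g^{-1})}\le R_{\nu_i}(g;\cdot)\le M(g)\qquad\text{a.e.}\]
By Proposition \ref{proposition convolution and ultralimit} applied to $\mu=\delta_g$ one has $R_\nu(g;\cdot)=\mathcal U\lim_i R_{\nu_i}(g;\cdot)$, so $R_\nu(g;\cdot)$ lies in the same compact interval $J_g:=[\tfrac{1}{M(g^{-1})},M(g)]\subset(0,\infty)$.

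With the cocycles confined to $J_g$, the rest is routine. The convex function $f$ is continuous, hence bounded, on the compact interval $J_g\subset(0,\infty)$; set $C_g=\sup_{J_g}|f|$. The functions $f_i:=f\big(R_{\nu_i}(g;\cdot)\big)$ then satisfy $\|f_i\|_\infty\le C_g$, so with $\rho(t)=t$ (for which $\mathcal C_\rho=L^\infty$ isometrically, Example \ref{example of C rho}) the hypothesis $\sup_i\|f_i\|_\rho<\infty$ of Theorem \ref{Theorem ultralimit and integrals} holds. Moreover, uniform continuity of $f$ on $J_g$ gives $\mathcal U\lim_i f\big(R_{\nu_i}(g;x_i)\big)=f\big(\mathcal U\lim_i R_{\nu_i}(g;x_i)\big)=f\big(R_\nu(g;\cdot)\big)$ $\nu$-a.e., i.e. $\mathcal U\lim_i f_i=f\big(R_\nu(g;\cdot)\big)$. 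Theorem \ref{Theorem ultralimit and integrals} then delivers exactly the single identity above, and summing over $g$ with weights $\lambda(g)$ completes the proof.

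The one point requiring care---and the step I would single out as the crux---is the \emph{lower} bound on the cocycle: it is what makes $f\circ R_{\nu_i}(g;\cdot)$ bounded even when $f(0^+)=+\infty$, and thereby places the whole argument in the elementary $L^\infty$ regime rather than requiring the uniform-integrability machinery (majorants in $\mathbf M$ and the de la Vall\'ee-Poussin lemma) of the general theory. It is essential here that a uniform majorant is a function $B\colon G\to\mathbf M$ defined on \emph{all} of $G$, so that the bound at $g^{-1}$ is available.
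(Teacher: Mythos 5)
Your proof is correct and takes essentially the same route as the paper: the paper proves Proposition \ref{Proposition entropy and ultralimit BQI case} precisely by invoking Theorem \ref{Theorem ultralimit and integrals} ``in the uniformly bounded case,'' which is exactly your argument with $\rho(t)=t$. The details you supply --- the two-sided bound $\tfrac{1}{M(g^{-1})}\le R_{\nu_i}(g;\cdot)\le M(g)$ obtained from the common BQI majorant together with the cocycle identity, the resulting uniform $L^\infty$ bound on $f\circ R_{\nu_i}(g;\cdot)$, and the compatibility of $\mathcal{U}\lim$ with continuous functions and with the identification $R_\nu(g;\cdot)=\mathcal{U}\lim_i R_{\nu_i}(g;\cdot)$ from Proposition \ref{proposition convolution and ultralimit} --- are just the justification the paper leaves implicit.
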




\begin{lemma}\label{lemma different measures on Ultralimit}
Let \((X_i,\nu_i)_{i\in\mathcal{I}}\) be uniformly QI \(G\)-spaces, let \(\mathcal{U}\) be an ultrafilter on \(\mathcal{I}\) and consider \((X,\nu)=\mathcal{U}\lim (X_i,\nu_i)\).
Suppose \(\omega\) be a BQI measure on \(G\) and \(m_0\in M(X)\) is a probability measure on \(X\) of the same measure class as \(\nu\). Assume that \(\frac{d\nu}{dm_0}\in L^{\infty}\) and let \(m=\omega\ast m_0\).\\
Then there are uniformly BQI measures \(\eta_i\) on \(X_i\) of the same measure class as \(\nu_i\) with \(\mathcal{U}\lim \eta_i=m\).
\end{lemma}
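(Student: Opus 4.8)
The plan is to realize $m$ as $\omega$ convolved with an ultralimit of measures on the $X_i$ that are \emph{equivalent} (not merely absolutely continuous) to the $\nu_i$. Write $\psi=\frac{dm_0}{d\nu}\in L^1(X,\nu)$; since $m_0$ and $\nu$ lie in the same measure class and $\frac{d\nu}{dm_0}\in L^\infty$, the density $\psi$ is bounded below by some constant $c>0$, and of course $\int_X\psi\,d\nu=1$. First I would represent $m_0$ as an ultralimit: applying Corollary \ref{Corollary of compartion of ultralimits}(1) to $f=\psi$ yields $\rho\in\mathbf{M}$ and (say with $\delta=1$) functions $\psi_i\in C_\rho(X_i,\nu_i)$, which we may take nonnegative, with $\sup_i\|\psi_i\|_\rho<\infty$ and $\psi=\mathcal{U}\lim\psi_i$.

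The key step, and the main obstacle, is to upgrade these $\psi_i$ so that the associated measures are \emph{equivalent} to $\nu_i$; this is exactly where the hypothesis $\frac{d\nu}{dm_0}\in L^\infty$ enters. Set $\tilde\psi_i=\max(\psi_i,c)$. Then $\tilde\psi_i\ge c>0$ everywhere, while $\mathcal{U}\lim\tilde\psi_i=\max(\psi,c)=\psi$ because $\psi\ge c$; moreover $\tilde\psi_i\le\psi_i+c$ and $\|1\|_\rho\le1$ (as $\rho(t)\ge t$ by Lemma \ref{lemma properties of majorants}(1)), so $\sup_i\|\tilde\psi_i\|_\rho<\infty$. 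By Theorem \ref{Theorem ultralimit and integrals} the normalizing constants $c_i:=\int_{X_i}\tilde\psi_i\,d\nu_i$ satisfy $\mathcal{U}\lim c_i=\int_X\psi\,d\nu=1$, and each $c_i\in[c,\infty)$. Hence $\hat\psi_i:=\tilde\psi_i/c_i$ is a strictly positive probability density, so $(m_0)_i:=\hat\psi_i\,\nu_i$ is a probability measure equivalent to $\nu_i$, and $\mathcal{U}\lim\hat\psi_i=\psi$. Checking on the generating algebra via Theorem \ref{Theorem ultralimit and integrals} (equivalently Corollary \ref{Corollary 2 from Vitali theorem}) then gives $\mathcal{U}\lim(m_0)_i=\psi\,\nu=m_0$.

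Finally I would set $\eta_i:=\omega\ast(m_0)_i$ and verify the three required properties. The ultralimit identity $\mathcal{U}\lim\eta_i=\omega\ast m_0=m$ is immediate from Lemma \ref{lemma convolution and ultralimit} together with $\mathcal{U}\lim(m_0)_i=m_0$. Uniform boundedness follows from Lemma \ref{lemma majorant on convolution on G}: if $B$ is the (linear-type) BQI majorant of $\omega$ on $G$, then every $\eta_i=\omega\ast(m_0)_i$ is QI with $B$ as a majorant, so the $\eta_i$ are uniformly BQI with common majorant $B$. It remains to see $\eta_i\sim\nu_i$. For $\eta_i\ll\nu_i$: since the $\nu_i$ are uniformly QI we have $g\nu_i\sim\nu_i$, hence $g(m_0)_i\ll g\nu_i\sim\nu_i$ for every $g$, and summing, $\eta_i=\sum_{g}\omega(g)\,g(m_0)_i\ll\nu_i$. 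For $\nu_i\ll\eta_i$: a BQI measure on $G$ is fully supported, so $\omega(\{e\})>0$ and $\eta_i\ge\omega(\{e\})\,(m_0)_i$; combined with $\nu_i\ll(m_0)_i$ this gives $\nu_i\ll\eta_i$. Thus the $\eta_i$ are equivalent to $\nu_i$, which completes the construction.

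The one delicate point is precisely the equivalence (as opposed to mere absolute continuity) of $(m_0)_i$ and $\nu_i$: the general representation results of subsection \ref{subsection Comparison with ultralimit of C*-algebras} only produce $(m_0)_i\ll\nu_i$, and it is the lower bound $\psi\ge c$ coming from $\frac{d\nu}{dm_0}\in L^\infty$ that allows the truncation from below without disturbing the ultralimit. Everything else is routine bookkeeping of absolute-continuity relations together with applications of the Lebesgue--Vitali theorem (Theorem \ref{Theorem ultralimit and integrals}).
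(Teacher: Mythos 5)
Your proof is correct and takes essentially the same route as the paper's: the paper likewise represents \(\frac{dm_0}{d\nu}\) as an ultralimit via Corollary \ref{Corollary of compartion of ultralimits}, truncates the representing functions from below by the constant coming from \(\frac{d\nu}{dm_0}\in L^{\infty}\) (via a pointwise maximum), renormalizes to probability densities, and then convolves with \(\omega\), using Lemma \ref{lemma convolution and ultralimit} and the BQI majorant of \(\omega\) exactly as you do. The only difference is cosmetic: you spell out the verification that \(\eta_i\) is equivalent (not merely absolutely continuous) to \(\nu_i\), which the paper leaves implicit.
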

\begin{proof}
Consider \(f=\frac{d m_0}{d\nu}\in L^{1}(X,\nu)\), by Corollary \ref{Corollary of compartion of ultralimits}, there is a majorant \(\rho\in\mathbf{M}\) and \(f_i\in \mathcal{C}_{\rho}(X_i,\nu_i)\) with \(\sup ||f_i||_\rho<\infty\) and \(f=\mathcal{U}\lim f_i\). \\
By assumption there is \(a>0\) for which \(f\geq a\). We may assume that \(f_i\geq \frac{a}{2}\) and \(\intop_{X} f_i d\nu_i=1\) for all \(i\). Indeed, otherwise consider \(g_i=\max(f_i,a)\) then \(\mathcal{U}\lim g_i=\max(\mathcal{U} \lim f_i,a)=f\), moreover, \(\mathcal{U}\lim \intop_{X_i} g_i d\nu_i=\intop_{X} \mathcal{U}\lim g_i d\nu=1\) and thus \(\mathcal{U}\)-a.e. \(\frac{1}{2}\leq \intop_{X_i} g_i d\nu_i\leq 2\). Define \(h_{i}=\frac{g_i}{||g_i||_{L^{1}}}\), then for \(\mathcal{U}\)-a.e. \(i\) we have \(h_i\geq \frac{a}{2}\), \(h_i\) are \(\rho\)-uniformly bounded and \(\mathcal{U}\lim h_i=f\).\\ 
Hence we can make this assumption on \(f_i\). 
Thus, \(\kappa_i=f_i\cdot \nu_i\) are probability measures on \(X_i\) of the same measure class as \(\nu_{i}\) and \(\mathcal{U}\lim \kappa_i=m_0\). This implies \(\eta_i:=\omega\ast \kappa_i\) are uniformly BQI (since \(\omega\) is BQI) and by Lemma \ref{lemma convolution and ultralimit} we have
\(\mathcal{U}\lim\eta_i=\omega\ast m_0=m\).
\end{proof}
The following corollary plays a key role in our proof of Theorem \ref{Theorem entropy for amenable intro}:
\begin{corollary}\label{Corollary lower semi cont of I in ultralimit}
Let \((X_i,\nu_i)_{i\in\mathcal{I}}\) be uniformly QI \(G\)-spaces, let \(\mathcal{U}\) be an ultrafilter on \(\mathcal{I}\) and consider \((X,\nu)=\mathcal{U}\lim (X_i,\nu_i)\).
Then for any convex function \(f\) with \(f(1)=0\) and finitely supported probability measure \(\lambda\) on \(G\) we have:
\[\mathcal{U}\lim I_{\lambda,f}(X_i) \leq I_{\lambda,f}(X)\]
\end{corollary}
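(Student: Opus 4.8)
The plan is to prove the single inequality $I_{\lambda,f}(X)\ge \mathcal{U}\lim I_{\lambda,f}(X_i)$ (the only direction the corollary asserts) by showing that \emph{every} measure in $M(X)$ has entropy at least $\mathcal{U}\lim I_{\lambda,f}(X_i)$. The strategy splits into a step for measures directly accessible to the ultralimit machinery and a convexity-approximation step in the spirit of Lemma \ref{lemma properties of I lambda f}(4).

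First I would treat the accessible measures. Call $m\in M(X)$ \emph{admissible} if $m=\omega\ast m_0$ for some BQI measure $\omega$ on $G$ and some $m_0\in M(X)$ with $\frac{d\nu}{dm_0}\in L^{\infty}(X)$. For such $m$, Lemma \ref{lemma different measures on Ultralimit} produces uniformly BQI measures $\eta_i$ on $X_i$, each in the measure class of $\nu_i$, with $m=\mathcal{U}\lim\eta_i$; moreover $(X,m)=\mathcal{U}\lim(X_i,\eta_i)$, since the underlying ultralimit Borel space does not depend on the chosen measures. Applying Proposition \ref{Proposition entropy and ultralimit BQI case} to the uniformly BQI family $(X_i,\eta_i)$ gives $h_{\lambda,f}(X,m)=\mathcal{U}\lim h_{\lambda,f}(X_i,\eta_i)$. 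As $\eta_i\in M(X_i)$ we have $h_{\lambda,f}(X_i,\eta_i)\ge I_{\lambda,f}(X_i)$ for every $i$, and since ultralimits of real sequences preserve the relation $\ge$, I conclude $h_{\lambda,f}(X,m)\ge \mathcal{U}\lim I_{\lambda,f}(X_i)$ for every admissible $m$.

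Next I would remove the admissibility restriction. Fix any $\kappa\in M(X)$; the goal is $h_{\lambda,f}(\kappa)\ge\mathcal{U}\lim I_{\lambda,f}(X_i)$. Choose a BQI measure $\omega$ on $G$ with $\omega(e)>0$ and set $\nu'=\omega\ast\nu$; by Lemma \ref{lemma properties of I lambda f}(2) one has $h_{\lambda,f}(\nu')\le h_{\lambda,f}(\omega)<\infty$ (recall $\lambda$ is finitely supported), while $\nu'\ge\omega(e)\nu$ gives $\frac{d\nu}{d\nu'}\in L^{\infty}$. For $\epsilon>0$ put $m_0=(1-\epsilon)\kappa+\epsilon\nu'$ and $\omega_\epsilon=(1-\epsilon)\sum_{n\ge0}\epsilon^n\omega^{\ast n}$. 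Then $m_0\ge\epsilon\,\omega(e)\,\nu$ forces $\frac{d\nu}{dm_0}\in L^{\infty}$, so $m:=\omega_\epsilon\ast m_0$ is admissible and the previous step yields $h_{\lambda,f}(m)\ge\mathcal{U}\lim I_{\lambda,f}(X_i)$. Using $\omega_\epsilon=(1-\epsilon)\delta_e+\epsilon\,\omega\ast\omega_\epsilon$ (hence $m=(1-\epsilon)m_0+\epsilon\,\omega\ast m$), the convexity of $h_{\lambda,f}$ (Proposition \ref{Proposition properties of entropy lambda f}(1)) and the factor inequality $h_{\lambda,f}(\omega\ast m)\le h_{\lambda,f}(\omega)$ (Lemma \ref{lemma properties of I lambda f}(2)), I obtain
\[
\mathcal{U}\lim I_{\lambda,f}(X_i)\le h_{\lambda,f}(m)\le (1-\epsilon)^2 h_{\lambda,f}(\kappa)+(1-\epsilon)\epsilon\,h_{\lambda,f}(\nu')+\epsilon\,h_{\lambda,f}(\omega).
\]
Letting $\epsilon\to0$, the last two terms vanish (both $h_{\lambda,f}(\nu')$ and $h_{\lambda,f}(\omega)$ being finite), leaving $h_{\lambda,f}(\kappa)\ge\mathcal{U}\lim I_{\lambda,f}(X_i)$. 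Taking the infimum over $\kappa\in M(X)$ finishes the argument.

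The main obstacle I anticipate is that $X=\mathcal{U}\lim(X_i,\nu_i)$ is only QI, not necessarily BQI, so Lemma \ref{lemma properties of I lambda f}(4) cannot be quoted verbatim and the reference measure $\nu$ itself may have infinite entropy. This is exactly why the second step introduces the auxiliary finite-entropy measure $\nu'=\omega\ast\nu$ and why the domination bound $\frac{d\nu}{dm_0}\in L^{\infty}$ must be checked by hand, so that the convolution $m_0$ lands in the class of measures to which Lemma \ref{lemma different measures on Ultralimit} applies. The remaining points — that ultralimits preserve inequalities and that $(X,m)$ really is the ultralimit of $(X_i,\eta_i)$ over the common Borel space — are routine bookkeeping.
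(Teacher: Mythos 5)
Your proof is correct and follows essentially the same route as the paper: both arguments rest on reducing to measures of the form $\omega\ast m_0$ with $\frac{d\nu}{dm_0}\in L^{\infty}$, handling those via Lemma \ref{lemma different measures on Ultralimit} together with Proposition \ref{Proposition entropy and ultralimit BQI case}, and then invoking the convexity/approximation trick of Lemma \ref{lemma properties of I lambda f}(4). The only (cosmetic) difference is that the paper first replaces $\nu_i$ by $\omega\ast\nu_i$ so that $(X,\nu)$ becomes BQI and Lemma \ref{lemma properties of I lambda f}(4) can be quoted verbatim, whereas you keep $\nu$ as is and re-run that lemma's proof with the auxiliary finite-entropy measure $\nu'=\omega\ast\nu$.
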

\begin{proof}
By replacing \(\nu_{i}\) by \(\omega\ast\nu_i\) for a BQI measure \(\omega\) on \(G\) we may assume that \((X,\nu)\) is BQI.
Let \(I:=\mathcal{U}\lim I_{\lambda,f}(X_i)\), by Lemma \ref{lemma properties of I lambda f} it is enough that show that if \(m\in M(X)\) is of the form \(m=\omega\ast m_0\) where \(\omega\) is BQI and \(\nu\stackrel{L^{\infty}}{\ll}m_0\) then \(h_{\lambda,f}(m)\geq I\). However, using Lemma \ref{lemma different measures on Ultralimit}, there are uniformly BQI measures \(\eta_i\) on \(X_i\) of the same measure class as \(\nu_{i}\) with \(\mathcal{U}\lim \eta_i=m\). By Proposition \ref{Proposition entropy and ultralimit BQI case} we conclude:
\[h_{\lambda,f}(m)=\mathcal{U}\lim h_{\lambda,f}(\eta_i)\geq \mathcal{U}\lim I_{\lambda,f}(X_i)=I\]
\end{proof}

The following general proposition will not be used in the next subsections, however we include it here:
\begin{prop}\label{Proposition lower semi continouity of entropy}
Suppose \((X_i,\nu_i)_{i\in\mathcal{I}}\) is a collection of QI \(G\)-spaces. Let \(\lambda\) be a generating (as a semi-group) measure on \(G\) and \(f\) a convex function with \(f(1)=0\) satisfying
\(\lim_{t\to \infty} \frac{f(t)}{t}=\infty\).\\
If \(\sup_{i} h_{\lambda,f}(X_i,\nu_i)<\infty\), then \((X_i,\nu_i)\) are uniformly QI. Moreover, for any ultrafilter \(\mathcal{U}\) on \(\mathcal{I}\) the ultralimit \((X,\nu)=\mathcal{U}\lim (X_i,\nu_i)\) satisfies:
\[h_{\lambda,f}(X,\nu)\leq \mathcal{U}\lim h_{\lambda,f}(X_i,\nu_i)\]
\end{prop}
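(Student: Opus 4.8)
The plan is to prove the two assertions in turn: first that finiteness of the entropies forces the family to be uniformly QI, and then to deduce the inequality from Fatou-type estimates for ultralimits. Write $M=\sup_i h_{\lambda,f}(X_i,\nu_i)<\infty$. Since modifying $f$ by an affine term leaves every $D_f$ unchanged, and since subtracting a subgradient at $1$ arranges $f\ge 0$ (with $1$ a minimum, $f(1)=0$), I would first reduce to the case where $f$ is a \emph{finite} non-negative convex function on $[0,\infty)$ with $\lim_{t\to\infty}f(t)/t=\infty$; concretely, replacing $f$ by a finite convex minorant with the same growth at infinity (which only decreases $D_f$) removes any issue of $f(0^+)=+\infty$. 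For each $g$ in the support of $\lambda$, the bound $\lambda(g)\,D_f(g\nu_i\|\nu_i)\le h_{\lambda,f}(X_i,\nu_i)\le M$ together with $g\nu_i\sim\nu_i$ and Lemma \ref{lemma properties of f-divergence}(2) gives $\int_{X_i} f\big(\tfrac{dg\nu_i}{d\nu_i}\big)\,d\nu_i\le M/\lambda(g)$ uniformly in $i$. Applying the de la Vallée–Poussin lemma (Lemma \ref{lemma de la Valle´e-Poussin}) with the convex function $f$ and constant $M/\lambda(g)$, applied to the density $\tfrac{dg\nu_i}{d\nu_i}$, produces $\rho_g\in\mathbf{M}$ and $K_g$ with $\|\tfrac{dg\nu_i}{d\nu_i}\|_{C_{\rho_g}}\le K_g$ for all $i$; setting $B(g)(t)=\min(1,K_g\rho_g(t))\in\mathbf{M}$ (Lemma \ref{lemma properties of majorants}(5)) yields $g\nu_i\stackrel{B(g)}{\ll}\nu_i$ uniformly.

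Since the support of $\lambda$ generates $G$ as a semigroup, it remains to manufacture a uniform majorant for an arbitrary product $g=s_1\cdots s_k$. For this I would use the cocycle estimate $(g_1g_2)\nu_i(A)=(g_2\nu_i)(g_1^{-1}A)\le B(g_2)\big((g_1\nu_i)(A)\big)\le B(g_2)\big(B(g_1)(\nu_i(A))\big)$, valid because each $B(g_j)$ is non-decreasing (Lemma \ref{lemma properties of majorants}(1)). Iterating gives $g\nu_i\stackrel{B(s_k)\circ\cdots\circ B(s_1)}{\ll}\nu_i$ uniformly in $i$, and the composite lies in $\mathbf{M}$ by Lemma \ref{lemma properties of majorants}(3). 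Fixing one factorization per $g$ thus defines a common majorant $B:G\to\mathbf{M}$, so $(X_i,\nu_i)$ are uniformly QI.

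For the lower semicontinuity, Proposition \ref{proposition convolution and ultralimit} now applies: $(X,\nu)$ is QI and $\tfrac{dg\nu}{d\nu}=\mathcal{U}\lim\tfrac{dg\nu_i}{d\nu_i}$ $\nu$-a.e. Because $f$ is continuous on $(0,\infty)$ and $\tfrac{dg\nu}{d\nu}\in(0,\infty)$ $\nu$-a.e., and since $\mathcal{U}\lim a_i=t\in(0,\infty)$ with $f$ continuous at $t$ forces $\mathcal{U}\lim f(a_i)=f(t)$, one gets $\mathcal{U}\lim f\big(\tfrac{dg\nu_i}{d\nu_i}\big)=f\big(\tfrac{dg\nu}{d\nu}\big)$ $\nu$-a.e. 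The ultralimit Fatou lemma (Lemma \ref{lemma ultra Fatou}) applied to the non-negative functions $f(\tfrac{dg\nu_i}{d\nu_i})$ then yields $D_f(g\nu\|\nu)\le\mathcal{U}\lim D_f(g\nu_i\|\nu_i)$ for each fixed $g$. Summing over $g$ uses a discrete Fatou step: for any finite $F\subset G$, finite sums commute with $\mathcal{U}\lim$, so $\sum_{g\in F}\lambda(g)\,\mathcal{U}\lim D_f(g\nu_i\|\nu_i)=\mathcal{U}\lim\sum_{g\in F}\lambda(g)D_f(g\nu_i\|\nu_i)\le\mathcal{U}\lim h_{\lambda,f}(X_i,\nu_i)$, and taking the supremum over all finite $F$ gives $h_{\lambda,f}(X,\nu)\le\mathcal{U}\lim h_{\lambda,f}(X_i,\nu_i)$.

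The genuinely delicate points are concentrated in the uniform-majorant step: getting de la Vallée–Poussin to apply cleanly (the reduction to a finite convex $f$ to avoid $f(0^+)=\infty$) and, more importantly, checking that composing the per-generator majorants over all of $G$ keeps a legitimate $\mathbf{M}$-valued function — this is where the semigroup-generation hypothesis is essential. A secondary subtlety is the commutation of $f$ with $\mathcal{U}\lim$, which must be justified at interior values only, using that the limiting density is a.e. finite and strictly positive. By contrast, note that one should \emph{not} expect equality here (unlike the uniformly bounded case of Proposition \ref{Proposition entropy and ultralimit BQI case}), since $f\big(\tfrac{dg\nu_i}{d\nu_i}\big)$ need not be uniformly integrable even when the densities themselves are uniformly controlled in $C_{\rho_g}$.
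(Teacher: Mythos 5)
Your proof is correct and follows essentially the same route as the paper: normalize so that \(f\geq 0\), apply the de la Vall\'{e}e--Poussin lemma (Lemma \ref{lemma de la Valle´e-Poussin}) to each generator in \(Supp(\lambda)\), compose the resulting majorants along the semigroup generation (Lemma \ref{lemma properties of majorants}) to get uniform quasi-invariance, and then combine Proposition \ref{proposition convolution and ultralimit}, continuity of \(f\) on \((0,\infty)\), and the ultralimit Fatou lemma (Lemma \ref{lemma ultra Fatou}) for the inequality, with your explicit cocycle iteration and the finite-sum Fatou step merely filling in details the paper leaves implicit. One caution: your opening reduction to a finite convex minorant of \(f\) can only be invoked for the uniform-QI half (a minorant decreases both sides of the entropy inequality, so it is not a legitimate global reduction), but since your argument for the inequality never uses finiteness of \(f\) at \(0^{+}\), this phrasing issue does not affect correctness.
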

\begin{proof}
We can change \(f\) by a linear function to assume further \(f\geq 0\). Let \(M=\sup_{i} h_{\lambda,f}(X_i,\nu_i)\).\\
For any \(g\in G\) we have \(\mathbb{E}_{\nu_{i}}(f(\frac{dg\nu_i}{d\nu_i}))\leq \frac{M}{\lambda(g)}\) and thus by Lemma \ref{lemma de la Valle´e-Poussin} we conclude that for any \(g\in Supp(\lambda)\) there is \(\rho\in\mathbf{M}\) and \(K>0\) for which \(||\frac{dg\nu_i}{d\nu_i}||_{\mathcal{C}_\rho(X_i,\nu_i)}\leq K\) (Both \(K,\rho\) depend on \(g\in Supp(\lambda)\)). Changing \(\rho\) to \(\min (1,K\cdot\rho)\) we get that \(g\nu_i\ll^{\rho} \nu_i\). Since \(Supp(\lambda)\) is generating \(G\) as a semi-group and using the fact that \(\mathbf{M}\) is closed under composition (see Lemma \ref{lemma properties of majorants}) we conclude that \((X_i,\nu_i)\) are uniformly QI.\\
The inequality \(h_{\lambda,f}(X,\nu)\leq \mathcal{U}\lim h_{\lambda,f}(X_i,\nu_i)\)  follows immediately from Proposition   \ref{proposition convolution and ultralimit} (compatibility of Radon-Nikodym derivative with ultralimit in the uniformly QI case), the continuity of \(f\) and
Lemma \ref{lemma ultra Fatou} (our version of Fatou's lemma).
\end{proof}

\subsection{Entropy in amenable actions - proof of Theorem \ref{Theorem entropy for amenable intro}}\label{subsection Entropy in amenable actions}

We can now complete the proof of Theorem \ref{Theorem entropy for amenable intro}:

\begin{theorem}\label{Theorem entropy and amenability}
Let \(S\) be an amenable action of \(G\).
Then for any convex function \(f\) with \(f(1)=0\) and finitely supported measure \(\lambda\) on \(G\) we have:
\[I_{\lambda,f}(S)=I_{\lambda,f}(G)\]
\end{theorem}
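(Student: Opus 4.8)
The plan is to combine the general inequality $I_{\lambda,f}(S)\le I_{\lambda,f}(G)$, which holds for any QI $G$-space by Lemma \ref{lemma properties of I lambda f}(3), with the reverse inequality $I_{\lambda,f}(S)\ge I_{\lambda,f}(G)$, which is where all the work lies. First I would reduce to the ergodic case. Given an amenable $S$, pick any BQI measure in the class and pass to its ergodic decomposition $\nu=\intop_E \nu_e\,dP$ as in Lemma \ref{lemma ergodic decomposition and entropy}; since $h_{\lambda,f}(\nu)=\intop_E h_{\lambda,f}(\nu_e)\,dP$, any measure on $S$ has entropy at least $\inf_e I_{\lambda,f}(S,\nu_e)$, and each ergodic component $(S,\nu_e)$ is again an amenable action (amenability passes to the ergodic components, being a property of the measure class). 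So it suffices to handle the ergodic amenable case.

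For an ergodic amenable $S$, I would invoke Theorem \ref{Theorem Amenable actions and ultralimit} (Theorem D): there is a BQI measure $\nu$ in the class and uniformly BQI measures $\lambda_n$ on $G\times\mathbb{N}$ such that, for a fixed non-principal ultrafilter $\mathcal{U}$,
\[
(S,\nu)_{RN}\cong (\mathcal{U}\lim\, G\times\mathbb{N},\ \mathcal{U}\lim\lambda_n)_{RN}.
\]
Since $(S,\nu)\to(S,\nu)_{RN}$ is a measure preserving extension, Lemma \ref{lemma properties of I lambda f}(1) gives $I_{\lambda,f}(S)=I_{\lambda,f}((S,\nu)_{RN})$, and the same reasoning identifies the right-hand side with $I_{\lambda,f}$ of the ultralimit space $(X,\nu_{\mathcal U}):=\mathcal{U}\lim(G\times\mathbb{N},\lambda_n)$ (passing to the RN factor does not change $I_{\lambda,f}$). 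Now applying Corollary \ref{Corollary lower semi cont of I in ultralimit} to the uniformly BQI family $(G\times\mathbb{N},\lambda_n)$ yields
\[
\mathcal{U}\lim I_{\lambda,f}(G\times\mathbb{N}) \le I_{\lambda,f}(X,\nu_{\mathcal U}) = I_{\lambda,f}(S).
\]
Finally, $G\times\mathbb{N}$ with its trivial $\mathbb{N}$-coordinate projects $G$-equivariantly onto $G$ via a measure preserving extension, so $I_{\lambda,f}(G\times\mathbb{N})=I_{\lambda,f}(G)$ for every $n$; hence the left-hand side equals $I_{\lambda,f}(G)$, giving $I_{\lambda,f}(S)\ge I_{\lambda,f}(G)$.

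The main obstacle is the bookkeeping around the two distinct meanings of the $RN$ subscript (one for QI $G$-spaces and one for $\boldsymbol{\sigma}$-systems) and making sure each passage to an RN factor is along a \emph{measure preserving} extension so that Lemma \ref{lemma properties of I lambda f}(1) applies with equality; this is exactly the subtlety flagged in the remark following Theorem \ref{Theorem Amenable actions and ultralimit}. I would carry this out carefully by noting that the canonical maps $(G\times\mathbb{N},\lambda_n)\to$ RN factor, and $(S,\nu)\to(S,\nu)_{RN}$, are measure preserving by construction, and that Corollary \ref{Corollary lower semi cont of I in ultralimit} applies directly to the \emph{uniformly BQI} family before taking any RN factor, so that the lower-semicontinuity inequality lands on the correct side. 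Combining the two inequalities completes the proof.
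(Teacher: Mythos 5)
Your overall route is the paper's own: reduce to the ergodic case, invoke Theorem \ref{Theorem Amenable actions and ultralimit}, use the measure-preserving invariance of \(I_{\lambda,f}\) from Lemma \ref{lemma properties of I lambda f}(1) to pass through the Radon--Nikodym factors on both sides, apply Corollary \ref{Corollary lower semi cont of I in ultralimit} to the uniformly BQI family \((G\times\mathbb{N},\lambda_n)\), and finish with \(I_{\lambda,f}(G\times\mathbb{N})\geq I_{\lambda,f}(G)\). That core is correct; note that for the last step you do not need the measure-preserving refinement at all: the plain factor inequality of Lemma \ref{lemma properties of I lambda f}(1) applied to the projection \(G\times\mathbb{N}\to G\) already gives the inequality in the direction you use (if you do want equality via a measure preserving extension, you must specify a product measure \(\mu\times\kappa\) on \(G\times\mathbb{N}\); for a general measure in the class the projection is not measure preserving).

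The genuine gap is in your reduction to the ergodic case. You justify amenability of the ergodic components \((S,\nu_e)\) by saying amenability ``passes to the ergodic components, being a property of the measure class,'' but the components \(\nu_e\) are \emph{not} in the measure class of \(\nu\): they are mutually singular, each living on a fiber of the map \(\pi:S\to E\) onto the space of ergodic components, so there is no canonical map \(L^{\infty}(S,\nu)\to L^{\infty}(S,\nu_e)\) and the \(G\)-equivariant conditional expectation witnessing amenability of \(S\) does not descend to a component. That ergodic components of an amenable action are again amenable is a genuine theorem, and the paper invokes \cite[Corollary B]{AmenableGeneral} at exactly this point. A second, smaller omission: Lemma \ref{lemma ergodic decomposition and entropy} is stated for \emph{standard} probability spaces, while amenable \(G\)-spaces in this paper are not assumed standard; the paper first replaces \(S\) by its Radon--Nikodym factor, which is standard, still amenable by Lemma \ref{lemma action ameanble iff rn amenable}, and has the same minimal entropy number by Lemma \ref{lemma properties of I lambda f}(1), before decomposing. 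With these two repairs your argument coincides with the paper's proof.
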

\begin{proof}
Note that by Lemma \ref{lemma properties of I lambda f}(3) we only need to show that \(I_{\lambda,f}(S)\geq I_{\lambda,f}(G)\). Using Lemma \ref{lemma properties of I lambda f}(1) we may assume \(S\) is a standard measure space and using Lemma \ref{lemma ergodic decomposition and entropy}(2) and \cite[Corollary B]{AmenableGeneral} we may assume that the action of \(G\) on \(S\) is ergodic.\\
Take a non-principle ultrafilter \(\mathcal{U}\) on \(\mathbb{N}\).
By Theorem \ref{Theorem Amenable actions and ultralimit} there is a measure \(\nu\) on \(S\) and BQI measures \(\eta_n\) on \(G\times \mathbb{N}\) (supported everywhere) with 
\[(S,\nu)_{RN}\cong (\mathcal{U}\lim G\times \mathbb{N},\mathcal{U}\lim \eta_n)_{RN}\] 
Consider the QI \(G\)-spaces \(Z:=(S,\nu)_{RN}\cong\mathcal{U}\lim (G\times \mathbb{N},\mathcal{U}\lim \eta_n)_{RN}\) and \(Y:=\mathcal{U}\lim (G\times \mathbb{N},\mathcal{U}\lim \eta_n)\). Then we conclude by Lemma \ref{lemma properties of I lambda f}(1) and Corollary \ref{Corollary lower semi cont of I in ultralimit}:
\[I_{\lambda,f}(S)= I_{\lambda,f}(Z)= I_{\lambda,f}(Y)\geq \mathcal{U}\lim_{n} I_{\lambda,f}(G\times \mathbb{N})=I_{\lambda,f}(G\times \mathbb{N})\geq I_{\lambda,f}(G)\]
proving the result.
\end{proof}

One can use the above theorem to conclude that certain actions do not posses a non-trivial amenable factor:

\begin{corollary}\label{cor no amenable factors to entropy minimizers}
Suppose  \(f\) is strictly convex function with \(f(1)=0\) and \(\lambda\) generating probability measure on \(G\). Suppose \((S,\nu)\) is a QI \(G\)-space with \(h_{\lambda,f}(S,\nu)=I_{\lambda,f}(S)\).\\
Then if \(\pi:(S,\nu)\to(T,m)\) is a factor and \(T\) is an amenable \(G\)-space, then \(\pi\) is a measure preserving extension.
\end{corollary}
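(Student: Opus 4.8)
The plan is to show that the hypotheses force equality in the data processing inequality $D_f(g\nu\|\nu)\ge D_f(gm\|m)$ (where $m=\pi_*\nu$) for every $g\in\mathrm{supp}(\lambda)$, and then to use the strict convexity of $f$ to upgrade each such equality into the pointwise identity of Radon--Nikodym cocycles that defines a measure preserving extension. The role of Theorem \ref{Theorem entropy and amenability} is to pin down the entropy values, and the role of ``$\lambda$ generating'' is to pass from the generators to all of $G$.

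First I would assemble the entropy (in)equalities. Since $T$ is a factor of $S$, Lemma \ref{lemma properties of I lambda f}(1) gives $I_{\lambda,f}(S)\ge I_{\lambda,f}(T)$, while Lemma \ref{lemma properties of I lambda f}(3) gives $I_{\lambda,f}(S)\le I_{\lambda,f}(G)$. As $T$ is amenable, Theorem \ref{Theorem entropy and amenability} yields $I_{\lambda,f}(T)=I_{\lambda,f}(G)$, so the three numbers coincide. Combining the hypothesis $h_{\lambda,f}(S,\nu)=I_{\lambda,f}(S)$ with the factor inequality $h_{\lambda,f}(T,m)\le h_{\lambda,f}(S,\nu)$ (Proposition \ref{Proposition properties of entropy lambda f}(2)) and the trivial bound $h_{\lambda,f}(T,m)\ge I_{\lambda,f}(T)$, one obtains $h_{\lambda,f}(T,m)=h_{\lambda,f}(S,\nu)$. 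Written out this says $\sum_{g}\lambda(g)\big[D_f(g\nu\|\nu)-D_f(gm\|m)\big]=0$, and each summand is non-negative by Lemma \ref{lemma properties of f-divergence}(3); hence, the common value being finite, $D_f(g\nu\|\nu)=D_f(gm\|m)$ for every $g\in\mathrm{supp}(\lambda)$.

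The key technical step is the equality case of the data processing inequality for strictly convex $f$: if $P\sim Q$ with $D_f(P\|Q)<\infty$ and $\pi$ is a factor, then $D_f(\pi_*P\|\pi_*Q)=D_f(P\|Q)$ forces $\frac{dP}{dQ}=\frac{d\pi_*P}{d\pi_*Q}\circ\pi$ a.e. I would prove this via the identity $\frac{d\pi_*P}{d\pi_*Q}\circ\pi=\mathbb{E}_Q\big[\frac{dP}{dQ}\,\big|\,\pi^{-1}\Sigma_Y\big]$ and conditional Jensen, which give $D_f(P\|Q)=\int f(\tfrac{dP}{dQ})\,dQ\ge\int f\big(\mathbb{E}_Q[\tfrac{dP}{dQ}\mid\mathcal G]\big)\,dQ=D_f(\pi_*P\|\pi_*Q)$, with equality precisely when $\tfrac{dP}{dQ}$ is $\pi^{-1}\Sigma_Y$-measurable (here strict convexity of $f$ together with integrability makes the conditional Jensen gap vanish only in this case). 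Applying this with $P=g\nu$, $Q=\nu$ (equivalent measures, as $S$ is QI) for each $g\in\mathrm{supp}(\lambda)$ yields $\frac{dg\nu}{d\nu}=\frac{dgm}{dm}\circ\pi$ on the generators.

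Finally I would propagate this identity to all of $G$. Writing $R_\nu(g;x)=\frac{dg\nu}{d\nu}(x)$ and $R_m(g;y)=\frac{dgm}{dm}(y)$, both obey the cocycle relation $R(gh;x)=R(h;g^{-1}x)R(g;x)$, and $G$-equivariance of $\pi$ gives $\pi(g^{-1}x)=g^{-1}\pi(x)$; a direct computation then shows that $\{g:R_\nu(g;\cdot)=R_m(g;\pi(\cdot))\text{ a.e.}\}$ is a subgroup of $G$. As it contains the generating set $\mathrm{supp}(\lambda)$, it is all of $G$, which is exactly the assertion that $\pi$ is a measure preserving extension. The main obstacle is the equality case of the data processing inequality: making the strict-convexity argument bite requires knowing $D_f(g\nu\|\nu)<\infty$ so that the conditional Jensen inequality is an honest equality of integrable functions, so the degenerate situation in which the minimal entropy is infinite must be handled (or excluded) separately; the cocycle propagation is routine but calls for care with the almost-everywhere identifications.
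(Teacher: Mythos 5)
Your proposal is correct and takes essentially the same route as the paper: the paper's proof is exactly your entropy sandwich \(I_{\lambda,f}(G)\geq I_{\lambda,f}(S)=h_{\lambda,f}(S,\nu)\geq h_{\lambda,f}(T,m)\geq I_{\lambda,f}(T)=I_{\lambda,f}(G)\) via Theorem \ref{Theorem entropy and amenability}, after which it simply cites \cite[Lemma 3.3(3)]{sayag2022entropy} for the implication ``equality of (finite) entropies along a factor implies measure preserving extension'' --- precisely the statement you reprove by the conditional-Jensen equality case for strictly convex \(f\) together with the cocycle/subgroup propagation from \(\mathrm{supp}(\lambda)\) to \(G\). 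The finiteness caveat you flag is harmless under the paper's standing convention that \(\lambda\) is finitely supported, since then \(I_{\lambda,f}(G)<\infty\) (BQI measures on \(G\) have finite \((\lambda,f)\)-entropy), so each \(D_f(g\nu\|\nu)\), \(g\in\mathrm{supp}(\lambda)\), is finite as required.
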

\begin{proof}
Using Theorem \ref{Theorem entropy and amenability} for \(T\) we see
\[I_{\lambda,f}(G)\geq I_{\lambda,f}(S)=h_{\lambda,f}(S,\nu)\geq h_{\lambda,f}(T,m)\geq I_{\lambda,f}(T)=I_{\lambda,f}(G)\]
This implies that we have equality everywhere, in particular \(h_{\lambda,f}(S,\nu)= h_{\lambda,f}(T,m)\) which implies \(\pi\) is measure preserving extension (by \cite[Lemma 3.3(3)]{sayag2022entropy}).
\end{proof}

\subsection{Entropy in boundary actions}

\subsubsection{Entropy for hyperbolic groups and linear groups - proof of Theorems \ref{Theorem Entropy for hyperbolic groups intro}, \ref{Theorem entropy for lattices intro}}\label{subsection Entropy for hyperbolic groups}
In \cite{Spatzier1991FundamentalGO}, Spatzier and Zimmer proved that certain topological boundary actions are amenable with respect to all measure classes. Adams generalized their result to all hyperbolic groups in the paper \cite{ADAMS1994765}. The notion of topological amenability of a locally compact groupoid was introduced \cite{renault2006groupoid} as the existence of an approximate invariant mean. It implies the Zimmer amenability for any quasi-invariant measure. In \cite{anantharaman1987systemes} the converse was shown to be true. For a comprehensive study of the amenability in both settings for the general case of groupoids, see the book \cite{anantharaman2000amenable} (in particular \cite[Appendix B]{anantharaman2000amenable} for the topological amenability for hyperbolic groups on their Gromov boundary).

\begin{theorem}\label{Theorem topological amenability entropy}
Suppose \(G\) is a discrete countable group, and let \(X\) be a compact \(G\)-space which is topologically amenable. Then for any convex function \(f\) with \(f(1)=0\) and finitely supported measure \(\lambda\) on \(G\) 
\[I_{\lambda,f}(G)=I_{\lambda,f}^{top}(X)\]
\end{theorem}
\begin{proof}
Lemma \ref{lemma properties of I lambda f}(2) yields \(I_{\lambda,f}^{top}(X)\leq I_{\lambda,f}(G)\). By Lemma \ref{lemma only QI measures on topological actions} we only need to show that for quasi invariant Borel measure \(\nu\in M(X)\) we have \(h_{\lambda,f}(X,\nu)\geq I_{\lambda,f}(G)\). Let \(S\) be the quasi invariant \(G\)-space \(X\) with the measure class of \(\nu\). It is amenable. Thus, we may use Theorem \ref{Theorem entropy and amenability} to conclude 
\[h_{\lambda,f}(X,\nu)\geq I_{\lambda,f}(S)=I_{\lambda,f}(G)\]
\end{proof}

From this (and the result by Adams \cite{ADAMS1994765} mentioned above) we get Theorem \ref{Theorem Entropy for hyperbolic groups intro}:
\begin{theorem}\label{Theorem Entropy for hyperbolic groups}
Let \(G\) be a hyperbolic group, and let \(\partial G\) be its Gromov boundary. Then for any convex function \(f\) with \(f(1)=0\) and finitely supported measure \(\lambda\) on \(G\) we have:
\[I_{\lambda,f}(G)=I_{\lambda,f}^{top}(\partial G)\]
\end{theorem}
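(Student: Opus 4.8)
The plan is to establish the two inequalities \(I_{\lambda,f}^{top}(\partial G)\leq I_{\lambda,f}(G)\) and \(I_{\lambda,f}^{top}(\partial G)\geq I_{\lambda,f}(G)\) separately. The first is elementary and uses neither hyperbolicity nor amenability; the second is where Adams' boundary amenability and Theorem \ref{Theorem entropy and amenability} enter.

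For the upper bound, I would first produce a quasi-invariant measure on \(\partial G\): picking any Borel probability measure \(m_0\) on \(\partial G\) together with a BQI measure \(\omega\) on \(G\), Lemma \ref{lemma majorant on convolution on G} guarantees that \(\nu:=\omega\ast m_0\) is quasi-invariant. Then \((\partial G,[\nu])\) is a QI \(G\)-space, and since the topological minimal entropy number is an infimum over \emph{all} Borel measures — in particular over the single class \([\nu]\) — one obtains
\[I_{\lambda,f}^{top}(\partial G)\leq I_{\lambda,f}(\partial G,[\nu])\leq I_{\lambda,f}(G),\]
the last step being Lemma \ref{lemma properties of I lambda f}(3).

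For the lower bound, the first step is the reduction, via Lemma \ref{lemma only QI measures on topological actions}, to quasi-invariant measures:
\[I_{\lambda,f}^{top}(\partial G)=\inf\big\{h_{\lambda,f}(\partial G,\nu)\ :\ \nu\in M(\partial G,\mathcal{B})\ \text{quasi-invariant}\big\}.\]
Next, for each such \(\nu\) I would invoke the boundary amenability theorem of Adams \cite{ADAMS1994765}: the \(G\)-action on \(\partial G\) is amenable with respect to every measure class, so the measured QI \(G\)-space \((\partial G,[\nu])\) is amenable in Zimmer's sense. Theorem \ref{Theorem entropy and amenability} then gives \(I_{\lambda,f}(\partial G,[\nu])=I_{\lambda,f}(G)\), whence \(h_{\lambda,f}(\partial G,\nu)\geq I_{\lambda,f}(G)\). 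Taking the infimum over all quasi-invariant \(\nu\) completes the lower bound.

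The heart of the matter — already handled by the results in place — is that Adams' statement is formulated for \emph{all} measure classes simultaneously, so it supplies precisely the Zimmer-amenability of \((\partial G,[\nu])\) needed to feed each quasi-invariant \(\nu\) into Theorem \ref{Theorem entropy and amenability}; no distinguished (e.g. harmonic) measure has to be singled out. Everything else is a formal consequence of the monotonicity of \(I_{\lambda,f}\) along factors together with the identity of Lemma \ref{lemma only QI measures on topological actions}. The only extra care I expect to need is for elementary \(G\), where \(\partial G\) is empty or finite and the statement should be checked directly.
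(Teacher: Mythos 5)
Your proposal is correct and takes essentially the same route as the paper: the lower bound is identical (reduce to quasi-invariant measures via Lemma \ref{lemma only QI measures on topological actions}, invoke Adams' boundary amenability for every measure class, and apply Theorem \ref{Theorem entropy and amenability}), while your upper bound, routed through an explicit QI class \(\omega\ast m_0\) and Lemma \ref{lemma properties of I lambda f}(3), is just a repackaging of the paper's direct appeal to Lemma \ref{lemma properties of I lambda f}(2), from which item (3) is derived. Your caveat about elementary \(G\) is reasonable and in fact slightly more careful than the paper, which silently assumes \(\partial G\neq\emptyset\) (for finite \(G\) the boundary is empty and the infimum defining \(I^{top}_{\lambda,f}(\partial G)\) is vacuous), but this degenerate case does not affect the substance of the argument.
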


We also get Theorem \ref{Theorem entropy for lattices intro}:

\begin{theorem}\label{Theorem Entropy for lattices}
Let \(G\) be a discrete subgroup in a semi-simple lie group \(\mathbf{G}\). Let \(\mathbf{B}\) be a Borel subgroup and consider the flag space \(X = \mathbf{G}/\mathbf{B}\), then for any \(\lambda, f\) as above we have:
\[I^{top}_{\lambda,f}(X)=I_{\lambda,f}(G)\]
\end{theorem}
\begin{proof}
It is clear since the action of \(G\) on \(X\) is topologically amenable, see \cite[Example 5.2.2(2)]{anantharaman2000amenable} - the groupoid \(G \rtimes X\) is equivalent to \(\mathbf{B} \rtimes (\mathbf{G}/G)\) which is amenable since \(\mathbf{B}\) is an amenable group.    
\end{proof}

\subsubsection{Entropy in the boundary of the free group - proof of Theorem \ref{Theorem entropy for free groups intro}}\label{subsection Entropy for free groups}

We recall some notations and results from \cite[Section 7]{sayag2022entropy}.\\
Let \(F=F_d\) be a free group on \(d\geq2\) generators, which we denote by \(a_1,\dots,a_d\). Denote \(a_{-i}=a_i^{-1}\).\\
We consider the boundary \(X=\partial F\) as the space of infinite length reduced words, e.g. the subset of \(\{a_{\pm1},\dots,a_{\pm d}\}^{\mathbb{N}}\) consisting of words \((w_n)_{n\geq0}\) with \(w_{n+1}\neq w_n^{-1}\). The space \(X\) is a compact metrizable space with a continuous \(F\)-action. \\
We denote by \(\Delta_d\) the set of generating symmetric probability measures on \(F\) supported on \(\{a_{\pm i}\}_{i=1}^{d}\).\\
Let \(\mu\in \Delta_d\) and denote \(p_i=\mu(a_i)\).
We recall that there are unique \((q_i)_{i=\pm1,\dots,\pm d}\) in \((0,1)\) with \(q_j=p_j+q_j\sum_{j\neq i=\pm1,\dots,\pm d} p_i q_{-i}\). They satisfy \(q_i=q_{-i}\) and if we define \(v_i=\frac{q_i}{1+q_i}\) then \(\sum_{i=\pm1,\dots,\pm d} v_i=1\).\\
Let \(f\) be a strictly convex smooth function on \((0,\infty)\) with \(f(1)=0\). We denote \(\Psi_f(z)=f(z)-zf^{\prime}(z)+f^{\prime}(\frac{1}{z})\).
\begin{definition}
We define \(T:\Delta_d\to \Delta_d\) as follows - given \(\mu\in\Delta_d\), denote \(p_i=\mu(a_i)\) and let \(q_i\) be as above, define \(T(\mu)=\lambda\in\Delta_d\) by:
\[\lambda(a_{\pm j})=\lambda_j=\frac{c}{\Psi_f(q_j)-\Psi_{f}(\frac{1}{q_j})}\]
where \(c\) is a normalization \(c=\big(2\sum_{i=1}^{d}\frac{1}{\Psi_{f}(q_j)-\Psi_f(\frac{1}{q_j})}\big)^{-1}\).
\end{definition}
We denote by \(\nu_{\mu}\) the \(\mu\)-harmonic measure on \(\partial F_{d}\). We showed in \cite[Proposition 7.6]{sayag2022entropy}
\begin{lemma}\label{lemma nu mu local minimum}
\(T\) is a bijection and for \(\lambda=T(\mu)\) we have that \(\nu_{\mu}\) minimize \((\lambda,f)\)-entropy in its measure class on \(\partial F_{d}\).
\end{lemma}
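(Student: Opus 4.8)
The plan is to treat the two assertions separately, the bijectivity of $T$ being the easier bookkeeping and the minimality the analytic heart; this recovers the content of \cite[Proposition 7.6]{sayag2022entropy} by a direct variational route.

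For the bijectivity of $T$, I would factor it through the auxiliary coordinates $(q_i)$. The assignment $\mu\mapsto(q_i)$ is determined by the fixed-point system $q_j=p_j+q_j\sum_{i\neq -j}p_iq_{-i}$; since the right-hand side is increasing in the $q$'s and $p_j=\mu(a_j)$, a monotonicity/implicit-function argument shows this is a bijection from $\Delta_d$ onto the admissible region $\{(q_i): q_i=q_{-i}\in(0,1),\ 2\sum_{i=1}^d q_i/(1+q_i)=1\}$. The second assignment $(q_i)\mapsto\lambda$ is explicit: since $f$ is strictly convex, $\Psi_f$ is strictly decreasing (indeed $\Psi_f'(z)=-zf''(z)-z^{-2}f''(1/z)<0$), so $\Psi_f(q_j)-\Psi_f(1/q_j)>0$ for $q_j\in(0,1)$, the weights $\lambda_j$ are positive, and the normalization constant $c$ is exactly the one displayed. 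Reversing this last step and composing gives that $T$ is a bijection of $\Delta_d$.

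The minimality of $\nu_\mu$ I would establish by a first-variation computation combined with convexity. Parametrize the measures in the class of $\nu_\mu$ by their density $\psi=d\nu/d\nu_\mu>0$ with $\int\psi\,d\nu_\mu=1$; this is a convex family and $\nu\mapsto h_{\lambda,f}(\nu)$ is convex by Proposition \ref{Proposition properties of entropy lambda f}(1). Writing $c_g(\xi)=\frac{dg\nu_\mu}{d\nu_\mu}(\xi)$ for the Radon--Nikodym cocycle, one has $\frac{dg\nu}{d\nu}(\xi)=\frac{\psi(g^{-1}\xi)}{\psi(\xi)}c_g(\xi)$. Differentiating $h_{\lambda,f}(\psi\nu_\mu)$ at $\psi\equiv1$ in an admissible direction $\eta$ with $\int\eta\,d\nu_\mu=0$, and using the cocycle relation $c_g(g\zeta)=c_{g^{-1}}(\zeta)^{-1}$ together with the change of variables $\int h(\xi)\eta(g^{-1}\xi)\,d\nu_\mu=\int h(g\zeta)c_{g^{-1}}(\zeta)\eta(\zeta)\,d\nu_\mu$, the cross terms collapse and the first variation becomes $\int \Phi\cdot\eta\,d\nu_\mu$ with $\Phi(\xi)=\sum_g\lambda(g)\big(f(c_g)-c_gf'(c_g)+f'(1/c_{g^{-1}})\big)$. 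Regrouping the $g$ and $g^{-1}$ contributions and using $\lambda(g)=\lambda(g^{-1})$ turns this into $\Phi=\sum_{i=1}^d\lambda_i\big(\Psi_f(c_{a_i})+\Psi_f(c_{a_{-i}})\big)$, so that $\nu_\mu$ is a critical point inside its measure class exactly when $\Phi$ is $\nu_\mu$-a.e. constant.

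To finish I would compute $c_{a_j}$ explicitly from the Markov description of the harmonic measure on $\partial F_d$: $\nu_\mu$ is the Markov measure with initial law $v_i$ and transitions $P(s\to t)=v_t/(1-v_{s^{-1}})$, so that $c_{a_j}$ is first-letter measurable with $c_{a_j}(\xi)=(1-v_j)/v_j=1/q_j$ when $\xi$ begins with $a_j$ and $c_{a_j}(\xi)=v_j/(1-v_j)=q_j$ otherwise (using $v_j=q_j/(1+q_j)$ and $q_j=q_{-j}$; one checks $\int c_{a_j}\,d\nu_\mu=1$ as a sanity test). Substituting into $\Phi$, its value on the cylinder $\{\xi_1=a_{\pm m}\}$ equals $\lambda_m\big(\Psi_f(q_m)-\Psi_f(1/q_m)\big)+2\sum_i\lambda_i\Psi_f(q_i)$, whose second summand is independent of $m$; hence $\Phi$ is a.e. constant precisely when $\lambda_m\big(\Psi_f(q_m)-\Psi_f(1/q_m)\big)$ does not depend on $m$, i.e. when $\lambda=T(\mu)$. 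Convexity of $h_{\lambda,f}$ on the convex set of measures in the class then upgrades this critical point to a genuine minimizer, with strict convexity of $f$ giving uniqueness. The main obstacle I anticipate is making the first-variation step rigorous — justifying differentiation under the integral and the change of variables when $f$ and the cocycle are unbounded — and pinning down the exact Markov transitions of $\nu_\mu$ so that the cocycle values $1/q_j,q_j$ are established cleanly.
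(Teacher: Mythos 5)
The paper itself contains no proof of this lemma: it is imported wholesale from \cite[Proposition 7.6]{sayag2022entropy}, and the remark following Theorem \ref{Theorem entropy for boundary of free groups} stresses that this citation (``a direct computation'') is the only input taken from that paper. So your proposal must stand on its own, and its analytic core does. I checked the first-variation formula: with $c_g=\frac{dg\nu_\mu}{d\nu_\mu}$, the change of variables together with the cocycle identity $c_g(g\zeta)=c_{g^{-1}}(\zeta)^{-1}$ does collapse the cross terms, and the $g\leftrightarrow g^{-1}$ pairing under $\lambda(g)=\lambda(g^{-1})$ gives $\Phi=\sum_{i=1}^d\lambda_i\big(\Psi_f(c_{a_i})+\Psi_f(c_{a_{-i}})\big)$; the cocycle of the harmonic measure is indeed first-letter measurable with values $1/q_j$ on $[a_j]$ and $q_j$ off it; and constancy of $\Phi$ is equivalent to $\lambda_m\big(\Psi_f(q_m)-\Psi_f(1/q_m)\big)$ being independent of $m$, i.e.\ to $\lambda=T(\mu)$. (Two small points: the cylinder value is $-\lambda_m\big(\Psi_f(q_m)-\Psi_f(1/q_m)\big)+2\sum_i\lambda_i\Psi_f(q_i)$, with a minus sign, which changes nothing; and the rigor issue you flag is genuinely dispatchable here, precisely because $c_g$ takes only the finitely many values $q_j^{\pm1}$. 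Write each divergence as $\int \tilde f\big((\psi\circ g^{-1})c_g,\,\psi\big)\,d\nu_\mu$ with $\tilde f(u,v)=vf(u/v)$ the jointly convex perspective function; along the segment from $\nu_\mu$ to any equivalent $\nu'$ the difference quotients of the integrand are monotone, so monotone convergence yields the one-sided derivative $\int\Phi\,(\psi'-1)\,d\nu_\mu$, and convexity of $h_{\lambda,f}$, Proposition \ref{Proposition properties of entropy lambda f}(1), upgrades its vanishing to global minimality in the class.)

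The genuine gap is in the bijectivity of $T$, specifically surjectivity. Your monotonicity observations do give injectivity: if two admissible vectors $(q_j),(q_j')$ yield the same $\lambda$, then $g(q_j')/g(q_j)$ is a constant $\kappa$, where $g(q):=\Psi_f(q)-\Psi_f(1/q)$ is strictly decreasing and positive on $(0,1)$; $\kappa\neq1$ would violate the normalization $\sum_{i=\pm1,\dots,\pm d}v_i=1$, so $\kappa=1$ and the vectors agree. But ``reversing this last step'' requires, for a given $\lambda$, solving $q_j=g^{-1}(c/\lambda_j)$ for some $c>0$ subject to $\sum_{j=1}^d q_j/(1+q_j)=\frac{1}{2}$, and the intermediate-value argument needs $g(0^+)=+\infty$ so that all $q_j$ can be driven to $0$ simultaneously. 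This does \emph{not} follow from strict convexity and smoothness alone: for $f(t)=\sqrt{1+(t-1)^2}-1$ one computes $\Psi_f(0^+)=\sqrt2$ and $\lim_{t\to\infty}\Psi_f(t)$ finite, so $g(0^+)<\infty$; then for $d=3$ and $\lambda=(\epsilon,\ast,\ast)$ with $\epsilon$ small, any admissible preimage would satisfy $g(q_2),g(q_3)\le g(0^+)\lambda_1/\lambda_2\to0$, forcing $q_2,q_3\to1$ and $v_2+v_3\to1>\frac{1}{2}$, contradicting the constraint --- so $T$ is not onto for such $f$. The fix is to invoke (or impose) a growth hypothesis such as $f'(\infty)=+\infty$: then $\Psi_f(q)\ge -f'(1)+f'(1/q)\to+\infty$ while $\Psi_f(1/q)\le 0$, so $g(0^+)=+\infty$, the map $c\mapsto\sum_j q_j(c)/(1+q_j(c))$ decreases continuously from $d/2$ to $0$, and the unique $c$ with value $\frac{1}{2}$ produces the preimage. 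As written, your one-line disposal of surjectivity is the step that would fail.
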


Using Theorem \ref{Theorem Entropy for hyperbolic groups intro} and the previous Lemma we conclude Theorem \ref{Theorem entropy for free groups intro}:

\begin{theorem}\label{Theorem entropy for boundary of free groups}
For any \(\mu\in\Delta_d,\lambda=T(\mu)\) we have:
\[I_{\lambda,f}^{top}(\partial F_d)=h_{\lambda,f}(\partial F_d,\nu_\mu)\]
\end{theorem}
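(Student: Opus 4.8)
The plan is to obtain the theorem as an immediate concatenation of two equalities, so that all of the analytic content is already absorbed into results proved earlier. The first equality comes from Theorem \ref{Theorem Entropy for hyperbolic groups}: the free group \(F_d\) is word-hyperbolic, and the space of infinite reduced words \(\partial F_d\) considered in this section is exactly its Gromov boundary as an \(F_d\)-space (it is the set of ends of the Cayley tree). Applying that theorem to \(G=F_d\) gives
\[I_{\lambda,f}^{top}(\partial F_d) = I_{\lambda,f}(F_d).\]
The only points to verify here are the classical facts that \(F_d\) is word-hyperbolic and that the combinatorial boundary used above coincides with the Gromov boundary; both are standard and require no new argument.

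Next I would feed in the computation carried out in \cite{sayag2022entropy} and recalled in the introduction. Since \(T\) is a bijection by Lemma \ref{lemma nu mu local minimum}, the hypothesis \(\lambda = T(\mu)\) is equivalent to \(\mu = T^{-1}(\lambda)\), which is precisely the normalization under which \cite{sayag2022entropy} establishes
\[I_{\lambda,f}(F_d) = h_{\lambda,f}(\partial F_d, \nu_\mu).\]
(The companion statement of Lemma \ref{lemma nu mu local minimum}, that \(\nu_\mu\) minimizes \((\lambda,f)\)-entropy within its own measure class, is what identifies \(h_{\lambda,f}(\partial F_d,\nu_\mu)\) as the correct candidate value in the first place.) Chaining the two displayed equalities yields
\[I_{\lambda,f}^{top}(\partial F_d) = I_{\lambda,f}(F_d) = h_{\lambda,f}(\partial F_d, \nu_\mu),\]
which is the assertion.

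I want to emphasize that at this stage there is essentially no obstacle left: the substantive work lives entirely in Theorem \ref{Theorem Entropy for hyperbolic groups} (and hence in Theorem \ref{Theorem C} and the ultralimit realization of amenable actions). The one place a reader might pause is the direction \(I_{\lambda,f}^{top}(\partial F_d) \geq h_{\lambda,f}(\partial F_d,\nu_\mu)\). The reverse inequality is trivial, since \(\nu_\mu\) is itself one admissible Borel measure and the infimum over \(M_{top}(\partial F_d)\) cannot exceed the value at a single point; it is the forward inequality — asserting that \emph{no} Borel measure on \(\partial F_d\) can undercut the harmonic measure — that is genuinely nontrivial, and it is supplied exactly by the boundary amenability of \(\partial F_d\) (Adams' theorem \cite{ADAMS1994765}) routed through Theorem \ref{Theorem Entropy for hyperbolic groups}. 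Thus the upgrade from the measure-class minimality of \(\nu_\mu\) to minimality among all Borel measures is precisely what boundary amenability buys us.
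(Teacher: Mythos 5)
Your proof is correct, but it is not the paper's own argument --- it is precisely the alternative route that the paper's closing remark mentions and deliberately avoids. You obtain the nontrivial inequality by importing from \cite{sayag2022entropy} the full-strength equality \(I_{\lambda,f}(F_d)=h_{\lambda,f}(\partial F_d,\nu_\mu)\) (Theorem 7.11 there, quoted in the introduction) and chaining it with Theorem \ref{Theorem Entropy for hyperbolic groups}. The paper instead uses only the weaker imported fact recorded as Lemma \ref{lemma nu mu local minimum} (Proposition 7.6 of \cite{sayag2022entropy}, a direct computation): that \(\nu_\mu\) minimizes \((\lambda,f)\)-entropy \emph{within its own measure class}, i.e. \(I_{\lambda,f}(X)=h_{\lambda,f}(\partial F_d,\nu_\mu)\) where \(X\) denotes \(\partial F_d\) equipped with the measure class of \(\nu_\mu\). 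It then closes the circle using the general inequality \(I_{\lambda,f}(X)\leq I_{\lambda,f}(F_d)\) of Lemma \ref{lemma properties of I lambda f}(3) together with the trivial bound:
\[I_{\lambda,f}^{top}(\partial F_{d})= I_{\lambda,f}(F_{d})\geq I_{\lambda,f}(X)= h_{\lambda,f}(\partial F_{d},\nu_{\mu})\geq I_{\lambda,f}^{top}(\partial F_{d}),\]
forcing equality throughout. What each approach buys: yours is shorter, but it rests on the deeper theorem of the earlier paper, whose proof there already required the ultralimit realization of the classical Poisson boundary; the paper's version confines the dependence on \cite{sayag2022entropy} to a single explicit computation, so that all the substantive input is Adams' boundary amenability routed through Theorem \ref{Theorem C}, and as a byproduct it actually reproves the equality \(I_{\lambda,f}(F_d)=h_{\lambda,f}(\partial F_d,\nu_\mu)\) rather than assuming it. Your side observations (bijectivity of \(T\) reconciling the two normalizations, the identification of which inequality is the nontrivial one, and the identification of the combinatorial boundary with the Gromov boundary) are all accurate.
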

\begin{proof}
Consider the QI \(F_{d}\)-space \(X\) which is \((\partial F_{d},\nu_{\mu})\) (that is considering the measure class of \(\nu_{\mu}\)).\\
By Theorem \ref{Theorem Entropy for hyperbolic groups}, Lemma \ref{lemma properties of I lambda f}(3) and Lemma \ref{lemma nu mu local minimum} we conclude:
\[I_{\lambda,f}^{top}(\partial F_{d})= I_{\lambda,f}(F_{d})\geq I_{\lambda,f}(X)= h_{\lambda,f}(\partial F_{d},\nu_{\mu})\]
As the other direction is trivial, the Theorem follows.
\end{proof}

\begin{remark}
In \cite[Theorem 7.11]{sayag2022entropy} we showed that \(h_{\lambda,f}(\partial F_{d},\nu_{\mu})=I_{\lambda,f}(F_{d})\) which implies Theorem \ref{Theorem entropy for boundary of free groups} when we combine it with Theorem \ref{Theorem Entropy for hyperbolic groups}. However I wanted to stress that all the information from the previous paper we use is \cite[Proposition 7.6]{sayag2022entropy}, which is a direct computation.
\end{remark}

A nice application of Corollary \ref{cor no amenable factors to entropy minimizers} is:

\begin{corollary}
Let \(\mu\in \Delta_d\), then \((\partial F_{d},\nu_\mu)\) has no amenable factors other then itself.
\end{corollary}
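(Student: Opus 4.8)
The plan is to apply Corollary \ref{cor no amenable factors to entropy minimizers} to the QI $F_d$-space $X=(\partial F_d,\nu_\mu)$ (taken with the measure class of $\nu_\mu$), and then to upgrade the resulting measure preserving extension to an isomorphism using that the boundary coincides with its own Radon--Nikodym factor.

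First I would fix the strictly convex smooth $f$ of this subsection and set $\lambda=T(\mu)$, which lies in $\Delta_d$ and is thus finitely supported and generating. Lemma \ref{lemma nu mu local minimum} says that $\nu_\mu$ minimizes $(\lambda,f)$-entropy in its own measure class, i.e. $h_{\lambda,f}(\partial F_d,\nu_\mu)=I_{\lambda,f}(X)$, which is precisely the hypothesis of Corollary \ref{cor no amenable factors to entropy minimizers}. Hence for every factor $\pi\colon X\to(T,m)$ with $T$ an amenable $G$-space, $\pi$ is a measure preserving extension.

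Next I would use that $(\partial F_d,\nu_\mu)$ is the classical Poisson boundary $\mathcal{B}(F_d,\mu)$, so that, by the fact that Poisson boundaries are their own Radon--Nikodym factors (cf. \cite{sayag2022entropy}), the canonical measure preserving factor $c\colon X\to X_{RN}$ is an isomorphism. Feeding the measure preserving factor $\pi\colon X\to T$ into the universal property of the Radon--Nikodym factor yields a factor $q\colon T\to X_{RN}$ with $c=q\circ\pi$. Then $c^{-1}\circ q$ is a left inverse of $\pi$, so $\pi$ is injective modulo null sets and therefore an isomorphism; thus $T\cong(\partial F_d,\nu_\mu)$. Since $\partial F_d$ is itself amenable by boundary amenability \cite{ADAMS1994765}, this is exactly the assertion that $(\partial F_d,\nu_\mu)$ has no amenable factor other than itself.

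The only step carrying real content beyond bookkeeping is the passage from ``$\pi$ is a measure preserving extension onto $T$'' to ``$\pi$ is an isomorphism,'' which relies essentially on self-Radon--Nikodym-factoriality of the boundary; verifying the entropy-minimality hypothesis is immediate from Lemma \ref{lemma nu mu local minimum}. I therefore expect the crux to be the correct setup and application of the universal property of the Radon--Nikodym factor.
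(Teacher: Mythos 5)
Your proof is correct and takes essentially the same route as the paper: it likewise combines Corollary \ref{cor no amenable factors to entropy minimizers} with Lemma \ref{lemma nu mu local minimum} (the paper fixes \(f(t)=t\ln t\) and \(\lambda=T(\mu)\)) and then uses the fact that, being the Poisson boundary of \(\mu\), the space \((\partial F_{d},\nu_{\mu})\) admits no proper measure preserving factors. Your explicit upgrade of the measure preserving extension to an isomorphism, via the universal property of the Radon--Nikodym factor together with self-RN-factoriality of the Poisson boundary, is simply a spelled-out version of the paper's terse parenthetical justification.
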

\begin{proof}
Since \((X,\nu_{\mu})\) doesn't posses any measure preserving extensions (e.g. being the Poisson boundary of \(\mu\)), the corollary follows from Proposition \ref{cor no amenable factors to entropy minimizers} and Lemma \ref{lemma nu mu local minimum}, taking \(f(t)=t\ln(t)\) and \(\lambda=T(\mu)\).
\end{proof}

\begin{remark}
It is probable that the Corollary can be deduced from \cite{Nevo2000RigidityOF} using the embedding of the Free group as a lattice in a Lie group.
Our argument avoids such an embedding, and might be useful in situations where one has a good boundary theory but no direct connection to a Lie group.
\end{remark}



\section{Further questions and outlook}

\subsection{Entropy minimizing}

\subsubsection{Spaces attaining the minimal entropy and uniqueness of minimizing measures}
In light of Theorem \ref{Theorem entropy for free groups intro}, a natural question is about uniqueness of minimizers. The following very special case is still open:
\begin{problem}
Consider the action \(F_d\curvearrowright\partial F_d\), and let \(\lambda\in \Delta_d\) be the uniform measure. Are there any measures \(\nu\) on \(\partial F_{d}\) other then \(\nu_{\lambda}\) for which \(h_{\lambda}(\nu)\) is the minimal value for the Furstenberg \(\lambda\)-entropy?
\end{problem}

More generally, we formulate the following problem:
\begin{problem}
Suppose \(\lambda\) is a finitely support generating probability measure on \(G\), \(f\) is smooth absolutely convex function with \(f(1)=0\) and \(f^{\prime}(\infty)=+\infty\).
\begin{itemize}
    \item Is there an amenable action \((S,\nu)\) such that \(h_{\lambda,f}(\nu)=I_{\lambda,f}(G)\) ?
    \item Is an action as above unique up to measure preserving extensions?
\end{itemize}
\end{problem}
If the answer to Problem \ref{problem ultralimit is amenable} is yes (in particular for exact groups, see Lemma \ref{lemma exact groups and ultralimits}), then the existence follows from Proposition \ref{Proposition lower semi continouity of entropy}.\\
We have shown in \cite[Lemma 3.10]{sayag2022entropy}, that given ergodic \(S\), such a measure \(\nu\) on it is unique if it exist. It is also clear that a factor between minimizing amenable actions is a measure preserving extension.

\subsubsection{Boundary actions that are not amenable}
There are interesting boundary actions that are not topologically amenable, for example the action of 
\(SL_n(\mathbb{Z})\curvearrowright \mathbb{P}^{n-1}_{\mathbb{R}}\) for \(n\geq 3\).

\begin{problem}
Suppose that \(G\) is a lattice in a real semi-simple Lie group \(\mathbf{G}\) with no compact factors (or any Zariski-dense discrete subgroup). Let \(\mathbf{P}\) be a parabolic (but not a Borel) subgroup and consider the flag space \(X = \mathbf{G}/\mathbf{P}\). Let \(f\) be a convex function with \(f(1)=0\) and let \(\lambda\) be a finitely supported measure on \(G\). 
\begin{enumerate}
    \item Can we describe the numbers \(I_{\lambda,f}^{top}(X)\) in terms of the group \(G\) ?
    \item  Considering \(X\) as a quasi-invariant \(G\)-space with the Lebesgue measure class, is \(I_{\lambda,f}(X)\) the same as \(I_{\lambda,f}^{top}(X)\) ?
    \item Is there a minimizing measure of the Lebesgue measure class?
    \item Is the minimizing measure on \(X\) unique in this case? 
\end{enumerate}
\end{problem}


\subsubsection{Possible generalizations}
The notion of amenability has a natural generalization to groupoids, see \cite{anantharaman2000amenable}. It is natural to ask if there is a version for entropy and our results in this setting. 
\(\)\\
\(\)\\
Another question is about weakening the assumption of finite support on \(\lambda\) in Theorem \ref{Theorem entropy for amenable intro}. In the case of \(f(t)=t\ln(t)\) the equality holds for \(\lambda\) with finite entropy.
Indeed, in order to prove this, one need to refine Lemma \ref{lemma properties of I lambda f}(4) to consider only \(\omega\)'s of the form \(\lambda_{\epsilon}=(1-\epsilon)\sum_{n}\epsilon^{n}\lambda^{*n}\), and to prove that the for each \(0<a<1\) and a collection \((X_i,\nu_i)\) with \((X,\nu)=\mathcal{U}\lim (X_i,\nu_i)\) one has: \(\mathcal{U}\lim h_{\lambda}(\lambda_{a}\ast\nu_{i})= h_{\lambda}(\lambda_{a}\ast \nu)\). The details are left to the reader.
\(\)\\
\(\)\\
Another natural problem is a relative version for Theorem \ref{Theorem entropy for free groups intro}:
\begin{problem}
Let \(\pi: X\to Y\) be a factor between QI \(G\)-spaces for which \((X, Y)\) is an amenable pair (see Definition \ref{Definition amenable pair}), does the equality \(I_{\lambda,f}(X) = I_{\lambda,f}(Y)\) hold?
\end{problem}

\subsection{Amenable actions}

\subsubsection{Amenable actions and ultralimits}
The reason for the ergodicity assumption in Theorem \ref{Theorem ultralimit ameanble action intro} is that \cite[Theorem A]{AmenableGeneral} deals only with ergodic actions. However, the Poisson boundary of a time dependent matrix-valued random walk is always an amenable action, but not necessarily ergodic - the ergodic component corresponds to the Poisson boundary of the random walk on the \([\ell_{n}]\) induced from \(V_n\to [\ell_{n}]\).\\
If the generalized Poisson boundaries cover all amenable actions, one can easily eliminate the ergodicity assumption in our realization of amenable actions as ultralimits.
\(\)\\
One can ask the following converse to Theorem \ref{Theorem ultralimit ameanble action intro}:
\begin{problem}\label{problem ultralimit is amenable}
Suppose \((S_{i},\nu_{i})\) are uniformly QI amenable actions, is \(\mathcal{U}\lim (S_i,\nu_i)\) necessarily amenable?
\end{problem}
\begin{lemma}\label{lemma exact groups and ultralimits}
The answer to Problem \ref{problem ultralimit is amenable} is yes for exact groups.
\end{lemma}
\begin{proof}
By \cite[Theorem 3.16]{buss2020amenability}, if \(G\) is exact, a \(G\)-space \(S\) is amenable iff there is a u.c.p. (unital completely positive) \(G\)-map \(\Phi: L^{\infty}(G)\to L^{\infty}(S)\).\\
Suppose \((S_{i},\nu_{i})\) are amenable and let \((S,\nu)=\mathcal{U}\lim (S_i,\nu_i)\). By assumption we get \(G\)-equivariant u.c.p mappings \(\varphi_{i}:L^{\infty}(G)\to L^{\infty}(S_i)\). Since \(\varphi_i\) are u.c.p. they are contractions.\\
Define \(\varphi: L^{\infty}(G)\to L^{\infty}(S)\) by \(\varphi(f)=\mathcal{U}\lim \varphi_i(f)\in L^{\infty}(S)\). Then \(\varphi\) is a \(G\)-equivariant u.c.p. which implies that \(S\) is amenable.
\end{proof}
\subsubsection{Amenable actions and entropy}
Another natural problem is whether a converse to Theorem \ref{Theorem entropy for amenable intro} holds, namely:
\begin{problem}
Suppose \(S\) is a QI \(G\)-space with \(I_{\lambda,f}(S)=I_{\lambda,f}(G)\) for any finitely supported \(\lambda\) and convex \(f\) with \(f(1)=0\). Is the \(G\)-space \(S\) necessarily amenable?
\end{problem}
Of course, a negative answer to Problem \ref{problem ultralimit is amenable} is also a negative answer for this, however this Problem is interesting also for exact groups.

\section*{Acknowledgments}
I would like to thank my thesis advisor Prof. Yehuda Shalom for his guidance, suggestion of the problem and his helpful remarks.\\
I would like to thank Uri Kreitner for useful discussion regarding Lemma \ref{lemma properties of majorants}(6).\\
I would also like to thank Prof. Jesse Peterson and Prof. Nicolas Monod for helpful email communication regarding amenability.\\
I was partially supported by the ISF grant 1483/16.

\appendix

\section{Spaces of uniform integrability}\label{appendix uniform integrability}
In this appendix, we introduce a filtration on \(L^{1}\)-space by a  Banach-spaces of uniformly integrable functions, indexed by the convex set of "majorants". This filtration form a clean packaging for uniform-integrability. The introduction of these spaces allows one to extract the \enquote{absolutely-integrable part} out of any function, which is very useful for our study of ultralimits.

\begin{definition}\label{definition space of majorants}
The space of majorants \(\mathbf{M}\) is the set of functions \(\rho:[0,1]\rightarrow[0,1]\) such that:
\begin{enumerate}
    \item \(\rho(0^+)=\rho(0)=0,\: \rho(1)=1\).
    \item
    \(\rho\) is concave: for any \(x,y,t\in[0,1]\) we have \(\rho((1-t)x+ty)\geq (1-t)\rho(x)+t\rho(y)\).
\end{enumerate}
\end{definition}

\begin{example}\label{example power of t in M}
Given \(q\geq1\) the function \(\rho(t)=t^\frac{1}{q}\) is in \(\mathbf{M}\).
\end{example}

We have the following lemma:
\begin{lemma}\label{lemma properties of majorants}
The space of majorant \(\mathbf{M}\) satisfies the following properties:
\begin{enumerate}
    \item 
    Any \(\rho\in\mathbf{M}\) is continuous, non-decreasing, sub-additive and satisfies \(\rho(t)\geq t\).
    \item
    \(\mathbf{M}\) is strongly convex: If \((\rho_n)_{n\geq 0} \) are in \(\mathbf{M}\) and \((p_n)_{n\geq 0}\) is a probability measure on \(\mathbf{N}\) then \(\rho=\sum_{n=0}^\infty p_n\:\rho_n\) is in \(\mathbf{M}\).
    \item
        If \(\rho,\eta\in\mathbf{M}\) then \(\rho\circ\eta \in \mathbf{M}\).
    \item
     If \(\rho,\eta\in\mathbf{M}\) then \(\max(\rho,\eta) \in \mathbf{M}\).
    \item
    If \(\rho\in\mathbf{M}\) and \(K\geq 1\) then the function \(\rho_{K}(t)=\min(1,K\cdot\rho(t)\)) is in \(\mathbf{M}\)
    \item
    Let \(\rho_0:[0,1]\rightarrow[0,1]\) be any function with \(\rho_0(0^+)=\rho(0)=0\). Then, there is a function \(\rho\in\mathbf{M}\) with \(\rho_0(t)\leq \rho(t)\). Moreover, we can choose such \(\rho\in \mathbf{M}\) such that \(\frac{\rho_0}{\rho}(0^+)=0\).
\end{enumerate}
\end{lemma}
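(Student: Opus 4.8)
The plan is to read every item as the assertion that a certain operation preserves concavity together with the normalizations $\rho(0)=\rho(0^+)=0$, $\rho(1)=1$ and $\rho([0,1])\subseteq[0,1]$, the only non-formal point throughout being the behaviour at the left endpoint. For item 1, all four properties come from concavity and the two boundary values. Writing $t=(1-t)\cdot0+t\cdot1$ gives $\rho(t)\ge(1-t)\rho(0)+t\rho(1)=t$. For monotonicity, given $x<y$ write $y$ as the convex combination of $x$ and $1$ with weight $s=\frac{y-x}{1-x}$ on $1$; concavity gives $\rho(y)\ge(1-s)\rho(x)+s\ge\rho(x)$, the last inequality because $\rho(x)\le1=\rho(1)$. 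Subadditivity is the standard fact that a concave function vanishing at $0$ is subadditive: placing $x$ (resp.\ $y$) between $0$ and $x+y$ gives $\rho(x)\ge\frac{x}{x+y}\rho(x+y)$ and the symmetric inequality, which add to $\rho(x)+\rho(y)\ge\rho(x+y)$. Continuity on $(0,1)$ is automatic for a concave function, continuity at $0$ is the hypothesis, and continuity at $1$ follows from $t\le\rho(t)\le1$.

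Items 2, 3, 5 are then checks that the operation preserves concavity and the normalizations. In item 2 concavity passes to the pointwise-convergent infinite convex combination and $\rho(1)=\sum_np_n=1$; the only substantive point is $\rho(0^+)=0$, which is dominated convergence on the probability space $(\mathbb{N},(p_n))$ with dominating constant $1$. In item 3 one uses that $\rho$ is non-decreasing (item 1), so that the composite of the concave $\eta$ with the concave non-decreasing $\rho$ is concave, the boundary values being immediate and $(\rho\circ\eta)(0^+)=0$ following from $\eta(t)\to0^+$ and continuity of $\rho$ at $0$. In item 5, $\min(1,K\rho)$ is a minimum of concave functions, hence concave, with $\min(1,K)=1$ since $K\ge1$. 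Item 4 needs a comment: $\min(\rho,\eta)\in\mathbf{M}$ is immediate for the same reason, but as literally stated the maximum is delicate, since a pointwise maximum of concave functions need not be concave --- for instance $\min(2t,1)$ and $t^{1/2}$ both lie in $\mathbf{M}$ while their maximum is not concave --- so I would instead read item 4 as the upward-directedness of $\mathbf{M}$ and deduce it from item 6 applied to $\rho_0=\max(\rho,\eta)$.

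Item 6 is the crux. Given $\rho_0$ with $\rho_0(0^+)=0$, the idea is to enlarge it before convexifying, so as to secure the ``moreover''. I would set $g=\sqrt{\rho_0}$ and reset $g(1):=1$; since $\rho_0\le1$ this keeps $g\le1$ and $g\ge\rho_0$, preserves $g(0^+)=0$, and produces the decisive bound $\frac{\rho_0}{g}=\sqrt{\rho_0}$. I then take $\rho$ to be the least concave majorant of $g$. Then $\rho$ is concave, $\rho\le1$ because the constant $1$ is a concave majorant of $g$, $\rho(1)=g(1)=1$ by endpoint matching, and $\rho\ge g\ge\rho_0$, so that $\frac{\rho_0}{\rho}\le\frac{\rho_0}{g}=\sqrt{\rho_0}\to0$ gives both the domination and the refined conclusion. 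The single step that is not formal, and which I expect to be the main obstacle, is $\rho(0^+)=0$: one must exclude a vertical cliff of the concave envelope at $0$. I would prove this directly from $g(0^+)=0$ and $g\le1$ by estimating the two-point chords that define the envelope: given $\epsilon>0$ pick $\delta$ with $g\le\epsilon$ on $[0,\delta]$; for $t\le\delta/2$ a chord through $t$ either stays in $[0,\delta]$, where its value is at most $\epsilon$, or reaches some $b>\delta$, where the weight it assigns to $b$ is at most $2t/\delta$, so its value is at most $\epsilon+2t/\delta$; letting $t\to0^+$ gives $\limsup_{t\to0^+}\rho(t)\le\epsilon$, and $\epsilon$ was arbitrary.
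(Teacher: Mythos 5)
Your proposal is correct, and on items 1, 2, 3 and 5 it coincides in substance with the paper's own proof (the paper derives continuity in item 1 from subadditivity plus monotonicity rather than from general facts about concave functions, and proves item 2's limit at \(0^+\) by a hands-on truncation rather than dominated convergence; these are immaterial variations). Where you genuinely diverge is in items 4 and 6, and in both places your route is the sounder one. On item 4 your objection is well taken: the pointwise maximum of two majorants need not be concave, and your counterexample is valid --- \(\max(\min(2t,1),\sqrt{t})\) equals \(\sqrt{t}\) on \([0,\frac14]\) and \(2t\) on \([\frac14,\frac12]\), so its slope jumps upward from \(1\) to \(2\) at \(t=\frac14\). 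The paper's proof of item 4 consists exactly of the assertion that ``by item 1, \(\max(\rho,\eta)\) is continuous and concave'', which is the false step; your reading of item 4 as upward directedness, deduced by applying item 6 to \(\rho_0=\max(\rho,\eta)\) (which still satisfies \(\rho_0(0^+)=0\)), is the natural repair, and directedness is all that the paper ever uses afterwards (the Remark on \(\mathbf{M}\) being an upper semilattice and a directed set, and the use of common majorants). On item 6 your construction differs from the paper's: the paper takes the convex hull of the subgraph of \(\rho_0\) together with the point \((1,1)\), defines \(\rho\) as the upper boundary of that hull, proves the crux \(\rho(0^+)=0\) by trapping the hull in the half-plane \(\{y\le\epsilon+\frac{1-\epsilon}{\delta}x\}\), and only afterwards secures the ``moreover'' by replacing \(\rho\) with \(\sqrt{\rho}\) via item 3; you instead take the least concave majorant of \(\sqrt{\rho_0}\) (with the endpoint value reset to \(1\)), which builds the ``moreover'' into the construction, and your chord estimate --- any convex combination representing \(t\le\delta/2\) can put total weight at most \(t/\delta\) (your bound \(2t/\delta\) is safe) on points beyond \(\delta\) --- is a correct proof that the envelope has no ``cliff'' at \(0\); it captures the same phenomenon as the paper's half-plane containment. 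Incidentally, your formulation also sidesteps a small imprecision in the paper, which asserts that the subgraph \(G\) is compact (this would require an upper semicontinuity of \(\rho_0\) that is not assumed); nothing is lost, since the envelope can be defined by suprema over chords exactly as you do.
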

\begin{proof}
\begin{enumerate}
    \item
    \(\rho\) non decreasing- suppose \(t\leq s\), then \(s\in [t,1]\) and we conclude by concavity that  \(\rho(s)\geq\min(\rho(1),\rho(t))=\rho(t)\).\\
    Note that the concavity implies \(\rho(tx)\geq t\rho(x)\) for \(x,t\in [0,1]\). Since \(\rho(1)=1\) we get \(\rho(t)\geq t\).\\
    \(\rho\) is sub-additive: for \(x,y\in[0,1]\) with \(x+y\leq 1\) we have 
    \[\rho(x)+\rho(y)=\rho(\frac{x}{x+y}(x+y))+\rho(\frac{y}{x+y}(x+y))\geq \frac{x}{x+y}\rho(x+y)+\frac{y}{x+y}\rho(x+y)=\rho(x+y)\]
    \(\rho\) is continuous: for any \(\epsilon>0\) there is \(\delta>0\) with \(\rho(\delta)<\epsilon\) and then for \(0<t<\delta\) we have for any \(s\in[0,1-t]\) that \(\rho(s)\leq\rho(s+t)\leq\rho(s)+\epsilon\).
    \item It is obvious that \(\rho\) is a concave function with \(\rho(0)=0,\rho(1)=1\). 
    To see that \(\rho(0^+)=0\), let \(\epsilon>0\) and take \(N\) with \(\sum_{n=N+1}^{\infty}p_{n}<\frac{\epsilon}{2}\).
    Since \(\rho_n(0^+)=0\) for all \(n\) we can find \(\delta>0\) with \(\rho_n(\delta)<\frac{\epsilon}{2}\: (n=1,\dots N)\). Hence 
    \(\rho(\delta)<\sum_{n=1}^{N}p_{n}\frac{\epsilon}{2}+\sum_{n=N+1}^{\infty}p_{n}<\epsilon\).
    \item
    By item 1 we conclude that \(\kappa=\rho\circ\eta\) is continuous, non-decreasing and concave. Thus \(\kappa(0^+)=\kappa(0)=0,\kappa(1)=1\).
    \item
    By item 1 we conclude that \(\lambda=\max(\rho,\eta)\) is continuous and concave.
    \item
    For the concavity of \(\rho_{K}\), note that \(\{(t,y): \:y\leq \rho_{K}(t)\} = \{(t,y): y\leq K\rho(t)\} \:\cap [0,1]^{2}\) thus it is a convex set as the intersection of convex sets. Thus \(\rho_{K}\) is concave. 
    \item
    Consider \(G=\{(x,y)\in[0,1]^{2}\big|\: y\leq \rho_{0}(x)\}\cup\{(1,1)\}\) and let \(C=conv(G)\) be the convex hull. As \(G\) is compact we get that \(C\) is compact. Define \(\rho_{1}(x)=\max\{y: (x,y)\in C\}\), then \(\rho:[0,1]\to [0,1]\) is concave, \(\rho(1)=1\) and \(\rho\geq \rho_0\). Let us show that \(\rho(0)=\rho(0^{+})=0\). Indeed: let \(\epsilon>0\), by assumption there is \(\delta>0\) such that for \(t\leq \delta\) we have \(\rho_0(t)<\epsilon\).\\
    Thus \(G\subset [0,\delta]\times[0,\epsilon] \: \bigcup \: [\delta,1]\times [0,1]\). This implies that \(C\subset \{(x,y)\in [0,1]^{2}\big| y\leq\epsilon+\frac{1-\epsilon}{\delta} x\}\). We conclude that for \(x\leq \epsilon \cdot \delta\) we have \(\rho(x)\leq 2\epsilon\). Thus \(\rho(0)=\rho(0^{+})=0\).\\
    Thus \(\rho\in\mathbf{M}\) satisfies \(\rho\geq\rho_0\).
    For the moreover part we can take \(\rho(t)^\frac{1}{2}\in\mathbf{M}\) and then \(\lim_{t\to0^+}\frac{\rho_0(t)}{\rho(t)^{\frac{1}{2}}}=0\).
\end{enumerate}
\end{proof}


\begin{definition}
Let \(\mathfrak{X}=(X,\Sigma,\nu)\) be a probability space and let \(\rho\in\mathbf{M}\).
\begin{enumerate}
    \item
We define the normed space \(\mathcal{C}_\rho (\mathfrak{X})\) to be the space of functions (mod a.e. 0 functions) in \(L^1(\mathfrak{X})\) such that
\[||f||_\rho=||f||_{\mathcal{C}_\rho (\mathfrak{X})}:=\sup_{A\in\Sigma:\ \nu(A)\neq0}\frac{1}{\rho(\nu(A))}\intop_{A}|f|d\nu<\infty\]
\item
Given a probability measures \(m\) on \((X,\Sigma)\),  we say that \(m\) is \emph{\(\rho\)-absolutely continuous with respect to \(\nu\)} if \(m\) is absolutely continuous with respect to \(\nu\) and \(||\frac{dm}{d\nu}||_\rho\leq1\). We denote this relation by \(m \stackrel{\rho}{\ll}\nu\). 
\end{enumerate}
\end{definition}
\begin{remark}
Note that \(m \stackrel{\rho}{\ll}\nu\) if and only if  \(m(A)\leq\rho\big(\nu(A)\big)\) for every \(A\in\Sigma\).

\end{remark}
\begin{example}\label{example of C rho} 
\(\)
\begin{itemize}
    \item 
    For \(\infty > p>1\) let \(q=\frac{p}{p-1}\) be the H\"{o}lder conjugate and let \(\rho(t)=t^\frac{1}{q}\). Then \(\rho\in\mathbf{M}\) (see Example \ref{example power of t in M}). Note we have a contraction
    \(L^p(\mathfrak{X})\subset \mathcal{C}_\rho (\mathfrak{X})\).\\
    Indeed, by H\"{o}lder's inequality:
    \[\intop_A|f|d\nu\leq (\intop_X |f|^p)^\frac{1}{p}(\intop_X \boldsymbol{1}_{A})^\frac{1}{q}=||f||_{L^p} \nu(A)^\frac{1}{q}\]
    \item 
    For \(\rho(t)=t\) we have that \(||\cdot||_{\mathcal{C}_\rho}=||\cdot||_\infty\) and thus \(L^\infty(\mathfrak{X})= \mathcal{C}_\rho (\mathfrak{X})\).\\ Indeed, the inequality \(||\cdot||_{\rho}\leq||\cdot||_{\infty}\) is the same as the previous item with \(p=\infty\). For the reverse inequality, take \(A=\{|f|\geq||f||_{\mathcal{C}_\rho}+\epsilon\}\). If \(\nu(A)\neq0\) then \(\frac{1}{\nu(A)}\intop_{A}|f|\geq||f||_{\mathcal{C}_\rho}+\epsilon\) which is a contradiction. Thus \(f\) is essentially bounded by \(||f||_{\mathcal{C}_{\rho}}\).
\end{itemize}
\end{example}

The next lemma explains the role of the spaces \(\mathcal{C}_{\rho}(\mathfrak{X})\):
\begin{lemma}\label{lemma properties of C rho spaces}
Let \(\mathfrak{X}\) be a probability space.
\begin{enumerate}
    \item 
    For any \(\rho \in \mathbf{M}\) the space \(\mathcal{C}_{\rho}(\mathfrak{X})\) is a Banach space.
    \item
    Let \(\rho_0,\rho_1\in\mathbf{M}\) and \(K>0\). Suppose that \(\frac{\rho_0}{\rho_1}\leq K\), then the inclusion induces a continuous embedding \(\mathcal{C}_{\rho_0}(\mathfrak{X})\subset\mathcal{C}_{\rho_1}(\mathfrak{X})\) with operator norm bounded by \(K\).
    \item
    Let \(\rho\in\mathbf{M}\) and \(K\geq1\), and let \(\rho_{K}(t)=\min(1,K\cdot\rho(t))\). Then \(||f||_{\rho_{K}}\leq \max\big(||f||_{L^{1}},\frac{1}{K}||f||_{\rho}\big)\).
    \item
    Let \(\rho_0,\rho_1\in\mathbf{M}\) be such that \(\frac{\rho_0}{\rho_1}(0^+)=0\). Then \(\mathcal{C}_{\rho_0}(\mathfrak{X})\) is contained in the closure of the simple functions in \(\mathcal{C}_{\rho_1}(\mathfrak{X})\).
    \item
    For any \(f\in L^1(\mathfrak{X})\) there is \(\rho\in\mathbf{M}\) with \(f\in \mathcal{C}_\rho(\mathfrak{X})\).
    
\end{enumerate}
\end{lemma}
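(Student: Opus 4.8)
The common thread is the reformulation that $\|f\|_\rho\le c$ is equivalent to the family of inequalities $\int_A|f|\,d\nu\le c\,\rho(\nu(A))$ for all $A\in\Sigma$; I will use this repeatedly. Three of the items are then essentially bookkeeping. For item (1), I would first note that taking $A=X$ (where $\rho(1)=1$) gives $\|f\|_{L^1}\le\|f\|_\rho$, so $\mathcal{C}_\rho\subset L^1$ continuously; together with the obvious subadditivity and homogeneity this shows $\|\cdot\|_\rho$ is a genuine norm. For completeness, a $\|\cdot\|_\rho$-Cauchy sequence $(f_n)$ is $L^1$-Cauchy, hence converges in $L^1$ to some $f$; fixing $A$ and letting $m\to\infty$ in $\frac{1}{\rho(\nu(A))}\int_A|f_n-f_m|\,d\nu\le\|f_n-f_m\|_\rho$ (the left side converges by $L^1$-convergence on the fixed set $A$) yields $\|f_n-f\|_\rho\le\limsup_m\|f_n-f_m\|_\rho$, proving simultaneously that $f\in\mathcal{C}_\rho$ and $f_n\to f$. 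Item (2) is immediate by writing $\frac{1}{\rho_1(\nu(A))}\int_A|f|\,d\nu=\frac{\rho_0(\nu(A))}{\rho_1(\nu(A))}\cdot\frac{1}{\rho_0(\nu(A))}\int_A|f|\,d\nu\le K\|f\|_{\rho_0}$ and taking the supremum. For item (3) I would split according to whether $K\rho(\nu(A))\ge 1$: in the first case $\rho_K(\nu(A))=1$ and the quotient is $\int_A|f|\,d\nu\le\|f\|_{L^1}$, while in the second $\rho_K(\nu(A))=K\rho(\nu(A))$ and the quotient is at most $\frac1K\|f\|_\rho$.

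The substance is in item (4). The plan is to approximate in two stages: first approximate $f\in\mathcal{C}_{\rho_0}$ by bounded functions in $\|\cdot\|_{\rho_1}$, then approximate a bounded function by simple functions in $L^\infty$, which suffices because $t\le\rho_1(t)$ gives $\|g\|_{\rho_1}\le\|g\|_\infty$ for bounded $g$ by item (2) applied to the identity majorant (recall $\mathcal{C}_{\mathrm{id}}=L^\infty$ from Example~\ref{example of C rho}). For the first stage let $f_N=\mathrm{sign}(f)\min(|f|,N)$ and $B_N=\{|f|>N\}$, so that $|f-f_N|\le|f|\,\boldsymbol{1}_{B_N}$ and, by Chebyshev, $\nu(B_N)\le\|f\|_{L^1}/N\to0$. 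Then for every $A$ (the case $\nu(A\cap B_N)=0$ being trivial), using $f\in\mathcal{C}_{\rho_0}$ and monotonicity of $\rho_1$,
\[\frac{1}{\rho_1(\nu(A))}\intop_A|f-f_N|\,d\nu\le\frac{\|f\|_{\rho_0}\,\rho_0(\nu(A\cap B_N))}{\rho_1(\nu(A\cap B_N))}\le\|f\|_{\rho_0}\sup_{0<s\le\nu(B_N)}\frac{\rho_0(s)}{\rho_1(s)}.\]
Since $\frac{\rho_0}{\rho_1}(0^+)=0$ and $\nu(B_N)\to0$, the right-hand side tends to $0$, so $\|f-f_N\|_{\rho_1}\to0$, completing item (4).

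For item (5) the natural device is the modulus of integrability (the case $f=0$ being trivial). Normalizing so that $\|f\|_{L^1}=1$, set $\rho_0(t)=\sup_{\nu(A)\le t}\int_A|f|\,d\nu$; then $\rho_0:[0,1]\to[0,1]$ is nondecreasing with $\rho_0(1)=1$, and absolute continuity of the Lebesgue integral (the uniform integrability of a single $L^1$ function) gives $\rho_0(0^+)=0$. Lemma~\ref{lemma properties of majorants}(6) then produces $\rho\in\mathbf{M}$ with $\rho\ge\rho_0$, and $\int_A|f|\,d\nu\le\rho_0(\nu(A))\le\rho(\nu(A))$ shows $\|f\|_\rho\le1$, so $f\in\mathcal{C}_\rho$.

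The only genuinely delicate step is the estimate in item (4): everything hinges on replacing the denominator $\rho_1(\nu(A))$ by $\rho_1(\nu(A\cap B_N))$ (legitimate since $\rho_1$ is nondecreasing) so that the two majorants can be compared on the same argument, after which the hypothesis $\frac{\rho_0}{\rho_1}(0^+)=0$ does the work. I expect this to be the main obstacle; the remaining items are routine once the reformulation $\int_A|f|\,d\nu\le\|f\|_\rho\,\rho(\nu(A))$ is in hand.
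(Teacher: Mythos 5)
Your proof is correct and follows essentially the same route as the paper's: the same reduction to the family of inequalities $\int_A|f|\,d\nu\le c\,\rho(\nu(A))$, the same key maneuver in item (4) (using monotonicity of $\rho_1$ to compare $\rho_0$ and $\rho_1$ at $\nu(A\cap\{|f|>C\})$ and then invoking $\tfrac{\rho_0}{\rho_1}(0^+)=0$), and the same modulus-of-integrability construction plus Lemma \ref{lemma properties of majorants}(6) for item (5). The only cosmetic differences are that you prove completeness via Cauchy sequences where the paper sums an absolutely convergent series, and you package item (4) as truncation followed by uniform approximation where the paper builds the simple function in one step; neither changes the substance.
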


\begin{proof}
\begin{enumerate}
    \item
    Suppose \(f_n\in\mathcal{C}_\rho(\mathfrak{X})\) with \(\sum_{n=0}^\infty ||f_n||_{\mathcal{C}_\rho(\mathfrak{X})}<\infty\), we will show their sum converge in \(\mathcal{C}_\rho(\mathfrak{X})\). Notice that \(||f||_{L^1}\leq ||f||_\rho\) and thus we have that \(\sum_{n=0}^\infty \intop_X|f_n|d\nu<\infty\) and in particular \(f=\sum_{n=0}^\infty f_n\) converges almost everywhere and in \(L^1(\mathfrak{X})\). Moreover
    \(\intop_A |f|\leq \sum_{n=0}^\infty \intop_A|f_n|d\nu\) which shows
    \(f\in \mathcal{C}_\rho(\mathfrak{X}),\: ||f||_{\mathcal{C}_\rho(\mathfrak{X})}\leq \sum _{n=0}^\infty ||f_n||_{\mathcal{C}_\rho(\mathfrak{X})}\).\\
    Applying the same reasoning, we see that for any \(N\), the a.e. converging sum \(\sum_{n=N+1}^{\infty} f_n\) is in \(\mathcal{C}_{\rho}(\mathfrak{X})\) with \(||\sum_{n=N+1}^{\infty} f_n||_{\mathcal{C}_{\rho}(\mathfrak{X})}\leq \sum_{n=N+1}^{\infty} ||f_{n}||_{\mathcal{C}_{\rho}(\mathfrak{X})}\). Thus
    \[\limsup_{N\to \infty}||f-\sum_{n=0}^N f_n||_{\mathcal{C}_\rho(\mathfrak{X})}=\limsup_{N\to\infty}||\sum_{n=N+1}^\infty f_n||_{\mathcal{C}_\rho(\mathfrak{X})}\leq\limsup_{N\to\infty}\sum_{n=N+1}^\infty ||f_n||_{\mathcal{C}_\rho(\mathfrak{X})}=0\]
    Which yields \(f=\sum_{n=0}^\infty f_n\) in \(\mathcal{C}_{\rho}(\mathfrak{X})\), as desired.
    \item
    Indeed:
    \[\intop_A |f|\:d\nu\leq ||f||_{\rho_0}\rho_0\big(\nu(A)\big)\leq K\cdot||f||_{\rho_0}\rho_1\big(\nu(A)\big)\Longrightarrow ||f||_{\rho_1}\leq K||f||_{\rho_0}\]
    \item
    Indeed:
    \[\intop_{A}|f|\:d\nu\leq \min\Big(||f||_{L^1}, ||f||_{\rho}\rho\big(\nu(A)\big)\Big)\leq \max\big(||f||_{L^1}, \frac{1}{K}||f||_{\rho}\big) \cdot\:  \min\big(1,K\rho(\nu(A))\big)\]
    \item
    Let \(f\in \mathcal{C}_{\rho_0}(\mathfrak{X})\), we will show it can be approximated in \(\mathcal{C}_{\rho_1}(\mathfrak{X})\) by simple functions. We may assume \(f\geq 0\). Let \(\epsilon>0\) and take \(C\) large enough such that:
    \[\forall t\in[0,\nu(\{f>C\})]:\quad \frac{\rho_0}{\rho_1}(t)<\frac{\epsilon}{2||f||_{\rho_{0}}}\]
    Let \(\varphi\) be a simple function such that on the set \(\{f\leq C\}\) we have \(f-\frac{\epsilon}{2}\leq\varphi\leq f\) and outside of this set it is \(0\). Then:
\begin{align*}
\intop_A |f-\varphi| \: d\nu \leq\frac{\epsilon}{2}\cdot\nu(A)+\intop_{A\cap\{f>C\}}|f|\: d\nu \leq \frac{\epsilon}{2}\cdot \rho_1\big(\nu(A)\big)+||f||_{\rho_0}\rho_0\Big(\nu\big(A\cap\{f>C\}\big)\Big) \\
\leq\rho_{1}\big(\nu(A)\big)\bigg(\frac{\epsilon}{2}+||f||_{\rho_0}\cdot \frac{\rho_0}{\rho_1}\big(\nu(A\cap\{f>C\})\big)\bigg)\leq\epsilon\rho_1\big(\nu(A)\big)
\end{align*}
we conclude \(||f-\varphi||_{\rho_{1}}\leq\epsilon\).

\item
Without loss of generality we may assume \(0\neq f\geq0\). We define a function \(\rho_0:[0,1]\to [0,1]\) in the following way:
\[\rho_0(t)=\frac{1}{||f||_{L^1}}\sup\bigg\{ \intop_A f\:d\nu \Big| \nu(A)= t \bigg\}\]
We obviously have \(\rho_0(0)=0\). By Lebesgue dominant convergence theorem, for any sequence of measurable sets \((A_n)\) with \(\nu(A_n)\to0\) we have \(\intop_{A_n} f d\nu\to 0\). Thus \(\rho_0(0^+)=0\), using Lemma \ref{lemma properties of majorants}(5) we can find \(\rho\in\mathbf{M}\) with \(\rho_0\leq\rho\). With this \(\rho\) we obviously have \(f\in\mathcal{C}_\rho (\mathfrak{X})\).
\end{enumerate}
\end{proof}

\begin{corollary}\label{Corollary exsitence of majorant for measures}
Given a measured space \((X,\Sigma)\) and two probability measures \(m,\nu\) on it. If \(m\ll\nu\) then there is some \(\rho\in\mathbf{M}\) with \(m\stackrel{\rho}{\ll}\nu\).
\end{corollary}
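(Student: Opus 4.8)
The plan is to apply Lemma \ref{lemma properties of C rho spaces}(5) to the Radon--Nikodym derivative and then renormalize the resulting majorant so that the relevant norm drops to $1$.

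First I would set $f:=\frac{dm}{d\nu}$, which exists as an element of $L^1(X,\nu)$ precisely because $m\ll\nu$. Since $m$ is a probability measure we have $f\geq 0$ and $\|f\|_{L^1}=\int_X f\,d\nu=m(X)=1$; this normalization will be the crux. By Lemma \ref{lemma properties of C rho spaces}(5) there is some $\rho_1\in\mathbf{M}$ with $f\in\mathcal{C}_{\rho_1}(\mathfrak{X})$, and I would then set $K:=\max\big(1,\|f\|_{\rho_1}\big)\geq 1$.

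The point to address is that membership $f\in\mathcal{C}_{\rho_1}(\mathfrak{X})$ only guarantees $\|f\|_{\rho_1}<\infty$, whereas the relation $m\stackrel{\rho}{\ll}\nu$ demands $\|\tfrac{dm}{d\nu}\|_\rho\leq 1$. To bridge this gap I would pass to the truncated majorant $\rho:=\min(1,K\rho_1)$, which lies in $\mathbf{M}$ by Lemma \ref{lemma properties of majorants}(5). Applying Lemma \ref{lemma properties of C rho spaces}(3) with this $K$ then yields
\[\|f\|_{\rho}\leq \max\Big(\|f\|_{L^1},\tfrac{1}{K}\|f\|_{\rho_1}\Big)=\max\Big(1,\tfrac{\|f\|_{\rho_1}}{K}\Big)=1,\]
where the final equality uses $\|f\|_{L^1}=1$ together with $K\geq\|f\|_{\rho_1}$. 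Hence $\|\tfrac{dm}{d\nu}\|_\rho\leq 1$, which is exactly $m\stackrel{\rho}{\ll}\nu$.

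I do not expect any genuine obstacle beyond this bookkeeping: the normalization $\|f\|_{L^1}=1$ forced by $m$ being a probability measure is precisely what lets the maximum in Lemma \ref{lemma properties of C rho spaces}(3) collapse to $1$ rather than a larger constant, so the truncation does all the work. If one prefers to avoid invoking item (3), the same conclusion follows by a direct estimate: $m(A)=\int_A f\,d\nu\leq K\rho_1(\nu(A))$ and $m(A)\leq m(X)=1$ together give $m(A)\leq\min(1,K\rho_1(\nu(A)))=\rho(\nu(A))$ for every $A\in\Sigma$, which by the remark following the definition of $\stackrel{\rho}{\ll}$ is the desired statement.
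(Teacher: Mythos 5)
Your proposal is correct and follows essentially the same route as the paper: take $f=\frac{dm}{d\nu}$, invoke the existence of a majorant $\rho_1\in\mathbf{M}$ with $f\in\mathcal{C}_{\rho_1}(\mathfrak{X})$ from Lemma \ref{lemma properties of C rho spaces}, and then truncate to $\rho=\min(1,K\rho_1)$ so that the probability-measure normalization $\|f\|_{L^1}=1$ forces $\|f\|_\rho\leq 1$. The paper verifies this last norm bound by the same direct estimate you give in your closing paragraph, so the two arguments coincide up to bookkeeping.
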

\begin{proof}
Using Lemma \ref{lemma properties of C rho spaces}(4) for \(f=\frac{dm}{d\nu}\) we find \(\rho_0\in\mathbf{M}\) with \(f\in \mathcal{C}_{\rho}(\mathfrak{X})\). Consider \(M=||f||_{\rho}\) (note \(M\geq 1\)) and \(\rho(t)=\min(M\cdot\rho_0(t),1)\), then it is easy to see \(\rho\in\mathbf{M}\) and that \(||f||_{\rho}\leq 1\).
\end{proof}

\begin{example}
Item 4 in Lemma \ref{lemma properties of C rho spaces} is indeed \enquote{sharp}, in the sense that it is not always true that the simple (bounded) functions are dense in \(\mathcal{C}_{\rho}(X)\). Let us give an example:\\
Take \(\rho(t)=\sqrt{t}\) which is a majorant. Let \(X=[1,\infty)\) with the Borel measure \(\nu=\frac{2 dx}{x^{3}}\) and let \(f:X\to\mathbb{R}\) be the function \(f(x)=x\). We claim \(f\in\mathcal{C}_{\rho}(X,\nu)\) but not in the closure of the bounded functions.\\
The distribution function of \(f\) is \(\lambda(s)=\nu(\{f>s\})=\frac{1}{s^{2}}\cdot \boldsymbol{1}_{[1,\infty]}\). Note that for any \(v\in(0,1]\) we have:
\[\sup_{A \:\:\text{Borel with}\:\: \nu(A)=v}\:\: \frac{1}{\rho(\nu(A))}\intop_{A} f d\nu=\frac{1}{\rho(v)}\intop_{\{f>\lambda^{-1}(v)\}} f d\nu=\frac{1}{\sqrt{v}}\intop_{\lambda^{-1}(v)}^{\infty} \lambda(s)\: ds=\frac{1}{\sqrt{v}}\intop_{\frac{1}{\sqrt{v}}}^{\infty} \frac{ds}{s^{2}}=1\]
Thus we see \(||f||_{\rho}=1\). On the other hand, we claim that \(||f-\varphi||\geq 1\) for any bounded \(\varphi\). Indeed, suppose \(\varphi\) is a function bounded by \(C\), then consider a small \(v>0\) and \(A=\{f>\lambda^{-1}(v)\}\). We have by the previous computation:
\[||f-\varphi||_{\rho}\geq \frac{1}{\rho(\nu(A))}\intop_{A}|f-\varphi| d\nu \geq \frac{1}{\rho(v)}\intop_{\{f>\lambda^{-1}(v)\}} (f-C)\: d\nu=1- \frac{1}{\sqrt{v}}C\nu(\{f>\lambda^{-1}(v)\})=1-C\cdot \sqrt{v}\]
Taking \(v\to0\) we get \(||f-\varphi||_{\rho}\geq 1\).
\end{example}

\begin{lemma}\label{lemma concave of rho effect on rho norm}
Let \(\mathfrak{X}\) be a probability space and let \(\rho\in\mathbf{M}\).
Suppose that \(f\in\mathcal{C}_{\rho}(\mathfrak{X})\) and that \(\varphi\in L^{\infty}(\mathfrak{X})\) with \(0\leq \varphi\leq 1\). Then :
\[\Bigg|\intop_{X} f\cdot \varphi \:d\nu\Bigg|\leq ||f||_{\rho}\cdot \rho\big(\intop_{X}\varphi\: d\nu\big)\]
\end{lemma}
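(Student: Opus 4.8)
The plan is to exploit the \emph{layer-cake} (distribution-function) representation of the bounded function \(\varphi\), so as to reduce the estimate to the defining inequality of the \(\mathcal{C}_\rho\)-norm applied to the super-level sets of \(\varphi\), and then to invoke the concavity of \(\rho\) through Jensen's inequality. First I would observe that since \(\varphi\geq 0\) we have \(\big|\intop_X f\varphi\, d\nu\big|\leq \intop_X |f|\,\varphi\, d\nu\), so it suffices to bound the latter. For \(s\in[0,1]\) set \(A_s=\{\varphi>s\}\) and \(a(s)=\nu(A_s)\); because \(0\leq\varphi\leq1\) the layer-cake identity gives \(\varphi=\intop_0^1 \boldsymbol{1}_{A_s}\, ds\) pointwise. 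Since \(f\in\mathcal{C}_\rho(\mathfrak{X})\subset L^1(\mathfrak{X})\) and \(0\leq\varphi\leq1\), the non-negative function \((x,s)\mapsto |f(x)|\,\boldsymbol{1}_{A_s}(x)\) is integrable on \(X\times[0,1]\), so Tonelli's theorem permits exchanging the order of integration:
\[\intop_X |f|\,\varphi\, d\nu = \intop_0^1 \Big(\intop_{A_s} |f|\, d\nu\Big)\, ds.\]

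Next I would apply the definition of the norm slice by slice: directly from the definition of \(||\cdot||_\rho\) one has \(\intop_{A_s}|f|\,d\nu\leq ||f||_\rho\,\rho(a(s))\) for every \(s\) with \(a(s)\neq0\) (and the bound is trivial when \(a(s)=0\)), which upon substitution yields
\[\intop_X |f|\,\varphi\, d\nu \leq ||f||_\rho \intop_0^1 \rho(a(s))\, ds.\]
Finally, viewing Lebesgue measure on \([0,1]\) as a probability measure and using that \(\rho\) is concave, Jensen's inequality gives
\[\intop_0^1 \rho(a(s))\, ds \leq \rho\Big(\intop_0^1 a(s)\, ds\Big) = \rho\Big(\intop_X \varphi\, d\nu\Big),\]
where the last equality is again the layer-cake identity, now in the form \(\intop_X \varphi\, d\nu=\intop_0^1 \nu(A_s)\, ds\). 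Combining the three displays produces the claim.

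I do not expect any serious obstacle; the delicate points are purely bookkeeping. The place where the hypothesis \(\rho\in\mathbf{M}\) is genuinely used is the final Jensen step, where concavity is indispensable, and this is the conceptual heart of the lemma: it expresses that \(\rho\) behaves well under ``averaging'' the level sets of \(\varphi\). The only technical matters to verify are the measurability of \(s\mapsto\rho(a(s))\) and the applicability of Tonelli; the former follows because \(a\) is the non-increasing distribution function (hence measurable) and \(\rho\) is continuous by Lemma \ref{lemma properties of majorants}(1), and the latter from the integrability of \(f\). Thus the real content of the argument is simply the recognition that a bounded \(\varphi\) should be decomposed into the indicators of its super-level sets.
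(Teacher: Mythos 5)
Your proof is correct and follows essentially the same route as the paper's: both decompose \(\varphi\) via its super-level sets (Fubini/Tonelli on \(X\times[0,1]\)), apply the defining inequality of \(\|\cdot\|_\rho\) to each slice, and conclude with Jensen's inequality using the concavity of \(\rho\). The only differences are cosmetic (strict versus non-strict level sets, and your explicit treatment of the measurability bookkeeping).
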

\begin{proof}
By replacing \(f\) with \(|f|\) we may assume that \(f\geq 0\).\\ Consider \(X\times [0,1]\) with the measure \(\nu\times Leb\) where \(Leb\) is the Lebesgue measure. By Fubini's theorem and the concavity of \(\rho\) we conclude:
\begin{multline*}
\intop_{X} f\cdot \varphi \: d\nu = \intop_{X} f(x) Leb(\{t\in [0,1]|\: t\leq \varphi(x)\}) d\nu(x)=\intop_{\{(x,t)\in X\times [0,1]\:|\: t\leq \varphi(x)\}} f(x)d(\nu\times Leb)(x,t)\\
=\intop_{0}^{1} \intop_{\{x\in X |\: \varphi(x)\geq t\}} f(x)\:d\nu(x)\: dt\leq ||f||_{\rho}\cdot \intop_{0}^{1}  \rho\big(\nu(\{x\in X\:|\:\varphi(x)\geq t\})\big) dt\leq \\
||f||_{\rho} \cdot\: \rho\Big(\intop_{0}^{1} \nu(\{x\in X\:|\: \varphi(x)\geq t\})\:dt\Big)=||f||_{\rho}\cdot \: \rho\big(\intop_{X}\varphi \:d\nu\big)
\end{multline*}
\end{proof}
The next is a version of a lemma by de la  Vall\'{e}e-Poussin (see \cite[Theorem 19, Chapter 2]{dellacherie1980probabilities}).
\begin{lemma}\label{lemma de la Valle´e-Poussin}
Suppose \(G:[0,\infty)\to[0,\infty)\) is a convex function with \(\lim_{t\to\infty} \frac{G(t)}{t}=+\infty\) and let \(M>0\). Then there is \(\rho\in\mathbf{M}\) and \(K>0\) such that for any probability space \(\mathfrak{X}\) and a measurable function \(f\) one has:
\[\mathbb{E}_{\mathfrak{X}}[G\circ|f|]\leq M\:\implies\: ||f||_{\mathcal{C}_{\rho}(\mathfrak{X})}\leq K\]
\end{lemma}
\begin{proof}
Define for any \(v\in(0,1]\):
\[\rho_{1}(v):=\sup\Big\{t\:\big|\: G(\frac{t}{v})\leq \frac{M}{v} \Big\}\]
Since \(\lim_{t\to\infty} \frac{G(t)}{t}=+\infty\) we have \(\rho_{1}(v)<\infty\) and \(\rho_1(0^{+})=0\).\\
Define \(K=\rho_{1}(1)\) and let \(\rho_0=\frac{\rho_1}{K}\). Using Lemma \ref{lemma properties of majorants}(5) we find \(\rho\in\mathbf{M}\) with \(\rho_0(t)\leq \rho(t)\).\\
Let us show \(\rho,K\) satisfy the properties. Suppose \(\mathfrak{X}=(X,\Sigma,\nu)\) and that \(\intop_{X} G\circ|f| \: d\nu\leq M\). Then for any \(A\in \Sigma\) with \(\nu(A)=v>0\) we have using Jensen:
\[G\Big(\:\frac{1}{v}\intop_{A} |f|\:d\nu\:\Big)\leq \frac{1}{v}\intop_{A} G\circ |f|\: d\nu \leq \frac{M}{v}\implies \intop_{A} |f|\:d\nu \leq \rho_1(v)\leq K\cdot \rho(\nu(A))\]
Thus \(||f||_{\mathcal{C}_{\rho}(\mathfrak{X})}\leq K\).
\end{proof}

The following proposition shows that one can extract the absolutely-integrable part out of any function, for this, note that any majorant is sub-additive.
\begin{prop}\label{prop finding a rho integrable part}
Let \(\mathfrak{X}\) be a probability space, and let \(f\in L^1(\mathfrak{X}),\: \rho\in\mathbf{M},\: C>0\).\\
Then there is \(B\in\Sigma\) satisfying:
\begin{enumerate}
    \item
    \(f\cdot\boldsymbol{1}_{B^c}\in \mathcal{C}_\rho(\mathfrak{X})\) and \(||f\cdot\boldsymbol{1}_{B^c}||_{\rho}\leq C\).
    \item 
    If \(B\neq \emptyset\) then \(\intop_B |f|d\nu>C\cdot \rho\big(\nu(B)\big)\).
\end{enumerate}
\end{prop}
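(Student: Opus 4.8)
The goal is to split $f$ into a $\rho$-controlled part and a "heavy" remainder $B$ on which the $\rho$-norm density genuinely exceeds the threshold $C$. The plan is to construct $B$ as a maximal (or near-maximal) witness to the failure of the bound $\int_A |f| \le C\rho(\nu(A))$, and then use subadditivity of $\rho$ to show that removing $B$ restores the bound everywhere.

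Let me reduce to $f \ge 0$ by passing to $|f|$. The natural object to consider is the collection of "bad" sets
\[
\mathcal{D} = \Big\{ A \in \Sigma \ \Big|\ \textstyle\int_A f\, d\nu > C\,\rho(\nu(A)) \Big\},
\]
and I want $B$ to be, morally, the union of everything in $\mathcal{D}$. The key structural point is that $\mathcal{D}$ is closed under (disjoint) unions of bad sets in a controlled way: if $A_1, A_2$ are disjoint and both lie in $\mathcal{D}$, then since $\rho$ is subadditive (Lemma~\ref{lemma properties of majorants}(1)),
\[
\int_{A_1 \cup A_2} f\, d\nu = \int_{A_1} f + \int_{A_2} f > C\big(\rho(\nu(A_1)) + \rho(\nu(A_2))\big) \ge C\,\rho(\nu(A_1 \cup A_2)),
\]
so $A_1 \cup A_2 \in \mathcal{D}$ as well. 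Thus $\mathcal{D}$ is stable under finite disjoint unions, and this is exactly where subadditivity (flagged in the sentence preceding the statement) does the work.

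With this stability in hand, the plan is to take a maximizing sequence for $\nu$-measure among bad sets. First I would set $s = \sup\{ \nu(A) : A \in \mathcal{D}\}$ (with $s = 0$ and $B = \emptyset$ if $\mathcal{D}$ is empty, which makes item 2 vacuous and item 1 the statement that $\|f\|_\rho \le C$). Choose $A^{(n)} \in \mathcal{D}$ with $\nu(A^{(n)}) \to s$. Replacing $A^{(n)}$ by $A^{(1)} \cup \cdots \cup A^{(n)}$ — which stays in $\mathcal{D}$ by the disjointification argument above applied inductively (splitting each new piece off the previous union) — I may assume the $A^{(n)}$ are increasing. Set $B = \bigcup_n A^{(n)}$, so $\nu(B) = s$. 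By monotone convergence $\int_B f = \lim_n \int_{A^{(n)}} f \ge C \lim_n \rho(\nu(A^{(n)})) = C\,\rho(\nu(B))$ using continuity of $\rho$; upgrading this to the strict inequality of item~2 requires a little care, and I would get it by noting that if $B \neq \emptyset$ then some $A^{(n)}$ is nonempty and the strict gap from a fixed bad set survives in the limit, or alternatively by choosing the $A^{(n)}$ with a uniform margin.

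Finally, for item~1, I must verify $\|f \cdot \boldsymbol{1}_{B^c}\|_\rho \le C$, i.e. $\int_A f\,d\nu \le C\rho(\nu(A))$ for every $A \subseteq B^c$. The point is maximality of $B$: if some $A \subseteq B^c$ violated this, then $A \in \mathcal{D}$, and by the disjoint-union stability $B \cup A \in \mathcal{D}$ with $\nu(B \cup A) = \nu(B) + \nu(A) > s$, contradicting the definition of $s$. \textbf{The main obstacle} I anticipate is the passage to the supremum: since $\Sigma$ need not be countably generated and the sup over all bad sets is genuinely uncountable, I must ensure the increasing union $B$ actually realizes $\nu(B) = s$ and, more importantly, that $B^c$ contains \emph{no} bad set — not merely that the particular sequence was maximizing. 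The clean resolution is the standard exhaustion argument just sketched: any bad $A \subseteq B^c$ would let me enlarge $B$ beyond $s$, which is impossible, so the maximality is inherited by the limit set and the subadditivity closure is exactly what makes "no bad subset of $B^c$" equivalent to the desired norm bound.
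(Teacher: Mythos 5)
Your key structural claim --- that you may replace $A^{(n)}$ by $A^{(1)}\cup\cdots\cup A^{(n)}$ and stay inside $\mathcal{D}$ --- is false, and this is a genuine gap. The class $\mathcal{D}$ of bad sets is closed under \emph{disjoint} unions (your subadditivity computation, which is also Observation 1 of the paper's proof), but it is not closed under overlapping unions, and ``splitting each new piece off the previous union'' does not help: badness of $A^{(n+1)}$ gives no information whatsoever about the piece $A^{(n+1)}\setminus\big(A^{(1)}\cup\cdots\cup A^{(n)}\big)$. Concretely, take $X=[0,1]$ with Lebesgue measure, $\rho(t)=t$, $C=1$, $f=100\cdot\boldsymbol{1}_{[0,\,0.01]}$, $A_1=[0,\tfrac12]$ and $A_2=[0,0.01]\cup[\tfrac12,1]$. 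Then $\int_{A_1}f\,d\nu=1>\tfrac12$ and $\int_{A_2}f\,d\nu=1>0.51$, so both are bad, while $A_1\cup A_2=[0,1]$ satisfies $\int_{A_1\cup A_2}f\,d\nu=1=C\rho(1)$ and is not bad. Consequently there need not exist any increasing sequence of bad sets whose measures tend to your supremum $s$, and the set $B$ you construct is an overlapping countable union of bad sets, which need not be bad; so item 2 (the strict inequality for $B$) is not established. This is not cosmetic: item 2 is precisely what Corollary \ref{Corollary finding a rho integrable part} uses to conclude $\nu(B)\le\epsilon$.

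Half of your idea does survive: taking bad sets $A^{(n)}$ with $\nu(A^{(n)})\to s$ and $B=\bigcup_n A^{(n)}$, any bad $A\subset B^{c}$ is disjoint from every $A^{(n)}$, so $A^{(n)}\cup A$ is a \emph{disjoint} union of bad sets, hence bad, of measure $\nu(A^{(n)})+\nu(A)>s$ for large $n$ (bad sets have positive measure), a contradiction; this yields item 1. But no such argument makes $B$ itself bad, and maximizing the measure of a single bad set is simply not the right invariant. The paper's proof instead runs a greedy exhaustion by pairwise disjoint bad sets: assuming no bad-or-empty set meets every bad set, it chooses $D_{n+1}$ bad and disjoint from $B_{n+1}=D_0\sqcup\cdots\sqcup D_n$ with $\nu(D_{n+1})$ at least half the supremum $\delta(B_{n+1})$ of measures of bad sets inside $B_{n+1}^{c}$. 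Every $B_n$ stays bad because all unions taken are disjoint, and since the $D_n$ are disjoint in a probability space, $\nu(D_n)\to0$; a bad set disjoint from $B=\bigsqcup_n D_n$ then has measure exceeding $\delta(B_N)$ for suitable $N$, the desired contradiction. Disjointness throughout is the mechanism your measure-maximization tries to replace but cannot do without.
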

\begin{proof}
For the proof of the proposition we will call \(A\in\Sigma\) \emph{bad} if \(\intop_A |f|\:d\nu>C\rho\big(\nu(A)\big)\).
We begin by verifying the following two observations:
\begin{enumerate}
    \item 
    If \((A_n)_{n}\) are pairwise disjoint bad sets then \(A=\bigcupdot_n A_n\) is bad.
    \item For any \(B\in\Sigma\) we have: \(||f\cdot \boldsymbol{1}_{B^c}||_\rho>C\) if and only if there is a bad set \(D\) disjoint from \(B\).
\end{enumerate}
Indeed:
\begin{enumerate}
    \item 
    Since \(\rho\) is continuous and sub-additive:
\[ \intop_A|f|d\nu=\sum_{n} \intop_{A_n} |f|d\nu>\sum_{n} C\cdot\rho\big(\nu(A_n)\big)\geq C\cdot\rho\big(\sum_{n} \nu(A_n))=C\cdot\rho\big(\nu(A)\big)\]
    
    \item
    By definition, \(||f\cdot \boldsymbol{1}_{B^c}||_\rho>C\) is equivalent to the existence of \(A\in\Sigma\) with \(\intop_{A\cap B^c} |f|\:d\nu>C\cdot\rho\big(\nu(A\cap B^c)\big)\). The "if" direction follows from taking \(A=D\). The "only if" direction follows by taking \(D=A\cap B^c\).
\end{enumerate}
We return to the proof the proposition. Via observation 2, we need to show that there is a bad or empty set \(B\subset X\) that intersects every other bad set. Assume otherwise, then for any bad or empty \(B\subset X\) we have
\[\delta(B):=\sup\big\{\nu(D) \big|\: D\: \text{bad} ,\:\:D\subset B^c \big\}>0\]
Choose \(\mathcal{D}[B]\) to be a bad set disjoint from \(B\) with \(\nu(\mathcal{D}[B])>\frac{\delta(B)}{2}\).\\
Define inductively:
\begin{center}
\begin{tabular}{ c c } 
 \(B_0=\emptyset\) & \(D_0=\mathcal{D}{[B_0]}\) \\
 \(B_1=D_0\) & \(D_1=\mathcal{D}{[B_1]}\) \\ 
 ... & ... \\
 \(B_{n+1}=B_n\cupdot D_n=\bigcupdot_{i=0}^{n} D_i\) \:&\: \(D_{n+1}=\mathcal{D}{[B_{n+1}]}\) \\
\end{tabular}
\end{center}
This is well defined: at stage \(n+1\) of the construction \(D_i\: (i=0,\dots,n)\) are disjoint bad sets so by observation 1 we conclude that \(B_{n+1}\) is also a bad set which allows us to define \(D_{n+1}\).\\
Consider \(B=\bigcup B_n=\bigcupdot D_n\), by observation 1 this is a bad set. Let \(D=\mathcal{D}{[B]}\), then \(\nu(D)>0\). Note that as \(D_n\) are disjoint in a probability space we must have \(\lim_{n\to\infty}\nu(D_n)=0\).
Let \(N\) be such that \(\nu(D_N)<\frac{1}{2}\nu(D)\), then, \(D\) is a bad set, disjoint from \(B_N\) and \(\nu(D)>2\nu(D_N)=2\nu(\mathcal{D}[B_{N}])>\delta(B_N)\).
This is a contradiction to the definition of \(\delta\).
\end{proof}

This has the following corollary:
\begin{corollary}\label{Corollary finding a rho integrable part}
Let \(\mathfrak{X}\) be a probability space, and let \(f\in L^1(\mathfrak{X}),\: \rho\in\mathbf{M},\: \epsilon,C>0\) such that the following implication is true:
\[(A\in\Sigma)\quad \intop_A |f|d\nu>C\rho\big(\nu(A)\big) \Longrightarrow \nu(A)\leq\epsilon\]
Then, there is \(B\in\Sigma\) with \(\nu(B)\leq\epsilon\) such that \(f\cdot\boldsymbol{1}_{B^c}\in \mathcal{C}_\rho(\mathfrak{X})\) and \(||f\cdot\boldsymbol{1}_{B^c}||_{\rho}\leq C\).
\end{corollary}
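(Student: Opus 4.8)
The plan is to invoke Proposition \ref{prop finding a rho integrable part} directly and simply read off the conclusion, with only a one-line measure estimate to add. Applying the proposition to the given \(f\), \(\rho\) and \(C\) produces a set \(B\in\Sigma\) satisfying its two conclusions. Conclusion (1) is verbatim the integrability statement we want: \(f\cdot\boldsymbol{1}_{B^c}\in\mathcal{C}_\rho(\mathfrak{X})\) with \(\|f\cdot\boldsymbol{1}_{B^c}\|_\rho\leq C\). So the only thing left to establish is the measure bound \(\nu(B)\leq\epsilon\).

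For this I would distinguish two cases. If \(B=\emptyset\) then \(\nu(B)=0\leq\epsilon\) and there is nothing to prove. Otherwise \(B\neq\emptyset\), and conclusion (2) of the proposition asserts precisely that \(\intop_B|f|\,d\nu>C\rho\big(\nu(B)\big)\); in the terminology of the proof of the proposition, \(B\) is a \emph{bad} set. This is exactly the premise of the hypothesised implication with \(A=B\), so the hypothesis of the corollary forces \(\nu(B)\leq\epsilon\), as required.

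There is no genuine obstacle here: the corollary is an immediate repackaging of the proposition, the only substantive point being to recognise that conclusion (2) says that \(B\) satisfies the premise of the assumed implication. One could alternatively phrase the argument contrapositively—the hypothesis says every bad set has measure at most \(\epsilon\), and the set \(B\) furnished by the proposition is bad whenever it is nonempty—but the case split above is the cleanest route.
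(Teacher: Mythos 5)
Your proposal is correct and is essentially identical to the paper's own proof: both apply Proposition \ref{prop finding a rho integrable part} to obtain \(B\), note that \(B\) is either empty or satisfies \(\intop_B |f|\,d\nu>C\rho\big(\nu(B)\big)\), and then invoke the hypothesised implication with \(A=B\) to get \(\nu(B)\leq\epsilon\). Nothing is missing.
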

\begin{proof}
The \(B\) from Proposition \ref{prop finding a rho integrable part} is either empty or satisfies \(\intop_B |f|d\nu>C\cdot \rho\big(\nu(B)\big)\). By the implication this means that \(\nu(B)\leq \epsilon\).
\end{proof}

The last thing we discuss in this section is some functoriality.
Suppose \(\pi: \mathcal{Y}=(Y,\Sigma_{Y},m)\to \mathfrak{X}=(X,\Sigma_{X},\nu)\) is a factor of measure spaces. Then we have the composition with \(\pi\) which we denote by \(\pi^{*}:L^{1}(\mathfrak{X})\to L^{1}(\mathcal{Y})\). We also have the conditional expectation \(\mathbb{E}_{m,\pi}:=\mathbb{E}_{m}[\cdot\:|X]: L^{1}(\mathcal{Y})\to L^{1}(\mathfrak{X})\). 
\begin{lemma}\label{lemma functoriality of C rho spaces}
Suppose \(\pi: \mathcal{Y}\to \mathfrak{X}\) is a factor of probability spaces and let \(\rho\in\mathbf{M}\). Then:
\begin{enumerate}
    \item We have a contraction: \(\mathbb{E}_{m,\pi}:\mathcal{C}_{\rho}(\mathcal{Y})\to\mathcal{C}_{\rho}(\mathfrak{X})\). 
    \item If \(m^{\prime}\stackrel{\rho}{\ll} m\) on \(Y\) then \(\pi_{*}m^{\prime}\stackrel{\rho}{\ll}\nu\) on \(X\).
    \item
    We have an isometric embedding: \(\pi^{*}: \mathcal{C}_{\rho}(\mathfrak{X})\to \mathcal{C}_{\rho}(\mathcal{Y})\).
\end{enumerate}
\end{lemma}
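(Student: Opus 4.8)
The plan is to exploit two facts throughout: the pushforward identity $m(\pi^{-1}(A)) = (\pi_{*}m)(A) = \nu(A)$ for $A\in\Sigma_X$, and the defining property of the conditional expectation, namely $\int_A \mathbb{E}_{m,\pi}[g]\,d\nu = \int_{\pi^{-1}(A)} g\,dm$ for every $A\in\Sigma_X$ and $g\in L^1(\mathcal{Y})$. Items (1) and (2) follow almost directly from these, while item (3) needs, in addition, the concavity estimate of Lemma \ref{lemma concave of rho effect on rho norm}.

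For item (1), I would fix $g\in\mathcal{C}_{\rho}(\mathcal{Y})$ and set $h=\mathbb{E}_{m,\pi}[g]$. Using the conditional Jensen inequality $|h|\leq \mathbb{E}_{m,\pi}[|g|]$ together with the displayed identity, for any $A\in\Sigma_X$ with $\nu(A)\neq0$ I obtain
\[\intop_A |h|\,d\nu \leq \intop_A \mathbb{E}_{m,\pi}[|g|]\,d\nu = \intop_{\pi^{-1}(A)} |g|\,dm \leq \|g\|_{\rho}\cdot\rho\big(m(\pi^{-1}(A))\big) = \|g\|_{\rho}\cdot\rho\big(\nu(A)\big),\]
the last step using $m(\pi^{-1}(A))=\nu(A)$. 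Dividing by $\rho(\nu(A))$ and taking the supremum over $A$ yields $\|h\|_{\rho}\leq\|g\|_{\rho}$, the asserted contraction. For item (2), I would invoke the Remark that $m'\stackrel{\rho}{\ll}m$ is equivalent to $m'(B)\leq\rho(m(B))$ for all $B\in\Sigma_Y$. Then for $A\in\Sigma_X$,
\[(\pi_{*}m')(A) = m'(\pi^{-1}(A)) \leq \rho\big(m(\pi^{-1}(A))\big) = \rho\big(\nu(A)\big),\]
which, by the same Remark, is exactly $\pi_{*}m'\stackrel{\rho}{\ll}\nu$ (note that $\rho(0)=0$ forces the absolute continuity as well).

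For item (3), the lower bound $\|\pi^{*}f\|_{\rho}\geq\|f\|_{\rho}$ is the easy half: given $A\in\Sigma_X$ with $\nu(A)\neq0$, taking $B=\pi^{-1}(A)$ gives $m(B)=\nu(A)$ and $\int_B |f\circ\pi|\,dm = \int_A |f|\,d\nu$, so every competitor realizing $\|f\|_{\rho}$ is matched by one for $\|\pi^{*}f\|_{\rho}$. The upper bound $\|\pi^{*}f\|_{\rho}\leq\|f\|_{\rho}$ is the crux, since a general $B\in\Sigma_Y$ need not be a pullback from $X$. Here I would write, for $B\in\Sigma_Y$ with $m(B)\neq0$,
\[\intop_B |f\circ\pi|\,dm = \intop_Y (|f|\circ\pi)\,\boldsymbol{1}_B\,dm = \intop_X |f|\,\varphi\,d\nu, \qquad \varphi := \mathbb{E}_{m,\pi}[\boldsymbol{1}_B],\]
where $0\leq\varphi\leq1$ and $\int_X \varphi\,d\nu = m(B)$. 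Applying Lemma \ref{lemma concave of rho effect on rho norm} to $f$ and $\varphi$ bounds this by $\|f\|_{\rho}\cdot\rho\big(\int_X \varphi\,d\nu\big) = \|f\|_{\rho}\cdot\rho(m(B))$, which gives $\|\pi^{*}f\|_{\rho}\leq\|f\|_{\rho}$ and completes the isometry.

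I expect the only genuine subtlety to be this last step: recognizing that integrating a pullback $f\circ\pi$ against an arbitrary indicator $\boldsymbol{1}_B$ on $Y$ equals integrating $|f|$ against the conditional expectation $\varphi$ on $X$, which converts the problem into precisely the hypothesis of the concavity lemma. Everything else reduces to the pushforward identity and the two characterizations recalled above.
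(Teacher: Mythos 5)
Your proof is correct and takes essentially the same route as the paper's: item 1 via the conditional-expectation/pushforward identity, and the crux of item 3 — the upper bound — via Lemma \ref{lemma concave of rho effect on rho norm} applied to \(\varphi=\mathbb{E}_{m,\pi}[\boldsymbol{1}_B]\), exactly as in the paper. The only (harmless) differences are that the paper deduces item 2 from item 1 applied to \(\frac{dm'}{dm}\), and gets the lower bound in item 3 from item 1 together with \(\mathbb{E}_{m,\pi}\circ\pi^{*}=\mathrm{id}\), whereas you argue both directly at the level of sets using the characterization \(m'(B)\leq\rho(m(B))\) and pullback sets \(B=\pi^{-1}(A)\).
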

\begin{proof}
\begin{enumerate}
    \item 
    Suppose \(f\in \mathcal{C}_{\rho}(\mathcal{Y})\) and \(A\in \Sigma_{X}\), then:
\[\intop_A |\mathbb{E}_{m,\pi}(f)| d\nu= \intop_{\pi^{-1}(A)} |f| dm \leq ||f||_{\mathcal{C}_{\rho}(Y)}\cdot  \rho\Big(m(\pi^{-1}(A)\big)\Big)=||f||_{\mathcal{C}_{\rho}(Y)}\cdot  \rho\big(\nu(A)\big)\]
Thus \(||\mathbb{E}_{m,\pi}(f)||_{\mathcal{C}_{\rho}(\mathfrak{X})}\leq ||f||_{\mathcal{C}_{\rho}(\mathcal{Y})}\).
\item 
Follows from the previous item as \(||\frac{d\pi_{*}m}{d\nu}||_{\mathcal{C}_{\rho}(X,\nu)}=||\mathbb{E}_{m,\pi}\big(\frac{dm^{\prime}}{dm}\big)||_{\mathcal{C}_{\rho}(X,\nu)}\leq ||\frac{dm^{\prime}}{dm}||_{\mathcal{C}_{\rho}(Y,m)}=1\).
\item
    Suppose \(f\in \mathcal{C}_{\rho}(\mathfrak{X})\) and \(A\in \Sigma_{Y}\), then applying Lemma \ref{lemma concave of rho effect on rho norm}
\[\intop_{A}|\pi^{*}(f)|\: dm=\intop_{Y} 1_{A}\cdot \pi^{*}(|f|)\:dm=\intop_{X} \mathbb{E}_{m,\pi}(1_{A})\cdot |f|\: d\nu \leq ||f||_{\mathcal{C}_{\rho}(X)} \rho\Big(\intop_{X}\mathbb{E}_{m,\pi}(1_{A})\:d\nu\Big)=||f||_{\mathcal{C}_{\rho}(X)} \rho\Big(m(A)\Big)\]
Thus \(||\pi^{*}f||_{\mathcal{C}_{\rho}(\mathcal{Y})}\leq ||f||_{\mathcal{C}_{\rho}(\mathfrak{X})}\). By item 1 we conclude that \(\pi^{*}\) is an isometric embedding.
\end{enumerate}
\end{proof}

\section{Poisson boundary of a time dependent matrix-valued random walk}\label{appendix Poisson boundary of a time dependent matrix-valued random walk}
In this section we recall the Poisson boundary of a time dependent matrix-valued random walk from the papers \cite{connes1989hyperfinite}, \cite{Amenable}. The goal of this section is to develop some of its properties that are required here. In particular, we consider a notion of stationary spaces parallel to \(\mu\)-stationary systems for the classical \(\mu\)-Poisson-Furstenberg boundary. In Theorem \ref{Theorem Furstenberg Glasner} we prove an analogue of result of Furstenberg-Glasner \cite[Theorem 4.3]{FurstenbergGlasner}.\\
Throughout this section, \(G\) is a discrete countable group.

\subsection{Basic Definitions}\label{subsection Basic Definitions}
\begin{notation}
For a non-negative integer \(\ell\), we set \([\ell]:=\{0,\dots,\ell-1\}\).
\end{notation}
Throughout, we will have the following notation:\\
Let \(\boldsymbol{\ell}=(\ell_n)_{n\geq0}\) be a sequence of positive integers, denote \(\ell_{-1}=1\).
\begin{definition}\label{definition stochastic sequence}
An \emph{\(\boldsymbol{\ell}\)-stochastic sequence} is a sequence \(\boldsymbol{\sigma}=\big(\sigma^{(n)}\big)_{n\geq0}\) where \(\sigma^{(n)}\) is an \([\ell_{n-1}]\times [\ell_n]\) stochastic matrix of measures on \(G\).\\
In more details, \(\sigma^{(n)}=\big(\sigma_{i,j}^{(n)}\big)_{i\in[\ell_{n-1}],j\in[\ell_n]}\) where each \(\sigma^{(n)}_{i,j}\) is a non-negative measure on \(G\), and we have for each \(n,\:i\in [\ell_{n-1}]\) that \(\sum_{j\in[\ell_n]}\sigma_{i,j}^{(n)}(G)=1\).\\
We will always assume (as in \cite{Amenable}),
that for any \(j\in[\ell_{0}]\), the measure \(\sigma^{(0)}_{0,j}\) is supported on all of \(G\).
\end{definition}
\begin{notation}\label{notation sigma random walk}
Suppose \(\boldsymbol{\sigma}\) is an \(\boldsymbol{\ell}\)-stochastic sequence.
\begin{itemize}
\item 
We denote \(V_n=[\ell_n]\times G\) for \(n\geq0\). This is a \(G\)-space with trivial action on the first coordinate.
\item 
We consider the random walk \((X_n)_{n\geq0 }\) (where \(X_n\) takes values in \(V_n\)) with transition probabilities (\(n\geq1\)):
\[\mathbb{P}_{V_{n-1}\mapsto V_n}\bigg((i,g)\to(j,h)\bigg)=\sigma_{i,j}^{(n)}(g^{-1} h)\] 
We get a probability space \((\Omega,\mathcal{F},\mathbb{P})\) where \(\Omega=\prod_{n\geq0} V_n\) is the space of paths of the radon walk and the probability measure \(\mathbb{P}\) on \(\Omega\) is given by the Markov measure with transition probabilities as above and initial distribution \(\sigma^{(0)}\).
\item 
We define the following random variables on \(\Omega\):
\begin{enumerate}
    \item
     \(Y_n\big((i_k,g_k)_k\big)=g_n\) (this is \(G\)-valued)
    \item
    \(I_n\big((i_k,g_k)_k\big)=i_n\) (this is \([\ell_n]\)-valued)
\end{enumerate}
In this notation, \(X_n=(I_n,Y_n)\)
\item
The \(G\)-action on \(V_n\) gives rise to a \(G\)-action on \(\Omega\) with:
\[g\cdot\big((i_n,g_n)\big)=\big((i_n,g\cdot g_n)\big)\:,\: I_n(g\omega)=I_n(\omega),\:Y_n(g\omega)=gY_n(\omega)\]
\item
Let \(\mathcal{F}_m=\sigma(X_k \big| k\geq m),\mathcal{G}_m=\sigma(X_k\big|k\leq m)\). These are \(\sigma\)-algebras on \(\Omega\).
\item 
The asymptotic algebra of the random walk: \(\mathcal{A}_\Omega=\bigcap_m \mathcal{F}_m\). 
\item
For each \(m\),\(\:j\in [\ell_{m}]\) consider the random walk with the same transition probabilities starting at \((j,g)\in V_m\).
The Markov measure will be denoted by \(\mathbb{P}^{(m)}_{(j,g)}\) and we can consider it as a measure on \((\Omega,\mathcal{F}_{m})\).\\
We define \(\mathbb{P}^{(m)}\) to be the column vector of probability measures \((\mathbb{P}^{(m)}_{j,e})_{j\in [\ell_{m}]}\).
\end{itemize}
\end{notation}
Note that in the notations above,
\begin{itemize}
    \item
    \(\mathbb{P}^{(-1)}=\mathbb{P}\).
    \item 
    \(\mathbb{P}_{(j,g)}^{(m)}=g\mathbb{P}_{(j,e)}\).
    \item \(X_n=(I_n,Y_n)\) has distribution given by matrix multiplication:
    \[
\mathbb{P}(I_n=i,Y_n=g)=
\big(\sigma^{(0)}\ast\dots\ast\sigma^{(n)}\big)_{i}(g)
\]

\end{itemize}

\begin{definition}
A bounded \(\boldsymbol{\sigma}\)-harmonic function is a sequence \(\boldsymbol{h}=(h_m)_{m\geq0}\) consisting of bounded complex valued functions \(h_{m}:V_m\to \mathbb{C}\) with \(\sup_m ||h_m||_{\ell^\infty(V_m)}<\infty\) and such that for all \(m\geq1\):
\[h_{m-1}\big((i,g)\big)=\sum_{(j,h)\in V_m} h_m\big((j,h)\big)\cdot \sigma_{i,j}^{(m)}(g^{-1}h)
\]
The space of all bounded \(\boldsymbol{\sigma}\)-harmonic functions will be denoted by \(\mathcal{H}_{\boldsymbol{\sigma}}^\infty(G)\). When equipped with the norm \(||\boldsymbol{h}||=\sup_m ||h_m||_{\ell^\infty(V_m)}\), \(\mathcal{H}_{\boldsymbol{\sigma}}^{\infty}(G)\) is a Banach space.
It has the following isometric left \(G\)-action 
\[g\cdot (h_m)_{m\geq0}=(g\cdot h_m)_{m\geq0}\:\:\:\text{ where:}\:\: (g\cdot f)(z)=f(g^{-1}z)\]
\end{definition}
We may rewrite this condition of harmonicity in the following ways, the first is:
\[\forall v\in V_{m-1}:\:\: h_{m-1}\big(v\big)=\sum_{u\in V_m} h_m\big(u\big)\cdot \mathbb{P}_{V_{m-1}\mapsto V_m}\bigg(v\mapsto u\bigg)\]
And the second is to think of \(h_m\) as a \(\ell_m\)-row vector and then in matrix notation:
\[h_{m-1}=h_m\ast\Check{\sigma}^{(m)}\]
Here, for a matrix \(\sigma=(\sigma_{i,j})\) we denote \(\Check{\sigma}(g)=(\sigma_{j,i}(g^{-1}))\).
\begin{definition}\label{definition stationary spaces}
Let \(\boldsymbol{\sigma}\) be an \(\boldsymbol{\ell}\)-stochastic sequence.
\begin{enumerate}
    \item 
    A \(\boldsymbol{\sigma}\)-stationary space (or \(\boldsymbol{\sigma}\)-system), \(\mathcal{X}=(X,\boldsymbol{\nu})\) is a measure space \(X\) together with a measurable \(G\)-action, and a sequence \(\boldsymbol{\nu}=(\nu^{(n)})_{n=-1}^{\infty}\) of \(\ell_n\)-column vectors \(\nu^{(n)}=(\nu^{(n)}_j)_{j\in [\ell_n]}\) such that any \(\nu^{(n)}_j\) is a probability measure on \(X\), and such that \(\sigma^{(n)}\ast \nu^{(n)}=\nu^{(n-1)}\) for any \(n\geq 0\).
    \item
    A \emph{factor} of \(\boldsymbol{\sigma}\)-stationary spaces \(\pi:\mathcal{X}=(X,\boldsymbol{\nu})\to\mathcal{Y}=(Y,\boldsymbol{m})\) is a measurable \(G\)-mapping \(\pi:X\to Y\) such that \(\pi_*(\nu^{(n)}_j)=m^{(n)}_j\). 
    We say that \(\mathcal{Y}\) is a factor of \(\mathcal{X}\), and that \(\mathcal{X}\) is an extension of \(\mathcal{Y}\).

\end{enumerate}
\end{definition}
Since \(\nu^{(-1)}=\sigma^{(0)}\ast \nu^{(0)}\) and \(\sigma^{(0)}_{j}\) are QI we see that \((X,\nu^{(-1)})\) is a QI \(G\)-space. We will consider \(X\) with this measure class as the underlying \(G\)-space for \(\mathcal{X}\) and denote \(L^{\infty}(\mathcal{X})=L^{\infty}(X,\nu^{(-1)})\).\\
From now on, \textbf{we will assume that the matrices \(\sigma^{(n)}\) do not have a zero column}. More formally, this assumption on \(\boldsymbol{\sigma}\) is:
For any \(n\) and \(j\in [\ell_{n}]\) there is \(i\in [\ell_{n-1}]\) with \(\sigma_{i,j}^{(n)}(G)>0\).\\
\(\)\\
Under the assumption, it is easy to see that for each for any \(n,\:j\in[\ell_{n}]\) we have \(\nu_{j}^{(n)}\ll \nu^{(-1)}\).

\begin{remark}\label{remark no zero columns on sigma}
This assumption does not change much- for any \(\boldsymbol{\sigma}\), by deleting the zero-columns (and their corresponding rows in the next matrix) we will obtain a new stochastic sequence \(\boldsymbol{\tau}\) (for a different \(\boldsymbol{\ell}\)). For any \(\boldsymbol{\sigma}\)-stationary space, considering only the relevant \(\nu_{j}^{(n)}\) yields a \(\boldsymbol{\tau}\)-stationary space. Their underlying \(G\)-space is the same.\\
Later we will consider the Poisson boundary (see Definition \ref{Definition Poisson boundary of sigma}), it will be clear that this operation sends the Poisson boundary of \(\boldsymbol{\sigma}\) to the one of \(\boldsymbol{\tau}\).
\end{remark}
We can now define a version of Furstenberg-Poisson transform:
\begin{prop}\label{Proposition Poisson integral for stationary actions}
Let \(\mathcal{X}=(X,\boldsymbol{\nu})\) be a \(\boldsymbol{\sigma}\)-stationary space, for any \(f\in L^\infty(\mathcal{X})\), define \(\mathcal{P}_{X}(f)\) by:
\[\big(\mathcal{P}_{X}(f)\big)_n(i,g):=\intop_{X} f\:d\big(g\cdot \nu^{(n)}_i\big)=\intop_X f(gx) \: d\nu_i^{(n)}(x)\]
Then \(\mathcal{P}_X(f)\in\mathcal{H}_{\boldsymbol{\sigma}}^\infty(G)\). Moreover,
\begin{enumerate}
    \item 
    \(\mathcal{P}_X:L^\infty(\mathcal{X})\to \mathcal{H}_{\boldsymbol{\sigma}}^\infty(G)\) is a \(G\)-equivariant linear contraction.
    \item
    If \(\pi:X\to Y\) is a factor of \(\boldsymbol{\sigma}\)-stationary spaces then \(\mathcal{P}_{X}\circ\pi^{*}=\mathcal{P}_{Y}\).
\end{enumerate}

\end{prop}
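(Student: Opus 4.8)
The plan is to verify the four assertions in turn — membership \(\mathcal{P}_X(f)\in\mathcal{H}_{\boldsymbol{\sigma}}^\infty(G)\), the contraction property, \(G\)-equivariance, and compatibility with factors — with the harmonicity relation being the only substantial computation and everything else following directly from the definitions. Boundedness is immediate: since each \(\nu_i^{(n)}\) is a probability measure, \(|(\mathcal{P}_X(f))_n(i,g)|=\left|\int_X f(gx)\,d\nu_i^{(n)}(x)\right|\leq\|f\|_\infty\), so that \(\sup_n\|(\mathcal{P}_X(f))_n\|_{\ell^\infty(V_n)}\leq\|f\|_\infty\). This simultaneously places \(\mathcal{P}_X(f)\) among the bounded sequences and yields the contraction estimate \(\|\mathcal{P}_X\|\leq1\); linearity of \(f\mapsto\mathcal{P}_X(f)\) is clear from linearity of the integral.

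The heart of the argument is harmonicity. First I would unfold the stationarity relation \(\sigma^{(m)}\ast\nu^{(m)}=\nu^{(m-1)}\) into its component form \(\nu_i^{(m-1)}=\sum_{j\in[\ell_m]}\sigma_{i,j}^{(m)}\ast\nu_j^{(m)}\), and recall that the convolution \(\sigma_{i,j}^{(m)}\ast\nu_j^{(m)}\) of a measure on \(G\) with a measure on \(X\) is the pushforward of \(\sigma_{i,j}^{(m)}\times\nu_j^{(m)}\) under the action map \(a(s,x)=sx\). Substituting into the left-hand side of the harmonicity identity and applying Fubini gives
\[(\mathcal{P}_X(f))_{m-1}(i,g)=\int_X f(gx)\,d\nu_i^{(m-1)}(x)=\sum_{j}\int_G\int_X f(gsx)\,d\nu_j^{(m)}(x)\,d\sigma_{i,j}^{(m)}(s).\]
On the other hand, expanding the right-hand side \(\sum_{(j,h)\in V_m}(\mathcal{P}_X(f))_m(j,h)\cdot\sigma_{i,j}^{(m)}(g^{-1}h)\) and performing the substitution \(h=gs\) (so \(g^{-1}h=s\), and as \(h\) ranges over the discrete group \(G\) so does \(s\)) produces the identical double sum-integral; carrying out this matching is the one genuine step. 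The key bookkeeping observation is that the transition weight \(\sigma_{i,j}^{(m)}(g^{-1}h)\) appearing in the definition of \(\mathcal{H}_{\boldsymbol{\sigma}}^\infty(G)\) is precisely what converts the convolution coming from stationarity into the summation over \(V_m\).

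For \(G\)-equivariance I would compute both sides directly from the two \(G\)-actions, using that the action on \(V_m\) is trivial on the \([\ell_m]\)-coordinate: \((g_0\cdot\mathcal{P}_X(f))_m(i,g)=(\mathcal{P}_X(f))_m(i,g_0^{-1}g)=\int_X f(g_0^{-1}gx)\,d\nu_i^{(m)}(x)\), which agrees with \((\mathcal{P}_X(g_0\cdot f))_m(i,g)=\int_X (g_0\cdot f)(gx)\,d\nu_i^{(m)}(x)\). For part (2), I would use that \(\pi\) is \(G\)-equivariant and that \(\pi_*\nu_i^{(n)}=m_i^{(n)}\), whence \((\mathcal{P}_X(\pi^*f))_n(i,g)=\int_X f(g\pi(x))\,d\nu_i^{(n)}(x)=\int_Y f(gy)\,dm_i^{(n)}(y)=(\mathcal{P}_Y f)_n(i,g)\). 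The main obstacle — essentially the only one — is the notational bookkeeping in the harmonicity step: keeping straight the matrix-convolution convention \(\sigma^{(m)}\ast\nu^{(m)}\), the identification of \(G\)-convolution with the action-map pushforward, and the change of variables that rewrites that convolution as the transition-probability sum defining \(\mathcal{H}_{\boldsymbol{\sigma}}^\infty(G)\).
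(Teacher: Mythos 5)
Your proposal is correct and follows essentially the same route as the paper: the harmonicity identity is the same computation (stationarity \(\sigma^{(m)}\ast\nu^{(m)}=\nu^{(m-1)}\) plus the change of variables \(h=gs\)), merely run from the left-hand side rather than from the right and with the convolution unfolded via Fubini instead of manipulated as measures, while the contraction, equivariance, and naturality arguments match the paper's (which treats the last two as one-line/trivial). No gaps.
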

\begin{proof}
The equalities
\begin{multline*}
\sum_{(j,h)\in V_m} \sigma_{i,j}^{(m)}(g^{-1}h) \big(P_X(f)\big)_m(j,h)=\intop_X f \: d\Big(\sum_{(j,h)\in V_m} \sigma_{i,j}^{(m)}(g^{-1}h)\: h\nu_{j}^{(m)}\Big)=\\
\intop_X f\: d\Big(g\cdot \sum_{(j,s)\in V_m} \sigma_{i,j}(s)\: s\nu_j^{(m)}\Big)=\\
\intop_X f\: d\Big(g\cdot ((\sigma^{(m)}\ast\nu^{(m)})_i\Big)=
\intop_X f \: d\Big(g\cdot \nu_{i}^{(m-1)}\Big)=
\big(\mathcal{P}_X(f)\big)_{m-1}(i,g)
\end{multline*}
shows that \(\mathcal{P}_X(f)\in\mathcal{H}_{\boldsymbol{\sigma}}^\infty(G)\).\\
For the moreover part: it is obvious that \(||\mathcal{P}_X||\leq1\). For the \(G\)-equivariance:
\[\big(g(\mathcal{P}_X(f))\big)_n (i,h)= (\mathcal{P}_X(f))_n(i,g^{-1}h)=\intop_{X} f\: d(g^{-1}h\cdot \nu_i^{(n)})=\intop_X (g\cdot f)\: d(h\cdot \nu_i^{(n)})=(\mathcal{P}_X(g\cdot f))_n(i,h)\]
The naturality of \(\mathcal{P}\) is trivial.
\end{proof}
We call the operator \(\mathcal{P}_{X}\) the Poisson transform.\\
Our next goal is to define the Poisson boundary of \(\boldsymbol{\sigma}\) using the random walk described above and to show that the Poisson transform attached to the Poisson boundary is an isomorphism.

\begin{prop}\label{Proposition stationarity of Poisson boundary}
We have equality of vectors of measures on \((\Omega,\mathcal{F}_{m})\)
\[\sigma^{(m)}\ast \mathbb{P}^{(m)}=\mathbb{P}^{(m-1)}\]
In particular, \(\Big(\Omega,\mathcal{A}_\Omega,\big(\mathbb{P}^{(n)}\big)_{n}\Big)\) is a \(\boldsymbol{\sigma}\)-stationary space.
\end{prop}
\begin{proof}
We only need to show the equality for the generating algebra consisting of cylinder sets of the form \(A=\big\{X_{m+r}=(i_{m+r},g_{m+r}) \: , (0\leq r\leq \ell)\big\}\). And indeed, for \(i\in[\ell_{m-1}]\) 
\begin{multline*}
    \mathbb{P}_i^{(m-1)}(A)=\sigma_{i,i_m}^{(m)}(g_m)\cdot\sigma_{i_m,i_{m+1}}^{(m+1)}(g_m^{-1}g_{m+1})\dots\sigma_{i_{m+\ell-1},i_{m+\ell}}^{(m+\ell)}(g_{m+\ell-1}^{-1}g_{m+\ell})=\\
    =\sum_{g\in G,\: j\in[\ell_m]} \sigma_{i,j}^{(m)}(g)\cdot\delta_{(j,g),(i_{m},g_{m})}\cdot\prod_{r=1}^{\ell}\sigma_{i_m+r-1,i_{m+r}}^{(m+r)}(g_{m+r-1}^{-1}g_{m+r})=\\
    \sum_{g\in G,\: j\in[\ell_m]} \sigma_{i,j}^{(m)}(g)\cdot\delta_{(j,e),(i_{m},g^{-1}g_m)}\cdot\prod_{r=1}^{\ell}\sigma_{i_{m+r-1},i_{m+r}}^{(m+r)}\big((g^{-1}g_{m+r-1})^{-1}(g^{-1}g_{m+r})\big)=\\
    \sum_{g\in G,\: j\in[\ell_m]} \sigma_{i,j}^{(m)}(g)\cdot\mathbb{P}_j^{(m)}\big(\{X_{m+r}=(i_{m+r},g^{-1}\cdot g_{m+r})\:,\: (0\leq r\leq \ell)\}\big)=\\
    \sum_{g\in G,\: j\in[\ell_m]} \sigma^{(m)}_{i,j}(g)\cdot \mathbb{P}_{j}^{(m)}\big(g^{-1}A)=
    \sum_{g\in G,\: j\in[\ell_m]} \sigma^{(m)}_{i,j}(g)\cdot (g\mathbb{P}_{j}^{(m)})\big(A\big)=\\
    \sum_{j\in[\ell_m]} \big(\sigma_{i,j}^{(m)}\ast \mathbb{P}_{j}^{(m)}\big)(A)=
    \big(\sigma^{(m)}\ast\mathbb{P}^{(m)}\big)_i(A)
\end{multline*}
\end{proof}

\begin{definition}\label{Definition Poisson boundary of sigma}
A Poisson boundary of \(\boldsymbol{\sigma}\), denoted by \(\mathcal{B}(G,\boldsymbol{\sigma})=(B,\Sigma,(\nu_{j}^{(m)})_{m\geq -1})\) is a \(\boldsymbol{\sigma}\)-stationary space with \((B,\Sigma)\) a standard Borel space together with a \(G\)-equivariant isomorphism \(L^\infty(B,\Sigma,\nu^{(-1)})\cong L^\infty(\Omega,\mathcal{A}_\Omega,\mathbb{P})\) for which \(\nu_j^{(m)}\) corresponds to \(\mathbb{P}_j^{(m)}\).
\end{definition}
Using the separability of \(L^{\infty}(\Omega,\mathcal{A}_{\Omega},\mathbb{P})\) with respect to the \(w^{*}\)-topology and \cite[Theorem 2.1]{RAMSAY1971253} we conclude:
\begin{prop}\label{prop existence and uniqueness poisson boundary}
\(\)
\begin{enumerate}
    \item 
    A Poisson boundary \((B,\Sigma,\boldsymbol{\nu})\) of \(\boldsymbol{\sigma}\) exists. 
    \item
    A Poisson boundary is unique up to a unique isomorphism that commutes with the isomorphism \(L^\infty(B,\Sigma,\nu^{(-1)})\cong L^\infty(\Omega,\mathcal{A}_\Omega,\mathbb{P})\).
    \item
    There is a factor map \(\tau:\Omega\to B\) inducing the isomorphism \(L^\infty(B,\Sigma,\nu^{(-1)})\cong L^\infty(\Omega,\mathcal{A}_\Omega,\mathbb{P})\). 

\end{enumerate}
\end{prop}


\begin{remark}
It follows from Proposition \ref{Proposition Poisson transform for the boundary} below that the isomorphism (as \(\boldsymbol{\sigma}\)-systems) is actually unique and the commuting assumption in the uniqueness of Poisson boundary is redundant.
\end{remark}

\begin{prop}\label{proposition random walk martingale}
Let \(\boldsymbol{h}\in\mathcal{H}_{\boldsymbol{\sigma}}^\infty(G)\) 
and consider 
\[M_n(\omega)=h_{n}(X_n(\omega))=\big(h_{n}\big)_{I_n(\omega)}(Y_n(\omega))\]
Then \(M_n\) is a bounded martingale for \((\Omega,\{{\mathcal{G}_n}\},\mathbb{P})\).

\end{prop}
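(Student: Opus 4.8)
The plan is to verify the three defining properties of a bounded martingale for \((\Omega,\{\mathcal{G}_n\},\mathbb{P})\): adaptedness, boundedness, and the martingale identity. The first two are immediate. Since \(\boldsymbol{h}\in\mathcal{H}_{\boldsymbol{\sigma}}^\infty(G)\) we have \(|M_n(\omega)|=|h_n(X_n(\omega))|\leq ||h_n||_{\ell^\infty(V_n)}\leq ||\boldsymbol{h}||<\infty\) uniformly in \(n\) and \(\omega\), so each \(M_n\) is bounded and in particular integrable. Moreover \(M_n=h_n\circ X_n\) is a measurable function of \(X_n\), hence \(\mathcal{G}_n\)-measurable, so \((M_n)\) is adapted to \((\mathcal{G}_n)\).

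The heart of the matter is the identity \(\mathbb{E}[M_{n+1}\mid\mathcal{G}_n]=M_n\). First I would invoke the Markov property of the chain \((X_n)\) with respect to the forward filtration \((\mathcal{G}_n)\): because \(\mathbb{P}\) is by construction the Markov measure with transition kernel \(\mathbb{P}_{V_n\mapsto V_{n+1}}\), for any bounded measurable \(f\) on \(V_{n+1}\) one has
\[
\mathbb{E}[f(X_{n+1})\mid\mathcal{G}_n]=\sum_{u\in V_{n+1}} f(u)\,\mathbb{P}_{V_n\mapsto V_{n+1}}\big(X_n\to u\big).
\]
Applying this with \(f=h_{n+1}\) and writing \(X_n=(I_n,Y_n)\), the transition probabilities \(\mathbb{P}_{V_n\mapsto V_{n+1}}\big((I_n,Y_n)\to(j,h)\big)=\sigma^{(n+1)}_{I_n,j}(Y_n^{-1}h)\) give
\[
\mathbb{E}[M_{n+1}\mid\mathcal{G}_n](\omega)=\sum_{(j,h)\in V_{n+1}} h_{n+1}\big((j,h)\big)\,\sigma^{(n+1)}_{I_n(\omega),j}\big(Y_n(\omega)^{-1}h\big).
\]

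It then remains only to recognise the right-hand side. By the harmonicity equation defining \(\mathcal{H}_{\boldsymbol{\sigma}}^\infty(G)\) (applied with \(m=n+1\) and \((i,g)=(I_n(\omega),Y_n(\omega))\)), the displayed sum equals \(h_n\big((I_n(\omega),Y_n(\omega))\big)=h_n(X_n(\omega))=M_n(\omega)\), which is exactly the martingale identity. The only point requiring genuine care is the precise formulation of the Markov property in this time-inhomogeneous, \(V_n\)-indexed setting; once the transition kernel is identified with the matrix entries \(\sigma^{(n+1)}_{i,j}\), the harmonicity relation closes the computation, and I do not anticipate any obstacle beyond this bookkeeping.
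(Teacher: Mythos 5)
Your proposal is correct and takes essentially the same route as the paper: both reduce the martingale identity to the harmonicity relation \(h_n = \sum_{u} h_{n+1}(u)\,\mathbb{P}_{V_n\mapsto V_{n+1}}(\cdot\to u)\) via the transition structure of the Markov measure. The only difference is one of packaging — the paper verifies \(\intop_A M_{n+1}\,d\mathbb{P}=\intop_A M_n\,d\mathbb{P}\) directly on cylinder sets generating \(\mathcal{G}_n\) (in effect proving inline the time-inhomogeneous Markov property that you cite as a known fact), so your "bookkeeping" step is exactly the paper's computation.
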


\begin{proof}
Indeed, \(M_n\) is \(\mathcal{G}_n\)-measurable and bounded by \(||\boldsymbol{h}||\).\\ We need to show that for \(A\in\mathcal{G}_n\) we have \(\intop_{\Omega} M_n\cdot\boldsymbol{1}_{A}d\mathbb{P}=\intop_{\Omega} M_{n+1}\cdot\boldsymbol{1}_{A}d\mathbb{P}\). It is enough to show this for 
\(A=\big\{X_{r}=v_r\:,\: (0\leq r\leq n)\big\} \) where \(v_r=(i_r,g_r)\in V_{r}\). Indeed:
\begin{multline*}
    \intop_{\Omega} M_{n+1}\cdot \boldsymbol{1}_A d\mathbb{P}= \intop_{\Omega} h_{n+1}(X_{n+1}(\omega))\sum_{v_{n+1}\in V_{n+1}} \boldsymbol{1}_{A\cap\{X_{n+1}=v_{n+1}\} }(\omega) d\mathbb{P}(\omega)=\\
    \sum_{v_{n+1}\in V_{n+1}} h_{n+1}(v_{n+1}) \cdot\mathbb{P}\bigg(\big\{X_{r}=v_r \: ,\: (0\leq r\leq n+1)\big\}\bigg)=\\
    \sum_{v_{n+1}\in V_{n+1}} h_{n+1}(v_{n+1}) \mathbb{P}_{V_n\mapsto V_{n+1}}(v_n\mapsto v_{n+1})\cdot \mathbb{P}\big(A\big)=h_n(v_n)\mathbb{P}(A)=\intop_{\Omega} M_n\cdot\boldsymbol{1}_{A}d\mathbb{P}
\end{multline*}
\end{proof}

\begin{lemma}\label{Lemma conditioning random walk measure}
Let \(v=(g,j)\in V_n\). Let \(B\in\mathcal{G}_{n-1}\) and let \(A=B\cap \{X_n=v\}\). Then for any \(f\in L^\infty(\Omega,\mathcal{F}_{n+1})\) we have:
\[\mathbb{P}(A)\cdot \intop_{\Omega} f \:d(g\mathbb{P}_j^{(n)})=\intop_A f \: d\mathbb{P}\]
\end{lemma}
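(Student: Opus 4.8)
The statement is the Markov property of the walk $(X_n)$ in the present normalisation: conditioned on the past $\mathcal{G}_{n-1}$ together with the value $X_n=v=(j,g)$, the future evolution is governed by the walk restarted from $v$, whose law on $(\Omega,\mathcal{F}_n)$ is exactly $\mathbb{P}^{(n)}_{(j,g)}=g\mathbb{P}^{(n)}_j$ (see Notation \ref{notation sigma random walk} and the remarks following it). Accordingly, the plan is to reduce everything to cylinder sets, where the identity becomes a bare product-of-transition-probabilities computation, and then to propagate it to a general $B$ and a general $f$ by countable additivity and by uniqueness of measures.

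I would first treat the case where both $B$ and the test function are cylinders. Take $B=\{X_r=v_r,\ 0\le r\le n-1\}$ with $v_r=(i_r,g_r)$, set $v_n:=v=(i_n,g_n)$ with $i_n=j,\ g_n=g$, and take $f=\boldsymbol{1}_C$ with $C=\{X_{n+r}=v_{n+r},\ 1\le r\le\ell\}$. Then $A=\{X_r=v_r,\ 0\le r\le n\}$ and $A\cap C=\{X_r=v_r,\ 0\le r\le n+\ell\}$ are again cylinders, so the definition of $\mathbb{P}$ as the Markov measure expands each of $\mathbb{P}(A)$ and $\mathbb{P}(A\cap C)$ as the product of the initial weight $\sigma^{(0)}_{0,i_0}(g_0)$ with the consecutive transition factors $\sigma^{(r)}_{i_{r-1},i_r}(g_{r-1}^{-1}g_r)$. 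Dividing, $\mathbb{P}(A\cap C)/\mathbb{P}(A)$ is the product of the transition factors for $n+1\le r\le n+\ell$, which is precisely $(g\mathbb{P}^{(n)}_j)(C)=\mathbb{P}^{(n)}_{(j,g)}(C)$. The one point needing care here is the passage $(g\mathbb{P}^{(n)}_j)(C)=\mathbb{P}^{(n)}_j(g^{-1}C)$: translating $C$ by $g^{-1}$ replaces each $g_{n+r}$ by $g^{-1}g_{n+r}$, leaving every increment $g_{r-1}^{-1}g_r$ unchanged for $r\ge n+2$ and matching the first increment as well precisely because the base point of $\mathbb{P}^{(n)}_j$ is $e$ while $g_n=g$. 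This gives the identity for cylinder $B$ and $f=\boldsymbol{1}_C$.

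Next I would remove the two cylinder restrictions in turn. Since $G$ is countable each $V_r$ is countable, so $\mathcal{G}_{n-1}=\sigma(X_0,\dots,X_{n-1})$ is purely atomic with the above cylinders as atoms; as both sides are countably additive in $B$ (for fixed $f=\boldsymbol{1}_C$, the maps $B\mapsto\mathbb{P}(B\cap\{X_n=v\})$ and $B\mapsto\mathbb{P}(B\cap\{X_n=v\}\cap C)$ are measures), the cylinder case extends at once to every $B\in\mathcal{G}_{n-1}$. With $B$ now arbitrary, fix it and view the two sides as integrals of $f$ against the finite measures $\lambda_1(E)=\mathbb{P}(A)\cdot(g\mathbb{P}^{(n)}_j)(E)$ and $\lambda_2(E)=\mathbb{P}(A\cap E)$ on $(\Omega,\mathcal{F}_{n+1})$. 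By the previous step these agree on the $\pi$-system of cylinders $\{X_{n+1}=\cdot,\dots,X_{n+\ell}=\cdot\}$, which generates $\mathcal{F}_{n+1}$ and exhausts $\Omega$ by countable disjoint unions, so $\lambda_1=\lambda_2$ by uniqueness of measures; integrating an arbitrary $f\in L^\infty(\Omega,\mathcal{F}_{n+1})$ then finishes the proof.

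The computation is routine, and I expect the only genuine bookkeeping hazard to be the equivariance step, namely verifying that $(g\mathbb{P}^{(n)}_j)(C)$ equals the tail product of transition factors anchored at $X_n=(j,g)$ rather than at $(j,e)$. This is the same translation identity that underlies the proof of Lemma \ref{Lemma stationarity of Poisson boundary}, and it is exactly the place where the normalisation $\mathbb{P}^{(n)}_{(j,g)}=g\mathbb{P}^{(n)}_j$ must be invoked.
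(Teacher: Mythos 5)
Your proof is correct and follows essentially the same route as the paper: reduce to the case where both $B$ and $f$ are cylinder sets, expand $\mathbb{P}(A)$ and $\mathbb{P}(A\cap C)$ as products of transition factors, and use the translation identity $(g\mathbb{P}^{(n)}_j)(C)=\mathbb{P}^{(n)}_j(g^{-1}C)$ to match the tail product. The only difference is that you spell out the reduction steps (countable additivity in $B$ and the $\pi$-system uniqueness argument in $f$) which the paper compresses into ``it is enough to verify \dots we may assume''.
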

\begin{proof}
It is enough to verify the identity in the case where \(f\) is the indicator function of the cylinder \(C=\{X_{n+1}=v_{n+1},\dots,X_{n+\ell}=v_{n+\ell}\}\) for \(v_{n+r}=(i_{n+r},g_{n+r})\in V_{n+r}\:,\: (r=1,\dots,\ell)\). Moreover, we may assume that \(B=\{X_0=v_0,\dots,X_{n-1}=v_{n-1}\}\) for \(v_{t}=(i_t,g_t)\in V_t\:,\:(t=0,\dots n-1)\).
In this case:
\begin{multline*}
    \mathbb{P}(A)\cdot \intop_{\Omega} f \:d(g\mathbb{P}_j^{(n)})=\mathbb{P}(A)\cdot \mathbb{P}_j^{(n)}(g^{-1}C)=\\
    \mathbb{P}\big(\{X_0=v_0,\dots,X_{n-1}=v_{n-1},X_{n}=v\}\big)\cdot\mathbb{P}_{j}^{(n)}\big(\{X_{n+r}=(i_r,g^{-1}g_r)\:\:(1\leq r\leq\ell\}\big)=\\
    =\Big(\sigma^{(0)}_{i_0}(g_0)\cdot\sigma^{(1)}_{i_0,i_1}(g_0^{-1}g_1)\dots\cdot\sigma_{i_{n-1},j}^{(n)}(g_{n-1}^{-1}g)\Big)\cdot\Big(\sigma_{j,i_{n+1}}^{(n+1)}(g^{-1}g_{n+1})\cdot\sigma_{i_{n+1},i_{n+2}}^{(n+2)}(g_{n+1}^{-1}g_{n+2})\dots \sigma_{i_{n+\ell-1},i_{n+\ell}}^{(n+\ell)}(g_{n+\ell-1}^{-1}g_{n+\ell})\Big)=\\
    \mathbb{P}\bigg(\{X_0=v_0,\dots,X_{n-1}=v_{n-1},X_{n}=v,X_{n+1}=v_{n+1},\dots,X_{n+\ell}=v_{n+\ell}\}\bigg)=
    \mathbb{P}(A\cap C)=\intop_{A} f\:d\mathbb{P}
\end{multline*}
\end{proof}

\begin{prop}\label{Proposition Poisson transform for the boundary}
The Poisson transform defines an isometric isomorphism:
\[\mathcal{P}_{\Omega}:\: L^\infty(\Omega,\mathcal{A}_\Omega,\mathbb{P})\stackrel{\sim}{\longrightarrow}\mathcal{H}_{\boldsymbol{\sigma}}^\infty (G)\]
Whose inverse satisfies:
\[\mathcal{P}_{\Omega}^{-1}(\boldsymbol{h})=\lim_{n\to\infty} h_n(X_n)\quad\text{a.e.}\]
\end{prop}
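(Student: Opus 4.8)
The plan is to construct a two-sided inverse of $\mathcal{P}_\Omega$ out of the almost-sure limit $\lim_{n\to\infty}h_n(X_n)$, exploiting martingale convergence in both directions. Recall first that Proposition \ref{Proposition Poisson integral for stationary actions} already yields that $\mathcal{P}_\Omega$ is a well-defined $G$-equivariant linear contraction $L^\infty(\Omega,\mathcal{A}_\Omega,\mathbb{P})\to\mathcal{H}_{\boldsymbol{\sigma}}^\infty(G)$, so $\|\mathcal{P}_\Omega\|\le 1$ and it remains to establish bijectivity, the inverse formula, and the reverse norm bound.

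\emph{Surjectivity and the inverse formula.} Fix $\boldsymbol h\in\mathcal{H}_{\boldsymbol{\sigma}}^\infty(G)$ and put $M_n=h_n(X_n)$. By Proposition \ref{proposition random walk martingale} this is a bounded $\{\mathcal{G}_n\}$-martingale, so Doob's martingale convergence theorem gives a bounded $\mathbb{P}$-a.e. limit $M_\infty$. The crucial point is that $M_\infty$ is $\mathcal{A}_\Omega$-measurable: for every fixed $m$ the limit $\lim_{n\to\infty}h_n(X_n)=\lim_{n\ge m}h_n(X_n)$ is a limit of $\mathcal{F}_m$-measurable functions (each $X_n$ with $n\ge m$ being $\mathcal{F}_m$-measurable), hence is $\mathcal{F}_m$-measurable; as this holds for all $m$ we get $M_\infty\in L^\infty(\Omega,\mathcal{A}_\Omega,\mathbb{P})$. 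To see $\mathcal{P}_\Omega(M_\infty)=\boldsymbol h$, fix $(i,g)\in V_n$ and work under $g\mathbb{P}_i^{(n)}$, the law of the walk started at $(i,g)$ at time $n$, for which $X_n=(i,g)$ almost surely. The same computation as in Proposition \ref{proposition random walk martingale} shows $(M_{n+k})_{k\ge0}$ is a martingale under $g\mathbb{P}_i^{(n)}$, and since $g\mathbb{P}_i^{(n)}\ll\mathbb{P}$ the a.e. convergence $M_{n+k}\to M_\infty$ persists; bounded convergence then gives $\int_\Omega M_\infty\,d(g\mathbb{P}_i^{(n)})=\lim_k\int_\Omega M_{n+k}\,d(g\mathbb{P}_i^{(n)})=\int_\Omega M_n\,d(g\mathbb{P}_i^{(n)})=h_n(i,g)$, the last equality because $X_n=(i,g)$ a.s. Thus $(\mathcal{P}_\Omega M_\infty)_n(i,g)=h_n(i,g)$, proving surjectivity and the formula $\mathcal{P}_\Omega^{-1}(\boldsymbol h)=\lim_n h_n(X_n)$.

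\emph{Injectivity and isometry.} Let $f\in L^\infty(\Omega,\mathcal{A}_\Omega,\mathbb{P})$ and set $\boldsymbol h=\mathcal{P}_\Omega f$. Since $\mathcal{A}_\Omega\subset\mathcal{F}_{n+1}$, the function $f$ is $\mathcal{F}_{n+1}$-measurable, so Lemma \ref{Lemma conditioning random walk measure} applies: for $B\in\mathcal{G}_{n-1}$ and $v=(i,g)\in V_n$, writing $A=B\cap\{X_n=v\}$, we obtain $\int_A f\,d\mathbb{P}=\mathbb{P}(A)\int_\Omega f\,d(g\mathbb{P}_i^{(n)})=\mathbb{P}(A)\,h_n(v)=\int_A h_n(X_n)\,d\mathbb{P}$. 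As such sets generate $\mathcal{G}_n$, this means $\mathbb{E}_{\mathbb{P}}[f\mid\mathcal{G}_n]=h_n(X_n)$ a.e. Because $\bigvee_n\mathcal{G}_n=\mathcal{F}\supset\mathcal{A}_\Omega$, Lévy's upward martingale convergence theorem yields $h_n(X_n)=\mathbb{E}[f\mid\mathcal{G}_n]\to f$ a.e. Hence $\lim_n h_n(X_n)=f$, which shows $\mathcal{P}_\Omega$ is injective and that $\boldsymbol h\mapsto\lim_n h_n(X_n)$ is a genuine two-sided inverse. For the isometry, $\mathcal{P}_\Omega$ is a contraction, while $|f|=|\lim_n h_n(X_n)|\le\sup_n\|h_n\|_{\ell^\infty(V_n)}=\|\boldsymbol h\|$ a.e. gives $\|f\|_\infty\le\|\mathcal{P}_\Omega f\|$; together these make $\mathcal{P}_\Omega$ isometric.

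\emph{Main obstacle.} The delicate part is the coordinated use of martingale convergence in the two directions together with the role of the asymptotic algebra $\mathcal{A}_\Omega$. On the surjectivity side one must verify that the a.e. limit $M_\infty$ genuinely lands in the tail algebra rather than merely in $\mathcal{F}$, and on the injectivity side the identity $\mathbb{E}[f\mid\mathcal{G}_n]=h_n(X_n)$ relies precisely on $f$ being $\mathcal{F}_{n+1}$-measurable, which is exactly the hypothesis that makes Lemma \ref{Lemma conditioning random walk measure} applicable; this is why $\mathcal{P}_\Omega$ becomes an isomorphism onto $\mathcal{H}_{\boldsymbol{\sigma}}^\infty(G)$ only after passing to $\mathcal{A}_\Omega$. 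The absolute continuity $g\mathbb{P}_i^{(n)}\ll\mathbb{P}$, guaranteed by the standing no-zero-column assumption, is what allows a single a.e. convergence statement to be transported simultaneously to all the starting measures.
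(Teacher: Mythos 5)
Your proof is correct and takes essentially the same route as the paper's: Doob's martingale convergence theorem produces the tail-measurable candidate inverse $\boldsymbol{h}\mapsto\lim_n h_n(X_n)$, and Lemma \ref{Lemma conditioning random walk measure} combined with L\'evy's upward theorem gives $\mathbb{E}_{\mathbb{P}}[f\mid\mathcal{G}_n]=h_n(X_n)$ and hence the other composition, exactly as in the paper. The only variation is cosmetic: in the surjectivity step you re-run the martingale computation under $g\mathbb{P}_i^{(n)}$ and transport the a.e.\ convergence via $g\mathbb{P}_i^{(n)}\ll\mathbb{P}$ (note this absolute continuity rests on $\mathbb{P}(X_n=(i,g))>0$, i.e.\ on the standing full-support assumption on $\sigma^{(0)}$, not merely the no-zero-column hypothesis), whereas the paper stays under $\mathbb{P}$, using Lemma \ref{Lemma conditioning random walk measure} to rewrite $\int_\Omega f\,d(g\mathbb{P}_i^{(n)})$ as $\mathbb{P}(A)^{-1}\int_A f\,d\mathbb{P}$ with $A=\{X_n=(i,g)\}$ together with $L^1$ martingale convergence.
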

\begin{proof}
Let \(\boldsymbol{h}\in\mathcal{H}_{\boldsymbol{\sigma}}^{\infty}(G)\), considering the martingale \(M_n=h_n(X_n)\) of Proposition \ref{proposition random walk martingale}, by the martingale convergence theorem (see \cite[Chapter 11]{williams}), we conclude that \(M_{n}\) converges a.e. and in \(L^{1}\) to a function \(f\in L^{1}(\Omega,\mathbb{P})\).
Note that \(f\in L^{\infty}(\Omega,\mathcal{F},\mathbb{P})\) and \(||f||_{\infty}\leq ||\boldsymbol{h}||\).\\
For all \(m\) we have that \(M_n\) is \(\mathcal{F}_m\)-measurable for \(n\geq m\) and thus, \(f\) is \(\mathcal{F}_m\)-measurable for all \(m\). This implies \(\mathcal{M}({\boldsymbol{h}}):= f\in L^\infty(\Omega,\mathcal{A}_\Omega)\).\\
To prove the proposition we will show that the contractions \(\mathcal{M},\mathcal{P}_{\Omega}\) are inverse to each other.\\
Let \(\boldsymbol{h}\in\mathcal{H}_{\boldsymbol{\sigma}}^{\infty}(G)\) and consider \(f=\mathcal{M}(\boldsymbol{h})\). We wish to show \(\mathcal{P}_{\Omega}(f)=\boldsymbol{h}\). Indeed, for any \(n\geq0,j\in[\ell_{n}],g\in G\) consider \(A=\{X_{n}=(j,g)\}\). Note that since \(\sigma^{(0)}\) is positive everywhere, we have \(\mathbb{P}(A)>0\). Using Lemma \ref{Lemma conditioning random walk measure} and the martingale property we get:
\[h_{n}(j,g)=\frac{1}{\mathbb{P}(A)}\intop_{A} h_n(X_n) \:d\mathbb{P}=\frac{1}{\mathbb{P}(A)}\lim_{m\to\infty} \intop_{A} h_m(X_m)\:d\mathbb{P}=\frac{1}{\mathbb{P}(A)} \intop_{A} f\:d\mathbb{P}=\intop_{\Omega} f\:d(g\mathbb{P}_j^{(n)})=\mathcal{P}_{\Omega}(f)_{n}(j,g)\]
For the other composition, let \(f\in L^\infty(\Omega,\mathcal{A}_\Omega)\) and consider \(\boldsymbol{h}=\mathcal{P}_{\Omega}(f)\). We need to show \(f=\mathcal{M}(\boldsymbol{h})\), that is, that \(M_n=h_{n}(X_n)\) converges a.e. to \(f\). \\
Using Levy's upward convergence for martingales (see \cite[Chapter 14]{williams}) it is enough to show that \(M_n=\mathbb{E}_{\mathbb{P}}(f|\mathcal{G}_n)\). As \(M_n\) is \(\mathcal{G}_n\)-measurable we only need to check \(\intop_\Omega f\cdot 1_A\:d\mathbb{P}=\intop_{\Omega} M_n\cdot 1_A\:d\mathbb{P}\) for all \(A\in\mathcal{G}_n\). By additivity we may assume that \(X_n|_{A}\equiv(j,g)\) and then by Lemma \ref{Lemma conditioning random walk measure} we have:
\[\intop_\Omega f\cdot 1_A\: d\mathbb{P}=\intop_{A} f\:d\mathbb{P}=\mathbb{P}(A)\intop_{\Omega} f\: d\big(g\mathbb{P}^{(n)}_j\big)=\mathbb{P}(A) h_n(j,g)=\intop_{\Omega} h_n(X_n(\omega))\cdot 1_A d\mathbb{P}=\intop_{\Omega} M_n\cdot 1_A\:d\mathbb{P}\]
concluding the result.
\end{proof}

\begin{corollary}\label{Corollary RN of Poisson boundary}
The subspace spanned by \(\{\frac{dg\mathbb{P}_{i}^{(n)}}{d\mathbb{P}} \::\: n\geq0 ,\: i\in [\ell_n],\: g\in G \}\) is dense in \(L^{1}(\Omega,\mathcal{A}_\Omega,\mathbb{P})\)
\end{corollary}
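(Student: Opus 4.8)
The plan is to argue by duality. Since $\mathbb{P}$ is a probability measure, the dual of $L^1(\Omega,\mathcal{A}_\Omega,\mathbb{P})$ is $L^\infty(\Omega,\mathcal{A}_\Omega,\mathbb{P})$, and by the Hahn--Banach theorem a linear subspace is dense if and only if its annihilator in the dual is trivial. So I would let $V$ denote the span of the generators $\frac{dg\mathbb{P}_i^{(n)}}{d\mathbb{P}}$. These genuinely lie in $L^1(\Omega,\mathcal{A}_\Omega,\mathbb{P})$: since $\mathbb{P}^{(n)}_i=\nu^{(n)}_i\ll\nu^{(-1)}=\mathbb{P}$ on $\mathcal{A}_\Omega$ (as observed just below the definition of $\boldsymbol{\sigma}$-stationary spaces), the Radon--Nikodym derivatives are well-defined $\mathcal{A}_\Omega$-measurable integrable functions. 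The task then reduces to showing that any $f\in L^\infty(\Omega,\mathcal{A}_\Omega,\mathbb{P})$ annihilating every generator of $V$ must vanish.

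The key observation is that pairing $f$ against a generator simply reproduces the Poisson transform of the boundary $\boldsymbol{\sigma}$-system $\big(\Omega,\mathcal{A}_\Omega,(\mathbb{P}^{(n)})_n\big)$. Indeed, unwinding the definition in Proposition \ref{Proposition Poisson integral for stationary actions},
\[
\intop_\Omega f\cdot\frac{dg\mathbb{P}_i^{(n)}}{d\mathbb{P}}\,d\mathbb{P}=\intop_\Omega f\,d\big(g\mathbb{P}_i^{(n)}\big)=\big(\mathcal{P}_\Omega(f)\big)_n(i,g).
\]
Hence $f$ annihilates all of $V$ precisely when every component $\big(\mathcal{P}_\Omega(f)\big)_n(i,g)$ vanishes, that is, exactly when $\mathcal{P}_\Omega(f)=0$ in $\mathcal{H}_{\boldsymbol{\sigma}}^\infty(G)$.

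Finally I would invoke Proposition \ref{Proposition Poisson transform for the boundary}: the Poisson transform $\mathcal{P}_\Omega\colon L^\infty(\Omega,\mathcal{A}_\Omega,\mathbb{P})\to\mathcal{H}_{\boldsymbol{\sigma}}^\infty(G)$ is an isometric isomorphism, in particular injective. Therefore $\mathcal{P}_\Omega(f)=0$ forces $f=0$ in $L^\infty(\Omega,\mathcal{A}_\Omega,\mathbb{P})$. This shows the annihilator of $V$ is trivial, whence $V$ is dense in $L^1(\Omega,\mathcal{A}_\Omega,\mathbb{P})$, as claimed.

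I do not expect a serious obstacle here; the statement is essentially a dual reformulation of the injectivity (indeed bijectivity) of the Poisson transform already established in Proposition \ref{Proposition Poisson transform for the boundary}. The only points requiring a little care are confirming that the generators actually belong to $L^1(\Omega,\mathcal{A}_\Omega,\mathbb{P})$, which rests on the absolute continuity $\nu^{(n)}_i\ll\nu^{(-1)}$ restricted to the asymptotic $\sigma$-algebra, and the use of the duality $L^1(\mathbb{P})^{*}\cong L^\infty(\mathbb{P})$ so that density is equivalent to triviality of the annihilator.
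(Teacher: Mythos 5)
Your proposal is correct and follows essentially the same route as the paper: both arguments use the duality \(L^1(\Omega,\mathcal{A}_\Omega,\mathbb{P})^*\cong L^\infty(\Omega,\mathcal{A}_\Omega,\mathbb{P})\) via Hahn--Banach, identify the pairing of an annihilating \(f\in L^\infty\) against the generators with the components of \(\mathcal{P}_\Omega(f)\), and then conclude \(f=0\) from the injectivity of the Poisson transform in Proposition \ref{Proposition Poisson transform for the boundary}. The only cosmetic difference is that the paper phrases it as a proof by contradiction while you argue directly that the annihilator is trivial, and you additionally spell out the (routine) verification that the generators lie in \(L^1\).
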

\begin{proof}
Otherwise, by Hahn-Banach theorem, there would be a non zero \(f\in L^1(\Omega,\mathcal{A}_\Omega,\mathbb{P})^*\cong L^\infty(\Omega,\mathcal{A}_\Omega,\mathbb{P})\) that vanishes on all those functions, meaning that 
\[\intop_{\Omega} f\cdot \frac{dg\mathbb{P}^{(n)}_i}{d\mathbb{P}} d\mathbb{P}=0\]
But this implies that \(\mathcal{P}_{\Omega}(f)=0\in\mathcal{H}_{\boldsymbol{\sigma}}^{\infty}(G)\). By Proposition \ref{Proposition Poisson transform for the boundary} we conclude that \(f=0\) which is a contradiction.
\end{proof}


\begin{definition}\label{definition measure preserving factors sigma-systems}
We will say that a factor between \(\boldsymbol{\sigma}\)-stationary spaces \(\pi:(X,\boldsymbol{\nu})\to(Y,\boldsymbol{m})\) is \emph{measure preserving extension} if for any \(n\:,\:i\in[\ell_n]\:,\:g\in G\) we have that
\[\frac{d\:g\nu_{i}^{(n)}}{d\nu^{(-1)}}=\frac{d\:gm_{i}^{(n)}}{dm^{(-1)}}\circ\pi\]
\end{definition}
In particular, \((X,\nu^{(-1)})\to(Y,m^{(-1)})\) is measure preserving in this case.

\begin{lemma}\label{lemma RN factor of sigma-system}
Let \(\mathcal{X}\) be a \(\boldsymbol{\sigma}\)-stationary space. Then there is a \(\boldsymbol{\sigma}\)-stationary system \(\mathcal{X}_{RN}\) along with a measure preserving extension \(\pi:\mathcal{X}\to\mathcal{X}_{RN}\) with the following property: for any measure preserving extension of \(\boldsymbol{\sigma}\)-stationary spaces \(p: \mathcal{X}\to\mathcal{Y}\), we have that \(\pi\) factors through \(p\), that is, there is a measure preserving extension \(q:\mathcal{Y}\to \mathcal{X}_{RN}\) with \(\pi=q\circ p\).
\end{lemma}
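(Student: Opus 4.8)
The plan is to imitate the construction of the Radon–Nikodym factor for ordinary QI $G$-spaces (as in the Remark following the definition of $\mathfrak{X}_{RN}$), but with the $\sigma$-algebra enlarged to record the \emph{entire} vector of stationary measures rather than a single quasi-invariant one. Since our standing no-zero-column assumption gives $\nu_i^{(n)}\ll\nu^{(-1)}$ and $\nu^{(-1)}$ is QI, for every $n\geq-1$, $i\in[\ell_n]$ and $g\in G$ the derivative
\[R_i^{(n)}(g;\cdot):=\frac{d\,g\nu_i^{(n)}}{d\nu^{(-1)}}\]
is a well-defined element of $L^1(\mathcal{X})$. I define $\Sigma_{RN}$ to be the $G$-invariant $\sigma$-subalgebra of $\Sigma_X$ generated by all these functions, let $\mathcal{X}_{RN}$ be a standard Borel model of $(X,\Sigma_{RN},\boldsymbol{\nu}|_{\Sigma_{RN}})$ (this is legitimate as $\Sigma_{RN}$ is countably generated), and let $\pi:\mathcal{X}\to\mathcal{X}_{RN}$ be the tautological projection. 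The proof then splits into verifying that $\Sigma_{RN}$ is $G$-invariant, that $\pi$ is a measure preserving extension, and that $\pi$ has the stated universal property.

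For $G$-invariance, I would run the Radon–Nikodym cocycle identity for the QI measure $\nu^{(-1)}$ to obtain, for each $h\in G$,
\[R_i^{(n)}(hg;x)=R_i^{(n)}(g;h^{-1}x)\cdot\frac{d\,h\nu^{(-1)}}{d\nu^{(-1)}}(x)\quad\text{a.e.}\]
The decisive point is that $\ell_{-1}=1$, so $\nu^{(-1)}=\nu_0^{(-1)}$ and the correction factor $\frac{d\,h\nu^{(-1)}}{d\nu^{(-1)}}=R_0^{(-1)}(h;\cdot)$ is itself one of the generators of $\Sigma_{RN}$ and is positive $\nu^{(-1)}$-a.e. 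Solving for $R_i^{(n)}(g;h^{-1}\cdot)$ exhibits it as a ratio of $\Sigma_{RN}$-measurable functions, hence $\Sigma_{RN}$-measurable; applying this to $h$ and $h^{-1}$ shows each group element preserves $\Sigma_{RN}$. The relation $\sigma^{(n)}\ast\nu^{(n)}=\nu^{(n-1)}$ restricts verbatim to $\Sigma_{RN}$, so $\mathcal{X}_{RN}$ is a $\boldsymbol{\sigma}$-system. That $\pi$ is measure preserving is then automatic: each $R_i^{(n)}(g;\cdot)$ is already $\Sigma_{RN}$-measurable, so it coincides with the derivative $\frac{d\,g(m_{RN})_i^{(n)}}{d(m_{RN})^{(-1)}}$ computed inside $\mathcal{X}_{RN}$, pulled back by $\pi$.

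For the universal property, let $p:\mathcal{X}\to\mathcal{Y}=(Y,\boldsymbol{m})$ be a measure preserving extension. Its defining identity $R_i^{(n)}(g;\cdot)=\frac{d\,gm_i^{(n)}}{dm^{(-1)}}\circ p$ presents every generator of $\Sigma_{RN}$ as a $p^{-1}(\Sigma_Y)$-measurable function, whence $\Sigma_{RN}\subseteq p^{-1}(\Sigma_Y)$ modulo null sets. This inclusion of $G$-invariant, countably generated $\sigma$-algebras is exactly the data needed to factor $\pi$ through $p$: by a point-realization argument (as in \cite[Theorem 2.1]{RAMSAY1971253}, already invoked for the existence of the Poisson boundary) it produces a factor map $q:\mathcal{Y}\to\mathcal{X}_{RN}$ with $\pi=q\circ p$. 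Then $q_*m_i^{(n)}=q_*p_*\nu_i^{(n)}=\pi_*\nu_i^{(n)}=(m_{RN})_i^{(n)}$ shows $q$ is a factor of $\boldsymbol{\sigma}$-systems, and cancelling the injective $p^{*}$ in $\frac{d\,gm_i^{(n)}}{dm^{(-1)}}\circ p=\frac{d\,g(m_{RN})_i^{(n)}}{d(m_{RN})^{(-1)}}\circ q\circ p$ (which combines the measure preserving property of $p$ with that of $\pi$) shows $q$ is a measure preserving extension.

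I expect the main obstacle to be the $G$-invariance step together with the transition from the abstract inclusion $\Sigma_{RN}\subseteq p^{-1}(\Sigma_Y)$ to an honest factor map $q$ of standard Borel spaces. The former is delicate because the derivatives are taken against the single measure $\nu^{(-1)}$ rather than against the $\nu_i^{(n)}$ themselves, so the cocycle bookkeeping must be done carefully and genuinely uses $\ell_{-1}=1$; the latter requires checking that the measure algebra $L^\infty(\mathcal{X}_{RN})$ is separable so that point realization applies and that the resulting map respects the full $\boldsymbol{\sigma}$-structure, not merely the measure class of $\nu^{(-1)}$.
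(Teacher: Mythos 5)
Your proposal is correct and follows essentially the same route as the paper: the paper's proof also takes the $\sigma$-algebra $\mathcal{E}$ generated by the (countably many) functions $\frac{dg\nu_i^{(n)}}{d\nu^{(-1)}}$, passes to a standard Borel model of $L^\infty(X,\mathcal{E})$, and invokes \cite[Theorem 2.1]{RAMSAY1971253} for the factor map and the universal property. The only difference is one of detail: you spell out the cocycle argument for $G$-invariance (using $\ell_{-1}=1$) and the verification that $\pi$ and $q$ are measure preserving, all of which the paper leaves implicit.
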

\begin{proof}
Consider the \(\sigma\)-algebra \(\mathcal{E}\) generated by the (countably many) functions \(\frac{dg\nu_{i}^{(n)}}{d\nu^{(-1)}}\) and take any standard Borel space \((X_{RN},m^{(-1)})\) with \(L^{\infty}(X_{RN})\cong L^{\infty}(X,\mathcal{E})\). Then \(X_{RN}\) posses a natural Borel probability measures \(m_{i}^{(n)}\) and a measurable \(G\)-action that makes it a \(\boldsymbol{\sigma}\)-stationary space \(\mathcal{X}_{RN}\). As \(X_{RN}\) is a standard Borel space, using \cite[Theorem 2.1]{RAMSAY1971253} we get a measure preserving factor \(X\to X_{RN}\) satisfying the universal property.
\end{proof}
By abstract nonsense, the pair \((\mathcal{X}_{RN},\pi)\) is unique up to a unique isomorphism. We call \(\mathcal{X}_{RN}\) the Radon-Nikodym factor of the \(\boldsymbol{\sigma}\)-stationary space \(\mathcal{X}\). 

\begin{prop}\label{prop RN factor of Poisson boundary is itself}
The Poisson boundary \(\mathcal{B}(G,\boldsymbol{\sigma})\) is its own Radon-Nikodym factor.
\end{prop}
\begin{proof}
Clear from Corollary \ref{Corollary RN of Poisson boundary}.
\end{proof}

\subsection{Conditional measures and a structure theorem}
Throughout this subsection, \(\boldsymbol{\sigma}\) denotes an \(\boldsymbol{\ell}\)-stochastic sequence with no zero columns.
\begin{definition}
A topological \(\boldsymbol{\sigma}\)-system is a \(\boldsymbol{\sigma}\)-stationary space \((X,\Sigma,\boldsymbol{\nu})\) such that \(X\) is a compact metrizable space, \(\Sigma\) is the Borel \(\sigma\)-algebra on \(X\) and the \(G\)-action is continuous.
\end{definition}

We use the following well known lemma from functional analysis:
\begin{lemma}\label{lemma functional analysis limit of functionals}
Let \(E\) be a Banach space and let \(\lambda_n\in E^*\) be a sequence of continuous linear functionals satisfying \(\sup ||\lambda_n||_{E^*}\leq1\). Then the set:
\[L:=\{v\in E\big| \exists \lim_{n\to\infty} \lambda_n(v)\}\]
is a closed linear subspace, and the function \(\lambda: L\to\mathbb{C}\) defined by \(\lambda(v)=\lim_{n\to\infty} \lambda_n(v)\) is a bounded functional with \(||\lambda||_{L^{*}}\leq 1\).
\end{lemma}

The following is the existence of stationary measures in the \(\boldsymbol{\sigma}\)-stationary setting, generalizing the construction in the classical Poisson boundary (see e.g. \cite[section 2, III-VII]{BaderShalom}).\\
We recall from Notation \ref{notation sigma random walk} that \((I_{n},Y_n)\) is the \(n\)-th step of the \(\boldsymbol{\sigma}\)-random walk -- these are random variables on \((\Omega,\mathcal{F},\mathbb{P})\). Throughout we let \((B,\boldsymbol{m})=\mathcal{B}(G,\boldsymbol{\sigma})\) be the Poisson boundary and let \(\tau:\Omega\to B\) be the factor map.

\begin{prop}\label{Proposition conditional measures for stationary spaces}
Let \(\mathcal{X}=(X,\boldsymbol{\nu})\) be a topological \(\boldsymbol{\sigma}\)-stationary space, let \(M(X)\) denote the space of Borel probability measures on \(X\) with the \(w^*\) topology.\\
Then there is a unique (up to a.e. equality) measurable \(G\)-mapping \(\Phi_{X}: B\to M(X)\) with the following integral factorization for any \(n\geq-1,\: j\in [\ell_n]\):
\[\nu^{(n)}_{j}=\intop_{B} \Phi_{X}\: d m^{(n)}_{j}\] 
Moreover, we have the following limit in \(M(X)\) for \(\mathbb{P}\)-a.e. \(\omega\in\Omega\)
\[\lim_{n\to\infty} Y_n(\omega)\cdot \nu^{(n)}_{I_n(\omega)}=\Phi_X\circ\tau(\omega)\]
\end{prop}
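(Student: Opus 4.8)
The plan is to construct $\Phi_X$ as the almost-sure weak-$*$ limit of the conditioned measures $\mu_n(\omega):=Y_n(\omega)\cdot\nu^{(n)}_{I_n(\omega)}$, and then to verify the factorization, the equivariance and the uniqueness. The starting observation is that for a fixed $f\in C(X)$ the number $\int_X f\,d\mu_n(\omega)$ is exactly the value of the harmonic function $\mathcal{P}_X(f)$ evaluated along the walk, namely $\big(\mathcal{P}_X(f)\big)_n\big(X_n(\omega)\big)$. By Proposition \ref{Proposition Poisson integral for stationary actions} we have $\mathcal{P}_X(f)\in\mathcal{H}_{\boldsymbol{\sigma}}^\infty(G)$, so by Proposition \ref{proposition random walk martingale} this is a bounded martingale on $(\Omega,\{\mathcal{G}_n\},\mathbb{P})$, and Proposition \ref{Proposition Poisson transform for the boundary} identifies its almost-sure and $L^1$ limit as $\hat f:=\mathcal{P}_\Omega^{-1}\big(\mathcal{P}_X(f)\big)\in L^\infty(\Omega,\mathcal{A}_\Omega,\mathbb{P})$.

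First I would upgrade these per-$f$ limits to genuine convergence of measures. Since $X$ is compact metrizable, $C(X)$ is separable; fix a countable dense $\mathbb{Q}$-subalgebra $\{f_k\}\subset C(X)$ and set $\Omega_0:=\bigcap_k\{\omega:\lim_n\int_X f_k\,d\mu_n(\omega)\text{ exists}\}$, a set of full $\mathbb{P}$-measure. On $\Omega_0$, uniform boundedness ($\|\mu_n(\omega)\|=1$) together with the density of $\{f_k\}$ shows that $f\mapsto\lim_n\int_X f\,d\mu_n(\omega)$ is a well-defined positive unital functional on $C(X)$; by Riesz representation it is integration against a probability measure $\Psi(\omega)\in M(X)$, and $\mu_n(\omega)\to\Psi(\omega)$ in the $w^*$ topology. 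Measurability of $\Psi$ is automatic since $\omega\mapsto\int_X f_k\,d\Psi(\omega)=\hat f_k(\omega)$ is measurable for each $k$, and each $\hat f_k$ is $\mathcal{A}_\Omega$-measurable; hence $\Psi$ is $\mathcal{A}_\Omega$-measurable and, via the isomorphism $L^\infty(B,\nu^{(-1)})\cong L^\infty(\Omega,\mathcal{A}_\Omega,\mathbb{P})$ defining the boundary together with the factor map $\tau:\Omega\to B$, descends to a measurable map $\Phi_X:B\to M(X)$ with $\Psi=\Phi_X\circ\tau$ a.e. This already yields the ``moreover'' limit formula.

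For the integral factorization I would test against $f\in C(X)$ and compute both sides through $\Omega$. The no-zero-column hypothesis gives $\mathbb{P}^{(n)}_j\ll\mathbb{P}$, so $\int_X f\,d\Psi=\hat f$ holds $\mathbb{P}^{(n)}_j$-a.e., and therefore $\int_B\big(\int_X f\,d\Phi_X\big)\,dm^{(n)}_j=\int_\Omega\hat f\,d\mathbb{P}^{(n)}_j$. Under $\mathbb{P}^{(n)}_j$ one has $X_n\equiv(j,e)$ and the sequence $\big(\mathcal{P}_X(f)\big)_m(X_m)$, $m\geq n$, is again a bounded martingale (by the computation of Proposition \ref{proposition random walk martingale}); by bounded convergence its integral against $\mathbb{P}^{(n)}_j$ equals its time-$n$ value $\big(\mathcal{P}_X(f)\big)_n(j,e)=\int_X f\,d\nu^{(n)}_j$. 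Comparing, $\int_B\Phi_X\,dm^{(n)}_j$ and $\nu^{(n)}_j$ agree against every $f\in C(X)$, which is the desired identity. Equivariance of $\Phi_X$ follows from that of $\tau$ together with $I_n(g\omega)=I_n(\omega)$ and $Y_n(g\omega)=gY_n(\omega)$, which give $\mu_n(g\omega)=g\cdot\mu_n(\omega)$ and hence $\Psi(g\omega)=g\cdot\Psi(\omega)$ by $w^*$-continuity of the $G$-action on $M(X)$.

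Finally, for uniqueness suppose $\Phi,\Phi'$ both satisfy the conclusion. Translating the $n$-th identity by $g\in G$ and using that each map is $G$-equivariant gives $g\nu^{(n)}_j=\int_B\Phi\,d(g\,m^{(n)}_j)$, so for each $f$ the $\mathcal{A}_\Omega$-function $F:=\int_X f\,d\Phi(\tau\cdot)-\int_X f\,d\Phi'(\tau\cdot)\in L^\infty(\Omega,\mathcal{A}_\Omega)$ integrates to zero against every $\tfrac{dg\mathbb{P}^{(n)}_j}{d\mathbb{P}}$. By Corollary \ref{Corollary RN of Poisson boundary} these derivatives span a dense subspace of $L^1(\Omega,\mathcal{A}_\Omega,\mathbb{P})$, forcing $F=0$ a.e.; ranging $f$ over $\{f_k\}$ yields $\Phi=\Phi'$ $m^{(-1)}$-a.e. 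The main obstacle is the second step: passing from the separate almost-sure convergence of the scalar martingales $\big(\mathcal{P}_X(f)\big)_n(X_n)$ to simultaneous $w^*$-convergence of the measures $\mu_n$ off a single null set, and checking that the resulting limit is $\mathcal{A}_\Omega$-measurable so that it genuinely lives on the boundary $B$.
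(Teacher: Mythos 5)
Your proposal is correct: every step can be justified with the tools available at this point in the paper, and the essential ingredients coincide with those in the paper's own proof (the isomorphism \(\mathcal{P}_{\Omega}\) of Proposition \ref{Proposition Poisson transform for the boundary}, a countable dense \(G\)-invariant subalgebra of \(C(X)\), and extension of positive unital \(\mathbb{Q}\)-linear functionals to states). The difference is one of organization, and it is worth noting what it costs and buys. The paper constructs \(\Phi_{X}\) directly on \(B\): for \(f\) in the countable subalgebra it sets \(\varphi_{f}:=\mathcal{P}_{B}^{-1}(\mathcal{P}_{X}(f))\), assembles the numbers \(\varphi_{f}(b)\) into a state of \(C(X)\) for a.e.\ \(b\), and only afterwards deduces the a.s.\ \(w^{*}\)-limit formula; you build the limit \(\Psi(\omega)\) on the path space first and then descend through \(\tau\). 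Your phrase ``descends to a measurable map \(\Phi_{X}\)'' is exactly where the paper's construction reappears: one must pull each \(\hat f_{k}\in L^{\infty}(\Omega,\mathcal{A}_{\Omega})\) back to some \(\varphi_{k}\in L^{\infty}(B)\) and verify positivity, linearity and normalization on a single co-null subset of \(B\) before Riesz representation can be applied there, so the obstacle you flag at the end is real but is resolved by the same argument the paper uses for existence. Your proof of the integral factorization is genuinely more probabilistic: the paper gets it for free from the identity \(\mathcal{P}_{X}(f)=\mathcal{P}_{B}(\varphi_{f})\) evaluated at \((j,e)\), whereas you condition on \(\{X_{n}=(j,e)\}\) and need \(\mathbb{P}^{(n)}_{j}\ll\mathbb{P}\) on \(\mathcal{F}_{n}\); this holds, via Lemma \ref{Lemma conditioning random walk measure} and \(\mathbb{P}(X_{n}=(j,e))>0\), but note the positivity uses the standing full-support assumption on \(\sigma^{(0)}\) in addition to the no-zero-columns hypothesis you cite. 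Finally, your uniqueness argument via Corollary \ref{Corollary RN of Poisson boundary} is the Hahn--Banach dual of the paper's argument via injectivity of \(\mathcal{P}_{B}\); both correctly exploit \(G\)-equivariance of the candidate maps, which is necessary since the stated factorization alone involves only the untranslated measures \(m^{(n)}_{j}\). In short: your ordering makes the ``moreover'' limit formula the definition rather than an afterthought, which is probabilistically natural; the paper's ordering avoids the descent problem entirely.
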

\begin{proof}
\underline{Uniqueness of \(\Phi_{X}\)}: suppose \(\Phi_1,\Phi_2\) both satisfy the integral factorization. Since \(M(X)\) is separable with the \(w^{*}\) topology it is enough to show that for any \(f\in C(X)\) the equality a.e. of \(\Phi_2(b)(f)=\Phi_1(b)(f)\). And indeed, we know that for any \(n\), \(j\in[ \ell_{n}]\) and \(g\in G\) we have for \(k=1,2\):
\begin{multline*}
\mathcal{P}_{X}(f)_{n}(j,g)=\intop_{X} f\:\: d (g\nu_{j}^{(n)})=\intop_{X} f\circ g \:\:d \nu_{j}^{(n)}=\intop_{B}\bigg(\intop_{X} f\circ g \:\:d\Phi_{k}(b)\bigg)\: dm^{(n)}_{j}(b)=\\
\intop_{B}\bigg(\intop_{X} f\:\: dg\Phi_{k}(b)\bigg)\: dm^{(n)}_{j}(b)=
\intop_{B}\bigg(\intop_{X} f\:\: d\Phi_{k}(gb)\bigg)\: dm^{(n)}_{j}(b)=\\
\intop_{B}\bigg(\intop_{X} f\:\: d\Phi_{k}\bigg)\: d\big(gm^{(n)}_{j}\big)=
\mathcal{P}_{B}(\langle f,\Phi_{k} \rangle)_{n}(j,g)
\end{multline*}
Since the Poisson transform is an isomorphism for the Poisson boundary (Proposition \ref{Proposition Poisson transform for the boundary}) we conclude \(\langle f,\Phi_{1} \rangle=\langle f,\Phi_{2} \rangle\) a.e. as claimed.\\
\underline{Existence of \(\Phi_X\)}:
the algebra \(A=C(X)\) is separable. Take \(A_0\subset A\) a countable \(G\)-invariant \(\mathbb{Q}[i]\)-algebra that is dense in \(A\). 
For each \(f\in A_0\), we consider \(\mathcal{P}_X(f)\in\mathcal{H}_{\boldsymbol{\sigma}}^\infty (G)\) and using \(L^{\infty}(B)\cong \mathcal{H}_{\boldsymbol{\sigma}}^{\infty}(G)\) we choose a measurable function \(\varphi_{f}\) on \(B\) bounded by \(||f||\) such that 
\(\mathcal{P}_{X}(f)=\mathcal{P}_{B}(\varphi_{f})\).\\
Given \(b\in B\) consider the map \(\nu_b: A_0\to \mathbb{C}\) given by \(\nu_b(f)=\varphi_{f}(b)\). Since \(A_0\) is countable, we get there is a co-null subset \(D\subset B\) such that for any \(b\in D\) we have \(\nu_{b}\) is \(\mathbb{Q}[i]\)-linear, positive and \(\nu_{b}(1)=1\).\\
These properties insures that for each \(b\in D\), \(\nu_{b}\) extends to a state of \(A=C(X)\). Thus we may regard \(\nu_{b}\) as an element \(\nu_{b}\in M(X)\). We define \(\Phi_X(b)=\nu_{b}\) for \(b\in D\) (and for the null set \(B\setminus D\) take \(\Phi_X(b)=\nu^{(-1)}\) for example).\\
We now verify that \(\Phi_{X}\) satisfies the required properties. First, let us show \(\Phi_{X}\) is measurable. Indeed, for any \(f\in A_0\) and \(b\in D\) we have \(\langle f,\Phi_X(b)\rangle=\nu_{b}(f)=\varphi_{f}(b)\) which is a measurable function on \(D\). Since \(A_0\) generates the topology and thus the \(\sigma\)-algebra we conclude \(\Phi_X\) is measurable. Next, note that for any \(f\in A_0\) we have a.e. \(\langle f,\Phi_X(b)\rangle=\mathcal{P}_{B}^{-1}\circ\mathcal{P}_{X}(f)(b)\). This yields the \(G\)-equivariance of \(\Phi_{X}\) since the Poisson transforms are \(G\)-equivariant.\\
Thus we constructed a measurable \(G\)-equivariant map \(\Phi_X:B\to M(X)\) such that for any \(f\in A_0\):
\[\intop_{X} f\: d\nu_{j}^{(n)}=\mathcal{P}_{X}(f)_{n}(j,e)=\mathcal{P}_{B}(\langle f,\Phi_X\rangle)_n(j,e)=\intop_{B} \langle f,\Phi_X(b)\rangle\: dm_{j}^{(n)}(b)=\intop_{X} f d\Big(\intop_{B} \Phi_X(b)\: dm_{j}^{(n)}(b)\Big)\]
Thus this holds for any \(f\in A=C(X)\) which yields the integral factorization:
\[\nu^{(n)}_{j}=\intop_{B} \Phi_{X}\: d m^{(n)}_{j}\]
For the moreover part, we use Proposition \ref{Proposition Poisson transform for the boundary} to conclude that for any \(f\in A_0\) we have a co-null set \(\Omega_{f}\subset\Omega\) so that for \(\omega\in \Omega_f\):
\begin{multline*}
\langle f,\Phi_X\circ \tau(\omega)\rangle=\mathcal{P}_{B}^{-1}(\mathcal{P}_{X}(f))(\tau(\omega))=\mathcal{P}_{\Omega}^{-1}(\mathcal{P}_{X}(f))(\omega)=\\
\lim_{n\to\infty} \mathcal{P}_{X}(f)_{n}\big(I_{n}(\omega),Y_n(\omega)\big)=\lim_{n\to\infty} \intop_{X} f\: d\Big(Y_n(\omega)\nu_{I_{n}(\omega)}^{(n)}\Big)
\end{multline*}
The set \(\Omega^{\prime}=\bigcap_{f\in A_0} \Omega_f\) is co-null. Using Lemma \ref{lemma functional analysis limit of functionals} we see that for \(\omega\in \Omega^{\prime}\) we have the following convergence in \(w^{*}\)-topology: \(\lim_{n\to\infty } Y_{n}(\omega)\nu^{(n)}_{I_{n}(\omega)}=\Phi_X(\tau(\omega))\), concluding the result.
\end{proof}

\begin{definition}
Let \(\mathcal{X}=(X,\boldsymbol{\nu})\) be a topological \(\boldsymbol{\sigma}\)-stationary space. 
The mapping \(\Phi_{X}\) is called the conditional measures (sometimes we will write \(\Phi_{\mathcal{X}}\) when we want to be specific).
\end{definition}
Our next goal is to show functoriality of the conditional measures. First, we prove the following lemma:
\begin{lemma}\label{lemma continous model for morphisms}
Let \(\pi:\mathcal{X}\to\mathcal{Y}\) be a factor between 
two topological \(\boldsymbol{\sigma}\)-systems. Then, there is a topological \(\boldsymbol{\sigma}\)-systems \(\mathcal{Z}\) and factors \(\tau: \mathcal{Z}\to \mathcal{X}\) and \(\theta:\mathcal{Z}\to\mathcal{Y}\) such that \(\tau,\theta\) are continuous and \(\theta=\pi\circ\tau\). Moreover one can take \(\tau\) to be an isomorphism (up to null sets).
\end{lemma}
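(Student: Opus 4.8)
The plan is to realize \(\mathcal{Z}\) as the \emph{graph} of \(\pi\) living inside the product \(X\times Y\), equipped with the pushforward of the stationary data \(\boldsymbol{\nu}\). This turns the merely measurable map \(\pi\) into a composition of genuinely continuous projections.

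First I would form the compact metrizable \(G\)-space \(Z=X\times Y\) with the product topology and the diagonal \(G\)-action; since \(\mathcal{X}\) and \(\mathcal{Y}\) are continuous \(G\)-spaces, so is \(Z\). Consider the graph map \(\gamma:X\to Z\), \(\gamma(x)=(x,\pi(x))\), which is Borel because \(\pi\) is measurable, and \(G\)-equivariant because \(\pi\) is: indeed \(\gamma(gx)=(gx,g\pi(x))=g\gamma(x)\). I then push the data forward by setting \(\tilde\nu^{(n)}_j:=\gamma_*\nu^{(n)}_j\) for all \(n\geq-1\) and \(j\in[\ell_n]\).

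Next I would verify that \(\mathcal{Z}=(Z,\tilde{\boldsymbol{\nu}})\) is a topological \(\boldsymbol{\sigma}\)-system. The only point requiring an argument is the stationarity relation \(\sigma^{(n)}\ast\tilde\nu^{(n)}=\tilde\nu^{(n-1)}\). This follows from the equivariance of \(\gamma\) together with the fact that pushforward along a \(G\)-map commutes with \(G\)-convolution, so that \(\sigma^{(n)}_{i,j}\ast\gamma_*\nu^{(n)}_j=\gamma_*(\sigma^{(n)}_{i,j}\ast\nu^{(n)}_j)\); summing over \(j\) and using \(\sigma^{(n)}\ast\nu^{(n)}=\nu^{(n-1)}\) gives the claim. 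I now set \(\tau:=\mathrm{pr}_X:Z\to X\) and \(\theta:=\mathrm{pr}_Y:Z\to Y\), which are continuous and \(G\)-equivariant. Since \(\mathrm{pr}_X\circ\gamma=\mathrm{id}_X\) and \(\mathrm{pr}_Y\circ\gamma=\pi\), we obtain \(\tau_*\tilde\nu^{(n)}_j=\nu^{(n)}_j\) and \(\theta_*\tilde\nu^{(n)}_j=\pi_*\nu^{(n)}_j=m^{(n)}_j\), so both \(\tau\) and \(\theta\) are factors of \(\boldsymbol{\sigma}\)-systems.

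Finally, the key observation is that every \(\tilde\nu^{(n)}_j\) is supported on the graph \(\Gamma=\{(x,\pi(x)):x\in X\}\), on which \(\mathrm{pr}_Y=\pi\circ\mathrm{pr}_X\) holds pointwise; hence \(\theta=\pi\circ\tau\) as morphisms of \(\boldsymbol{\sigma}\)-systems (that is, \(\tilde{\boldsymbol{\nu}}\)-a.e.). For the \emph{moreover} part, \(\gamma\) is a measure-preserving \(G\)-map inverting \(\tau\): one has \(\tau\circ\gamma=\mathrm{id}_X\) everywhere and \(\gamma\circ\tau=\mathrm{id}_Z\) on \(\Gamma\), hence \(\tilde{\boldsymbol{\nu}}\)-a.e., so \(\tau\) is an isomorphism up to null sets. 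The main (and essentially only) subtlety is that the naive product map does \emph{not} satisfy \(\theta=\pi\circ\tau\) everywhere on \(X\times Y\); the resolution is precisely that all relevant measures are carried by \(\Gamma\), where the identity does hold, which is exactly what makes both \(\theta=\pi\circ\tau\) and the invertibility of \(\tau\) valid in the measure-theoretic category.
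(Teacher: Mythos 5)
Your proof is correct, but it takes a genuinely different route from the paper. The paper argues via Gelfand duality: it forms the separable, \(G\)-invariant \(C^{*}\)-algebra \(A=\overline{C(X)+\pi^{*}C(Y)}\subseteq L^{\infty}(X)\) and lets \(Z\) be its Gelfand spectrum, so that the inclusions \(C(X),\pi^{*}C(Y)\subseteq C(Z)\) dualize to the continuous factors \(\tau,\theta\), and the inclusion \(C(Z)\subseteq L^{\infty}(X)\) forces \(\tau\) to be an isomorphism up to null sets. You instead realize \(\mathcal{Z}\) concretely as \(X\times Y\) carrying the pushforwards \(\gamma_{*}\nu^{(n)}_{j}\) along the graph map \(\gamma(x)=(x,\pi(x))\); stationarity follows because pushforward along an equivariant map commutes with convolution by \(\boldsymbol{\sigma}\), and the graph \(\Gamma\) (Borel, since \(\pi\) is Borel and \(Y\) is metrizable) carries all the measures, which is exactly what gives \(\theta=\pi\circ\tau\) almost everywhere and exhibits \(\gamma\) as a measurable a.e.\ inverse of \(\tau\). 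Your construction is more elementary and explicit -- no \(C^{*}\)-machinery, and the isomorphism \(\tau\) comes with a concrete inverse -- while the paper's construction produces the ``smallest'' topological model (its function algebra is precisely the one generated by \(C(X)\) and \(\pi^{*}C(Y)\), essentially the support of your graph measure) and stays within the \(C^{*}\)-algebraic framework the paper uses elsewhere. Note that both constructions yield \(\theta=\pi\circ\tau\) only up to null sets, which is the right notion here given the paper's convention of identifying maps that agree a.e., and which suffices for the downstream application (functoriality of conditional measures), by the paper's remark on pushforwards of measurable families of measures under a.e.\ equal maps.
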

\begin{proof}
Consider the \(C^{*}\)-algebra \(A=\overline{C(X)+\pi^{*}C(Y)}\subseteq L^\infty(X)\) and let \(Z\) be the Gelfand dual to \(A\). Since \(A\) is \(G\)-invariant and separable we conclude that \(Z\) has the structure of a topological \(\boldsymbol{\sigma}\)-system. Since \(C(X),\pi^{*}C(Y)\subseteq C(Z)\) we get the continuous factors \(\tau,\theta\) as needed. As \(C(Z)\subseteq L^\infty(X)\) we conclude \(\tau\) is an isomorphism.
\end{proof}

\begin{prop}\label{proposition functoriality of conditional measures}
Let \(\pi:(X,\boldsymbol{\nu})\to (Y,\boldsymbol{\eta})\) be a factor (of \(\boldsymbol{\sigma}\)-systems) between topological \(\boldsymbol{\sigma}\)-systems, then (a.e.): \[\pi_{*}\Phi_{X}=\Phi_{Y}\]
\end{prop}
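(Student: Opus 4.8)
The plan is to exploit the uniqueness part of Proposition \ref{Proposition conditional measures for stationary spaces}, which characterizes $\Phi_Y$ as the \emph{unique} measurable $G$-map $B \to M(Y)$ satisfying the integral factorization $\eta^{(n)}_j = \int_B \Phi_Y \, dm^{(n)}_j$. Hence, to prove $\pi_*\Phi_X = \Phi_Y$ it suffices to check that $b \mapsto \pi_*\Phi_X(b)$ has these defining properties. The one delicate point is that $\pi$ is merely measurable, so $\pi_*:M(X)\to M(Y)$ need not be $w^*$-continuous and the measurability of $b\mapsto \pi_*\Phi_X(b)$ is not immediate. This is exactly the role of Lemma \ref{lemma continous model for morphisms}: I would first settle the case of a \emph{continuous} factor and then reduce the general case to it.

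For the continuous case, suppose $\theta:\mathcal Z\to\mathcal Y$ is a continuous factor of topological $\boldsymbol{\sigma}$-systems, and write $\boldsymbol{\kappa}$ for the measures of $\mathcal Z$. Then $\theta_*:M(Z)\to M(Y)$ is $w^*$-continuous, so $\Psi:=\theta_*\circ\Phi_Z$ is measurable, and it is $G$-equivariant since both $\Phi_Z$ and $\theta_*$ are. To verify the integral factorization I would test against $f\in C(Y)$ and push the pairing $\langle f,\cdot\rangle$ through the barycenter integral, using $\theta^* f = f\circ\theta\in C(Z)$:
\begin{align*}
\left\langle f,\ \int_B \Psi\, dm^{(n)}_j\right\rangle &= \int_B \langle f\circ\theta,\ \Phi_Z(b)\rangle\, dm^{(n)}_j(b) = \left\langle f\circ\theta,\ \int_B \Phi_Z\, dm^{(n)}_j\right\rangle \\
&= \langle f\circ\theta,\ \kappa^{(n)}_j\rangle = \langle f,\ \theta_*\kappa^{(n)}_j\rangle = \langle f,\ \eta^{(n)}_j\rangle,
\end{align*}
where the middle equality is the defining (weak) property of the barycenter $\int_B \Phi_Z\, dm^{(n)}_j = \kappa^{(n)}_j$, and the last step uses that $\theta$ is a $\boldsymbol{\sigma}$-factor, so $\theta_*\kappa^{(n)}_j = \eta^{(n)}_j$. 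Since $C(Y)$ separates measures, this gives $\int_B \Psi\, dm^{(n)}_j = \eta^{(n)}_j$, and uniqueness then forces $\theta_*\Phi_Z=\Phi_Y$ a.e.

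For the general case, given the measurable factor $\pi:\mathcal X\to\mathcal Y$ I would apply Lemma \ref{lemma continous model for morphisms} to produce a topological $\boldsymbol{\sigma}$-system $\mathcal Z$ together with \emph{continuous} factors $\tau:\mathcal Z\to\mathcal X$ and $\theta:\mathcal Z\to\mathcal Y$ satisfying $\theta=\pi\circ\tau$. Applying the continuous case to each of $\tau$ and $\theta$ yields $\tau_*\Phi_Z=\Phi_X$ and $\theta_*\Phi_Z=\Phi_Y$ a.e. Then, by functoriality of pushforward of measures, $(\pi\circ\tau)_*=\pi_*\circ\tau_*$, so
\[\pi_*\Phi_X = \pi_*(\tau_*\Phi_Z) = (\pi\circ\tau)_*\Phi_Z = \theta_*\Phi_Z = \Phi_Y \quad\text{a.e.},\]
which is the claim. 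I expect the main obstacle to be precisely the measurability issue flagged at the outset—handling a merely measurable $\pi$, for which $\pi_*$ is not obviously $w^*$-continuous—and this is resolved by routing through the continuous model $\mathcal Z$; the remaining verifications (the $G$-equivariance and the barycenter computation) are routine consequences of the uniqueness characterization.
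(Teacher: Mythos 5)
Your proof is correct and follows essentially the same route as the paper: the continuous case is settled via the uniqueness characterization together with the fact that a continuous factor's pushforward is $w^*$-continuous and hence preserves the barycenter integral, and the general case is reduced to it through the continuous model of Lemma \ref{lemma continous model for morphisms} using the identical chain $\pi_*\Phi_X=\pi_*(\tau_*\Phi_Z)=(\pi\circ\tau)_*\Phi_Z=\theta_*\Phi_Z=\Phi_Y$. The only difference is cosmetic: you spell out the pairing computation against $C(Y)$ (and the measurability and $G$-equivariance checks) that the paper compresses into the remark that $(\pi^*)^*:C(X)^*\to C(Y)^*$ preserves integration.
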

\begin{proof}
We first consider the case where \(\pi\) is continuous. By the uniqueness in Proposition \ref{Proposition conditional measures for stationary spaces} we only need to exhibit the integral decomposition for \(\pi_{*}\Phi_{X}\). Note that \(\pi_{*}: M(X)\to M(Y)\) is the restriction of the \(w^{*}\)-continuous linear operator \((\pi^{*})^{*}:C(X)^{*}\to C(Y)^{*}\) and thus preserves integration. Thus:
\[\eta_{j}^{(n)}=\pi_{*}\nu_{j}^{(n)}=\pi_{*}\Big(\intop_{B} \Phi_{X} \:dm_{j}^{(n)}\Big)=\intop_{B} \pi_{*}\Phi_{X}\:dm_{j}^{(n)}\]
In the general case, take \(\mathcal{Z}=(Z,\lambda)\) as in Lemma \ref{lemma continous model for morphisms}, then since \(\tau,\theta\) are continuous, we have:
\[\pi_*\Phi_{X}=\pi_*(\tau_*\Phi_{Z})=(\pi\circ\tau)_{*}\Phi_{Z}=\theta_{*}\Phi_{Z}=\Phi_{Y}\]
\end{proof}
\begin{definition}\label{definition regular sigma-system}
We say that a \(\boldsymbol{\sigma}\)-system \(\mathcal{X}=(X,\Sigma,\boldsymbol{\nu})\) is \emph{regular} if the underlying Borel space \((X,\Sigma)\) is a standard Borel space.\\
We will denote by \(M(X,\Sigma)\) the collection of all probability measures on the Borel space \((X,\Sigma)\).
\end{definition}
\begin{remark}
Above we identified two mappings \(\pi_1,\pi_2:X\to Y\) if they are equal \(\nu^{(-1)}\)-a.e. Note that this does not imply that the mappings \((\pi_1)_{*},(\pi_2)_{*}: M(X,\Sigma_{X})\to M(Y,\Sigma_{Y})\) are equal.\\
However, if \((X,\Sigma_{X})\) is standard and we have given a measurable mapping \(T\to M(X,\Sigma_{X}),\:t\mapsto\nu_{t}\), where \(T\) posses a measure \(m\) with \(\intop_{T} \nu_{t}\:dm\ll \nu^{(-1)}\), we have \((\pi_1)_{*}\nu_{t}=(\pi_2)_{*}\nu_{t}\) for \(m\)-a.e. \(t\in T\).
\end{remark}
The following lemma is clear by taking a topological model:
\begin{lemma}
If a \(\boldsymbol{\sigma}\)-stationary space is regular then it is isomorphic to a topological \(\boldsymbol{\sigma}\)-system.
\end{lemma}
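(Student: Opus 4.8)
The plan is to produce a compact metrizable model by equivariantly embedding the underlying standard Borel space into a cube on which $G$ acts by permuting coordinates, and then transporting the $\boldsymbol{\sigma}$-structure along this embedding. Since a $\boldsymbol{\sigma}$-system is a measure-theoretic object and isomorphisms are taken modulo null sets, it suffices to build such a model up to a null set, so I am free to enlarge the space by a set of measure zero when compactifying.

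First I would fix the base measure $\nu^{(-1)}$ and choose a countable family of bounded Borel functions separating the points of $(X,\Sigma)$; such a family exists because $(X,\Sigma)$ is standard Borel (for instance, the indicators of a countable separating family of Borel sets). Replacing this family by the union of its $G$-translates $g\cdot h$, with the action $(g\cdot f)(z)=f(g^{-1}z)$ as in the excerpt, produces a countable, $G$-invariant, point-separating family $\mathcal{F}$ of Borel functions, all bounded by $1$ since the $G$-action is isometric for $\|\cdot\|_\infty$. The group $G$ then acts on the countable index set $\mathcal{F}$ by a genuine permutation action.

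Next I would form the map $\iota\colon X\to K:=[-1,1]^{\mathcal{F}}$, $\iota(x)=(f(x))_{f\in\mathcal{F}}$, where $K$ carries the coordinate-permutation action $(g\cdot\xi)_f=\xi_{g^{-1}\cdot f}$, which is continuous. A direct check using the identity $f(gx)=(g^{-1}\cdot f)(x)$ shows that $\iota$ is $G$-equivariant, and it is an injective Borel map between standard Borel spaces; by the Lusin--Souslin theorem its image $\iota(X)$ is Borel in $K$ and $\iota$ is a Borel isomorphism onto $\iota(X)$. I then set $Y=\overline{\iota(X)}$, which is compact metrizable and $G$-invariant (the action is by homeomorphisms, so it preserves closures), equip $Y$ with its Borel $\sigma$-algebra, and define $m^{(n)}_j:=\iota_*\nu^{(n)}_j$. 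Naturality of convolution under the equivariant pushforward, $\sigma^{(n)}\ast m^{(n)}=\iota_*(\sigma^{(n)}\ast\nu^{(n)})=\iota_*\nu^{(n-1)}=m^{(n-1)}$, shows that $(Y,\boldsymbol{m})$ is a $\boldsymbol{\sigma}$-system, and since each $m^{(n)}_j$ is supported on the co-null Borel set $\iota(X)$, the map $\iota$ is an isomorphism of $\boldsymbol{\sigma}$-systems onto the topological $\boldsymbol{\sigma}$-system $(Y,\boldsymbol{m})$.

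The only point requiring genuine care is the continuity of the $G$-action: securing it is exactly why I insist on a $G$-\emph{invariant} separating family, since this forces the induced action on $K$ to merely permute coordinates and hence be continuous. Everything else --- the Borel isomorphism onto the image, the compactness obtained by passing to the closure, and the transport of the stationarity relations --- is then routine.
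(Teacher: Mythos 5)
Your proof is correct and complete. Note, however, that the paper offers no argument here at all: the lemma is stated with the preamble that it is ``clear by the definition of a standard Borel space,'' so there is no detailed proof to match yours against. The closest the paper comes to this kind of construction is in Lemma \ref{lemma continous model for morphisms} and Lemma \ref{lemma RN factor of sigma-system}, where compact metrizable models with continuous $G$-action are produced by Gelfand duality: one picks a separable $G$-invariant $C^{*}$-subalgebra of $L^{\infty}(X)$, takes its spectrum, and then invokes the point-realization theorem of Ramsay (\cite[Theorem 2.1]{RAMSAY1971253}) to recover a measurable map of spaces. Your route is genuinely different and more hands-on: the equivariant embedding $\iota\colon X\to[-1,1]^{\mathcal{F}}$ into a cube with the coordinate-permutation action, combined with Lusin--Souslin, gives a pointwise Borel isomorphism onto a $G$-invariant co-null Borel subset of the compact model, so no operator-algebraic machinery or point-realization theorem is needed, and the isomorphism of $\boldsymbol{\sigma}$-systems is exhibited concretely rather than extracted from an isomorphism of $L^{\infty}$-algebras. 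The Gelfand approach, on the other hand, is more uniform with the rest of the paper's toolkit and adapts directly to situations (like Lemma \ref{lemma continous model for morphisms}) where one must realize a given factor map continuously, which your construction does not address but also is not asked to. Your observation that $G$-invariance of the separating family is the crux --- it is what makes the induced action on the cube a (continuous) permutation of coordinates --- is exactly the right point to isolate.
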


\begin{example}
For any \(\boldsymbol{\sigma}\)-stationary space \(\mathcal{X}\), the proof of Lemma \ref{lemma RN factor of sigma-system} shows that the Radon-Nikodym factor \(\mathcal{X}_{RN}\) can be given as a regular \(\boldsymbol{\sigma}\)-stationary space.
\end{example}

We now summarize the previous propositions and lemmas in a theorem regarding conditional measures:
\begin{theorem}\label{Theorem conditional measures for regular systems}
Let \(\mathcal{X}=(X,\boldsymbol{\nu})\) be a regular \(\boldsymbol{\sigma}\)-stationary space.\\
Then there is a unique (up to a.e. equality) measurable \(G\)-mapping \(\Phi_{X}: B\to M(X,\Sigma_{X})\) from the Poisson boundary with the following integral factorization for any \(n\geq-1,\:j\in [\ell_n],\:g\in G\):
\[g\nu^{(n)}_{j}=\intop_{B} \Phi_{X}\: d gm^{(n)}_{j}\] 
Moreover, for every \(f\in L^{\infty}(X)\) we have for \(\mathbb{P}\)-a.e. \(\omega\in\Omega\)
\[\lim_{n\to\infty} \langle Y_n(\omega)\cdot \nu^{(n)}_{I_n(\omega)},f\rangle=\langle\Phi_X(\tau(\omega)),f\rangle\]
The \(\Phi\) is natural: given a factor \(\pi:\mathcal{X}\to \mathcal{Y}\) we have that \(\pi_{*}\Phi_{X}=\Phi_{Y}\).
\end{theorem}

\begin{remark}
In the theory of classical Poisson boundary (e.g. \cite{FurstenbergGlasner},\cite{BaderShalom}), the measure \(\Phi_{X}(\tau(\omega))\) is commonly denoted by \(\nu_{\omega}\).
\end{remark}

\begin{definition}
Let \(\mathcal{X}=(X,\boldsymbol{\nu})\) be a regular \(\boldsymbol{\sigma}\)-stationary space. 
The mapping \(\Phi_{X}\) is called the conditional measures.\\ 
We define a \(\boldsymbol{\sigma}\)-system \(\Pi(X):=(M(X,\Sigma_{X}),(\Phi_X)_* \boldsymbol{m})\).
\end{definition}

\begin{example}\label{Example conditional measures of Poisson boundary}
For the Poisson boundary one has \(\Phi_{B}(b)=\delta_{b}\).
\end{example}

\begin{definition}
A factor \(\pi:\mathcal{X}\to\mathcal{Y}\) between regular
\(\boldsymbol{\sigma}\)-systems is called \emph{proximal} if for a.e. \(b\in B\) we have that \(\pi:(X,\Phi_{X}(b))\to(Y,\Phi_{Y}(b))\) is an isomorphism (as usual, modulo null sets).\\
We say that \(\mathcal{X}\) is \emph{proximal} if \(\mathcal{X}\to pt\) is proximal.
\end{definition}
\begin{remark}
By definition, a system is proximal iff the conditional measures are a.e. delta measures.
\end{remark}

\begin{lemma}\label{lemma proximal systems}
A regular
\(\boldsymbol{\sigma}\)-system is proximal iff it is a factor of the Poisson boundary.\\
In particular, for any regular \(\boldsymbol{\sigma}\)-system \(\mathcal{X}\), the \(\boldsymbol{\sigma}\)-system \(\Pi(\mathcal{X})\) is proximal.
\end{lemma}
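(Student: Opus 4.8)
The plan is to prove both implications through the conditional measures $\Phi_X$ of Theorem \ref{Theorem conditional measures for regular systems}, using the functoriality it provides together with the computation $\Phi_B(b)=\delta_b$ from Example \ref{Example conditional measures of Poisson boundary} and the characterization (stated in the remark after the definition of proximality) that a system is proximal precisely when its conditional measures are a.e.\ delta measures.

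First I would treat the easy implication: if $\mathcal{X}$ is a factor of the Poisson boundary via $\pi:\mathcal{B}(G,\boldsymbol{\sigma})\to\mathcal{X}$, then the naturality clause of Theorem \ref{Theorem conditional measures for regular systems} gives $\pi_{*}\Phi_{B}=\Phi_{X}$ a.e. Since $\Phi_{B}(b)=\delta_{b}$, we obtain $\Phi_{X}(b)=\pi_{*}\delta_{b}=\delta_{\pi(b)}$ a.e., so the conditional measures of $\mathcal{X}$ are a.e.\ delta measures and $\mathcal{X}$ is proximal. For the converse, suppose $\mathcal{X}$ is proximal, so $\Phi_{X}(b)$ is a.e.\ a delta measure. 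Because $(X,\Sigma_{X})$ is standard Borel, the set of Dirac measures is a Borel subset of $M(X,\Sigma_{X})$ and $x\mapsto\delta_{x}$ is a Borel isomorphism onto it; composing its inverse with $\Phi_{X}$ yields a measurable map $\phi:B\to X$ with $\Phi_{X}(b)=\delta_{\phi(b)}$ a.e. I would then verify that $\phi$ is a factor of $\boldsymbol{\sigma}$-systems: the $G$-equivariance of $\Phi_{X}$ gives $\delta_{\phi(gb)}=g\Phi_{X}(b)=\delta_{g\phi(b)}$, hence $\phi(gb)=g\phi(b)$; and the integral factorization of Theorem \ref{Theorem conditional measures for regular systems} gives $\nu^{(n)}_{j}=\intop_{B}\Phi_{X}\,dm^{(n)}_{j}=\intop_{B}\delta_{\phi(b)}\,dm^{(n)}_{j}(b)=\phi_{*}m^{(n)}_{j}$. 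Thus $\phi:\mathcal{B}(G,\boldsymbol{\sigma})\to\mathcal{X}$ is a factor, exhibiting $\mathcal{X}$ as a factor of the Poisson boundary.

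For the ``in particular'' assertion I would observe that the conditional measures map $\Phi_{X}:B\to M(X,\Sigma_{X})$ is itself $G$-equivariant and measurable by Theorem \ref{Theorem conditional measures for regular systems}. Since convolution with $\boldsymbol{\sigma}$ commutes with equivariant pushforward, applying $(\Phi_{X})_{*}$ to the stationarity relations $\sigma^{(n)}\ast m^{(n)}=m^{(n-1)}$ of the boundary shows that the measures $(\Phi_{X})_{*}m^{(n)}_{j}$ defining $\Pi(\mathcal{X})$ indeed form a $\boldsymbol{\sigma}$-system and that $\Phi_{X}$ is a factor map $\mathcal{B}(G,\boldsymbol{\sigma})\to\Pi(\mathcal{X})$. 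Hence $\Pi(\mathcal{X})$ is a factor of the Poisson boundary, and by the first implication it is proximal.

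The hard part will be the passage from ``a.e.\ delta measure'' to an honest measurable factor map $\phi$: this is exactly where the standard Borel hypothesis enters, via the Borel selection identifying Dirac measures with points. A related subtlety is ensuring the $G$-equivariance $\phi(gb)=g\phi(b)$ holds on a single co-null set rather than one depending on $g$, which is where the countability of $G$ is used. The verification that $\Pi(\mathcal{X})$ is again a $\boldsymbol{\sigma}$-system is routine but should be recorded carefully, since it underlies the final reduction to the already-proved implication.
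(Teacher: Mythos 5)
Your proposal is correct and follows essentially the same route as the paper's proof: the forward direction via naturality of conditional measures together with $\Phi_{B}(b)=\delta_{b}$, and the converse by reading off the factor map $\phi$ from the a.e.\ delta-valued $\Phi_{X}$ and checking equivariance and the integral factorization. The extra details you supply (the Borel isomorphism between Dirac measures and points, the single co-null set for equivariance via countability of $G$, and the verification that $\Pi(\mathcal{X})$ is a $\boldsymbol{\sigma}$-system receiving a factor map from $\mathcal{B}(G,\boldsymbol{\sigma})$) are exactly the points the paper leaves implicit, and they are handled correctly.
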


\begin{proof}
If a regular \(\boldsymbol{\sigma}\)-system is a factor of the Poisson boundary, then by the functionality of conditional measures and Example \ref{Example conditional measures of Poisson boundary} we conclude that the system is proximal.\\
On the other direction, if a regular system \(\mathcal{X}\) is proximal then for a.e. \(b\in B\) we have \(\Phi_{X}(b)=\delta_{\pi(b)}\). The mapping \(\pi:B\to X\) is measurable and \(G\)-equivariant and the integral decomposition in Theorem \ref{Theorem conditional measures for regular systems} yields that \(\pi\) is a factor.
\end{proof}
\begin{remark}
Note that the proof of Lemma \ref{lemma proximal systems} implies that being a factor of the Poisson boundary is a property of the system and not an extra structure. \\
It follows that proximal systems does not posses nontrivial endomorphisms.
\end{remark}

We need the following easy lemma in measure theory (no group action) relating disintegration and Radon-Nikodym derivative.
\begin{lemma}\label{Lemma disintegration and Radon-Nikodym derivative}
Suppose \(\pi: (X,\nu)\to (Y,m)\) is a factor between standard Borel spaces. Let \(\nu_{0}\ll \nu\) and \(m_{0}=\pi_{*}\nu_{0}\). Suppose there is a measurable mapping \(\Phi: Y\to M(X,\Sigma_{X})\) such that:
\begin{itemize}
    \item 
    For \(m\)-a.e. \(y\in Y\) we have \(\pi_{*}\Phi(y)=\delta_{y}\).
    \item
    \(\nu=\intop_{Y} \Phi \: dm\) and \(\nu_{0}=\intop_{Y} \Phi\: dm_{0}\).
\end{itemize}
Then \(\frac{d\nu_{0}}{d\nu}=\frac{dm_0}{dm}\circ \pi\).
\end{lemma}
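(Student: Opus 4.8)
The plan is to recognize the hypotheses as saying precisely that $\Phi$ is the disintegration of $\nu$ along $\pi$, and then to verify directly that the candidate $\frac{dm_0}{dm}\circ\pi$ serves as the Radon--Nikodym derivative of $\nu_0$ with respect to $\nu$.

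First I would record the elementary fact that $m_0=\pi_*\nu_0\ll\pi_*\nu=m$, which holds because pushing forward preserves absolute continuity (if $m(A)=0$ then $\nu(\pi^{-1}A)=0$, hence $\nu_0(\pi^{-1}A)=0$, i.e.\ $m_0(A)=0$). Set $\phi:=\frac{dm_0}{dm}\in L^1(Y,m)$; note $\phi\circ\pi$ is defined $\nu$-a.e.\ and lies in $L^1(\nu)$. The goal then becomes the identity of measures $\nu_0=(\phi\circ\pi)\cdot\nu$, since this is exactly the assertion $\frac{d\nu_0}{d\nu}=\phi\circ\pi$.

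The crucial observation is the meaning of the first bullet: $\pi_*\Phi(y)=\delta_y$ forces $\Phi(y)$ to be concentrated on the fiber $\pi^{-1}(\{y\})$ for $m$-a.e.\ $y$, since $\Phi(y)\big(X\setminus\pi^{-1}\{y\}\big)=(\pi_*\Phi(y))(Y\setminus\{y\})=\delta_y(Y\setminus\{y\})=0$. Consequently, for $m$-a.e.\ $y$ the function $\phi\circ\pi$ equals the constant $\phi(y)$ for $\Phi(y)$-a.e.\ point. With this in hand the proof is a Fubini-type computation: for any bounded measurable $g$ on $X$, using $\nu=\int_Y\Phi\,dm$ and $\nu_0=\int_Y\Phi\,dm_0=\int_Y\phi(y)\,\Phi(y)\,dm(y)$, one computes
\[
\int_X g\cdot(\phi\circ\pi)\,d\nu=\int_Y\Big(\int_X g(x)\,\phi(\pi(x))\,d\Phi(y)(x)\Big)dm(y)=\int_Y\phi(y)\Big(\int_X g\,d\Phi(y)\Big)dm(y)=\int_X g\,d\nu_0,
\]
where the middle equality is exactly the fiber-concentration remark. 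Testing against all bounded measurable $g$ (equivalently, against a countable separating family, available since $X$ is standard Borel) yields $\nu_0=(\phi\circ\pi)\nu$, as desired.

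I expect the only genuine subtlety to be the bookkeeping around measurability and the validity of exchanging the order of integration: one must know that $y\mapsto\Phi(y)$ is measurable into the standard Borel space $M(X,\Sigma_X)$, so that $y\mapsto\int_X g\,d\Phi(y)$ is measurable and the factorizations for $\nu$ and $\nu_0$ may be combined via $dm_0=\phi\,dm$. Since these measurability statements are built into the hypotheses $\nu=\int_Y\Phi\,dm$ and $\nu_0=\int_Y\Phi\,dm_0$, and the standardness of $X$ lets me reduce testing to a countable algebra of bounded functions, no real difficulty arises beyond a careful invocation of Fubini's theorem.
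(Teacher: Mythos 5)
Your proof is correct and follows essentially the same route as the paper's: both arguments hinge on the observation that $\pi_{*}\Phi(y)=\delta_{y}$ makes pulled-back functions $g\circ\pi$ act as the constant $g(y)$ under $\Phi(y)$ (the paper phrases this as the a.e.\ identity $\langle\Phi(y),f\cdot(g\circ\pi)\rangle=\langle\Phi(y),f\rangle\,g(y)$, you phrase it as concentration of $\Phi(y)$ on the fiber $\pi^{-1}\{y\}$), and then both combine the two integral factorizations with $dm_{0}=\frac{dm_{0}}{dm}\,dm$ in a Fubini computation against bounded test functions. The only cosmetic difference is the direction of the chain of equalities (you start from $\int_X g\cdot(\phi\circ\pi)\,d\nu$, the paper starts from $\int_X f\,d\nu_0$), together with your explicit remark that $m_{0}\ll m$, which the paper uses implicitly.
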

\begin{proof}
We first show that for any \(f\in L^{\infty}(X,\nu)\:,\: g\in L^{1}(Y,m)\) we have for \(m\) a.e. \(y\in Y\) that
\(\langle\Phi(y),f\cdot (g\circ\pi)\rangle=\langle \Phi(y), f\rangle \cdot g(y) \). Indeed:
\begin{multline*}
|\big\langle\Phi(y),f\cdot (g\circ\pi)\big\rangle-\big\langle \Phi(y), f\big\rangle \cdot g(y)|=|\big\langle \Phi(y), f\cdot (\pi^{*}(g-g(y)))\big\rangle|\leq \big\langle \Phi(y), |f\cdot (\pi^{*}(g-g(y)))|\big\rangle \leq\\
||f||_{\infty} \cdot \big\langle \Phi(y), \pi^{*}(|g-g(y)|)\big\rangle= ||f||_{\infty}\cdot \big\langle \pi_{*}\Phi(y), |g-g(y)|\big\rangle=||f||_{\infty}\cdot \big\langle \delta_{y}, |g-g(y)|\big\rangle=0
\end{multline*}
Thus, for any \(f\in L^{\infty}(X,\nu)\):
\begin{multline*}
\intop_{X}f \:d\nu_{0}=\intop_{Y} \langle\Phi(y),f\rangle\:dm_{0}(y)=\intop_{Y} \langle\Phi(y),f\rangle (y)\cdot \frac{dm_0}{dm}(y)\:\: dm(y)=\\
\intop_{Y}\big\langle\Phi(y),f\cdot (\frac{dm_0}{dm}\circ \pi)\big\rangle \:dm(y)= \intop_{X} \: f\cdot (\frac{dm_0}{dm}\circ\pi)\:d\nu
\end{multline*}
proving the result.
\end{proof}
The following will be used in section \ref{section Realizations via ultralimit} in order to prove that a certain ultralimit construction produces the Poisson boundary.
\begin{prop}\label{proposition maximality of Poisson boundary}
If \(\pi:\mathcal{X}\to \mathfrak{B}(G,\boldsymbol{\sigma})\) is a factor, then \(\pi\) is a measure preserving extension.
\end{prop}
\begin{proof}
Replacing \(\mathcal{X}\) by a measure preserving regular factor (that still has the Poisson boundary as a factor), we may assume \(\mathcal{X}\) is regular.
Note that \(\Phi_{X}: B\to M(X,\Sigma_{X})\) is a measurable function such that a.e. \(\pi_{*}\Phi_{X}(b)=\delta_{b}\). Using the integral formula in Theorem \ref{Theorem conditional measures for regular systems} and  applying Lemma \ref{Lemma disintegration and Radon-Nikodym derivative} we conclude that \(\pi\) is measure preserving.
\end{proof}

We conclude this section with a construction for Joining and a Furstenberg-Glasner type theorem (strongly inspired from \cite{FurstenbergGlasner}).

\begin{definition}
Let \(\mathcal{X}=(X,\boldsymbol{\nu}),\mathcal{Y}=(Y,\boldsymbol{\kappa})\) be two regular \(\boldsymbol{\sigma}\)-systems. The \emph{Joining} of \(\mathcal{X},\mathcal{Y}\) is \(\mathcal{X}\curlyvee\mathcal{Y}=(X\times Y,\boldsymbol{\rho})\) where \(X\times Y\) is equipped with the diagonal action, and the vectors of measures \(\boldsymbol{\rho}=(\rho^{(n)})\) are defined by:
\[\rho^{(n)}=\intop_{B} \Phi_{X}(b)\times \Phi_{Y}(b) \: dm^{(n)}(b)\]
\end{definition}
Note that \[g\:\rho^{(n)}=\intop_{B} g\big(\Phi_{X}\times\Phi_{Y}\big) \: dm^{(n)}=\intop_{B} \Phi_{X}\times\Phi_{Y}\: d\big(g\:m^{(n)}\big)\]
Thus, \(\mathcal{X}\curlyvee\mathcal{Y}\) is a regular \(\boldsymbol{\sigma}\)-system. We sometimes denote \(\boldsymbol{\rho}=\boldsymbol{\nu}\curlyvee \boldsymbol{\kappa}\).\\
By definition, the conditional measures of \(\mathcal{X}\curlyvee \mathcal{Y}\) are \(\Phi_{\mathcal{X}\curlyvee \mathcal{Y}}(b)=\Phi_{\mathcal{X}}(b)\times \Phi_{\mathcal{Y}}(b)\).\\
Also, it is clear that the projections \(\mathcal{X}\curlyvee\mathcal{Y}\to\mathcal{X},\mathcal{X}\curlyvee\mathcal{Y}\to\mathcal{Y}\) are factors of \(\boldsymbol{\sigma}\)-systems.

\begin{lemma}\label{lemma joining with proximal}
Let \(\mathcal{X},\mathcal{Y}\) be regular \(\boldsymbol{\sigma}\)-systems. If \(\mathcal{X}\) proximal then the projection \(\pi: \mathcal{X}\curlyvee\mathcal{Y}\to\mathcal{Y}\) is proximal. 
\end{lemma}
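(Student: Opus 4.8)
The plan is to exploit the fact, recorded in the remark following the definition of a proximal factor, that a proximal system has Dirac conditional measures. First I would invoke Lemma \ref{lemma proximal systems}: since $\mathcal{X}$ is proximal, it is a factor of the Poisson boundary, so there is a measurable $G$-equivariant map $s:B\to X$ with $\Phi_{\mathcal{X}}(b)=\delta_{s(b)}$ for $\mathbb{P}$-a.e. $b\in B$. This is the only input about $\mathcal{X}$ that the argument needs; everything else is measure-theoretic bookkeeping about product measures with an atom in one coordinate.

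By the definition of the joining, the conditional measure of $\mathcal{X}\curlyvee\mathcal{Y}$ at $b$ is $\Phi_{\mathcal{X}\curlyvee\mathcal{Y}}(b)=\Phi_{\mathcal{X}}(b)\times\Phi_{\mathcal{Y}}(b)=\delta_{s(b)}\times\Phi_{\mathcal{Y}}(b)$, which for a.e. $b$ is a probability measure carried by the slice $\{s(b)\}\times Y$. On this slice the projection $\pi:X\times Y\to Y$ is a bijection, with measurable inverse $\iota_b:y\mapsto(s(b),y)$. I would then verify the three routine identities: $\pi\circ\iota_b=\mathrm{id}_Y$; the push-forward $(\iota_b)_*\Phi_{\mathcal{Y}}(b)=\delta_{s(b)}\times\Phi_{\mathcal{Y}}(b)=\Phi_{\mathcal{X}\curlyvee\mathcal{Y}}(b)$; and $\iota_b\circ\pi=\mathrm{id}$ on the set $\{s(b)\}\times Y$, which is $\Phi_{\mathcal{X}\curlyvee\mathcal{Y}}(b)$-conull. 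Together these say that $\pi$ and $\iota_b$ are mutually inverse modulo null sets, so $\pi:(X\times Y,\Phi_{\mathcal{X}\curlyvee\mathcal{Y}}(b))\to(Y,\Phi_{\mathcal{Y}}(b))$ is an isomorphism of measure spaces. As this holds for a.e. $b$, the projection $\pi:\mathcal{X}\curlyvee\mathcal{Y}\to\mathcal{Y}$ is proximal by definition.

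The one point that demands care, rather than a genuine obstacle, is the measurability bookkeeping: I need $s$ to be an honest measurable map so that the $\iota_b$ are measurable for a.e. $b$, and this is precisely what the identification of $\mathcal{X}$ with a factor of the Poisson boundary in Lemma \ref{lemma proximal systems} supplies. I also use the compatibility $\pi_*\Phi_{\mathcal{X}\curlyvee\mathcal{Y}}=\Phi_{\mathcal{Y}}$, which follows either from the naturality clause of Theorem \ref{Theorem conditional measures for regular systems} applied to the projection, or directly from $\pi_*(\mu\times\eta)=\eta$. Beyond these points I expect no difficulty: the content of the lemma is exactly the observation that forming a product with a point mass makes the projection an isomorphism on each conditional fiber.
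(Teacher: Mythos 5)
Your proof is correct and follows essentially the same route as the paper's: the paper also observes that the conditional measure of the joining at $b$ is $\delta_{p(b)}\times\Phi_{\mathcal{Y}}(b)$ and that the projection restricted to each such fiber is an isomorphism with inverse $y\mapsto(p(b),y)$. Your version merely spells out the measurability of the section map and the mutual-inverse verifications that the paper leaves implicit.
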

\begin{proof}
We need to show that the projection \(\pi:(X\times Y, \delta_{p(b)}\times\Phi_{Y}(b))\to (Y,\Phi_{Y}(b))\) is an isomorphism. Indeed the inverse is \(y\mapsto (p(b),y)\).
\end{proof}

Now we can prove the corresponding Furstenberg-Glasner type theorem:

\begin{theorem}\label{Theorem Furstenberg Glasner}
Let \(\mathcal{X}\) be a regular \(\boldsymbol{\sigma}\)-system. Let \(\Lambda(\mathcal{X})\) be a proximal \(\boldsymbol{\sigma}\)-system, having \(\Pi(\mathcal{X})\) as a factor. Consider \(\mathcal{X}^*=\mathcal{X}\curlyvee \Lambda(\mathcal{X})\). Then we have:
\begin{enumerate}
    \item 
    \(\mathcal{X}^*\to\mathcal{X}\) is a proximal extension.
    \item
    \(\mathcal{X}^*\to \Lambda(\mathcal{X})\) is a measure preserving extension.
\end{enumerate}
\end{theorem}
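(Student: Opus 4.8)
The plan is to read off both extensions from the conditional measures of the joining. By the definition of $\curlyvee$ we have $\Phi_{\mathcal{X}^*}(b)=\Phi_{\mathcal{X}}(b)\times\Phi_{\Lambda(\mathcal{X})}(b)$, and since $\Lambda(\mathcal{X})$ is proximal, Lemma \ref{lemma proximal systems} furnishes a factor map $q\colon B\to\Lambda(X)$ with $\Phi_{\Lambda(\mathcal{X})}(b)=\delta_{q(b)}$. Hence $\Phi_{\mathcal{X}^*}(b)=\Phi_{\mathcal{X}}(b)\times\delta_{q(b)}$, a measure supported on the slice $X\times\{q(b)\}$. For item 1, the projection $\pi_1\colon X\times\Lambda(X)\to X$ restricts on this slice to the identity-like map $(x,q(b))\mapsto x$ with measurable inverse $x\mapsto(x,q(b))$; thus $\pi_1\colon (X\times\Lambda(X),\Phi_{\mathcal{X}^*}(b))\to(X,\Phi_{\mathcal{X}}(b))$ is an isomorphism for a.e. $b$, which is precisely the assertion that $\mathcal{X}^*\to\mathcal{X}$ is proximal. (This is also Lemma \ref{lemma joining with proximal} applied to $\Lambda(\mathcal{X})\curlyvee\mathcal{X}$ after the coordinate swap $\mathcal{X}^*\cong\Lambda(\mathcal{X})\curlyvee\mathcal{X}$.)

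For item 2 the decisive point is that the hypothesis that $\Lambda(\mathcal{X})$ has $\Pi(\mathcal{X})$ as a factor forces $\Phi_{\mathcal{X}}$ to factor through $q$. Indeed, by the definition of $\Pi(\mathcal{X})$ the conditional-measure map $\Phi_{\mathcal{X}}\colon B\to M(X,\Sigma_X)=\Pi(X)$ is itself a factor $\mathcal{B}(G,\boldsymbol{\sigma})\to\Pi(\mathcal{X})$, and by Proposition \ref{proposition functoriality of conditional measures} and Example \ref{Example conditional measures of Poisson boundary} it is the factor map exhibiting the proximal system $\Pi(\mathcal{X})$ as a factor of the boundary. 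Writing $r\colon\Lambda(\mathcal{X})\to\Pi(\mathcal{X})$ for the given factor, $r\circ q$ is a second such factor; since a proximal system admits at most one factor map from the Poisson boundary (the remark following Lemma \ref{lemma proximal systems}) we get $\Phi_{\mathcal{X}}=r\circ q$ a.e. Consequently $\Phi_{\mathcal{X}^*}(b)=r(q(b))\times\delta_{q(b)}=\Psi(q(b))$, where $\Psi\colon\Lambda(X)\to M(X\times\Lambda(X))$ is the single $(n,i,g)$-independent map $\Psi(y)=r(y)\times\delta_y$, which is measurable, $G$-equivariant, and satisfies $(\pi_2)_*\Psi(y)=\delta_y$.

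It then remains to feed this into Lemma \ref{Lemma disintegration and Radon-Nikodym derivative}. For every $n\geq-1$, $i\in[\ell_n]$, $g\in G$ the integral formula of Theorem \ref{Theorem conditional measures for regular systems} gives $g\rho^{(n)}_i=\int_{B}\Phi_{\mathcal{X}^*}\,d(gm^{(n)}_i)=\int_{\Lambda(X)}\Psi\,d(g\kappa^{(n)}_i)$, since the integrand depends on $b$ only through $q(b)$ and $q_*(gm^{(n)}_i)=g\kappa^{(n)}_i$; in particular $\rho^{(-1)}=\int\Psi\,d\kappa^{(-1)}$. Because the no-zero-column assumption gives $g\rho^{(n)}_i\ll\rho^{(-1)}$, Lemma \ref{Lemma disintegration and Radon-Nikodym derivative} applied to $\pi_2$, with this one disintegration $\Psi$, yields $\frac{dg\rho^{(n)}_i}{d\rho^{(-1)}}=\frac{dg\kappa^{(n)}_i}{d\kappa^{(-1)}}\circ\pi_2$, which is exactly the condition that $\mathcal{X}^*\to\Lambda(\mathcal{X})$ be a measure preserving extension. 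I expect the main obstacle to be item 2, and within it the identification $\Phi_{\mathcal{X}}=r\circ q$: it is what upgrades the fibrewise disintegration of $\mathcal{X}^*$ over $\Lambda(\mathcal{X})$ to one and the same $\Psi$ valid for all $n,i,g$, which is the hypothesis of Lemma \ref{Lemma disintegration and Radon-Nikodym derivative}; the remaining measurability and equivariance checks are routine.
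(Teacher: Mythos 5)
Your proposal is correct and follows essentially the same route as the paper: item 1 is the slice/coordinate-swap application of Lemma \ref{lemma joining with proximal}, and item 2 hinges, exactly as in the paper, on the uniqueness of the factor map from the Poisson boundary to a proximal system to identify \(\Phi_{\mathcal{X}}\) with the composite \(\Lambda(\mathcal{X})\)-factor (your \(r\circ q\)), after which the integral formula is rewritten as a disintegration over \(\Lambda(\mathcal{X})\) and Lemma \ref{Lemma disintegration and Radon-Nikodym derivative} gives the measure-preserving property. The only cosmetic difference is notational (your \(\kappa^{(n)}_i\) are the paper's \(\lambda^{(n)}_i\)), and your explicit treatment of the coordinate swap in item 1 is a detail the paper leaves implicit.
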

\begin{proof}
Since \(\Lambda(\mathcal{X})\) is proximal, we conclude item 1 from Lemma \ref{lemma joining with proximal}.\\
For item 2: we write \(\mathcal{X}=(X,\boldsymbol{\nu}),\Pi(\mathcal{X})=(M(X,\Sigma),\boldsymbol{\kappa}),\Lambda(\mathcal{X})=(\Lambda,\boldsymbol{\lambda}),\mathcal{X}^*=(X\times \Lambda,\boldsymbol{\rho})\).\\
Let \(\varphi:\Lambda(\mathcal{X})\to\Pi(\mathcal{X})\) be the given factor and \(\pi:\mathcal{B}(G,\boldsymbol{\sigma})\to \Lambda(\mathcal{X})\) be the factor provided by the proximailty of \(\Lambda(\mathcal{X})\) and Lemma \ref{lemma proximal systems}. From the same lemma we know that the factor from the Poisson boundary to a proximal system is unique. Considering the composition
\[\mathcal{B}(G,\boldsymbol{\sigma})\stackrel{\pi}{\longrightarrow}\Lambda(\mathcal{X})\stackrel{\varphi}{\longrightarrow}\Pi(\mathcal{X})\] 
we conclude \(\varphi\circ\pi=\Phi_{\mathcal{X}}\) by the definition of \(\Pi(\mathcal{X})\).
Thus for any \(n\geq -1\:,\:i\in[\ell_{n}]\:,\: g\in G\):
\[g\rho_{i}^{(n)}=\intop_{B} \Phi_{\mathcal{X}}\times \Phi_{\Lambda(\mathcal{X})}\: d(gm_{i}^{(n)})=\intop_{B} \varphi(\pi(b))\times\delta_{\pi(b)}d(gm_{i}^{(n)})(b)=\intop_{\Lambda} \varphi(\xi) \times \delta_\xi \: d(g\lambda_{i}^{(n)})(\xi)\]
Using Lemma \ref{Lemma disintegration and Radon-Nikodym derivative} we conclude that the extension \((X\times \Lambda,\boldsymbol{\rho})\to (\Lambda,\boldsymbol{\lambda})\) is measure preserving.
\end{proof}


\printbibliography

@article{Amenable, 
title={Amenable actions of discrete groups},
 volume={13}, DOI={10.1017/S0143385700007379},
 number={2}, journal={Ergodic Theory and Dynamical Systems},
 publisher={Cambridge University Press},
 author={Elliott, G. A. and Giordano, T.},
 year={1993},
 pages={289–318}
}

@book{Zimmer1984ErgodicTA,
  title={Ergodic Theory and Semisimple Groups},
  author={Zimmer, R.J.},
  isbn={9780817631840},
  lccn={84011127},
  series={Monographs in mathematics},
  url={https://books.google.co.il/books?id=gwBHAQAAIAAJ},
  year={1984},
  publisher={Birkh{\"a}user}
}

@article{Spatzier1991FundamentalGO,
  title={Fundamental groups of negatively curved manifolds and actions of semisimple groups},
  author={Ralf J. Spatzier and Robert J. Zimmer},
  journal={Topology},
  year={1991},
  volume={30},
  pages={591-601}
}

@article{dellacherie1980probabilities,
  title={Probabilities and potential},
  author={Dellacherie, Claude and Meyer, Paul-Andr{\'e}},
  journal={Bull. Amer. Math. Soc},
  volume={2},
  pages={510--514},
  year={1980}
}

@article{AmenableGeneral,
 ISSN = {00029947},
 URL = {http://www.jstor.org/stable/2154508},
 abstract = {The equivalence between different characterizations of amenable actions of a locally compact group is proved. In particular, this answers a question raised by R. J. Zimmer in 1977.},
 author = {Scot Adams and George A. Elliot and Thierry Giordano},
 journal = {Transactions of the American Mathematical Society},
 number = {2},
 pages = {803--822},
 publisher = {American Mathematical Society},
 title = {Amenable Actions of Groups},
 volume = {344},
 year = {1994}
}

@article{BoundaryEntropy,
author = {V. A. Kaimanovich and A. M. Vershik},
title = {{Random Walks on Discrete Groups: Boundary and Entropy}},
volume = {11},
journal = {The Annals of Probability},
number = {3},
publisher = {Institute of Mathematical Statistics},
pages = {457 -- 490},
keywords = {$n$-fold convolution, amenability, entropy of random walk, exit boundary, invariant mean, Random walk on group},
year = {1983},
doi = {10.1214/aop/1176993497},
URL = {https://doi.org/10.1214/aop/1176993497}
}

@book{williams,
place={Cambridge},
title={Probability with Martingales},
%DOI={10.1017/CBO9780511813658},
publisher={Cambridge University Press},
author={Williams, David},
year={1991}}

@article{FurstenbergGlasner,
  title={Stationary dynamical systems},
  author={Furstenberg, Hillel and Glasner, Eli},
  journal={Dynamical numbers—interplay between dynamical systems and number theory},
  volume={532},
  pages={1--28},
  year={2010}
}

@article{sayag2022entropy,
  title={Entropy, Ultralimits and the Poisson boundary},
  author={Sayag, Elad and Shalom, Yehuda},
  journal={arXiv preprint arXiv:2202.06607v2},
  year={2022}
}

@article {MR2013348,
    AUTHOR = {Anantharaman-Delaroche, Claire},
     TITLE = {On spectral characterizations of amenability},
   JOURNAL = {Israel J. Math.},
  FJOURNAL = {Israel Journal of Mathematics},
    VOLUME = {137},
      YEAR = {2003},
     PAGES = {1--33},
      ISSN = {0021-2172},
   MRCLASS = {37A15 (43A07 46L55)},
  MRNUMBER = {2013348},
MRREVIEWER = {Arlan Ramsay},
       DOI = {10.1007/BF02785954},
%       URL = {https://doi.org/10.1007/BF02785954},
}

@article{weakcont,
 ISSN = {00029939, 10886826},
 URL = {http://www.jstor.org/stable/2160750},
 abstract = {We consider a countable discrete group Γ actinbg ergodically on a standard Borel space S with quasi-invariant measure μ. Let π be a unitary representation of Γ on L2(S, dμ, K) "nicely" related with S. We prove that if Γ acts amenably on S then π is weakly contained in the regular representation.},
 author = {M. Gabriella Kuhn},
 journal = {Proceedings of the American Mathematical Society},
 number = {3},
 pages = {751--757},
 publisher = {American Mathematical Society},
 title = {Amenable Actions and Weak Containment of Certain Representations of Discrete Groups},
 volume = {122},
 year = {1994}
}

@article{BaderShalom,
  title={Factor and normal subgroup theorems for lattices in products of groups},
  author={Uri Bader and Yehuda Shalom},
  journal={Inventiones mathematicae},
  year={2006},
  volume={163},
  pages={415-454}
}

@article{Nevo2000RigidityOF,
  title={Rigidity of Furstenberg entropy for semisimple Lie group actions},
  author={Amos Nevo and Robert J. Zimmer},
  journal={Annales Scientifiques de l'\'{E}cole Normale Superieure},
  year={2000},
  volume={33},
  pages={321-343}
}

@article{conley_kechris_tucker-drob_2013, title={Ultraproducts of measure preserving actions and graph combinatorics}, volume={33}, DOI={10.1017/S0143385711001143}, number={2}, journal={Ergodic Theory and Dynamical Systems}, publisher={Cambridge University Press}, author={Conley, Clinton T. and Kechris, Alexander S. and Tucker-Drob, Robin D.}, year={2013}, pages={334–374}}

@article{connes1989hyperfinite,
  title={Hyperfinite von Neumann algebras and Poisson boundaries of time dependent random walks},
  author={Connes, Alain and Woods, EJ},
  journal={Pacific journal of mathematics},
  volume={137},
  number={2},
  pages={225--243},
  year={1989},
  publisher={Mathematical Sciences Publishers}
}

@article{ADAMS1994765,
title = {Boundary amenability for word hyperbolic groups and an application to smooth dynamics of simple groups},
journal = {Topology},
volume = {33},
number = {4},
pages = {765-783},
year = {1994},
issn = {0040-9383},
doi = {https://doi.org/10.1016/0040-9383(94)90007-8},
%url = {https://www.sciencedirect.com/science/article/pii/0040938394900078},
author = {S. Adams}
}

@article{RAMSAY1971253,
title = {Virtual groups and group actions},
journal = {Advances in Mathematics},
volume = {6},
number = {3},
pages = {253-322},
year = {1971},
issn = {0001-8708},
doi = {https://doi.org/10.1016/0001-8708(71)90018-1},
%url = {https://www.sciencedirect.com/science/article/pii/0001870871900181},
author = {Arlan Ramsay}
}

@article {MR470692,
    AUTHOR = {Zimmer, Robert J.},
     TITLE = {Hyperfinite factors and amenable ergodic actions},
   JOURNAL = {Invent. Math.},
  FJOURNAL = {Inventiones Mathematicae},
    VOLUME = {41},
      YEAR = {1977},
    NUMBER = {1},
     PAGES = {23--31},
      ISSN = {0020-9910},
   MRCLASS = {46L10 (22D40 43A07)},
  MRNUMBER = {470692},
MRREVIEWER = {Fran\c{c}ois Combes},
       DOI = {10.1007/BF01390162},
%       URL = {https://doi.org/10.1007/BF01390162},
}

@article{polyanskiy2014lecture,
  title={Lecture notes on information theory},
  author={Polyanskiy, Yury and Wu, Yihong},
  journal={Lecture Notes for ECE563 (UIUC) and},
  volume={6},
  number={2012-2016},
  pages={7},
  year={2014},
  publisher={Citeseer}
}

@article {MR390154,
    AUTHOR = {Loeb, Peter A.},
     TITLE = {Conversion from nonstandard to standard measure spaces and
              applications in probability theory},
   JOURNAL = {Trans. Amer. Math. Soc.},
  FJOURNAL = {Transactions of the American Mathematical Society},
    VOLUME = {211},
      YEAR = {1975},
     PAGES = {113--122},
      ISSN = {0002-9947},
   MRCLASS = {28A10 (02H25 60J99)},
  MRNUMBER = {390154},
MRREVIEWER = {R. B. Kirk},
       DOI = {10.2307/1997222},
       URL = {https://doi.org/10.2307/1997222},
}

@article{buss2020amenability,
  title={Amenability and weak containment for actions of locally compact groups on C* -algebras},
  author={Buss, Alcides and Echterhoff, Siegfried and Willett, Rufus},
  journal={arXiv preprint arXiv:2003.03469},
  year={2020}
}

@article {MR2964622,
    AUTHOR = {Elek, G\'{a}bor and Szegedy, Bal\'{a}zs},
     TITLE = {A measure-theoretic approach to the theory of dense
              hypergraphs},
   JOURNAL = {Adv. Math.},
  FJOURNAL = {Advances in Mathematics},
    VOLUME = {231},
      YEAR = {2012},
    NUMBER = {3-4},
     PAGES = {1731--1772},
      ISSN = {0001-8708},
   MRCLASS = {05C65 (05C35 28E99)},
  MRNUMBER = {2964622},
MRREVIEWER = {Daniel Kr\'{a}l},
       DOI = {10.1016/j.aim.2012.06.022},
       URL = {https://doi.org/10.1016/j.aim.2012.06.022},
}

@article{anantharaman1987systemes,
  title={Systemes dynamiques non commutatifs et moyennabilit{\'e}},
  author={Anantharaman-Delaroche, Claire},
  journal={Mathematische Annalen},
  volume={279},
  pages={297--315},
  year={1987},
  publisher={Springer}
}

@article{ando2014ultraproducts,
  title={Ultraproducts of von Neumann algebras},
  author={Ando, Hiroshi and Haagerup, Uffe},
  journal={Journal of Functional Analysis},
  volume={266},
  number={12},
  pages={6842--6913},
  year={2014},
  publisher={Elsevier}
}

@book{anantharaman2000amenable,
  title={Amenable Groupoids},
  author={Anantharaman-Delaroche, C. and Renault, J.},
  isbn={9782940264018},
  series={Monographie ... de l'Enseignement math{\'e}matique},
  url={https://books.google.co.il/books?id=HbQrAAAAYAAJ},
  year={2000},
  publisher={L'Enseignement Math{\'e}matique}
}

@article{zimmer1978amenable,
  title={Amenable ergodic group actions and an application to Poisson boundaries of random walks},
  author={Zimmer, Robert J},
  journal={Journal of Functional Analysis},
  volume={27},
  number={3},
  pages={350--372},
  year={1978},
  publisher={Elsevier}
}

@book{renault2006groupoid,
  title={A groupoid approach to C*-algebras},
  author={Renault, Jean},
  volume={793},
  year={2006},
  publisher={Springer}
}

@article{zimmer1977neumann,
  title={On the von Neumann algebra of an ergodic group action},
  author={Zimmer, Robert J},
  journal={Proceedings of the American Mathematical Society},
  volume={66},
  number={2},
  pages={289--293},
  year={1977}
}

\end{document}